\theoremstyle{plain}
\newtheorem{lemma}{Lemma}[section]
\newtheorem{theorem}[lemma]{Theorem}
\newtheorem{proposition}[lemma]{Proposition}
\newtheorem{corollary}[lemma]{Corollary}
\newtheorem{corrolary}[lemma]{Corollary}
\theoremstyle{definition}
\newtheorem{definition}[lemma]{Definition}
\newtheorem{example}[lemma]{Example}
\newtheorem{notation}[lemma]{Notation}
\newcommand{\comp}[1]{\operatorname{comp} #1}
\newcommand{\group}[1]{\operatorname{group} #1}
\newcommand{\grad}{\nabla}
\newcommand{\bdry}{\partial}
\newcommand{\del}{\partial}
\newcommand{\Vol}{\operatorname{Vol}}
\newcommand{\Dom}{\operatorname{Dom}}
\newcommand{\Lip}{\operatorname{Lip}}
\newcommand{\diam}{\operatorname{diam}}
\newcommand{\sign}{\operatorname{sign}}
\newcommand{\trace}{\operatorname{Tr}}
\newcommand{\sgn}{\operatorname{sgn}}
\newcommand{\supp}{\operatorname{supp}}
\newcommand{\X}{\mathcal{X}}
\newcommand{\M}{\mathcal{M}}
\newcommand{\E}{\mathcal{E}}
\newcommand{\R}{\mbox{\bf R}}
\newcommand{\Z}{\mbox{\bf Z}           }
\newcommand{\ceil}[1]{\lceil #1 \rceil}
\newcommand{\floor}[1]{\lfloor #1 \rfloor}
\newcommand{\norm}[1]{\lVert #1 \rVert}
\newcommand{\abs}[1]{\lvert #1 \rvert}
\newcommand{\varl}{\ell}
\def\Xint#1{\mathchoice
    {\XXint\displaystyle\textstyle{#1}}%
    {\XXint\textstyle\scriptstyle{#1}}%
    {\XXint\scriptstyle\scriptscriptstyle{#1}}%
    {\XXint\scriptscriptstyle\scriptscriptstyle{#1}}%
    \!\int}
\def\XXint#1#2#3{{\setbox0=\hbox{$#1{#2#3}{\int}$}
     \vcenter{\hbox{$#2#3$}}\kern-.5\wd0}}
\def\dashint{\Xint-}
\newcommand{\aveint}{\dashint}
\newcommand{\coverlap}{C_{Over}}
\newcommand{\csup}{C_{sup}}
\newcommand{\cpon}{C_{P}}
\newcommand{\cover}{C_{N,S}}
\newcommand{\Size}{R}
\newcommand{\CompareXG}{C_{XG}}
\newcommand{\CXG}{C_0}
\newcommand{\F}{\mathcal{F}}
\newcommand{\Const}{C_g}
\newcommand{\Rz}{R_0}
\title{Heat Kernels on Euclidean Complexes}
\author{Melanie Anne Pivarski}
\begin{document}

\maketitle
\makecopyright
\begin{abstract}
In this thesis we describe a type of metric space called an Euclidean
polyhedral complex.  We define a Dirichlet form on it; this is used to
give a corresponding heat kernel.  We provide a uniform small time
Poincar\'{e} inequality for complexes with bounded geometry and use
this to determine uniform small time heat kernel bounds via a theorem
of Sturm.  We then consider such complexes with an underlying finitely
generated group structure.  We use techniques of Saloff-Coste and
Pittet to show a large time asymptotic equivalence for the heat kernel
on the complex and the heat kernel on the group.
\end{abstract}
\begin{biosketch}
Melanie Pivarski was born on August 13, 1977 somewhere in the
outskirts of Pittsburgh to Lynn and Thomas Pivarski.  She grew up with
her parents and two sisters, Kara and Janelle; her grandmother,
Caroline Matovick, lived a few blocks away.  They can all attest to
the fact that yes, Melanie has always talked with her hands.

She attended Colfax Elementary School where she greatly enjoyed
Ms. Kengor's math classes; partly, this was because she could spend
time doing logic puzzles.  She was also involved in Girl Scouts with
Chrissy.  She then attended Springdale High School where she
participated in many different activities, most notably art classes,
Drama Club with Ms. Frauenholz, and Academic Games.

From 1995 through 1999, she attended Carnegie Mellon University, where
she majored in mathematics and minored in computer science.  While
there, she took Prof. Mizel's freshman analysis course out of Apostle.
This class convinced her that she needed to study mathematics.  In her
analysis class she met Helena and Ruth who became her good friends and
study partners.  Computers are a part of the culture at CMU, and so
she found herself in a number of fun computer science classes.  Though
she spent much of her time on math and computers, she found time to
take some ballet classes with her friend Robert and some introductory
Polish classes at the University of Pittsburgh.  Quite significantly,
she met Jim McCann (now Jim McCann Pivarski) during freshman
orientation.  They began dating that fall and were married in June of
1999.

Melanie and Jim moved to Ithaca in the summer of 1999, where they
became graduate students in math and physics respectively.  Melanie
has greatly enjoyed her time in the math department; while there she
was involved in the teaching seminars, the outreach program Expanding
Your Horizons, the women in math potlucks, the 120A Cafe, and the
Gingerbread House contests, specializing in tower constructions.  She
also studied some math.

Outside the math department, Melanie found many things to do.  She's
taken ballet classes and sang in the choir at St. Catherine's for much
of her time here.  She's also been involved in the St Catherine's
young adult group, the PreCana team, and spent a few years helping out
at Loaves and Fishes.  Through all of this, she's met a number of
interesting people and had a variety of experiences.  She considers
herself to be more mature than she's ever been before, and she hopes
to continue growing and learning throughout her life.

In the fall of 2006, Melanie and Jim will move to College Station
where they will be employed as postdocs at Texas A \& M University.
\end{biosketch}
\begin{dedication}
To Friendship! And most especially to Jim!
\end{dedication}
\begin{acknowledgements}
The math department at Cornell is full of wonderful people.  The
community here is splendiferous.  I'm very grateful for the years
spent here, the math learned, and especially the friendships.

Most importantly, I'd like to thank my advisor, Prof. Saloff-Coste,
who gave me an interesting problem to work on.  He's exposed me to
loads of cool mathematics, and he has been essential in my learning
some of it.  He also has a nearly infinite amount of patience, which
comes in quite handy.

I'd also like to thank my committee members, Prof. Gross and
Prof. Strichartz, who also helped me to develop mathematically through
both courses and conversations.  Thanks go to Prof. Fulling as well,
who found a mistake in an early draft of this thesis. 

My mathematical siblings, David Revelle, Lee Gibson, Sharad Goel,
Guan-Yu Chen, Evgueni Klebanov, Pavel Gyra, and Jessica Zuniga, are
all great people.  They've been very encouraging and quite helpful to
me in clarifying my thoughts and definitions.  I've learned a lot in our
group meetings!

Thanks go to Todd Kemp and Treven Wall, who helped me with various
analysis bits, Jim Belk, who gave me a crash course in algebraic
topology one summer, Kristin Camenga who helped me with the geometric
definitions, and Mike Kozdron, who showed me to various latex
commands.  I'd also like to thank Josh Bowman, Jonathan Needleman,
Robyn Miller, Mia Minnes, and Brigitta Vermesi who, along with many of
the other folk mentioned above, helped me to organize my thoughts into
some kind of presentable form through various conversations.  Maria
Belk and Maria Terrell should also be thanked for their encouragement
and general good advice on how to be a graduate student.
\end{acknowledgements}

\tableofcontents
\figurelistpage
\symlist
\begin{itemize}
\item $X$ the complex
\item $X^{(k)}$ the $k$-skeleton of $X$
\item $\gamma$ a path in $X$
\item $L(\gamma)$ length of $\gamma$
\item $d_X(\cdot,\cdot)$ distance in $X$ induced by the Euclidean metric
\item $d_{X^{(i)}}(\cdot,\cdot)$, $d_{i}(\cdot,\cdot)$ distance in
$X^{(i)}$ induced by the Euclidean metric
\item $\mu$ measure on $X$
\item $E(\cdot,\cdot)$ energy form constructed via $\Gamma$-limit
\item $\E(\cdot,\cdot)$ energy form constructed via gradients
\item $\Delta$, $\Delta^k$  Laplacian for $X$,  $X^{(k)}$
\item $\Delta^{\Omega}$ Laplacian for $\Omega \subset X$ with
Dirichlet boundary condition
\item $\grad f$, $F$ gradient of $f$ 
\item $d\mu(x)$, $dx$ equivalent ways of writing the differential form
\item $\Lip(X)$ the set of Lipschitz functions on $X$
\item $W^{1,p}(X)$ the Sobolev space on $X$ of functions in $L^p(X)$ with first derivatives in $L^p$ 
\item $C_0^{\Lip}(X)$ the set of compactly supported Lipschitz functions on $X$
\item $\aveint_B$ average integral over the set $B$
\item $I_{\Omega}$ indicator function on $\Omega$ (1 if in $\Omega$, 0 if not)
\item $f_E$ average value of $f$ on the set $E$
\item $\bdry E$ the boundary of the set $E$
\item $S^{(k)}$ the $k$-sphere
\item $W_k$, $W_{j,k}$ wedges of a ball in $X$
\item $N(j)$ the list of indices of faces adjacent to $W_j$ including $j$
\item $M$ degree bound on $X$
\item $\varl$ lower bound on edge lengths of $X$
\item $\alpha$ smallest interior angle in $X$
\item $\kappa$ constant related to multiples of radii 
\item $R_0$ bound on radius defined in terms of $\kappa$
\item $C_{Weak}$ constant in weak Poincar\'{e} inequality
\item $\cpon$ constant for the Poincar\'{e} inequality
\item $\F$ collection of balls in the Whitney cover
\item $r_B$ radius of the ball $B$
\item $B_z$ the central ball in the Whitney cover
\item $\F(B)$ a string of balls that takes $B_z$ to $B$
\item $h_t(x,y)$, $h^k_t(x,y)$ heat kernel on $X$, $X^{(k)}$
\item $H_t$ heat semigroup on $X$
\item $\tau$ exit time for Brownian motion on a set
\item $X_t$ random variable for location of a Brownian motion in a subset of $X$
\item $h^{\Omega}_t(x,y)$ heat kernel on $\Omega \subset X$
\item $H_t^{\Omega}$ heat semigroup on $\Omega \subset X$
\item $p_t(x,y)$, $p_t^A(x,y)$ heat kernel on $G$, $A \subset G$
\item $K^n$, $K^n_A$ $n$ step transition operator on $G$, $A \subset G$
\item $\lambda_{\Omega}(i)$ ith eigenvalue for $H_1^{\Omega}$
\item $\beta_A(i)$ ith eigenvalue for $K^1_A$
\item $\trace{(K^n)}$ trace of $K^n$
\item $G$ group
\item $S$ generating set for $G$
\item $Y$ compact subset of $X$ with the property $X/Y=G$
\item $|g|$ word length of $g \in G$
\item $d_G(\cdot,\cdot)$ distance in $G$ with respect to word length
\item $d_Y(\cdot,\cdot)$ distance in $Y$ based on Euclidean paths in $Y$.
\item $\diam(Y)$ diameter of $Y$ with respect to $d_Y$
\item $Y^{(i)}$ the $i$ skeleton of $Y$; $X^{(i)} \cap Y$
\item $|A|$, $\#A$ number of elements in $A$
\item $\Vol$, $\Vol_X$, $\Vol_G$ volume (with respect to $X$, $G$)
\item $B_r$, $B_X(r)$, $B_G(r)$ ball of radius $r$ (in $X$, $G$)
\item $B_X(x,r)$, $B_G(g,r)$ ball in $X$ ($G$) with radius $r$
centered at $x$ ($g$)
\item $E(f,f)$ energy form; for $G$ this is $\frac{1}{|S|}\sum_{g \in
G}\sum_{s \in S} |f(g) -f(gs)|^2$, for $X$ this is $- \langle \grad f,
\grad f \rangle$
\item $|\grad f(x)|$ length of gradient; on $G$ this is
$\sqrt{\frac{1}{|S|}\sum_{s \in S} |f(x) -f(xs)|^2}$.
\item $||f||_{p,A}$ the $L^p$ norm restricted to a subset $A$; for $G$
this is $(\sum_{x\in A} |f(x)|^p)^{1/p}$, for $X$ this is $(\int_{A}
|f(x)|^p dx)^{1/p}$
\item $\{ \gamma_i\}_{i=1}^N$ centers of the balls of radius $\delta$
covering $Y$
\item $\{ g\gamma_i\}_{i=1..N; g \in G}$ centers of the balls of
radius $\delta$ covering $X$
\item $\cover$ maximum number of balls overlapping a point in $X$
\item $\group f$ a new function from $(G,N) \rightarrow R$ defined to
be $\dashint_{B_X(g\gamma_i,\delta)} f(x) dx$
\item $\CompareXG$, $\CXG$ constants used to compare metrics in $G$ and $X$
\item $C_H$ constant for the Harnack inequality
\item $\chi$,  $\chi_g$ a smooth function on $X$, $\chi$ translated by $g$
\item $\csup$ constant used to bound the support of $\chi$
\item $\coverlap$ constant bound on the maximum number of $\chi_g$
supported at any point in X
\item $\Const$ bound on the magnitude of the gradient of $\chi$
\item $\comp{f(x)}$ a new function from $X \rightarrow R$ defined by
$\sum_{g \in G} f(g) \chi_g(x)$
\item $C_{grad}$ constant bound comparing norms of gradients 
\item $\floor{x}$ the largest integer less than or equal to $x$
\item $\ceil{x}$ the smallest integer greater than or equal to $x$ 
\item $C_0(\Omega)$ continuous funcions which are compactly supported
in $\Omega$
\item $U(A)$ subset of $X$ depending on $A$
\end{itemize}
\pagebreak

\normalspacing
\pagestyle{cornell}
\pagenumbering{arabic}
\setcounter{page}{1}

\chapter{Setup for the Complexes}
\section{Introduction}
  We will study how local and global geometries affect heat kernels on a
set of metric spaces called Euclidean polyhedral complexes.

Euclidean complexes are formed by taking a collection of $n$
dimensional convex polytopes and joining them along $n-1$ dimensional
faces.  Within each polytope, we will have the same metric structure
as $R^n$.  When we join them, we will glue the faces of two polytopes
together so that points on one face are identified with points on the
other face, and the metrics on those faces are preserved.  We will
require that these structures have a countable number of polytopes,
are locally finite, and have lower bound on the interior angles and
edge lengths.  The complex formed by looking at $k$ dimensional faces
is called the $k$-skeleton.  For instance, the 0-skeleton is set of
vertices.  A 1-skeleton is a graph where the space includes both
vertices and points on the edges; sometimes this is called a metric
graph \cite{Kuc}.  Note that we can triangulate any convex polytope to
obtain a collection of simplices, and so this structure is essentially
equivalent to looking at a simplicial complex.

\begin{figure}[h]
\centering
\includegraphics[angle=0,width=1in]{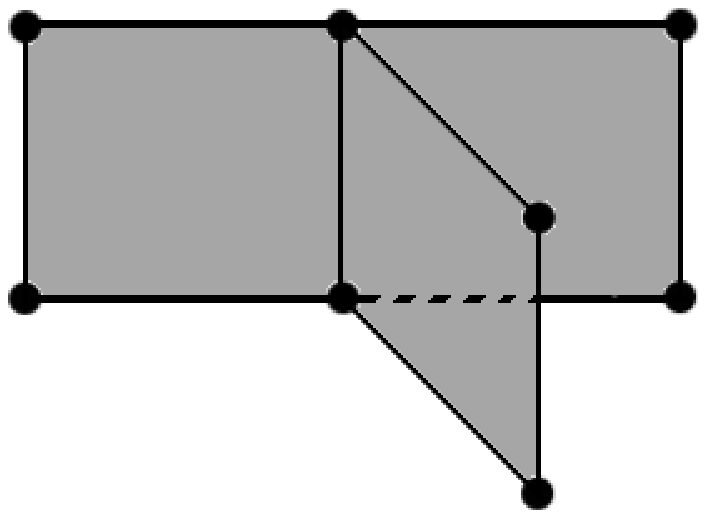}
\hspace{.5in}
\includegraphics[angle=0,width=1in]{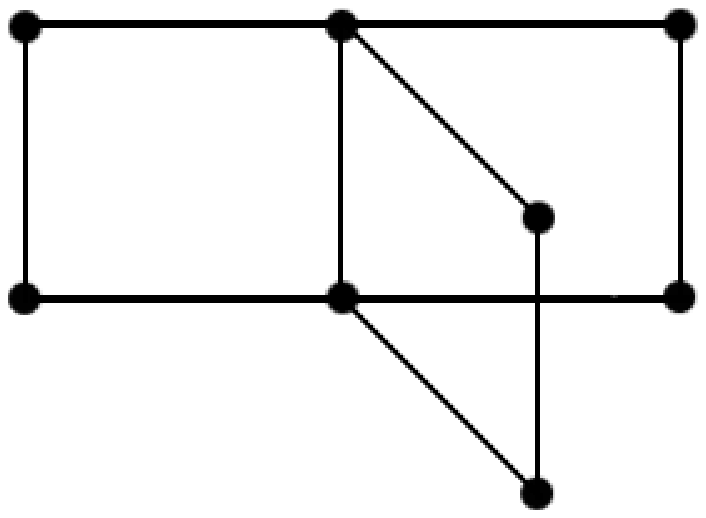}
\hspace{.5in}
\includegraphics[angle=0,width=1in]{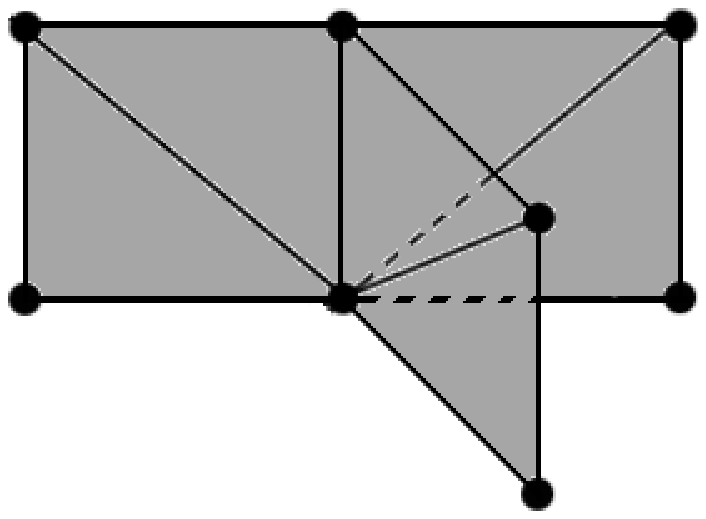}
\caption{Example of a 2 dimensional Euclidean Complex (left),
its 1-skeleton (center), one possible triangulation (right).}
\end{figure}
Let $h_t^k(x,y)$ be the heat kernel on the $k$-skeleton.  This is the
fundamental solution to the heat equation $\partial_t u - \Delta u =0$
on the $k$-skeleton. It can be used to describe the probability that
we travel from $x$ to $y$ in time $t$ when our movement is restricted
to the $k$-skeleton.  
\begin{theorem}
For $X$ a uniformly locally finite Euclidean complex of dimension $n$
whose interior angles and edge lengths are bounded below.  Fix $T \in
(0,\infty)$.  There exist $c, C\in (0,\infty)$ such that for any $x
\in X$ and $t < T$ we have:
\begin{eqnarray*}
\frac{c}{t^{k/2}} \le h_t^k(x,x) \le \frac{C}{t^{k/2}}.
\end{eqnarray*}
\end{theorem}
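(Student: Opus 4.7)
The plan is to deduce the two-sided on-diagonal bound from Sturm's theorem, which gives Gaussian upper and lower bounds on the heat kernel of a strongly local Dirichlet space satisfying a local volume doubling property and a local (scale-invariant) Poincar\'e inequality. Since the abstract advertises exactly such a Poincar\'e inequality on $X$ (and the argument applies equally to each skeleton $X^{(k)}$, which is itself a Euclidean complex of dimension $k$ with the inherited bounded geometry), I would apply Sturm's theorem on $X^{(k)}$ at scales $r\le\sqrt{T}$ to conclude
\[
\frac{c_1}{\Vol_{X^{(k)}}\!\bigl(B_{X^{(k)}}(x,\sqrt{t})\bigr)} \;\le\; h^k_t(x,x) \;\le\; \frac{c_2}{\Vol_{X^{(k)}}\!\bigl(B_{X^{(k)}}(x,\sqrt{t})\bigr)}
\]
for all $x\in X^{(k)}$ and $t<T$.

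The remaining task is then a purely geometric one: show that there exist constants $a,A>0$ and $r_0>0$ (depending only on $n$, $M$, $\varl$, $\alpha$) such that
\[
a\, r^{k} \;\le\; \Vol_{X^{(k)}}\!\bigl(B_{X^{(k)}}(x,r)\bigr) \;\le\; A\, r^{k} \qquad \text{for all } x\in X^{(k)},\; 0<r\le r_0.
\]
The upper bound follows because the degree bound $M$ ensures that a ball in $X^{(k)}$ of radius $r$ is contained in at most $M^{\mathrm{const}}$ Euclidean $k$-faces, each contributing at most $\omega_k r^k$ to the volume. For the lower bound, any $x\in X^{(k)}$ lies in the closure of some $k$-face $F$, and the lower bounds on interior angles and edge lengths ensure that $B_{X^{(k)}}(x,r)\cap F$ contains a Euclidean sector of opening $\ge\alpha$ and radius $\min(r,\varl/2)$, which has $k$-volume comparable to $r^k$ for $r$ small. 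Taking $r=\sqrt{t}$ with $t<T$ (adjusting $T$ to be at most $r_0^2$) yields $\Vol_{X^{(k)}}(B_{X^{(k)}}(x,\sqrt{t}))\asymp t^{k/2}$, which combined with Sturm's bounds gives the claim.

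The main obstacle is the Poincar\'e inequality, but this is precisely the content of the uniform small-time Poincar\'e inequality established earlier in the thesis, so at the level of this theorem it may be invoked as a black box. A secondary subtlety is making sure the hypotheses of Sturm's theorem (strong locality, regularity of the Dirichlet form, and the local doubling hypothesis) are verified on $X^{(k)}$; the strong locality and regularity come from the gradient-based construction of $\E(\cdot,\cdot)$, and local doubling follows from the same two-sided volume estimate above since $\Vol(B(x,2r))\le 2^k(A/a)\Vol(B(x,r))$ for small $r$.
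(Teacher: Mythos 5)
Your proposal follows essentially the same route as the paper: invoke the uniform local Poincar\'{e} inequality (proved in Chapter 2) together with the uniform local volume doubling coming from the degree, angle, and edge-length bounds, apply Sturm's Gaussian upper and lower bounds on balls of fixed radius in $X^{(k)}$ (which, as you note, is itself an admissible complex with the same bounded geometry), and then convert $\mu\bigl(B(x,\sqrt{t})\bigr)$ into $t^{k/2}$ via the two-sided volume estimate from the solid-angle and degree bounds. This is exactly the paper's argument (Corollaries \ref{Corge}, \ref{Corle}, and \ref{ComplexIsJustLikeRn}), so the proposal is correct and not a genuinely different proof.
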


Note that this claims that the heat kernel on the $k$-skeleton behaves, up
to a constant that is independent of where in $X$ we are, like the heat
kernel on $R^k$ asymptotically when $t \rightarrow 0$.  The local behavior
reflects the local geometry and structure of our space. 

Theorems in Sturm \cite{SturmDiffHK} can be applied to Euclidean complexes to
show that on any compact subset of $X^{(k)}$, the heat kernel is
locally like the one on $R^k$, with constants that depend on the
choice of compact subset.  The essential difference in our
theorem is that the constants are uniform throughout the entire
complex.

An interesting example of these complexes comes from biology. In a
paper by Billera, Holmes, and Vogtmann \cite{BilleraHolmesVogtmann}
they describe a way of classifying distances between phylogenetic
trees, which are trees that describe evolution of species.  One can
form an Euclidean complex, where each of the faces corresponds to a
different tree, and one moves through the points in the face by
changing the edge lengths in the tree.  One can then consider
probability distributions on this space to determine likely genetic
ancestry.

Euclidean complexes are also examples of fractal ``blow-ups'', which
are infinite fractals that are locally nice but globally have a
structure with repetition.  See Kigami \cite{KigamiFrac} for a
description of these fractals.  In this setting, our small time
asymptotic estimates apply.  Note that these examples need not be
compact.  In \cite{BarlowKumagai}, Barlow and Kumagai studied the
small time asymptotic of heat kernels for compact self-similar
sets.

Another collection of examples can be found by considering metric
spaces, $X$, which are acted upon by a finitely generated group, $G$
of isometries.  When we take the space and mod out by that group, we
obtain a compact set $Y = X/G$. When $Y$ can be expressed as a finite
Euclidean complex, then $X$ is an Euclidean complex as well.  Note
that the $k$-skeleton of $Y$ will be the $(k$-skeleton of $X)/G$.  A
simple example of this is $X=\R^2$, $Y=$ the unit square, and
$G=\Z^2$.  A more interesting example occurs when $G$ is the free
group; there the space is globally hyperbolic, but locally Euclidean.
With this added group structure, we can describe the large time
behavior of the heat kernel.  We write the heat kernel on a group as
$p_t(\cdot,\cdot)$.

\begin{proposition}
Let $X$ be a locally finite countable Euclidean complex of dimension
$n$ and let $G$ be finitely generated group $G$.  If $X/G$ is a
complex comprised of a finite number of polytopes with Euclidean
metric, we have:
\begin{eqnarray*}
p_t(x,x) \simeq h_t(x,x) \mbox{ as } t \rightarrow \infty .
\end{eqnarray*}
\end{proposition}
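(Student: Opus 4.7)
The plan is to use the comparison technique of Pittet--Saloff-Coste, which relates spectral invariants (and hence heat kernels at the diagonal) on two spaces via a pair of transfer maps between their function spaces, provided the maps preserve Dirichlet form energies and $L^2$ norms up to multiplicative constants. The small-time Gaussian bounds established earlier in the thesis (together with the associated volume doubling and Poincar\'e inequality on small scales) will give us the parabolic Harnack inequality, which is the tool needed to upgrade ``averaged'' heat-kernel comparisons to pointwise comparisons at the diagonal.

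First I would set up the two transfer maps listed in the notation. Choose $\delta$ so that $\{B_X(g\gamma_i,\delta)\}$ covers $X$ with controlled overlap, and use the companion smooth partition of unity $\{\chi_g\}$ with support and gradient bounds $\csup$, $\Const$, and overlap bound $\coverlap$. Given $f:G\to\R$, the function $\comp{f}(x)=\sum_{g\in G} f(g)\chi_g(x)$ lifts $f$ to $X$; given $f:X\to\R$, the function $\group{f}(g,i)=\dashint_{B_X(g\gamma_i,\delta)} f(x)\,dx$ averages $f$ down to $G\times\{1,\ldots,N\}$. The routine but essential comparisons are
\begin{eqnarray*}
\norm{\comp f}_{2,X}^2 &\asymp& \norm{f}_{2,G}^2, \qquad E_X(\comp f,\comp f) \;\le\; C\, E_G(f,f),\\
\norm{\group f}_{2,G}^2 &\le& C\,\norm{f}_{2,X}^2, \qquad E_G(\group f,\group f)\;\le\;C\, E_X(f,f),
\end{eqnarray*}
each proved by decomposing integrals/sums over the local pieces $B_X(g\gamma_i,\delta)$, using $\CompareXG$, $\CXG$ to match distances in $X$ and $G$, and using a local Poincar\'e inequality on balls to pass averages through the energy.

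Next I would pass from form comparison to spectral comparison on matched finite subsets. Let $A=B_G(g,r)\subset G$ and let $U(A)\subset X$ be the corresponding union of balls of radius $\delta$. The min--max characterization of the eigenvalues $\beta_A(i)$ of $K^1_A$ and $\lambda_{U(A)}(i)$ of $H_1^{U(A)}$, combined with the above form/norm comparisons applied to Dirichlet eigenfunctions extended by zero, produces constants $c_1,c_2$ with
\begin{eqnarray*}
1-\beta_A(\lfloor c_1 i\rfloor) \;\le\; c_2\bigl(1-e^{-\lambda_{U(A)}(i)}\bigr) \;\le\; c_2\lambda_{U(A)}(i),
\end{eqnarray*}
and the reverse inequality. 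Summing Mercer-type trace expansions $\trace(K^n_A)=\sum e^{-n(1-\beta_A(i))}$ (after the usual log transform) and the analogue $\sum e^{-t\lambda_{U(A)}(i)}$ gives the two-sided bound $\trace(H_t^{U(A)})\asymp \trace(K_A^{\lceil ct\rceil})$ up to a linear time change, with constants independent of the base point by cocompactness.

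Finally I would pass from traces to pointwise diagonal values. Choose $A=B_G(e,\sqrt{t})$ and $U(A)$ accordingly; a standard argument (dividing by $|A|$ and $\Vol U(A)$ and exploiting translation-invariance of $p_t$ under $G$ and cocompactness of the complex) converts the trace comparison into $p_t(x,x)\asymp \aveint_{U(A)} h_t(y,y)\,dy$, again after a linear change of the time variable. The parabolic Harnack inequality, which holds uniformly in $X$ thanks to the bounded-geometry Poincar\'e inequality and volume doubling already established, then replaces the right-hand average by $h_{t'}(x,x)$ for a nearby time $t'\asymp t$, giving the required $p_t(x,x)\simeq h_t(x,x)$ as $t\to\infty$.

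The main obstacle will be step two: the careful local analysis showing $E_X(\comp f,\comp f)\le C\, E_G(f,f)$, together with its dual for $\group$. These depend on the Leibniz rule for $\grad(\sum f(g)\chi_g)$, the fact that $\sum\chi_g\equiv 1$ eliminates the diagonal term, and on a local Poincar\'e inequality to pay for averaging. Getting constants that are independent of the base point and of scale, and making the domains $U(A)$ and $A$ compatible enough to compare their Dirichlet spectra, is where the work really lies.
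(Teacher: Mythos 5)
Your setup (the transfer maps $\group{\cdot}$ and $\comp{\cdot}$, the norm/energy comparisons, and min--max to compare Dirichlet eigenvalues of $K_A$ and $H_t^{U(A)}$ on matched finite sets) runs parallel to the paper, and using Harnack to upgrade an averaged diagonal bound to a pointwise one on the $X$-side is also the paper's move. But the final step of your plan has a genuine gap: you take $A=B_G(e,\sqrt t)$ and assert that ``dividing by $|A|$ and $\Vol U(A)$ and exploiting translation-invariance'' converts the trace comparison into $p_t(x,x)\asymp \aveint_{U(A)}h_t(y,y)\,dy$. The inequality $\frac{1}{|A|}\trace(K_A^n)\le p_n(e,e)$ is fine, but the reverse direction is exactly where boundary effects live, and they are \emph{not} negligible on a ball of radius $\sqrt t$ unless the walk is diffusive. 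For a nonamenable group, or an amenable group of exponential growth (lamplighter-type), $p_t(e,e)$ decays like $e^{-t}$ or $e^{-t^{1/3}}$, while the probability of exiting $B_G(e,K\sqrt t)$ by time $t$ is only $e^{-cK^2}$; so the Dirichlet trace on that ball cannot reproduce $p_t(e,e)$ up to constants. This is precisely why the paper (following Pittet--Saloff-Coste) does \emph{not} tie the spatial scale to $\sqrt t$: it splits into two cases. In the nonamenable case it invokes Kesten's spectral gap $\norm{f}_{2,G}\le C\norm{\grad f}_{2,G}$, transfers it to $X$ via the averaging maps, and concludes both kernels decay like $e^{-t/C}$. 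In the amenable case it uses a F\o lner sequence $F(i)$, the inequality $p_{2n+2}(e,e)\le \frac{1}{|F(i)|}\trace\bigl(K^{2n+2}_{S^nF(i)}\bigr)$, and lets $i\to\infty$ at \emph{fixed} time, so that $\#(S^nF(i))/\#F(i)\to 1$ kills the boundary term; your proposal contains no substitute for either mechanism.

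Two further points you would have to repair even in the amenable case. First, $K_A$ has eigenvalues of both signs, so $\trace(K_A^n)=\sum_i\beta_A(i)^n$ cannot simply be rewritten as $\sum_i e^{-n(1-\beta_A(i))}$; the paper works with even powers and the positivity trick $\trace(K_A^{2n+2})\le 2\sum_{\beta_A(i)>0}\beta_A(i)^{2n}$, and only compares $e^{-\lambda}$ with $|\beta|$ on the range where $\lambda$ is small. Second, in the direction $\sup_x h_t(x,x)\lesssim p_{ct}(e,e)$ the eigenvalue comparison only controls the spectrum of $H_t^{U(A)}$ below a threshold $1/B$; the remaining large-eigenvalue contribution is of size $C\mu(U(A))e^{-t/2B}$ and must be dominated by $p_{\cdot}(e,e)$ itself, which the paper does via the elementary bound $p_{2n}(e,e)\ge |S|^{-n}$, together with the exit-time decomposition $h_t^W(x,x)\ge h_t(x,x)-\sup_{s<t}\sup_{z\in\bdry W}h_s(z,y)$ and the off-diagonal Gaussian upper bound to make the boundary term small only for $x$ at distance $\gtrsim a\sqrt t$ from $\bdry W$. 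These quantitative steps are where the proof actually closes, and they are absent from the proposal.
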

 Our main result says that, up to a constant, the heat kernel will
behave the same asymptotically as $t \rightarrow \infty$ on both the
group and the complex.  By transitivity, it will behave the same
asymptotically regardless of which $k$-skeleton we consider.

This theorem relates to a paper of Pittet and Saloff-Coste
\cite{LSCP}.  They show that a manifold $M$ which has a finitely
generated group of isometries $G$ satisfies $sup_x h^M_t(x,x)
\simeq h^G_t(e,e)$ for large $t$.  

In chapter one, we describe our set-up.  We provide definitions for
the complex and skeletons and then define an energy form and a
Laplacian on them.  In chapter two, we will prove the initial theorem
by first showing that a series of Poincar\'{e} inequalities hold,
starting with one for balls where the radius of the ball depends on
the center and generalizing until the result is uniform in space.  In
chapter three, we apply these inequalities to a result of Sturm
\cite{Sturm} to yield a small time on diagonal heat kernel asymptotic
with a uniform constant.  We also provide off diagonal estimates with
constants that depend on $d(x,y)$, but not on where $x$ and $y$ are
located.  We give several examples of heat kernels.  In chapter four,
we consider complexes with underlying group structure. We describe how
to compare metrics on the complex and those on the underlying group,
as well as how to switch from a function on a group to one on a
complex and vice versa.  We then use the metric comparison as well as
our small time Poincar\'{e} inequality to compare norms of functions
on complexes and their group counterparts.  In chapter five, we
consider heat kernels on the group and the complex.  We split into two
cases; nonamenable groups, which have exponentially fast heat kernel
decay, and amenable groups.  For the amenable groups, we look at heat
kernels restricted to subsets of our space, and then take a F\o lner
sequence to limit to a bound on the heat kernels themselves.  In this
way, we prove the second theorem.
\section{Geometry of the Complexes}
  We will take our definitions of polytopes and polyhedral sets from
Gr\"{u}nbaum's Convex Polytopes \cite{Grunbaum}.
\begin{definition}
A polyhedron $K$ is a subset of $R^n$ formed by intersecting a finite
family of closed half spaces of $R^n$.  Note that this can be an
unbounded set, but it will be convex.
\end{definition}
\begin{definition}
A set $F$ is a face of $K$ if $F=\emptyset$, $F=K$, or if $F=H\cap K$
where $H$ is a supporting hyperplane of $K$.  $H$ is a supporting
hyperplane of $K$ if $H\cap K \ne \emptyset$ and $H$ does not cut $K$
into two pieces.
\end{definition} 
\begin{definition}
A point $x\in K$ is an extreme point of a set $K$ if the only $y,z \in
K$ which are solutions to $x = \lambda y + (1-\lambda) z$ for some
$\lambda \in (0,1)$ are $x=y=z$.  That is, $x$ cannot be expressed as
a convex combination of points in $K - \{x\}$.  Note that the extreme
points of $K$ are faces for $K$.
\end{definition}
\begin{definition}
A polytope is a compact convex subset of $R^n$ which has a finite set
of extreme points.  This is equivalent to saying it is a bounded
polyhedron. 
\end{definition}
\begin{definition}
A polyhedral complex $X$ is the union of a collection, $\X$, of convex
polyhedra which are joined along lower dimensional faces.  By this we
mean that for any two distinct polyhedra $P_1, P_2 \in \X$, 
\begin{itemize}
\item $P_1 \cap P_2$ is a polyhedron whose dimension satisfies \\
$\dim(P_1 \cap P_2) < \max(\dim(P_1),\dim(P_2))$ and 
\item $P_1 \cap P_2$ is a face of both $P_1$ and $P_2$.  We allow this 
face to be the empty set.
\end{itemize}  
\end{definition}
We do not have a specific embedding for the complex, $X$; however, we
require each polyhedra to have a metric which is consistent with that
of its faces.

Note that this definition implied $P_1\cap P_2$ is a connected set.
This rules out expressing a circle as two edges whose ends are joined,
but it allows us to write it as a triangle of three edges.  This is
not very restrictive, as we can triangulate the polyhedra in order to
form a complex which avoids the overlap.

  Simplicial complexes are an example of a polyhedral complex; the
difference here is that we allow greater numbers of sides.  Note that
we allow infinite polyhedra, not just finite polytopes.
\begin{definition}
Define a $p$-skeleton, $X^{(p)}$, for $0\le p\le \dim X$ to be the union
of all faces of dimension $p$ or smaller.  Note that this is also a
polyhedral complex.
\end{definition}
\begin{definition}
A maximal polyhedron is a polyhedron that is not a proper face of any
other polyhedron.  The set of maximal polyhedra of $X$ is denoted
$\X_{MAX}$.  We say $X$ is dimensionally homogeneous if all of its
maximal polyhedra have dimension $n$.  Note that in combinatorics
literature this is called pure.
\end{definition}
\begin{definition}
$X$ is locally (n-1)-chainable if for every connected open set $U
\subset X$, $U-X^{(n-2)}$ is also connected.  For a dimensionally
homogeneous complex $X$ this is equivalent to the property that any
two $n$ dimensional polyhedra that share a lower dimensional face can
be joined by a chain of contiguous $(n-1)$ or $n$ dimensional
polyhedra containing that face.
\end{definition}
\begin{definition}
We call $X$ admissible if it is both dimensionally homogeneous
and in some triangulation $X$ is locally (n-1)-chainable. 
\end{definition}
\begin{figure}[h]
\centering
       \includegraphics[angle=0,width=1in]{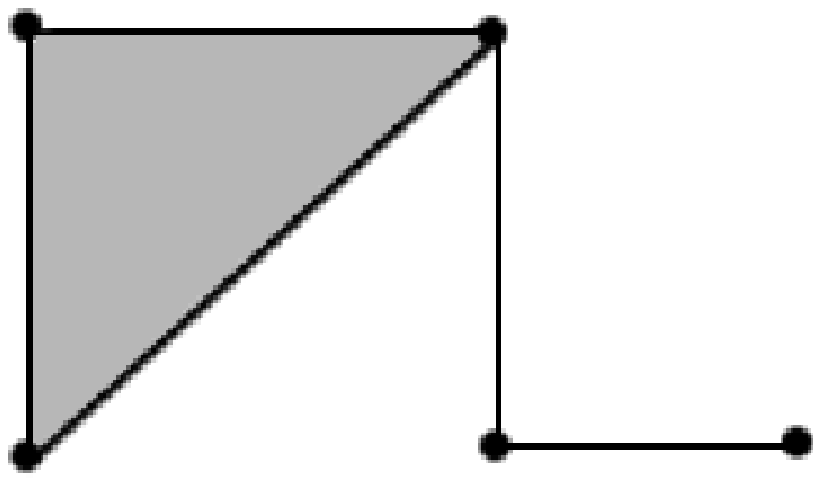}
\hspace{.5in}
       \includegraphics[angle=0,width=1in]{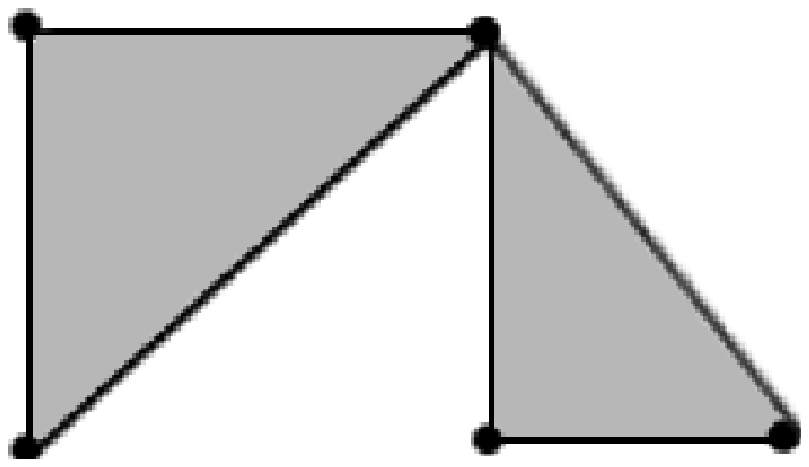}
\hspace{.5in}
       \includegraphics[angle=0,width=1in]{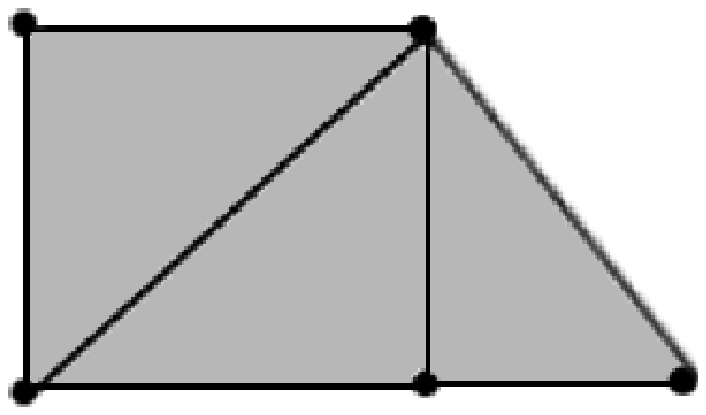}
\caption{Examples of a complex which is not dimensionally homogeneous (left), 
one which is not 1-chainable (center), and one which is admissible (right).}
\end{figure}
We will be working with connected admissible complexes, and for our
purposes, we'd like to consider polyhedra that have an Euclidean
metric. Let $X$ be an n-dimensional complex.  When two polyhedra share
a face, we require these metrics to coincide.  For points $x$ and $y$
in different polyhedra, we define the distance as follows. 
\begin{definition}
 Consider the set of paths connecting $x$ to $y$ which consist of a
finite number of line segments.  We can label each of these by the
points it crosses in the $(n-1)$ skeleton. Set $\gamma
=\{x=x_0,x_1,x_2,..,x_k=y\}$ where $x_i \in$ $(n-1)$-skeleton for
$i=1..k-1$, and $x_i$, $x_{i+1}$ are both in the closure of the same
maximal polyhedron.  Then set $L(\gamma) = \sum_{i=1}^k
d(x_{i-1},x_i)$.  We define the distance between points in different
polyhedra to be $d(x,y) = \inf_{\gamma} L(\gamma)$.
\end{definition}

Essentially, we are splitting the path into pieces, and letting
the lengths of those pieces inside of the simplices be the standard
lengths in $R^n$.  Since our complex is created using closed
polyhedra, if the geometry of the polyhedra is bounded, the inf will
be realized.  This will give us a length space; ie, one in which
distances are realized by geodesics in the space.  Discussions of
length spaces and other metric measure spaces can be found in Heinonen
\cite{Hein} and Burago, Burago, and Ivanov \cite{BuragoI}.

\begin{definition}
Let $X=\cup_i P_i$, where the $P_i$ are the maximal polyhedra. We will
set the measure of $A$, a Borel subset of $X$, to be $\mu(A) =\sum_i \mu_i(A
\cap P_i)$ where $\mu_i$ is the Lebesgue measure on $P_i$.  
\end{definition}
Notice that the measure within the interior of maximal polyhedra is
the same as Lebesgue measure on $R^n$.  This means that locally we
will have all of the structure of $R^n$; in particular, we will have
volume doubling for balls contained in the interior of the maximal
polyhedra.  Since our complex is locally finite, volume doubling will
hold locally for all points in the complex.

\begin{definition}
An admissible polyhedral complex, $X$, equipped with distance,
$d(\cdot,\cdot)$ and measure $\mu$ is called an Euclidean polyhedral complex.  
\end{definition}
For brevity, we will often call this an Euclidean complex.
A book which describes these complexes is Harmonic Maps Between
Riemannian Polyhedra \cite{EellF}.  In it, the authors define these
structures with a Riemannian metric and provide analytic results on
both the complexes and functions whose domain and range are both
complexes.

\section{Analysis on the Complexes}
\subsection{The Dirichlet Form}
Now that we've defined the space geometrically, we will define a
Dirichlet form whose core consists of compactly supported Lipschitz
functions.

\begin{definition}
A function $f$ on a metric space $X$ is called $L$-Lipschitz
(alternately, Lipschitz) if there exists a constant $L \ge 0$ so that
$d(f(x),f(y)) \le L d_X(x,y)$ for all $x$ and $y$ in $X$.  The space
of Lipschitz functions is denoted $\Lip(X)$.  The space of compactly
supported Lipschitz functions is denoted $C_0^{\Lip}(X)$. 
\end{definition}
Note that Lipschitz functions are continuous.  By theorem 4 in section
5.8 of \cite{Evans}, for each $B_{\epsilon}(x) \subset X-X^{(n-1)}$
and $f \in C_0^{\Lip}\left(X\right)$, $f$ restricted to
$B_{\epsilon}(x)$ is in the Sobolev space
$W^{1,\infty}\left(B_{\epsilon}(x)\right)$.  This tells us that $f$
has a gradient almost everywhere in $X-X^{(n-1)}$.  Since
$\mu(X^{(n-1)})=0$, $f$ has a gradient for almost every $x$ in $X$.

We would like an energy form that acts like $E(u,v) =\int_X \langle
\grad u,\grad v \rangle d\mu$ with domain $F$ to define our operator
$\Delta$ with domain $\Dom(\Delta)$. We can define this in a very
general manner which does not depend on the local Euclidean structure
by following a paper of Sturm \cite{SturmDif}.  We can also define it
in a more straightforward manner which uses the geometry of $X$.  We
do both, and then show that they coincide.

Sturm assumes that the space $(X,d)$ is a locally compact separable
metric space, $\mu$ is a Radon measure on $X$, and that $\mu(U)>0$ for
every nonempty open set $U \subset X$.  These assumptions hold both in
our space, $X$, and on the skeletons, $X^{(k)}$.  We begin by
approximating $E$ with a form $E^r$ defined to be:
\begin{eqnarray*}
E^r(u,v) := \int_X \int_{B(x,r)-\{x\}}
 \frac{(u(x)-u(y))(v(x)-v(y))}{d^2(x,y)} 
\frac{2 N d\mu(y)d\mu(x)}{\mu(B_r(x)) + \mu(B_r(y))}
\end{eqnarray*} 
for $u,v \in \Lip(X)$ where N is the local dimension.  Note that whenever $x$
  is in a region locally like $R^n$, we have
\begin{eqnarray*}
\mathop{\lim}_{r\rightarrow 0} \frac{N}{\mu(B_r(x))} \int_{B(x,r)-\{x\}}
 \frac{(u(x)-u(y))^2}{d^2(x,y)} d\mu(y) = |\grad u(x)|^2 ,
\end{eqnarray*}
 and so this form looks very similar to $E(u,u)=\int_X|\grad u|^2 dx$.

This form with domain $C_0^{\Lip}(X)$ is closable and symmetric on
$L^2(X)$, and its closure has core $C_0^{\Lip}(X)$.  See Lemma 3.1 in
\cite{SturmDif}. One can take limits of these operators in the
following way.  The $\Gamma$-limit of the $E^{r_n}$ is defined to be
the limit that occurs when the following lim sup and lim inf are equal
for all $u\in L^2(X,m)$.  See Dal Maso\cite{DalMaso} for a thorough
introduction.
\begin{align*}
\Gamma-\mathop{\lim \sup}_{n \rightarrow \infty} E^{r_n}(u,u) 
&:= \mathop{\lim}_{\alpha \rightarrow 0} 
\mathop{\lim \sup}_{n \rightarrow \infty} 
\mathop{\mathop{\inf}_{v \in L^2(X)}}_{||u-v||\le \alpha} E^{r_n}(v,v)
\\
\Gamma-\mathop{\lim \inf}_{n \rightarrow \infty} E^{r_n}(u,u) 
&:= \mathop{\lim}_{\alpha \rightarrow 0} 
\mathop{\lim \inf}_{n \rightarrow \infty} 
\mathop{\mathop{\inf}_{v \in L^2(X)}}_{||u-v||\le \alpha} E^{r_n}(v,v).
\end{align*}
For any sequence $\{E^{r_n}\}$ of these operators with $r_n
\rightarrow 0$ , there is a subsequence $\{r_{n'}\}$ so that the
$\Gamma$-limit of $E^{r_n'}$ exists by Lemma 4.4 in
\cite{SturmDif}.  These lemmas are put together into a theorem
(5.5 in \cite{SturmDif}) that tells us that this limit, $E^0$,
with domain $C_0^{\Lip}(X)$ is a closable and symmetric form, and its
closure, $(E,F)$, is a strongly local regular Dirichlet form on
$L^2(X,m)$ with core $C_0^{\Lip}(X)$.

Alternately, we can define the energy form using the structure of the
space.  We set $\E(\cdot,\cdot)$ to the following for $f \in
C_0^{\Lip}(X)$:
\begin{eqnarray*}
\E(f,f) = \sum_{X_M \in \M} \int_{X_M} |\grad f|^2 d\mu(x).
\end{eqnarray*}
\begin{lemma}
$\E(\cdot,\cdot)$ is a closable form.  That is, for any sequence \\ $\{ f_n
\}_{n=1}^{\infty} \subset C_0^{\Lip}\left(X\right)$ that converges to
0 in $L^2(X)$ and is Cauchy in $|| \cdot ||_2 + \E(\cdot,\cdot)$ we have
$\lim_{n \rightarrow \infty} \E(f_n,f_n) =0$.
\end{lemma}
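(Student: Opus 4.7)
The plan is to prove closability by reducing to the classical closability of the Euclidean Dirichlet form on each maximal polyhedron, and then controlling the (possibly infinite) sum over polyhedra via the Cauchy hypothesis together with the compact support of any fixed $f_N$.

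First I would observe that for each maximal polyhedron $X_M \in \M$, restriction is non-expansive: since
\[
\int_{X_M} |\grad(f_n - f_m)|^2\, d\mu \le \sum_{X_{M'} \in \M} \int_{X_{M'}} |\grad(f_n - f_m)|^2\, d\mu = \E(f_n-f_m,f_n-f_m),
\]
the sequence $\{f_n|_{X_M}\}$ is Cauchy in the seminorm $\bigl(\int_{X_M}|\grad \cdot|^2\bigr)^{1/2}$ and converges to $0$ in $L^2(X_M)$. Because $X_M$ is a convex polyhedron in $\R^n$ carrying its Euclidean structure, and $f_n$ restricted to $X_M$ lies in $W^{1,2}(X_M)$ with bounded support, the classical Euclidean Dirichlet form $\int_{X_M} |\grad \cdot|^2\, d\mu$ is closable on $L^2(X_M)$ (see e.g.\ Fukushima or Evans). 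Hence for each fixed $M$,
\[
\lim_{n\to\infty} \int_{X_M} |\grad f_n|^2\, d\mu = 0.
\]

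Next I would handle the infinite sum. Fix $\epsilon>0$ and use the Cauchy hypothesis to choose $N$ with $\E(f_n - f_N, f_n - f_N) < \epsilon$ for all $n \ge N$. Since $f_N \in C_0^{\Lip}(X)$ is compactly supported and $X$ is locally finite, the support of $f_N$ meets only finitely many maximal polyhedra; call this finite set $\M_0$. For $M \notin \M_0$, $f_N$ vanishes on $X_M$, so $\grad f_N = 0$ there, giving
\[
\sum_{M \notin \M_0} \int_{X_M}|\grad f_n|^2\, d\mu
= \sum_{M \notin \M_0} \int_{X_M}|\grad(f_n - f_N)|^2\, d\mu
\le \E(f_n - f_N, f_n - f_N) < \epsilon.
\]
Meanwhile, $\sum_{M \in \M_0} \int_{X_M}|\grad f_n|^2\, d\mu$ is a finite sum in which each term tends to $0$ by the previous paragraph, so it is less than $\epsilon$ for $n$ sufficiently large. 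Adding these two pieces gives $\E(f_n,f_n) < 2\epsilon$ for large $n$, and since $\epsilon$ was arbitrary, $\E(f_n,f_n) \to 0$.

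The only real obstacle is the tail estimate, since $\M$ may be countably infinite and the convergence $\int_{X_M}|\grad f_n|^2 \to 0$ is not a priori uniform in $M$. The trick above, substituting $f_n = (f_n - f_N) + f_N$ and using that $f_N$ has support in finitely many polyhedra, converts the tail into a piece of $\E(f_n-f_N,f_n-f_N)$, which is small by the Cauchy assumption. The closability of the standard Euclidean Dirichlet form on each $X_M$, which is the other ingredient, is classical and I would simply cite it.
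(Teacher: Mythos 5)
Your proof is correct, and its overall shape (localize to a single maximal polyhedron, then control the sum over polyhedra) matches the paper's, but both halves are carried out differently, so it is worth recording the comparison. On a fixed $X_M$ the paper does not cite classical closability: it argues directly that $\grad f_n \to 0$ almost everywhere by integrating $\grad f_n$ along line segments $\gamma_{x \sim y}$ inside $X_M$ and using $f_n(y)-f_n(x)\to 0$, whereas you invoke the standard closability of $\int_{X_M}|\grad u|^2\,d\mu$ on $L^2(X_M)$ (in effect the weak-derivative/integration-by-parts argument), which hands you $L^2(X_M)$ convergence of the gradients in one stroke. For the global step the paper applies Fatou's lemma to $|\grad f_n-\grad f_m|^2$ over all of $X$, bounding $\lim_n \sum_{X_M}\int_{X_M}|\grad f_n|^2\,d\mu$ by $\lim_n\lim_m \sum_{X_M}\int_{X_M}|\grad f_n-\grad f_m|^2\,d\mu=0$; you instead use the splitting $f_n=(f_n-f_N)+f_N$ with $N$ chosen by the Cauchy hypothesis, exploiting that $f_N\in C_0^{\Lip}(X)$ meets only finitely many maximal polyhedra, so the tail of the sum is absorbed into $\E(f_n-f_N,f_n-f_N)<\epsilon$ and only a finite sum remains. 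Your tail trick is elementary and transparent, but it genuinely uses local finiteness of the complex (to know that a compact support touches finitely many cells), an assumption the Fatou argument does not need; conversely, the paper's globalization is shorter, while its per-polyhedron step is done by hand where you simply cite the classical Euclidean fact. Both routes establish the lemma; yours is a legitimate alternative rather than a paraphrase of the paper's proof.
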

\begin{proof}
To show this, we will first look at what happens on one fixed
polyhedron, and then look at what happens on a complex.  Let $X_M$ be
a maximal polyhedron.  Since $\{ f_n \}_{n=1}^{\infty}$ is Cauchy in
the norm, we have
\begin{eqnarray*}
\lim_{m,n \rightarrow \infty} 
\left( \int_{X_M} (f_n -f_m)^2 d\mu \right)^{\frac{1}{2}} +
\left( \int_{X_M} (\grad f_n - \grad f_m)^2 d\mu \right)^{\frac{1}{2}}
= 0.
\end{eqnarray*}
  This gives us two functions, $f$ and $F$ which are the 
limits of $f_n$ and $\grad f_n$ respectively.  We have $f=0$ by
assumption.  We need to show that $F=0$.  For almost every $x,y \in X_M$
and line $\gamma_{x \sim y}$ in $X_M$ we have 
\begin{eqnarray*}
\int_{\gamma_{x \sim y}} \grad f_n
d\mu = f_n(y) -f_n(x).
\end{eqnarray*}
Then we can take the limit as $n$ goes to infinity to get
\begin{eqnarray*}
\lim_{n \rightarrow \infty}
\int_{\gamma_{x \sim y}} \grad f_n d\mu =\lim_{n \rightarrow \infty} f_n(y)
-f_n(x) =0.
\end{eqnarray*}
  This gives us $\lim_{n \rightarrow \infty} \grad f_n(x) =0$
 for almost every $x \in X_M$.  

Since the choice of $X_M$ was arbitrary, this shows $\lim_{n
 \rightarrow \infty} \grad f_n(x) =0$ for almost every $x \in X$.

Showing $L^2$ convergence is a bit trickier, as we need to show that
 we can interchange the limit with the sum over the maximal polyhedra.
 We can do this for $|\grad f_n - \grad f_m|$ by Fatou's Lemma.
\begin{eqnarray*}
\lim_{n \rightarrow \infty} \sum_{X_M \in \M} \int_{X_M} |\grad f_n|^2 d\mu
&=& \lim_{n \rightarrow \infty} \sum_{X_M \in \M} \int_{X_M} 
      |\grad f_n -\lim_{m \rightarrow \infty} \grad f_m|^2 d\mu
\\ &=& \lim_{n \rightarrow \infty} \sum_{X_M \in \M} \int_{X_M} 
    \lim_{m \rightarrow \infty}  |\grad f_n - \grad f_m|^2 d\mu
\\ &\le& \lim_{n \rightarrow \infty}  
    \lim_{m \rightarrow \infty}  \sum_{X_M \in \M} \int_{X_M} 
     |\grad f_n - \grad f_m|^2 d\mu
\\ &=&0.
\end{eqnarray*}

This tells us that the form is closable.
\end{proof}
We will show that the two energy forms are the same.  To do this, we
show that they are the same on the core $C_0^{\Lip}\left(X\right)$;
this gives equality on the domain.  
%%%%%%%%%%%%%%%%%%%%%%%%%%%%%%%%%%%%%%%%%%%%%%%%%%%%%%%%%%%%%%%%%%%%%%%
\begin{lemma}
Each function $f \in C_0^{\Lip}\left(X\right)$ satisfies
$E(f,f)=\E(f,f)$.
\end{lemma}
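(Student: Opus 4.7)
The plan is to verify the equality on the core by direct computation. Since $E$ is the closure of the $\Gamma$-limit $E^0$ and $C_0^{\Lip}(X)$ is a core for both $E$ and $\E$, it suffices to show $E^0(f,f) = \E(f,f)$ for each $f \in C_0^{\Lip}(X)$. This in turn reduces to showing $\lim_{r \to 0} E^r(f,f) = \E(f,f)$ together with the identification of this ordinary limit with the $\Gamma$-limit on Lipschitz test functions.

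First I would establish pointwise convergence of the inner integral. Because $\mu(X^{(n-1)}) = 0$, almost every $x \in X$ lies in the interior of some maximal polyhedron, and for all sufficiently small $r$ the ball $B_r(x)$ sits inside that polyhedron and is isometric to a Euclidean $n$-ball. For such $x$ and $r$ we have $\mu(B_r(x)) = \mu(B_r(y))$ whenever $y \in B_r(x)$, so the symmetric weight $\frac{2N}{\mu(B_r(x)) + \mu(B_r(y))}$ collapses to $\frac{N}{\mu(B_r(x))}$. Rademacher's theorem gives differentiability at almost every such $x$, and expanding $f(y) - f(x) = \grad f(x) \cdot (y-x) + o(|y-x|)$, switching to polar coordinates, and using the identity $\int_{S^{n-1}} (\omega \cdot v)^2 \, d\omega = \frac{|S^{n-1}|}{n} |v|^2$ yields the pointwise limit
\[
\lim_{r \to 0} \frac{N}{\mu(B_r(x))} \int_{B_r(x) - \{x\}} \frac{(f(x) - f(y))^2}{d^2(x,y)} \, d\mu(y) = |\grad f(x)|^2,
\]
with $N = n$ the local dimension calibrated so that the constants balance.

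Next I would pass this convergence through the outer integral by dominated convergence. Since $f$ is $L$-Lipschitz, $\frac{(f(x)-f(y))^2}{d^2(x,y)} \le L^2$, so the inner integral is uniformly bounded by $2NL^2$; the compact support of $f$ restricts the effective outer integration to a fixed compact neighborhood of $\supp f$ (valid once $r \le r_0$ for some fixed $r_0$), giving an integrable dominating function. Therefore $\lim_{r \to 0} E^r(f,f) = \int_X |\grad f|^2 \, d\mu = \E(f,f)$.

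Finally, to identify this pointwise limit with $E^0(f,f)$, note that the $\Gamma$-$\limsup$ bound $E^0(f,f) \le \lim_{r \to 0} E^r(f,f)$ is immediate by choosing the constant competitor $v = f$ in the infimum. The $\Gamma$-$\liminf$ inequality is where I expect the main obstacle: one must rule out approximating sequences $v_n \to f$ in $L^2$ whose energies $E^{r_n}(v_n, v_n)$ drop strictly below $\E(f,f)$. The standard route uses $L^2$-lower semicontinuity of $\E$ together with the uniform Lipschitz bound and the compact support of $f$, and in our setting this is packaged by Theorem 5.5 of \cite{SturmDif}, which I would cite to conclude $E^0(f,f) = \E(f,f)$ and hence $E(f,f) = \E(f,f)$ on the core.
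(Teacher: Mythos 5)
Your computation of the ordinary limit is fine: for $f\in C_0^{\Lip}(X)$, almost every $x$ lies in the interior of a maximal polyhedron, Rademacher plus the spherical average identity give the pointwise limit $|\grad f(x)|^2$ of the inner integral, and the Lipschitz bound together with the compact support furnish a dominating function, so $\lim_{r\to 0}E^r(f,f)=\E(f,f)$. Choosing $v=f$ as competitor then gives the $\Gamma$-$\limsup$ bound, i.e. $E(f,f)\le\E(f,f)$ on the core. The gap is exactly where you flag it, and your proposed fix does not close it. Theorem 5.5 of \cite{SturmDif} only asserts that the $\Gamma$-limit $E^0$ with domain $C_0^{\Lip}(X)$ is closable and that its closure is a strongly local regular Dirichlet form with core $C_0^{\Lip}(X)$; it says nothing about identifying $E^0$ with the gradient form $\E$, which is precisely the content of the lemma. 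Nor does the sketched ``lower semicontinuity of $\E$ plus the uniform Lipschitz bound'' argument work as stated: in the $\Gamma$-$\liminf$ the infimum runs over arbitrary $v\in L^2(X)$ with $\norm{f-v}_2\le\alpha$, and such competitors need not be Lipschitz (let alone uniformly so), so you cannot bound $E^{r_n}(v_n,v_n)$ below by $\E$ of anything converging to $f$ without substantial extra work relating the nonlocal forms at scale $r_n$ to the local energy.

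The paper closes this direction by a different mechanism: it observes that $X-X^{(n-1)}$ is a Riemannian manifold, that $X$ is a locally compact length space satisfying the hypotheses of Example 4G in \cite{SturmDiffHK}, hence that $X$ has the strong measure contraction property with an exceptional set, and then quotes Corollary 5.7 of \cite{SturmDiffHK}, which is exactly the identification $E(f,f)=\E(f,f)$ for $f\in C_0^{\Lip}(X)$ (proved there by exhausting $X-X^{(n-1)}$ by open subsets). If you want to keep your direct route, you must either supply a genuine proof of the $\Gamma$-$\liminf$ inequality for arbitrary $L^2$ competitors (this is the hard technical step, and it is what the measure-contraction machinery encapsulates), or replace your citation by the correct one, verifying the hypotheses of Example 4G and invoking Corollary 5.7 of \cite{SturmDiffHK} as the paper does.
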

%%%%%%%%%%%%%%%%%%%%%%%%%%%%%%%%%%%%%%%%%%%%%%%%%%%%%%%%%%%%%%%%%%%%%%%
\begin{proof}
We can write $X$ as $(X-X^{(n-1)}) \cup X^{(n-1)}$; this is a
collection of maximal polyhedra and a set of measure 0.  The interior
of the maximal polyhedra is a Riemannian manifold without boundary.
$X$ is also a locally compact length space, and so it satisfies the
conditions of example 4G in \cite{SturmDiffHK}.  This implies it has the
strong measure contraction property with an exceptional set.
Corollary 5.7 in \cite{SturmDiffHK} says that this then has $E(f,f)
=\E(f,f)$ for each $f \in C_0^{\Lip}(X)$.  The equality is shown by
approximating the forms using an increasing sequence of open subsets
which limit to $X-X^{(n-1)}$.  As $C_0^{\Lip}(X)$ is a core for both
$E$ and $\E$, the Dirichlet forms are the same.
\end{proof}

We will explain more clearly where the domain of this operator lies.
The domain is the closure of $C_0^{\Lip}(X)$ in the $W^{1,2}(X)$ norm.
This domain is a subset of the set of functions which are in $W^{1,2}$
of the interiors of the maximal polyhedra.
\begin{lemma}
For $\E$, 
\begin{align*}
\overline{C_0^{\Lip}(X)} \subset \overline{C(X) \cap
\left(\cup_{X_M \in \X_{MAX}}\bigoplus W^{1,2}(X_M^o) \right)}
\end{align*}
 where the closure is taken with respect to the $W^{1,2}$ norm,
 $||\cdot||_2 + \E(\cdot,\cdot)$.  $X_M^o$ denotes the interior of
 $X_M$.
\end{lemma}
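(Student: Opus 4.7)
The plan is to establish the stronger pointwise inclusion $C_0^{\Lip}(X) \subset C(X) \cap \bigl(\cup_{X_M \in \X_{MAX}}\bigoplus W^{1,2}(X_M^o)\bigr)$, after which taking $W^{1,2}$-closure of both sides (which is monotone with respect to inclusion) gives the lemma.

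First I would observe that the continuity piece is automatic: any $f \in C_0^{\Lip}(X)$ is Lipschitz with respect to $d_X$, hence continuous, so $f \in C(X)$. Thus the content of the lemma reduces to showing that the restriction of $f$ to the disjoint union of the open polyhedra $X_M^o$ lies in $\bigoplus_{X_M \in \X_{MAX}} W^{1,2}(X_M^o)$.

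Next I would fix a maximal polyhedron $X_M$. Since the polyhedra are equipped with the Euclidean metric and are joined isometrically along faces, $X_M^o$ is isometric to the interior of a convex polytope in $\R^n$. The restriction $f|_{X_M^o}$ inherits the Lipschitz constant of $f$, and by the theorem of Evans cited before Lemma 1.3.1 (Section 5.8, Theorem 4 in \cite{Evans}), Lipschitz functions on an open subset of $\R^n$ belong to $W^{1,\infty}_{\mathrm{loc}}$. Because $f$ has compact support and $X$ is locally finite, $\supp(f)$ meets only finitely many maximal polyhedra; on each such $X_{M_i}$, the set $\supp(f) \cap X_{M_i}$ is compact and $f|_{X_{M_i}^o}$ has bounded Lipschitz constant together with bounded support, so its $W^{1,2}(X_{M_i}^o)$ norm is finite. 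On all other maximal polyhedra the restriction is identically zero. The total norm $\sum_{X_M} \bigl(\|f\|_{W^{1,2}(X_M^o)}^2\bigr)^{1/2}$ is therefore a finite sum, showing $f \in \bigoplus_{X_M} W^{1,2}(X_M^o)$.

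Finally, I would note that the form norm $\|\cdot\|_2 + \E(\cdot,\cdot)^{1/2}$ agrees, by the very definition of $\E$, with the Hilbertian $W^{1,2}$ norm built from the polyhedron-by-polyhedron norms, so closures on both sides are taken in the same topology and the inclusion is preserved. The main subtlety, and the point I would be careful to spell out, is simply the interpretation of $\cup_{X_M \in \X_{MAX}} \bigoplus W^{1,2}(X_M^o)$ as the space of functions on $X$ whose restrictions to each $X_M^o$ belong to $W^{1,2}(X_M^o)$ with summable squared norms; once this convention is fixed, no compatibility or trace condition across the $(n-1)$-skeleton enters the argument, because $\mu(X^{(n-1)})=0$ and the closure is taken only in $W^{1,2}$, not in a continuous category.
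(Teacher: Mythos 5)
Your proposal is correct and follows essentially the same route as the paper: establish the unclosed inclusion $C_0^{\Lip}(X) \subset C(X) \cap \left(\cup_{X_M \in \X_{MAX}}\bigoplus W^{1,2}(X_M^o)\right)$ by combining continuity of Lipschitz functions with the fact that compact support and a bounded gradient give a finite $W^{1,2}$ norm on each maximal polyhedron, and then close both sides in the same norm. Your extra remarks on local finiteness, the summability of the polyhedron-by-polyhedron norms, and the interpretation of the direct sum only make explicit what the paper leaves implicit.
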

\begin{proof}
First note that $C_0^{\Lip}(X) \subset C(X)$.  For any $f
\in C_0^{\Lip}(X)$, we have the compact subset $Y= \supp(f)$.  Then $f$ 
restricted to $X_M^o$ will be in $W^{1,2}(X_M^o)$, since $||\grad
f||_{2,X_M^o} \le ||\grad f||_{\infty,X_M} \mu(X_M^o \cap \supp(f))$.
This tells us 
\begin{eqnarray*}
f \in \cup_{X_M \in \X_{MAX}}\bigoplus W^{1,2}(X_M^o).
\end{eqnarray*}
We now have a containment without the closures:
\begin{align*}
C_0^{\Lip}(X) \subset 
C(X) \cap \left( \cup_{X_M \in \X_{MAX}}\bigoplus W^{1,2}(X_M^o) \right).
\end{align*}
As we then close both sides with respect to the same norm, we have:
\begin{align*}
\overline{C_0^{\Lip}(X)} \subset 
\overline{C(X) \cap \left( \cup_{X_M \in \X_{MAX}}\bigoplus W^{1,2}(X_M^o) 
                    \right)}.
\end{align*}
\end{proof}

\subsection{The Laplacian}
The Dirichlet form uniquely determines a positive self-adjoint
operator \\ $\{ \Delta, \Dom(\Delta) \}$ on $L^2$ where
$F=\Dom(\Delta^{\frac{1}{2}})$ and $E(u,v) = (u,\Delta v)$ for all
$u\in F$ and $v \in \Dom(\Delta)$.  This is done by defining a
collection of quadratic forms, \\ $E_{\alpha}(u,v):=E(u,v) + \alpha(u,v)$
for $\alpha >0$.  Then, by the Reisz representation theorem, there
will be an operator $G_{\alpha}$ so that $E_{\alpha}(G_{\alpha}u,v) =
(u,v)$ for any u,v in $\Dom(E)$.  The set of these operators forms a
$C_0$ resolvent.  One can look at inverses, $G_{\alpha}^{-1}$ on the
image of $G_{\alpha}$.  We can then consider $\Delta =
G_{\alpha}^{-1}- \alpha$ on the space $G_{\alpha}(\Dom(E))$.  One can
show that this definition is independent of $\alpha$.  The domain of
the operator is $\Dom(\Delta)=G_{\alpha}(\Dom(E))$.  It's difficult to
explicitly state exactly which functions are in $\Dom(\Delta)$, but
the domain is dense in $L^2(X)$.  See Fukushima et al \cite{FOT} for
the full argument; a fine summary of this is done in Todd Kemp's
lecture notes \cite{Kemp}.

Note that this set-up will work on each of the skeletons, and so we
can use it to define a different Laplacian on each of them.  When we
define the $E^r$ on a k-skeleton, $X^{(k)}$, we'll set $N=k$,
integrate over $X^{(k)}$, and let m be a k-dimensional measure.  This
technique will define $\Delta_k$ on a dense subset of $L^2(X^{(k)})$.

In the one dimensional case, this Laplacian gives us a structure
called a quantum graph.  Here, the functions in the domain of the
Laplacian should be continuous and the inward pointing derivatives
should sum to zero at each vertex.  This is known as a Kirchoff
condition.  A nice introduction to these graphs and their spectra as
well as a wide variety of references to the literature on them can be
found in Kuchment \cite{Kuc}.

In the two dimensional case, this operator is related to results in a
paper of Brin and Kifer \cite{BrinK} which constructs Brownian motion
on two dimensional Euclidean complexes.  Bouziane \cite{Bouz}
constructs and proves the existence of Brownian motion on admissible
Reimannian complexes of any dimension.  It would be interesting to
determine whether these constructions define the same operator;
however, that is not our focus.

\chapter{Local Poincar\'{e} Inequalities on X}
  In this chapter we will show that a uniform local Poincar\'{e}
inequality holds for a certain class of admissible complexes.  Local
Poincar\'{e} inequalities have appeared in \cite{White} and
\cite{EellF} for finite complexes or for compact subsets of complexes.
In White's article \cite{White}, a global Poincar\'{e} inequality was
shown for Lipschitz functions on an admissible complex made up of a
finite number of polyhedra.  The constant in this proof was linear in
the number of polyhedra involved, and so it does not extend to an
infinite complex.  A uniform weak local inequality for Lipschitz
functions was also shown on this finite complex.  This too differs
from our inequality in its dependence on a finite complex.

In Eells and Fuglede's book \cite{EellF}, they show that for any
relatively compact subset of an admissible complex, a local
Poincar\'{e} inequality will hold for locally Lipschitz functions with
a constant that depends on the particular choice of compact subset.
The larger complex itself can be infinite, but the constant in the
inequality depends on our particular choice of compact subset.

We will show the following for $f \in \Lip(X)$, under some assumptions
on the geometry of $X$:
\begin{eqnarray*}
\norm{f-f_B}_{p,B} \le p P_0 r \norm{\grad f}_{p,B}
\end{eqnarray*}
where $f_B$ is the average of $f$ over $B$, $B=B(z,r)$, $r<R_0$.
$R_0$ and $P_0$ are constants depending on the space, $X$.

Our result shows that a uniform local Poincar\'{e} inequality will
hold for Lipschitz continuous functions on any ball of radius less than
$R_0$, where $R_0$ is fixed and depends only on the complex itself, not on
the specific choice of ball.  We require our complex to be admissible.
Our complex can be infinite, but we bound below the angles of the
polyhedra and the distance between two vertices.  We also bound above
the number of polyhedra that join at a vertex.  In both White and
Eells and Fuglede these assumptions hold because their sets are either
finite or relatively compact.

Connections between Poincar\'{e} inequalities and other analytic
inequalities can be found in Sobolev met Poincar\'{e} by Haj{\l}asz
and Koskela \cite{HajKosk}.

\section{Weak Poincar\'{e} Inequalities}
We would like to prove a local Poincar\'{e} inequality for an
admissible Euclidean polytopal complex.  If we look at a convex
subset of Euclidean space, this is a well known statement.  We will show it
first in a convex space, and then we will generalize it to our locally
nonconvex space.

A note on our notation: often we will abbreviate $d\mu(x)$ by $dx$.
Similarly, we will write the average integral of $f$ over a set $A$ by
$\aveint_A f dx$.

%%%%%%%%%%%%%%%%%%%%%%%%%%%%%%%%%%%%%%%%%%%%%%%%%%%%%%%Break it down a bit!
\begin{lemma}\label{ConvexSimplify}
Let $\Omega$ be a connected convex set with Euclidean distance and
structure and $\Omega_1, \Omega_2$ be convex subsets of $\Omega$.  For
$f \in \Lip(\Omega) \cap L^1(\Omega)$, the following
holds:
\begin{eqnarray*}
\int_{\Omega_2}  
           \int_{\Omega_1} \abs{f(z)-f(y)}dz dy 
\le 2^{n-1} \frac{ \diam(\Omega)}{n} (\mu(\Omega_1)+\mu(\Omega_2)) 
  \int_{\Omega} \abs{\grad f(y)} dy.
\end{eqnarray*}
\end{lemma}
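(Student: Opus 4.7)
The plan is to represent $f(z)-f(y)$ as a line integral along the straight segment from $y$ to $z$, which lies inside $\Omega$ by convexity. Since $f \in \Lip(\Omega)$, it is absolutely continuous along almost every such segment, so
\begin{eqnarray*}
f(z)-f(y) = \int_0^1 \grad f\bigl((1-t)y + tz\bigr)\cdot(z-y)\,dt,
\end{eqnarray*}
and taking absolute values together with $|z-y|\le \diam(\Omega)$ gives
\begin{eqnarray*}
|f(z)-f(y)|\le \diam(\Omega)\int_0^1 |\grad f((1-t)y+tz)|\,dt.
\end{eqnarray*}

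Next, I would integrate this bound over $(y,z)\in \Omega_2\times \Omega_1$ and split the $t$-integral at $t=1/2$ into two pieces $A$ and $B$. On the first piece, where $t$ is small and $u:=(1-t)y+tz$ lies close to $y$, I would make the substitution $y=(u-tz)/(1-t)$ at fixed $z,t$, whose Jacobian is $(1-t)^{-n}$ and whose image sits inside $(1-t)\Omega_2+tz\subset \Omega$. Applying Fubini, this bounds the first piece by
\begin{eqnarray*}
A\le \diam(\Omega)\,\mu(\Omega_1)\int_\Omega |\grad f|\cdot\int_0^{1/2}(1-t)^{-n}\,dt.
\end{eqnarray*}
Symmetrically, on $[1/2,1]$ the point $u$ is close to $z$, so I would substitute $z=(u-(1-t)y)/t$ with fixed $y,t$, which has Jacobian $t^{-n}$ and yields the analogous bound with $\mu(\Omega_2)$ and $\int_{1/2}^1 t^{-n}\,dt$. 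The substitution $s=1-t$ shows the two $t$-integrals are equal, and the elementary evaluation is bounded by $2^{n-1}/n$.

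Combining $A$ and $B$ produces the displayed inequality. The routine part is the two elementary substitutions and the calculus integral; the main obstacle is the bookkeeping: one must verify that each image set sits inside the convex $\Omega$ (so that extending $\int_{(1-t)\Omega_2+tz}|\grad f|$ to $\int_\Omega|\grad f|$ is legitimate), apply Fubini to exchange the $y$- (respectively $z$-) and $t$-integrations, and handle the $|z-y|$ factor cleanly, either by bounding it at the outset by $\diam(\Omega)$ or by exploiting the finer identity $|z-y|=|z-u|/(1-t)$ to tighten the constant. The choice of split point $t=1/2$ is essential, because it keeps both Jacobians $(1-t)^{-n}$ and $t^{-n}$ integrable on their respective half-intervals and symmetrically distributes the contributions between $\mu(\Omega_1)$ and $\mu(\Omega_2)$; without the split, the $t$-integral is divergent, which is why the naive direct approach fails.
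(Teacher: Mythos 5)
Your strategy is essentially the paper's: represent $f(z)-f(y)$ as an integral of $\abs{\grad f}$ along the straight segment, split the segment at its midpoint, and on each half eliminate the \emph{nearer} endpoint by a change of variables whose Jacobian stays bounded (this is exactly the Korevaar--Schoen trick the paper implements in polar coordinates). The structure is sound: the image sets $(1-t)\Omega_2+tz$ do lie in $\Omega$ by convexity, Fubini applies, and the half with $t\in[0,1/2]$ produces the $\mu(\Omega_1)$ term while the other half produces $\mu(\Omega_2)$.

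The gap is in the constant. With $\abs{z-y}\le\diam(\Omega)$ pulled out first, your $t$-integral is $\int_0^{1/2}(1-t)^{-n}\,dt=\frac{2^{n-1}-1}{n-1}$ (and $\ln 2$ for $n=1$), and your claim that this is bounded by $\frac{2^{n-1}}{n}$ is false for $n\ge 3$ (e.g.\ $n=3$: $\frac{3}{2}>\frac{4}{3}$); the inequality $\frac{2^{n-1}-1}{n-1}\le\frac{2^{n-1}}{n}$ is equivalent to $2^{n-1}\le n$. So as written you prove the lemma only with the slightly larger constant $\frac{2^{n-1}-1}{n-1}\,\diam(\Omega)$, not the stated $\frac{2^{n-1}}{n}\,\diam(\Omega)$. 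Your proposed repair via $\abs{z-y}=\abs{z-u}/(1-t)$ by itself goes the wrong way: bounding $\abs{z-u}\le\diam(\Omega)$ then gives $\int_0^{1/2}(1-t)^{-(n+1)}\,dt=\frac{2^{n}-1}{n}$, which is worse. The saving in the paper comes from keeping the constraint that the eliminated endpoint lies in $\Omega$, i.e.\ capping the radial variable at $\diam(\Omega)$: integrating the endpoint radially over $[\rho,\min(2\rho,\diam(\Omega))]$ yields the kernel $\frac{(\min(2\abs{y-z},\diam\Omega))^n-\abs{y-z}^n}{n\abs{y-z}^{n-1}}$, which is what is bounded by $2^{n-1}\diam(\Omega)/n$. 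If you only need \emph{some} dimensional constant times $\diam(\Omega)(\mu(\Omega_1)+\mu(\Omega_2))$, your argument suffices after correcting the arithmetic; to get the constant as stated you must incorporate that cap, which amounts to redoing the paper's polar-coordinate bookkeeping.
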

\begin{proof}
The type of argument used here can be found in Aspects of Sobolev-Type
Inequalities \cite{LSC}.

Let $\gamma$ be a path from $z$ to $y$. The definition of a
 gradient gives us:
\begin{eqnarray*}
\abs{f(z)-f(y)} \le \int_{\gamma}\abs{ \grad f(s)} ds .
\end{eqnarray*}
Note that if we are in a 1-dimensional space, a convex subset is a
line.  The desired inequality follows from expanding $\gamma$ to
$\Omega$, and then noting that integrating over $x$ and $y$ has the
effect of multiplying the right hand side by
\\ $\mu(\Omega_1)\mu(\Omega_2) \le \diam(\Omega)(\mu(\Omega_1)
+\mu(\Omega_2))$.

Because $z$ and $y$ are in the same convex region $\Omega$ with an
Euclidean distance, we can let the path $\gamma$ be a straight line:
\begin{eqnarray*}
\abs{f(z)-f(y)} \le 
\int_0^{\abs{y-z}} \abs{\grad f\left(z + \rho \frac{y-z}{|y-z|}\right)} d\rho .
\end{eqnarray*}
We integrate this over $z \in \Omega_1,y \in \Omega_2$.  To get a nice
bound, we will use a trick from Korevaar and Schoen \cite{KorevaarS}.
We split the path into two halves.  For each half, we switch into and
out of polar coordinates in a way that avoids integrating
$\frac{1}{s}$ near $s=0$.  This allows us to have a bound which
depends on the volumes of $\Omega_1$ and $\Omega_2$ rather than
$\Omega$.

First, we consider the half of the path which is closer to $y \in
\Omega_2$.  $I_{\Omega}(\cdot)$ is the indicator function for $\Omega$.
\begin{eqnarray*}
\int_{\Omega_1} \int_{\Omega_2} \int_{\frac{|y-z|}{2}}^{|y-z|} 
  \abs{\grad f(z + \rho\frac{y-z}{|y-z|})}I_{\Omega}(z +
  \rho\frac{y-z}{|y-z|}) d\rho dy dz .
\end{eqnarray*}
We change of variable so that $y-z = s \theta$.  That is, $|y-z| = s$
and $\frac{y-z}{|y-z|} = \theta$. Note that $\diam(\Omega)$ is an
upper bound on the distance between $y$ and $z$.
\begin{eqnarray*}
...= \int_{\Omega_1}\int_{S^{n-1}} \int_0^{\diam(\Omega)} 
  \int_{s/2}^{s} 
             \abs{\grad f(z + \rho\theta)}I_{\Omega}(z + \rho\theta)
                 s^{n-1}d\rho ds d\theta dz.
\end{eqnarray*}
We switch the order of integration.  Now, $\rho$ will be between $0$
and $\diam(\Omega)$ and $s$ will be between $\rho$ and
$\min(2\rho,\diam(\Omega))$.  This allows us to integrate with respect
to $s$.
\begin{eqnarray*}
... &=& \int_{\Omega_1}  \int_{S^{n-1}}  \int_0^{\diam(\Omega)} 
  \int_{\rho}^{ \min(2 \rho,\diam(\Omega))}
             \abs{\grad f(z + \rho\theta)}I_{\Omega}(z + \rho\theta)
                 s^{n-1} ds d\rho d\theta dz \\
&=& \int_{\Omega_1} \int_{S^{n-1}}  \int_0^{\diam(\Omega)} 
             \abs{\grad f(z + \rho\theta)}I_{\Omega}(z + \rho\theta)
              \frac{(\min(2 \rho,\diam(\Omega)))^n -\rho^n}{n}d\rho d\theta dz.
\end{eqnarray*}
Now we reverse the change of variables to set $y = z + \rho \theta$.
Since our integral includes an indicator function at $z + \rho
\theta$, we have $y \in \Omega$.
\begin{eqnarray*}
\int_{\Omega_1} \int_{\Omega} 
  \abs{\grad f(y)}
    \frac{(\min( 2 |y-z|,\diam(\Omega)))^n - |y-z|^n}{n |y-z|^{n-1}} dy dz.
\end{eqnarray*}
Let's consider the possible values of $\frac{(\min( 2
|y-z|,\diam(\Omega)))^n - |y-z|^n}{n|y-z|^{n-1}}$.

If $|y-z| < \frac{\diam(\Omega)}{2}$, then $\min( 2
|y-z|,\diam(\Omega)) = 2 |y-z|$.  This gives us:
\begin{eqnarray*} 
\frac{(\min( 2 |y-z|,\diam(\Omega)))^n - |y-z|^n}{n|y-z|^{n-1}}
&=& \frac{ 2^n |y-z|^n - |y-z|^n}{n|y-z|^{n-1}} \\
&=& \frac{2^n-1}{n} |y-z| \\
&\le& \frac{\diam(\Omega)(2^n-1)}{2n}.
\end{eqnarray*}
Otherwise, if $|y-z| \ge \frac{\diam(\Omega)}{2}$, then 
$\min( 2 |y-z|,\diam(\Omega)) =\diam(\Omega) $.  This gives us:
\begin{eqnarray*} 
\frac{(\min( 2 |y-z|,\diam(\Omega)))^n - |y-z|^n}{n|y-z|^{n-1}}
&=& \frac{ \diam(\Omega)^n - |y-z|^n}{n|y-z|^{n-1}} \\
&\le& 2^{n-1} \frac{ \diam(\Omega)^n - |y-z|^n}{n \diam(\Omega)^{n-1}} \\
&\le& 2^{n-1} \frac{ \diam(\Omega)^n}{n \diam(\Omega)^{n-1}}\\
&=& 2^{n-1} \frac{ \diam(\Omega)}{n}.
\end{eqnarray*}
Both cases are dominated by $ 2^{n-1} \frac{ \diam(\Omega)}{n}$.  We
place this into the original integral:
\begin{equation*}
\begin{split}
\int_{\Omega_1} \int_{\Omega} 
  \abs{\grad f(y)}
   & \frac{(\min( 2 |y-z|,\diam(\Omega)))^n - |y-z|^n}{n |y-z|^{n-1}} dy dz
\\ &\le \int_{\Omega_1} \int_{\Omega} 
  \abs{\grad f(y)} 2^{n-1} \frac{ \diam(\Omega)}{n} dy dz
\\ &= 2^{n-1} \frac{ \diam(\Omega)}{n} \mu(\Omega_1) 
  \int_{\Omega} \abs{\grad f(y)} dy.
\end{split}
\end{equation*}
This is an upper bound for 
\begin{eqnarray*}
\int_{\Omega_1} \int_{\Omega_2} \int_{\frac{|y-z|}{2}}^{|y-z|} 
  \abs{\grad f(z + \rho\frac{y-z}{|y-z|})}I_{\Omega}(z + \rho\frac{y-z}{|y-z|})
  d\rho dy dz .
\end{eqnarray*}
We can apply the same argument to the half of the geodesic closest to $z
\in \Omega_1$, after first substituting $\rho'=|y-z| -\rho$:
\begin{equation*}
\begin{split}
\int_{\Omega_1} \int_{\Omega_2} \int_0^{\frac{|y-z|}{2}} 
 & \abs{\grad f(z + \rho\frac{y-z}{|y-z|})}
   I_{\Omega}(z + \rho\frac{y-z}{|y-z|})
   d\rho dy dz
\\ &= \int_{\Omega_2} \int_{\Omega_1} \int_{\frac{|y-z|}{2}}^{|y-z|} 
  \abs{\grad f(y + \rho'\frac{z-y}{|z-y|})} I_{\Omega}(y +
      \rho'\frac{z-y}{|z-y|}) d\rho dz dy
\\ &\le  2^{n-1} \frac{ \diam(\Omega)}{n} \mu(\Omega_2) 
  \int_{\Omega} \abs{\grad f(y)} dy.
\end{split}
\end{equation*}
Combining these with the original inequality, we have 
\begin{eqnarray*}
\int_{\Omega_2}  
           \int_{\Omega_1} \abs{f(z)-f(y)}dz dy 
\le 2^{n-1} \frac{ \diam(\Omega)}{n} (\mu(\Omega_1)+\mu(\Omega_2)) 
  \int_{\Omega} \abs{\grad f(y)} dy.
\end{eqnarray*}
\end{proof}
%%%%%%%%%%%%%%%%%%%%%%%%%%%%%%%%%%%%%%%%%%%%%%%%%%%%%%%%%%%%%%%%%%
\begin{notation}
Let $X$ be an admissible Euclidean polytopal complex of dimension $n$.
\end{notation}
%%%%%%%%%%%%%%%%%%%%%%%%%%%%%%%%%%%%%%%%%%%%%%%%%%%%%%%%%%%%%%%%%%  
\begin{definition}
Let $B$ be a ball of radius $r$ whose center is on a $D$-dimensional
face with the property that $B$ intersects no other $D$-dimensional
faces. We define wedges $W_k$ of $B$ to be the closures of each of the
connected components of \\ $B-X^{(n-1)}$.
\end{definition}
Note that for any point $z$ in $X$, a ball $B(z,r)$ satisfying the
above criteria exists: for each dimension $D$, we can take any point $z \in
X^D-X^{(D-1)}$ and any $r < d(z,X^{(D-1)})$ and create $B= B(z,r)
\subset X$.  Then $B$ is a ball of radius $r$ whose center is on a
$D$-dimensional face, and $B$ intersects no other $D$-dimensional
faces.  In essence, the wedges, $W_k$, are formed when the $(n-1)$
skeleton slices the ball $B$ into pieces.  This construction tells us
that each $W_k$ has diameter at most $2r$, as each of the points in
$W_k$ is within distance $r$ of $z$, and $z$ is included in $W_k$.
\begin{example}\label{SkelWithWedges}
\begin{figure}[h]
\centering
       \includegraphics[angle=0,width=1in]{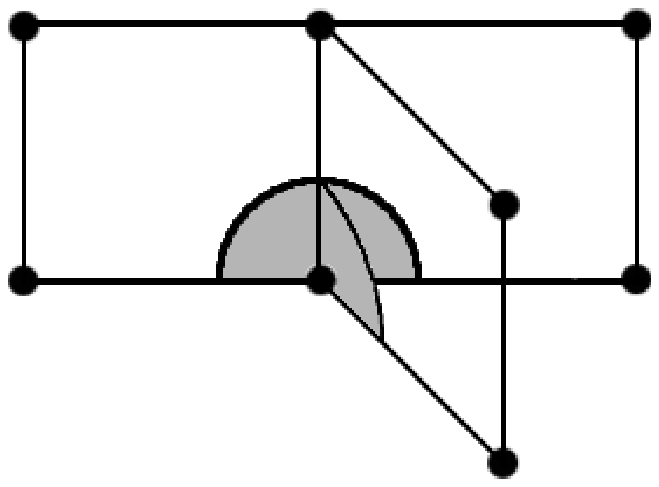}
\hspace{.5in}
       \includegraphics[angle=0,width=1in]{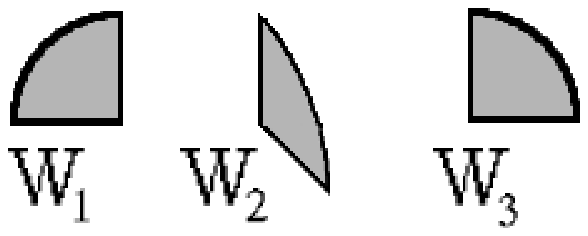}
\caption{Complex with shaded ball B (left); the three wedges for B (right).}
\end{figure}
In figure \ref{SkelWithWedges} we have an example of a 2 dimensional complex with a shaded ball
centered at a vertex.  This ball has three wedges; one for each of the
two dimensional faces that share the vertex.  Note that each wedge is
a fraction of a sphere.
\end{example}
%%%%%%%%%%%%%%%%%%%%%%%%%%%%%%%%%%%%%%%%%%%%%%%%%%%%%%%%%%%%%%%%%% 
\begin{definition}
We say that $X$ has degree bounded by $M$ if $M$ is the maximal
number of edges in $X^{(1)}$ that can share a vertex in $X^{(0)}$.
\end{definition}
\begin{definition}
We say that $X$ has edge lengths bounded below by $\varl$ if
\begin{equation*} 
0 < \varl \le \inf_{v,w \in X^{(0)}} d(v,w).
\end{equation*}
\end{definition}
Note that having degree bounded by $M$ implies that the maximum number
of $k$ dimensional faces that can share a lower dimensional face is
also $M$.  This tells us that sufficiently small balls will be split
into at most $M$ wedges. Note that if $X$ has degree bounded by $M$,
then $X^{(k)}$ will as well.  When $X$ has degree bounded by $M$,
volume doubling will occur locally with a uniform constant.  In
particular, when the edge lengths are bounded below by $\varl$ the
strong statement:
\begin{eqnarray*}
\mu(B(x,c r)) \le M c^N \mu(B(x,r))
\end{eqnarray*}
will hold whenever $c r \le \varl$. For balls in $X$, $N$ will equal
$n$, the dimension of $X$.  If we restrict to balls in $X^{(k)}$, then
this holds with $N=k$.

To show a local Poincar\'{e} inequality on $X$, we will split the
balls, which are not necessarily convex, up into smaller overlapping
pieces which are.  We will do this using the wedges.  We can use a
chaining argument in order to move through $B$ from one of the $W_k$
to another.  Note that this uses the fact that our space $X$ is
admissible.  We will say $W_k$ and $W_j$ are adjacent if they share an
$n-1$ dimensional face, and let $N(j)$ be the list of indices of faces
adjacent to $W_j$ including $j$.  In order to create paths which we
can integrate over, we need an overlapping region between the adjacent
faces. For $k \in N(j)$, let $W_{k,j}=W_{j,k}$ be the largest subset
of $W_k \cup W_j$ which has the property that $W_k \cup W_{k,j}$ and
$W_j \cup W_{k,j}$ are both convex.  Then, for each $x$ in $W_{k,j}$
there is a way of describing the rays between $x$ and $W_k$ in a
distance preserving manner as one would have in $R^n$.  This
will justify our use of the $\rho$ in the calculation below.
\begin{example}
\begin{figure}[h]\label{NonConvexWedges}
\centering
       \includegraphics[angle=0,width=1in]{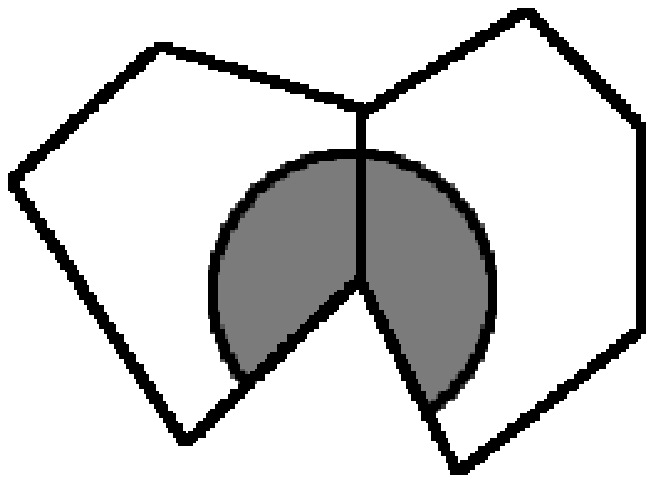}
\hspace{.5in}
       \includegraphics[angle=0,width=1in]{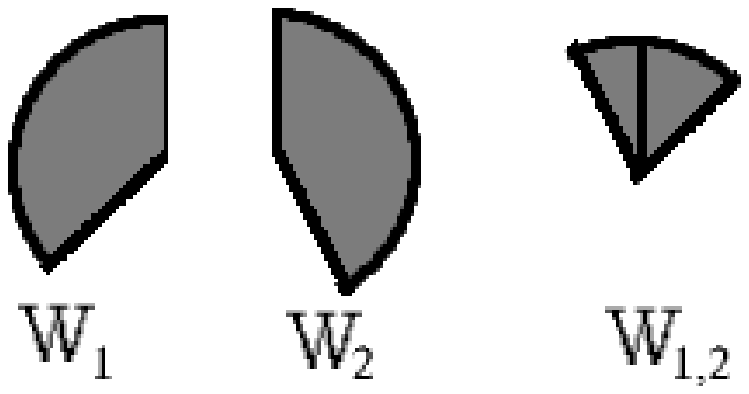}
\caption{Complex with shaded ball B (left); the two wedges for B and a region which overlaps both of them(right).}
\end{figure}
In figure \ref{NonConvexWedges} we have a complex and ball with two
adjacent wedges. The union of the wedges, $W_1$ and $W_2$, is not
convex, so we form the region $W_{1,2}$.  In this example, both $W_1
\cup W_{1,2}$ and $W_2 \cup W_{1,2}$ are half circles.
\end{example}
%%%%%%%%%%%%%%%%%%%%%%%%%%%%%%%%%%%%%%%%%% Theorem poincare p=1
%%%%%%%%%%%%%%%%%%%%%%%%%%%%%%%%%%%%%%%%%%% \label{PoincareP1}
\begin{theorem}\label{PoincareP1}
Let $X$ be an admissible Euclidean polytopal complex of dimension $n$
with degree bounded by $M$.  For each $z \in X$ there exists $r>0$ so
that for $B=B(z,r)$ and its corresponding wedges, $W_{i,j}$, the
following holds for \\ $f \in \Lip(X) \cap L^1(B)$:
\begin{eqnarray*}
||f - f_B||_{1,B} 
\le  2M \max_{k,j \in N(k)} 
     \left( \frac{\mu(B)}{\mu(W_k)} +2 \right)
          \frac{2^n r (\mu(W_k) + \mu(W_{j,k}))}{n\mu(W_{j,k})} 
             ||\grad f||_{1,B} .
\end{eqnarray*}
\end{theorem}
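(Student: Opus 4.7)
The plan is to split the non-convex ball $B$ into its wedges $\{W_k\}$ and to glue the wedges together via their adjacency overlaps $W_{k,j}$, reducing everything to the Euclidean-convex situation of Lemma \ref{ConvexSimplify}. Admissibility guarantees that the adjacency graph of the wedges is connected, and the degree bound $M$ forces the total number of wedges (hence maximal chain length) to be at most $M$. The radius $r$ is chosen small enough, namely smaller than $d(z,X^{(D-1)})$ where $D$ is the dimension of the face containing $z$ (this quantity is positive by the lower bound on edge lengths), so that the wedge decomposition is well defined and each $W_k$ has diameter at most $2r$.

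Fix a reference wedge $W_{*}$ (say the one containing $z$) and use the standard inequality $\|f-f_B\|_{1,B}\le 2\|f-f_{W_*}\|_{1,B}$ to account for the outer factor of $2$. Split the right-hand side as
$$\|f-f_{W_*}\|_{1,B}\le\sum_k\int_{W_k}|f-f_{W_k}|\,dx+\sum_k\mu(W_k)\,|f_{W_k}-f_{W_*}|.$$
For each $k$ choose an adjacent index $j(k)\in N(k)$. Because $W_k\cup W_{k,j(k)}$ is convex of diameter at most $2r$, Lemma \ref{ConvexSimplify} applied with $\Omega_1=\Omega_2=W_k$ bounds $\int_{W_k}|f-f_{W_k}|\,dx$ by a universal multiple of $\tfrac{2^n r(\mu(W_k)+\mu(W_{k,j(k)}))}{n\mu(W_{k,j(k)})}\|\grad f\|_{1,W_k\cup W_{k,j(k)}}$. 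For the second sum I chain $W_*$ to $W_k$ through at most $M-1$ adjacent wedges, and for each consecutive pair $(W_a,W_b)$ insert the overlap $W_{a,b}$ via $|f_{W_a}-f_{W_b}|\le|f_{W_a}-f_{W_{a,b}}|+|f_{W_{a,b}}-f_{W_b}|$, again applying Lemma \ref{ConvexSimplify} in each of the two convex sets $W_a\cup W_{a,b}$ and $W_b\cup W_{a,b}$ with $\Omega_1=W_a,\Omega_2=W_{a,b}$ (respectively $\Omega_1=W_b,\Omega_2=W_{a,b}$). Each such application produces a gradient integral over a convex region contained in $B$.

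Assembly is then mostly algebraic: every convex subregion appearing is a subset of $B$, so each $\|\grad f\|_{1,W_a\cup W_{a,b}}$ is dominated by $\|\grad f\|_{1,B}$, and multiplicities in the chain sums are at most $M$, which produces the outer $M$ factor. The hybrid factor $\mu(B)/\mu(W_k)+2$ arises when the weighting $\mu(W_k)$ in the second sum is combined with the $\mu(W_k)^{-1}$ that appears in the chaining bound (the $\mu(B)/\mu(W_k)$ piece, via $\sum_k\mu(W_k)=\mu(B)$) while the $+2$ absorbs the first-sum contribution and the slack between $\tfrac{2^{n+1}r}{n}$ and $\tfrac{2^n r(\mu(W_k)+\mu(W_{k,j}))}{n\mu(W_{k,j})}$. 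The main obstacle I expect is purely combinatorial bookkeeping: showing that in the worst case each gradient integral $\int_{W_i}|\grad f|$ is counted at most $M$ times rather than $M^2$ so the constant is linear in $M$, and that the per-wedge estimate and the chaining estimate can be packaged under a single maximum over pairs $(k,j)$ with $j\in N(k)$. Once these multiplicities are pinned down, the remaining work is algebraic tidying to land exactly on the stated constant.
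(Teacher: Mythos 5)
Your outline is essentially the route the paper takes: decompose $B$ into wedges, use admissibility to chain adjacent wedges through the convex overlaps $W_{j,k}$, apply Lemma \ref{ConvexSimplify} to each convex pair, and control how often each region is hit using the degree bound. The only structural difference is that the paper chains pointwise differences inside $\frac{1}{\mu(B)}\int_B\int_B\abs{f(x)-f(y)}\,dy\,dx$ and then averages over the intermediate points, whereas you telescope wedge averages $f_{W_k}$; these are equivalent in spirit.

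The gap is in the constant, and it sits exactly at the point you flagged. In the sum over adjacent ordered pairs $(a,b)$ of the gradient integrals $\int_{W_a\cup W_{a,b}}\abs{\grad f}$, a point lying in a wedge $W_c$ belongs to $W_c\cup W_{c,b}$ for up to $M$ neighbours $b$, and, because it may also lie in overlap regions $W_{a,c}$, to up to $M$ further sets $W_a\cup W_{a,c}$; the correct multiplicity bound is $2M$ (this is precisely how the paper obtains its factor $2M$), not the $M$ you hope for. Since you additionally spend a factor $2$ at the outset via $\norm{f-f_B}_{1,B}\le 2\norm{f-f_{W_*}}_{1,B}$, your assembled constant comes out as roughly $4+4M\,\mu(B)/\mu(W_k)$ times the geometric factor $\frac{2^n r(\mu(W_k)+\mu(W_{j,k}))}{n\mu(W_{j,k})}$, which exceeds the stated $2M\left(\mu(B)/\mu(W_k)+2\right)$ whenever $\mu(B)/\mu(W_k)>2-2/M$, e.g.\ already for comparable wedges with $M\ge 3$. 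The repair is to avoid paying the initial factor of $2$: either run the chaining on the symmetric double integral as the paper does, or expand $f_B=\frac{1}{\mu(B)}\sum_i\mu(W_i)f_{W_i}$ and chain each pair $(W_i,W_k)$ with weight $\mu(W_i)\mu(W_k)/\mu(B)$; these weights still sum to at most $\mu(B)$ over all chains passing through a fixed adjacent pair, and with the $2M$ multiplicity the bookkeeping then closes on the stated constant, the ``$+2$'' absorbing the within-wedge terms $\int_{W_k}\abs{f-f_{W_k}}$.
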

%%%%%%%%%%%%%%%%%%%%%%%%%%%%%%%%%%%%%%%%%%%%%%%%%%%%%%%proof
\begin{proof}
For a given $z\in X$ let $D$ be the dimension such that $z \in
X^{(D)}-X^{(D-1)}$.  Pick $r < d(z,X^{(D-1)})$.  Let $B=B(z,r)$. For $x$
in $B$ we have:
\begin{eqnarray*}
|f(x)-f_B| 
&=& 
 \abs{\int_B \frac{1}{\mu(B)} f(x)dy -\int_B \frac{1}{\mu(B)} f(y)dy} \\
&\le& \frac{1}{\mu(B)} \int_B |f(x) - f(y)|dy.
\end{eqnarray*}
We would like to apply Lemma \ref{ConvexSimplify} to this; however,
$B$ is not necessarily convex.  We will construct a path from $x$ to
$y$ using a finite number of straight lines, where each of the line
segments is contained in a convex region.  For simplicity, we will
consider $x \in W_i$ and $y \in W_k$.  It is quite possible that these
two wedges are not contained in a convex subset of $B$.  We need to
use the fact that our space is locally $(n-1)$-chainable by looking at
a chain in $B-\{z\}$ starting at $W_i$ and ending at $W_k$.  The
pieces of the chain will move us from a point in $W_j$ into a
connecting point in $W_{j,l}$, and then from that connecting point in
$W_{j,l}$ into a point in $W_l$.

Formulated more precisely, there is a sequence of indices,
$\sigma(1)=i,...\sigma(l)=k$ that corresponds to this chain of $W$'s, so
that for each $j$, $W_{\sigma(j)}$ and $W_{\sigma(j+1)}$ are adjacent,
and none of the indices repeat.  We can take points in these regions;
$z_1 \in W_{\sigma(1)}$, $z_2 \in W_{\sigma(1),\sigma(2)}$,
... $z_{2j-1} \in W_{\sigma(j)}$ and $z_{2j} \in
W_{\sigma(j),\sigma(j+1)}$.  Note that each pair in this sequence is
located in a convex region-- either $W_{\sigma(j)} \cup
W_{\sigma(j),\sigma(j+1)}$ or $W_{\sigma(j+1)} \cup
W_{\sigma(j),\sigma(j+1)}$.  The line segments between these
points will define our path $\gamma$ from $x$ to $y$.
\begin{eqnarray*}
|f(x)-f(y)|
 &=& |f(x)-f(z_1) + f(z_1) -...+f(z_{2l}) -f(y)|
\\ &\le& |f(x)-f(z_1)| + |f(z_1) -f(z_2)|+...+|f(z_{2l}) -f(y)|
\\ &=& |f(x)-f(z_1)| + \sum_{j=1}^{l-1} 
\left( |f(z_{2j})-f(z_{2j-1})|+|f(z_{2j})-f(z_{2j+1})| \right) \\  
 & & \mbox{} + |f(z_{2l}) -f(y)|.
\end{eqnarray*}
Since it didn't matter which $z$'s we chose, as long as they were in the
proper sets, we can average the pieces over all of the possible $z$'s.
\begin{eqnarray*}
|f(x)-f(y)|
&\le &
\lefteqn{\aveint_{W_{i,\sigma(1)}}
   |f(x)-f(z_1)| dz_1}
 \\  & & + 
\sum_{j=1}^{l-1} 
\left(
   \aveint_{W_{\sigma(j)}} \aveint_{W_{\sigma(j),\sigma(j+1)}}
    |f(z_{2j})-f(z_{2j-1})| dz_{2j} dz_{2j-1} \right.
\\ & &+ \left.
  \aveint_{W_{\sigma(j+1)}} \aveint_{W_{\sigma(j),\sigma(j+1)}}
|f(z_{2j})-f(z_{2j+1})|
     dz_{2j} dz_{2j+1}
\right) \\  & &+ \aveint_{W_{\sigma(l),k}}
   |f(z_{2l})-f(y)| dz_{2l} .
\end{eqnarray*}
We will not want to keep track of the exact path between every pair of
regions, although in specific examples one may want to do that in
order to achieve a tighter bound.  Rather, it is useful simply
integrate over all pairs of neighboring wedges, as this will include
everything in our path.  
\begin{eqnarray*}
|f(x)-f(y)|
&\le & \lefteqn{\sum_{l \in N(i)} \aveint_{W_{i,l}}
   |f(x)-f(z)| dz}
 \\  & &+ 
\sum_{j} \sum_{l \ne i, l \in N(j)} 
  \aveint_{W_l} \aveint_{W_{j,l}}
 |f(z)-f(w)| dz dw
 \\  & &+ \sum_{j \in N(k)} \aveint_{W_{j,k}}
     |f(z)-f(y)| dz.
\end{eqnarray*}
This new inequality will hold for $x$ and $y$ in any pair of $W_i$ and
$W_k$ with $k \ne i$.  If we expand our notation so that $W_{i,i}=W_i$
, then this will hold when $x$ and $y$ are in the same set $W_k=W_i$.
To integrate over all $y \in B$, we can split the integral into two
parts; one where $x$ and $y$ are both in $W_i$, and then add it to the
second where $y$ is in one of the $W_k \ne W_i$.  Similarly, we can
integrate over $x$ in $W_i$ and then sum over $i$.
\begin{equation*}
\begin{split}
\frac{1}{\mu(B)} \int_B \int_B & |f(x) - f(y)|dy dx\\
\le & \lefteqn{\frac{1}{\mu(B)} 
\left(
\sum_{i,k}  \sum_{l \in N(i)} \int_{W_i} \int_{W_k}
   \aveint_{W_{i,l}} 
             |f(x)-f(z)|dz dy dx \right.} \\
& +\sum_{i,k,j} \sum_{l \in N(j)} \int_{W_i}
   \int_{W_k}  
   \aveint_{W_{j,l}} \aveint_{W_l} 
             |f(z)-f(w)|dw dz dy dx \\
& + \left. \sum_{i,k} \sum_{j \in N(k)}\int_{W_i} \int_{W_k} 
    \aveint_{W_{j,k}} 
             |f(z)-f(y)|dz dy dx
\right) \\
= & \lefteqn{ 
\sum_i   \sum_{l \in N(i)} \int_{W_i} 
   \aveint_{W_{i,l}} 
             |f(x)-f(z)|dz dx}\\
&+\mu(B) \sum_{j} \sum_{l \in N(j)} 
   \aveint_{W_{j,l}} \aveint_{W_l} 
             |f(z)-f(w)|dw dz \\
&+ \sum_{k} \sum_{j \in N(k)} \int_{W_k} 
    \aveint_{W_{j,k}} 
             |f(z)-f(y)|dz dy.
\end{split}
\end{equation*}
We can combine this into one double sum by setting $x=w$ and $y=w$ as
well as reindexing so that $i=j$ and $l=k$.
\begin{eqnarray*}
... \le
 \sum_{k} \sum_{j \in N(k)} 
   \left( \frac{\mu(B)}{\mu(W_k)} +2 \right) 
  \int_{W_k} \aveint_{W_{j,k}} 
             |f(z)-f(w)| dz dw .
\end{eqnarray*}
Applying lemma \ref{ConvexSimplify} with 
$\Omega=W_k \cup W_{j,k}$, $\Omega_1=W_{j,k}$, $\Omega_2=W_k$, 
and $\diam(\Omega) \le 2r$ to each of the pieces we find:
\begin{eqnarray*}
... \le
 \sum_{k} \sum_{j \in N(k)} 
   \left( \frac{\mu(B)}{\mu(W_k)} +2 \right)
   \frac{2^n r (\mu(W_k) + \mu(W_{j,k}))}{n \mu(W_{j,k})} 
  \int_{W_k \cup W_{j,k}} 
             |\grad f(y)| dy.
\end{eqnarray*}
Note that points in the sets $W_k \cup W_{j,k}$ are counted at most
$2M$ times, since each of the $W_k$ has at most $M$ neighbors.  This
allows us to combine the sums to find:
\begin{equation*}
\begin{split}
\frac{1}{\mu(B)} \int_B \int_B &|f(x) - f(y)|dy dx \\
&\le
 2M \max_{k,j \in N(k)} 
   \left( \frac{\mu(B)}{\mu(W_k)} +2 \right)
\frac{2^n r (\mu(W_k) + \mu(W_{j,k}))}{n \mu(W_{j,k})}
  \int_{B} |\grad f(y)| dy .
\end{split}
\end{equation*}
This is the desired result.
\end{proof}
Now that we have the inequality when $p=1$, we can use a trick to
extend it to other values of $p$.
%%%%%%%%%%%%%%%%%%%%%%%%%%%%%%%%%%%%%%Poincare extended to p \ne 1
%%%%%%%%%%%%%%%%%%%%%%%%%%%%%%%%%%%%%%Poincare extended to p \ne 1
\begin{lemma}\label{PoincareP}
If for any $f \in \Lip(X)$ we have:
\begin{eqnarray*}
||f-f_B||_{1,B} \le C r||\grad f||_{1,B}
\end{eqnarray*}
 for $B=B(z,r)$ then 
\begin{eqnarray*}
\inf_{c \in (-\infty,\infty)} ||f - c||_{p,B} 
&\le& p C r||\grad f||_{p,B} \text{ and} \\
||f-f_B||_{p,B} &\le& 2p C r||\grad f||_{p,B}.
\end{eqnarray*}
 holds for $1 \le p < \infty$.
\end{lemma}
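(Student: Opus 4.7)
The plan is to bootstrap from the $L^1$ Poincar\'e hypothesis to its $L^p$ version via a median-truncation argument combined with H\"older's inequality. Let $m$ be a median of $f$ on $B$, so that both $\{f \le m\} \cap B$ and $\{f \ge m\} \cap B$ have measure at least $\mu(B)/2$. Split $f - m = v - w$ into its positive and negative parts $v := (f-m)_+$ and $w := (m-f)_+$. Both are Lipschitz, have disjoint supports, satisfy $|f - m| = v + w$, and their gradients are supported on disjoint sets (and vanish where $f = m$), so that $\norm{\grad v}_{p,B}^p + \norm{\grad w}_{p,B}^p \le \norm{\grad f}_{p,B}^p$.

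The key technical step is a capacity-style observation: for any nonnegative $u \in L^1(B)$ vanishing on a set $Z \subset B$ with $\mu(Z) \ge \mu(B)/2$, one has $\int_B u\,dx \le 2 \int_B |u - u_B|\,dx$, since already $\int_Z |u - u_B|\,dx = \mu(Z) u_B \ge \tfrac{1}{2}\int_B u\,dx$. I would apply this with $u = v^p$, which is locally Lipschitz for $p \ge 1$ with $|\grad v^p| \le p v^{p-1}|\grad v|$, then combine with the hypothesis and H\"older's inequality (conjugate exponents $p$ and $p/(p-1)$) to obtain
\begin{equation*}
\norm{v}_{p,B}^p \le 2 \norm{v^p - (v^p)_B}_{1,B} \le 2Cr \norm{\grad v^p}_{1,B} \le 2pCr \norm{v}_{p,B}^{p-1} \norm{\grad v}_{p,B},
\end{equation*}
and divide to conclude $\norm{v}_{p,B} \le 2pCr \norm{\grad v}_{p,B}$, with the identical bound for $w$. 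Summing the $p$-th powers then gives $\norm{f - m}_{p,B}^p = \norm{v}_{p,B}^p + \norm{w}_{p,B}^p \le (2pCr)^p \norm{\grad f}_{p,B}^p$, which yields the first inequality (up to an absolute constant; the spurious factor $2$ coming from the capacity step is expected to disappear with a more careful choice of test function in place of the blunt median split).

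The second inequality follows from the first by a soft triangle-inequality argument. For any $c \in \R$, Jensen gives $|f_B - c|\mu(B)^{1/p} = \left|\aveint_B (f - c)\,dx\right|\mu(B)^{1/p} \le \norm{f - c}_{p,B}$, so $\norm{f - f_B}_{p,B} \le \norm{f - c}_{p,B} + |f_B - c|\mu(B)^{1/p} \le 2\norm{f - c}_{p,B}$, and taking infimum over $c$ closes the argument. The main obstacle in this scheme is the capacity-type vanishing Poincar\'e for $v^p$; this is the standard Maz'ya trick whose entire purpose is to replace the mean-value term that would otherwise prevent one from iterating the $L^1$ inequality into an $L^p$ one, and once it is in place the rest is just H\"older and triangle inequalities with no additional geometric input from $X$.
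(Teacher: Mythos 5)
Your proof is correct in substance, but it takes a genuinely different route from the paper's. The paper applies the $L^1$ hypothesis to the single auxiliary function $g=|f-c_f|^p\sgn(f-c_f)$, where $c_f$ is chosen (by the intermediate value theorem) so that $g_B=0$; then $\norm{f-c_f}_{p,B}^p=\norm{g-g_B}_{1,B}\le Cr\norm{\grad g}_{1,B}$, and H\"older with $(p-1)q=p$ gives exactly $\inf_c\norm{f-c}_{p,B}\le pCr\norm{\grad f}_{p,B}$, followed by the same Jensen/triangle argument you use for the second inequality. You instead run the Maz'ya truncation scheme: median split $f-m=v-w$, the observation that a nonnegative function vanishing on half the ball satisfies $\int_B u\le 2\int_B|u-u_B|$, and the $L^1$ inequality applied to $v^p$ and $w^p$ separately. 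Each step of yours checks out (disjoint gradient supports, the chain rule bound $|\grad v^p|\le pv^{p-1}|\grad v|$, H\"older, and summing $p$-th powers), and what you flag as the "more careful choice of test function" that should kill the spurious factor is precisely the paper's zero-mean normalization $g_B=0$: your capacity step costs a factor $2$, so as written you obtain $2pCr$ and $4pCr$ in place of the stated $pCr$ and $2pCr$ --- harmless for every later application of the lemma, since none of the constants in the paper are sharp, but strictly weaker than the statement, and worth noting if you want the lemma verbatim. One shared technicality: for $p>1$ your $v^p$ (like the paper's $g$) need only be locally Lipschitz when $f$ is unbounded on $X$, while the hypothesis is phrased for $\Lip(X)$; since only values on $B$ enter, truncating the function outside $B$ (e.g.\ capping $v$ at $\sup_B v$) repairs this in either proof. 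Your approach buys a decomposition that generalizes readily (it is the standard route to self-improvement and weak-type Poincar\'e estimates), while the paper's choice of $c_f$ buys the cleaner constant in one stroke.
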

\begin{proof}
Let $g(x)=|f(x)-c_f|^p \sign(f(x)-c_f)$.  Note that $g$ is in
 $\Lip(X)$. Then if $\grad f$ is the gradient of $f$, we have that $p
 |f(x)-c_f|^{p-1} |\grad f(x)|$ is the length of the gradient of $g$.

Pick a value of $c_f$ so that $g_B=\int_B g(x) dx =0$.  (One will
exist; we consider $g_B$ as a function of $c_f$ and apply the
intermediate value theorem.)

Applying our assumption to $g$, we have:
\begin{eqnarray*}
\int_B |g(x)-0| dx &\le& 
C \int_B |\grad g(x)| dx  \\
&=& C \int_B  p |f(x)-c_f|^{p-1} |\grad f(x)| dx .
\end{eqnarray*}
Now we use H\"{o}lder to find:
\begin{eqnarray*}
\int_B |f(x)-c_f|^{p-1} |\grad f(x)| dx 
\le \left(\int_B (|f(x)-c_f|^{p-1})^q dx\right)^{1/q} 
    \left(\int_B |\grad f(x)|^p dx\right)^{1/p} .
\end{eqnarray*}
Since $\frac{1}{p} +\frac{1}{q} =1$, we have $(p-1)q=p$. Combining
this with the above inequality gives us:
\begin{eqnarray*}
\left(\int_B  |f(x)-c_f|^p dx\right)^{1/p} 
\le p 
C r \left(\int_B |\grad f(x)|^p dx\right)^{1/p} .
\end{eqnarray*}

When we take an infimum, we find that 
\begin{eqnarray*}
\inf_c \left(\int_B  |f(x)-c|^p dx\right)^{1/p} 
\le \left(\int_B  |f(x)-c_f|^p dx\right)^{1/p}.
\end{eqnarray*}
When we combine these, we have
\begin{eqnarray*}
\inf_{c} ||f - c||_{p,B} 
\le p C r||\grad f||_{p,B}. 
\end{eqnarray*}

In the case where $p=2$, it is easy to compute the infimum exactly.
Consider 
\begin{eqnarray*}
h(c)&=& \int_B |f(x)-c|^2 dx \\
    &=& \int_B f(x)^2 dx -2c \int_B f(x) dx +c^2 \mu(B) .
\end{eqnarray*}
This is a parabola whose minimum occurs at $c=\frac{1}{\mu(B)} \int_B
f(x) dx = f_B$.  Its minimum is the same as that of $\sqrt{h(c)}$, and
so this gives us
\begin{eqnarray*}
||f-f_B||_{2,B} \le 2 C r||\grad f||_{2,B}.
\end{eqnarray*}

When $p \ne 2$, we can use Jensen's inequality to get the average.  We
do this by noticing:
\begin{eqnarray*}
\dashint |f_B -c|^p dx &=& |\dashint_B (f-c)dx |^p \\
& \le& \dashint_B |f-c|^p dx. 
\end{eqnarray*}
This tells us that 
\begin{eqnarray*}
||f -f_B||_{p,B}
&\le& \inf_c ||f -c||_{p,B} + ||f_B -c||_{p,B} \\
&\le& 2 \inf_c ||f -c||_{p,B} \\
&\le& 2 p C r||\grad f||_{p,B}. 
\end{eqnarray*}
\end{proof}
%%%%%%%%%%%%%%%%%%%%%%%%%%%%%%%%%%%%%%%%%%%%%%%%%%%%%%%%%%%%%%%%%%
\begin{definition}
We say that $X$ has solid angle bound $\alpha$ if for each \\ $z \in
X^{(D)}-X^{(D-1)}$ and $r <d(z,X^{(D-1)})$ the wedges of the ball $B(z,r)$
satisfy
\begin{eqnarray*}
\alpha \le \frac{\mu(W_k)}{\mu(r^n S^{(n-1)})} \le 1.
\end{eqnarray*}
\end{definition}
Note that the right hand side of the inequality reflects the fact that
each of the $W_k$ is a subset of an Euclidean ball.

If we have a uniform bound on the solid angles formed, then the
constant in Theorem \ref{PoincareP1} will simplify.

%%%%%%%%%%%%%%%%%%%%%%%%%%%%%%%%%%%%%%%%% Volume Estimate Corrolary
%%%%%%%%%%%%%%%%%%%%%%%%%%%%%%%%%%%%%%%%%%%%%%%\label{VolPPoincare}
%%%%%%%%%%%%%%%%%%%%%%%%%%%%%%%%%%Estimate \label{VolumeEstimate}
\begin{corrolary}\label{VolPPoincare}
Suppose $X$ is an admissible n-dimensional Euclidean polytopal complex
with solid angle bound $\alpha$, and $f\in \Lip(X)$.  For each $z \in
X$ there exists $r>0$ so that for $B=B(z,r)$ we have
\begin{eqnarray*}
||f - f_B||_{p,B} 
\le C_{X} p r ||\grad f||_{p,B}
\end{eqnarray*}
where the constant $C_{X} = \frac{2^{3n+3} M^2}{\alpha n}$ depends
only on the space $X$.
\end{corrolary}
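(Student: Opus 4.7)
The strategy is to combine the $L^1$ Poincar\'e inequality of Theorem \ref{PoincareP1} with the $L^p$ bootstrapping of Lemma \ref{PoincareP}, and use the solid angle hypothesis $\alpha$ together with the degree bound $M$ to replace the geometric volumes in the constant by uniform quantities.

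First I would choose the admissible radius exactly as in Theorem \ref{PoincareP1}: for $z \in X^{(D)} - X^{(D-1)}$, take any $r < d(z, X^{(D-1)})$, so that $B = B(z,r)$ has a well-defined wedge decomposition $\{W_k\}$ and overlap regions $\{W_{j,k}\}$, and Theorem \ref{PoincareP1} applies directly. This gives the $L^1$ inequality
\begin{eqnarray*}
\|f - f_B\|_{1,B} \le 2M \max_{k, j \in N(k)} \left( \frac{\mu(B)}{\mu(W_k)} + 2 \right) \frac{2^n r (\mu(W_k) + \mu(W_{j,k}))}{n \, \mu(W_{j,k})} \|\grad f\|_{1,B}.
\end{eqnarray*}

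Next, I would simplify the geometric prefactor using the solid angle bound. Since $\alpha \mu(r^n S^{(n-1)}) \le \mu(W_k) \le \mu(r^n S^{(n-1)})$ and there are at most $M$ wedges in the decomposition of $B$, the identity $\mu(B) = \sum_k \mu(W_k)$ yields $\mu(B)/\mu(W_k) \le M/\alpha$, hence the first factor is at most $M/\alpha + 2$. For the second factor, I would argue that $W_{j,k}$ decomposes as $A_j \cup A_k$ with $A_j \subset W_j$, $A_k \subset W_k$, and that each piece is itself a wedge-like region whose solid angle inherits a lower bound from $\alpha$ (because $W_j \cup A_k$ and $W_k \cup A_j$ are convex extensions whose filled-in content is comparable to the original wedges). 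This gives a uniform bound $(\mu(W_k) + \mu(W_{j,k}))/\mu(W_{j,k}) \le c(\alpha)$. Collecting powers of $M$, $\alpha$, and $2^n$ and enlarging the exponent of $2$ to absorb numerical constants, the $L^1$ inequality reduces to $\|f - f_B\|_{1,B} \le C\, r\, \|\grad f\|_{1,B}$ with $C$ of the form $c_n M^2/(\alpha n)$.

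Finally, I would apply Lemma \ref{PoincareP} directly: given the $L^1$ inequality with constant $Cr$, one automatically obtains $\|f - f_B\|_{p,B} \le 2pCr \|\grad f\|_{p,B}$ for every $1 \le p < \infty$. Absorbing the extra factor of $2$ together with the lower-order numerical factors into the exponent $2^{3n+3}$ produces the stated $C_X = 2^{3n+3} M^2/(\alpha n)$.

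The main obstacle is the uniform bound on $\mu(W_{j,k})$: one must give a careful geometric argument that the convex overlap regions cannot degenerate relative to the wedges $W_k$ themselves. The intuition is that forming the convex hulls $W_k \cup A_j$ and $W_j \cup A_k$ amounts to "reflecting" the wedges across the shared $(n-1)$-face, and the solid angle bound prevents these reflections from being pathologically thin. Once this geometric estimate is in place, the rest of the proof is bookkeeping of constants.
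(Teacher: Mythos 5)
Your overall architecture matches the paper's: apply Theorem \ref{PoincareP1} on a ball with $r < d(z,X^{(D-1)})$, control the geometric prefactor using the degree bound $M$ and the solid angle bound $\alpha$, and then bootstrap to general $p$ via Lemma \ref{PoincareP}. Your bound $\mu(B)/\mu(W_k) \le M/\alpha$ is fine. But the step you yourself flag as ``the main obstacle'' --- a uniform lower bound on $\mu(W_{j,k})$ relative to $\mu(W_k)$ --- is exactly the point where your argument has a genuine gap. You assert that $W_{j,k}$ splits as $A_j\cup A_k$ whose pieces are ``wedge-like'' with solid angle inherited from $\alpha$, so that $(\mu(W_k)+\mu(W_{j,k}))/\mu(W_{j,k}) \le c(\alpha)$, but no argument is given, and it is not clear one exists in this form: $W_{j,k}$ is defined by a convexity requirement on $W_k\cup W_{j,k}$ and $W_j\cup W_{j,k}$, so its size is governed by how the two wedges sit relative to the shared $(n-1)$-face, not directly by the solid angles of the wedges themselves; the ``reflection'' heuristic is intuition, not an estimate. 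Moreover, a bound of the form $c(\alpha)$ would not by itself reproduce the stated constant, whose powers of $2^n$ have a specific origin.

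The paper avoids this difficulty by a different device: before estimating anything, it subdivides each wedge by cutting it in half in each of the $n$ dimensions, producing at most $M'=2^nM$ refined wedges $W'_k$. For these refined pieces, when $W'_j$ and $W'_k$ are adjacent the overlap region $W'_{j,k}$ automatically has volume at least $\min(\mu(W'_j),\mu(W'_k))$, so the troublesome ratio is simply bounded by $2$; the price is the factor $2^n$ in the count of wedges and a factor $2^{-n}$ in the wedge volumes, which combine with the solid angle bound to give $\mu(B)/\mu(W'_k)\le M2^n/\alpha$ and ultimately the constant $C_X = 2^{3n+3}M^2/(\alpha n)$. So the missing ingredient in your proposal is precisely this subdivision (or some substitute proof of a quantitative lower bound on $\mu(W_{j,k})$ for the unrefined wedges); without it the argument does not close, and with it the rest of your bookkeeping goes through essentially as you describe.
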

\begin{proof}
We need to bound $ \max_{k,j \in N(k)} \left( \frac{\mu(B)}{\mu(W_k)}
+2 \right) \frac{\mu(W_k) + \mu(W_{j,k})}{\mu(W_{j,k})}$ from Theorem
\ref{PoincareP1}.  Since we will want to bound the $\mu(W_{j,k})$, we
need a way to compare its size to the volume of the other $W_j$.  We
can subdivide the space initially by cutting each piece in half in
each of the $n$ dimensions, so that there are at most $M'= 2^n M$
pieces.  When the $W'_k$ and $W'_j$ are adjacent, this tells us that
$W'_{j,k}$ has a volume which is larger than
$\min(\mu(W'_j),\mu(W'_k))$.  Thus $\frac{\mu(W_k) +
\mu(W_{j,k})}{\mu(W_{j,k})} \le 2$.

To bound $\frac{\mu(B)}{\mu(W_k)}$ we will need the solid angle bound.
Combining the solid angle bound with the factor of $2^{-n}$ decrease
in wedge size gives us the modified inequality:
\begin{eqnarray*}
\mu(W'_k) \le 2^{-n} \mu(r^n S^{(n-1)})  \le \frac{\mu(W'_k)}{\alpha}.
\end{eqnarray*}
Summing the left hand side of the inequality over $k$ tells us that
\begin{eqnarray*}
\mu(B) \le M 2^n 2^{-n} \mu(r^n S^{(n-1)}). 
\end{eqnarray*}
If we multiply the right hand side of the inequality by $M2^n$, we have
\begin{eqnarray*}
M \mu(r^n S^{(n-1)})  \le \frac{M 2^n \mu(W'_k)}{\alpha}.
\end{eqnarray*}
Combining these two inequalities, we find that:
\begin{eqnarray*}
\frac{\mu(B)}{\mu(W'_k)} \le \frac{M2^n}{\alpha}.
\end{eqnarray*}
We can substitute these into our constant to get:
\begin{eqnarray*}
2M' \max_{k,j \in N(k)} 
   \left( \frac{\mu(B)}{\mu(W'_k)} +2\right)
         \frac{2^n r(\mu(W'_k)+\mu(W'_{j,k}))}{n\mu(W'_{j,k})} 
\le 2 M2^n 
   \left(\frac{M2^n}{\alpha} +2\right)
 \frac{2^{n+1}r}{n} .
\end{eqnarray*}
This combined with theorem \ref{PoincareP1} and lemma
\ref{PoincareP} gives us that:
\begin{eqnarray*}
||f - f_B||_{p,B} 
\le \frac{2^{3n+3} M^2}{\alpha n}  p r ||\grad f||_{p,B}.
\end{eqnarray*}
\end{proof}

We would like to extend these theorems so that the radius is not dependent
on the center of the ball.  To do so, we will first show a weaker
Poincar\'{e} inequality, and then we will extend it via a Whitney type
covering to a stronger version.
%%%%%%%%%%%%%%%%%%%%%%%%%%%%%%%%%%%%%%%%%% Weak p=1 Poincare Theorem
\begin{theorem}\label{weakp1}
Suppose $X$ is an admissible n-dimensional Euclidean polytopal complex
with solid angle bound $\alpha$ and edge lengths bounded below by
$\varl$, and \\ $f\in \Lip(X)$.  When $\kappa =
6\left(\frac{2}{\sqrt{2(1-\cos(\alpha))}}+1\right)^n$, the following
inequality holds for each $z\in X$ and each $0\le r \le R_0$ where
$R_0 :=
\frac{\inf_{v,w\in X^{0}}d(v,w)}{
      6\left(\frac{2}{\sqrt{2(1-\cos(\alpha))}}+1\right)^n }
      =\frac{\varl}{\kappa}$.
\begin{eqnarray*}
\int_{B(z,r)} |f(x)-f_{B(z,r)}| dx 
              \le r C_{Weak}
                  \int_{B(z,\kappa r)} |\grad f(x)|dx
\end{eqnarray*}
where $C_{Weak}= \frac{2^{3n+3} M^3 \kappa^{N+1}}{\alpha n}$.
\end{theorem}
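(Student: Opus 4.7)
The plan is to bootstrap the center-dependent Corollary~\ref{VolPPoincare} to a uniform statement on balls of radius $r \le R_0 = \varl/\kappa$ via a Whitney-type cover and chaining, paying the price of an enlargement from $B(z,r)$ to $B(z,\kappa r)$ in the gradient term. The bound $r \le R_0$ guarantees that $B(z,\kappa r)$ has radius at most $\varl$, hence meets at most one vertex, so all of its geometry is controlled by the wedge decomposition around a single vertex (or none).

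First I would cover $B(z,r)$ by a family of \emph{good} balls $B_i = B(z_i, \rho_i)$, where $z_i$ lies in an open $D_i$-dimensional stratum and $\rho_i < d(z_i, X^{(D_i-1)})$, so that Corollary~\ref{VolPPoincare} applies on each $B_i$. The explicit form of $\kappa = 6\bigl(2/\sqrt{2(1-\cos\alpha)}+1\bigr)^n$ reflects the cost of such a cover: by the law of cosines, $\sqrt{2(1-\cos\alpha)}$ is the distance between the endpoints of two unit segments meeting at the minimal interior angle $\alpha$, so in order to find good balls of radius comparable to $\rho$ we must push the centers off the low strata by a distance proportional to $\rho/\sqrt{2(1-\cos\alpha)}$. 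Raising to the $n$-th power accounts for the $n$-dimensional volume comparison needed to keep overlaps under control, and the factor $6$ is a Vitali-type covering constant.

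Next, on each good ball apply Corollary~\ref{VolPPoincare} at $p=1$ to get $\|f-f_{B_i}\|_{1,B_i}\le C_{X}\rho_i\|\grad f\|_{1,B_i}$. The cover is organized so that (i)~the multiplicity of overlap is bounded by $M$, (ii)~adjacent $B_i, B_j$ overlap on a set whose volume is comparable to $\min(\mu(B_i),\mu(B_j))$, and (iii)~each $B_i$ can be joined to a fixed central good ball $B_0$ by a chain whose length is bounded only in terms of $M$ and $n$. Telescoping along the chain,
\[
|f_{B_i} - f_{B_0}| \le \sum_{j} |f_{B_j}-f_{B_{j+1}}|,
\]
and bounding each consecutive difference by the local Poincar\'e inequality on a ball comparable to $B_j \cup B_{j+1}$ yields $\|f - f_{B_0}\|_{1,B(z,r)} \le C r\,\|\grad f\|_{1,B(z,\kappa r)}$. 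Replacing $f_{B_0}$ by $f_{B(z,r)}$ via $|f_{B(z,r)} - f_{B_0}| \le \dashint_{B(z,r)}|f - f_{B_0}|\,dx$ costs only a constant factor, and collecting the contributions from $C_X = 2^{3n+3}M^2/(\alpha n)$ in Corollary~\ref{VolPPoincare}, one factor of $M$ from bounded-degree chaining, and $\kappa^{N+1}$ from the radius and volume accounting in the cover, produces the constant $C_{Weak}= 2^{3n+3}M^3\kappa^{N+1}/(\alpha n)$ as claimed.

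The main obstacle will be the geometric lemma underlying the cover, namely that the enlargement factor $\kappa$ is \emph{exactly} enough so that, no matter which stratum $z$ lies in, every point of $B(z,r)$ can be reached from $B_0$ by a chain of good balls sitting inside $B(z,\kappa r)$, with the combinatorial chain length controlled solely by $M$ and $n$. This is where the angle bound $\alpha$ enters the constant in a sharp way via the law-of-cosines expression, and where admissibility (local $(n-1)$-chainability) is used to guarantee that such chains exist within a single ball.
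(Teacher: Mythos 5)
There is a genuine gap, and it sits exactly where you flag your ``main obstacle.'' The Whitney-cover-plus-chaining machinery you propose is the mechanism the paper uses for the \emph{next} step (upgrading the weak inequality with enlargement $\kappa$ to the strong one with $\kappa=1$, Theorem \ref{UnifPoincare1General}), and there the chaining requires as input a weak Poincar\'{e} inequality valid on every ball of the cover with \emph{arbitrary} center --- i.e.\ it requires Theorem \ref{weakp1} itself. For the present theorem the only available input is Corollary \ref{VolPPoincare}, whose admissible radius at a center $z_i$ on a $D_i$-dimensional face is limited by $d(z_i,X^{(D_i-1)})$ and hence degenerates near lower-dimensional strata. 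Consequently your assumptions (i)--(iii) --- overlap multiplicity bounded by $M$, adjacent balls overlapping in comparable volume, and chains to a central ball of length depending only on $M$ and $n$, all by ``good'' balls inside $B(z,\kappa r)$ --- cannot be arranged with balls of size comparable to $r$: a point of $B(z,r)$ close to (but not on) a low-dimensional face is covered only by good balls that are either tiny or centered on that face, and a center on that face lying near a still lower-dimensional face forces a further retreat, and so on. You do not prove the covering lemma, and as stated (bounded chain length, bounded overlap, comparable radii) it is not true without precisely the descent argument that constitutes the real content of the theorem; without it the constant cannot be controlled, and your bookkeeping (``one factor of $M$ from chaining, $\kappa^{N+1}$ from the cover'') is asserted rather than derived.

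The paper's proof shows that no cover or chain is needed at all: a \emph{single} recentering suffices. If $d(z,X^{(n-1)})>r$ one applies Corollary \ref{VolPPoincare} directly. Otherwise, starting from $r_0=3r$, one uses the law of cosines with the angle bound $\alpha$ to find, whenever the lowest-dimensional skeleton met by $B(z,r_i)$ is hit in two distinct faces, a point on a strictly lower-dimensional face within distance $\bigl(\frac{2}{\sqrt{2(1-\cos(\alpha))}}+1\bigr)r_i$ of $z$; after at most $n$ such steps one obtains one center $v$ on a $k$-face and a radius $R\le 3\bigl(\frac{2}{\sqrt{2(1-\cos(\alpha))}}+1\bigr)^n r$ such that $B(v,R)$ meets only that face, the hypothesis $r\le\varl/\kappa$ guaranteeing $B(v,R)$ contains at most one vertex. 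Then $B(z,r)\subset B(v,R)\subset B(z,\kappa r)$, and one application of Corollary \ref{VolPPoincare} on $B(v,R)$, combined with the double-integral comparison of $f-f_{B(z,r)}$ against $f-f_{B(v,R)}$ and local volume doubling $\mu(B(z,\kappa r))\le M\kappa^N\mu(B(z,r))$, yields $C_{Weak}=\frac{2^{3n+3}M^3\kappa^{N+1}}{\alpha n}$. Note also that your heuristic for $\kappa$ is off: the power $\bigl(\frac{2}{\sqrt{2(1-\cos(\alpha))}}+1\bigr)^n$ is the compounding of up to $n$ descent steps, and the factor $6=2\cdot 3$ comes from $r_0=3r$ together with the requirement that $R\le\frac{1}{2}\varl$ (at most one vertex) and that $B(v,R)\subset B(z,\kappa r)$ --- it is not a Vitali covering constant.
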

%%%%%%%%%%%%%%%%%%%%%%%%%%%%%%%%%%%%%%%%%%%%%%%%%%%%%%%Proof%%%%%%%%%%
\begin{proof}
Let $z \in X$ and $r\le \frac{\inf_{v,w\in X^{0}}
d(v,w)}{6(\frac{2}{\sqrt{2(1-\cos(\alpha))}}+1)^n }$ be given.  If
$d(z,X^{(n-1)}) > r$, then the result follows as a weaker version of
Corollary \ref{VolPPoincare}.  Otherwise, we will need to find a
point $v_k$ which has the property that it is on a $k$-skeleton, and
there are no other faces in the $k$-skeleton that are intersected by
$B(v,d(v,z)+r)$.  We will do this by descending down the skeletons.

If there is a point within $r$ of $z$ with this property, we will use
it.                   

If not, set $r_0=3r$.  Then there is a $k$ such that the lowest
dimensional skeleton that is intersected by $B(z,r_0)$ is $X^{(k)}$,
and $X^{(k)}$ is intersected by $B(z,r_0)$ at at least two points
$v_k$ and $w_k$ on two different faces.  If these faces did not
intersect, then they would be at least distance $\inf_{v,w\in X^{(0)}}
d(v,w)$ from one another.  This would imply that $\inf_{v,w\in
X^{(0)}} d(v,w) \le 2r_0 = 6r$, a contradiction.  Thus those two faces
intersect in a smaller $j$-dimensional face.  Call $v_j$ the point on
the $j$-dimensional face which minimizes
$\min(d(v_j,v_k),d(v_j,w_k))$.  These three points form a triangle
with angle $v_k v_j w_k \ge \alpha$, where $\alpha$ is the smallest
interior angle in $X$.  Note that this angle is bounded by the
assumption $\alpha \le \frac{\mu(W_k)}{\mu(r^n S^{n-1})}$. The
triangle that would maximize the minimum distance to this new point,
$\min(d(v_j,v_k),d(v_j,w_k))$ is an isosceles one with angle
$v_k v_j w_k = \alpha$.  The law of cosines tells us that for the
isosceles triangle, $d(v_k,w_k)^2 = 2 d(v_k,v_j)^2(1-\cos(\alpha))$,
and so for a general triangle, $d(v_k,v_j) \le
\frac{d(v_k,w_k)}{\sqrt{2(1-\cos(\alpha))}} \le
\frac{2r_0}{\sqrt{2(1-\cos(\alpha))}}$.  

If this $v_j$ works, we're done.  Otherwise, we will have at least two
points, $v_j$ and $w_j$ within
$\left(\frac{2}{\sqrt{2(1-\cos(\alpha))}}+1\right)r_0$ of $z$.  We
will repeat the process by taking new $r$'s of the form $r_{i+1}=
\left(\frac{2}{\sqrt{2(1-\cos(\alpha))}}+1\right)r_i$ until 
we have a point which works.  Note that each time we repeat it, we
find a point on a lower dimensional skeleton.  The worst case scenario
will have us repeat this $n$ times until we're left with at least one
point on $X^{(0)}$.  The largest radius that we could require is $R=
\left(\frac{2}{\sqrt{2(1-\cos(\alpha))}}+1\right)^n 3 r$.  Using this 
$R$, we can show that $B(v,R)$ does not intersect two vertices.  The
condition $r \le \frac{\inf_{v,w\in X^{0}}
d(v,w)}{6\left(\frac{2}{\sqrt{2(1-\cos(\alpha))}}+1\right)^n }$ tells
us that $R \le \frac{1}{2}\inf_{v,w\in X^{0}}d(v,w)$.  As two vertices
cannot be closer than the closest pair, $B(v,R)$ contains at
most one vertex.

This construction gives us a center, $v$, on a $k$-dimensional face,
and a radius, $R \le
\left(\frac{2}{\sqrt{2(1-\cos(\alpha))}}+1\right)^n 3 r $ so that
$B(v,R)$ intersects only the $k$-dimensional face that $v$ is on.
This allows us first to recenter our ball around $v$ and then to apply
Corollary \ref{VolPPoincare} to $f$ on $B(v,R)$.  Then, as $\kappa =
6\left(\frac{2}{\sqrt{2(1-\cos(\alpha))}}+1\right)^n$, we find $B(v,R)
\subset B(z,\kappa r)$.
\begin{eqnarray*}
\int_{B(z,r)} |f(x)-f_{B(z,r)}| dx 
 &=&   \frac{1}{\mu(B(z,r))}\int_{B(z,r)} \int_{B(z,r)}|f(x)-f(y)| dx dy \\
 &\le& \frac{1}{\mu(B(z,r))}\int_{B(v,R)} \int_{B(v,R)}|f(x)-f(y)| dx dy \\
 &\le& \frac{\mu(B(v,R))}{\mu(B(z,r))} \int_{B(v,R)} |f(x)-f_{B(v,R)}| dx \\
 &\le& \frac{\mu(B(v,R))}{\mu(B(z,r))} \frac{2^{3n+3} M^2}{\alpha n} 
R \int_{B(v,R)} |\grad f(x)|dx \\
 &\le& \frac{\mu(B(z,\kappa r))}{\mu(B(z,r))}\frac{2^{3n+3} M^2}{\alpha n} 
\kappa r \int_{B(z,\kappa r)} |\grad f(x)|dx\\
 &\le& M \kappa^N \frac{2^{3n+3} M^2}{\alpha n} \kappa r 
\int_{B(z,\kappa r)} |\grad f(x)|dx .
\end{eqnarray*}
\end{proof}

\section{Whitney Covers}
We would like to strengthen the weak version of the Poincar\'{e}
inequality. We'll do this by using a Whitney type covering of the
ball, $E=B(z,r)$.  Once we have this cover, we can use a chaining
argument to allow us to replace \\
$\kappa=6\left(\frac{2}{\sqrt{2(1-\cos(\alpha))}}+1\right)^n$ with 1.

Given our set we will consider a collection $F$ of balls such that
%%%%%%%%%%%%%%%%%%%%%%%%%%%%%%%%%%%%%%%Properties of cover
\begin{description}
\item(1) $B \in \F$ are disjoint. 
\item(2) If we expand the balls to ones with twice the radius, we cover all of
$E$. $\cup_{B\in \F} 2B =E$.
\item(3) For any $B \in F$, its radius is $r_B = 10^{-3}\kappa^{-1}
d(B,\bdry E)$.  This implies $10^{3}\kappa B \subset E$. Note that
this also tells us that the distance from the center of $B$ to the
boundary of $E$ is $(10^{-3}\kappa^{-1} +1)d(B, \bdry E) = (10^3\kappa
+1)r_B$.
\item(4) $\sup _{x \in E} |\{B\in \F | x \in 36 \kappa B \}| \le K$.
\end{description}
%%%%%%%%%%%%%%%%%%%%%%%%%%%%%%%%%%%%%%%%%%%%%%%%%%%%%%%%%%%
Note that the constant $\kappa$ depends on $X$ but not on the specific
choice of $E$. We will show in the following lemma that $K$ is
independent of $E$ as well.
\begin{lemma}
Property 4 is satisfied for $K \le C_{vol}^{\log_2(8(10^3\kappa
+1))}$.  When $E$ is a ball in $X$ which intersects only one vertex,
we have $K \le M (8(1+10^{3}\kappa))^N$.
\end{lemma}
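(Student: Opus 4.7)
Fix $x \in E$ and enumerate the Whitney balls $B_i = B(z_i, r_i) \in \F$ with $x \in 36\kappa B_i$ as $B_1, \ldots, B_k$. The plan is to show that the three Whitney conditions force these balls to have mutually comparable radii, to fit inside a single ball centered at $x$, and then to count them using disjointness together with a volume doubling estimate.

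First I would pin down the radii. Property (3) gives $d(z_i, \bdry E) = (1+10^3\kappa) r_i$, and $x \in 36\kappa B_i$ gives $d(x, z_i) \le 36\kappa r_i$. Setting $R^* := d(x, \bdry E)$, the triangle inequality then produces the two-sided bound
\[
\frac{R^*}{1+1036\kappa} \le r_i \le \frac{R^*}{1+964\kappa}.
\]
Since $36\kappa + 1 \le 1 + 964\kappa$, it follows that $B_i \subset B(x, (36\kappa+1) r_i) \subset B(x, R^*) \subset E$, so property (1) gives $\sum_i \mu(B_i) \le \mu(B(x, R^*))$. The same triangle inequality also yields $B(x, R^*) \subset B(z_i, (1+1072\kappa) r_i)$ for each $i$, so the remaining task is to compare the measure of a Whitney ball with the measure of this modestly inflated version of itself.

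For the general bound I would apply volume doubling with constant $C_{vol}$ to balls centered at $z_i$, obtaining
\[
\mu(B(x, R^*)) \le \mu(B(z_i, (1+1072\kappa) r_i)) \le C_{vol}^{\lceil \log_2(1+1072\kappa)\rceil} \mu(B_i).
\]
Since $1+1072\kappa \le 8(10^3\kappa + 1)$, summing and using the volume estimate above yields $k \le C_{vol}^{\log_2(8(10^3\kappa+1))}$. For the refined bound when $E$ meets only one vertex, the entire configuration lives in a union of at most $M$ Euclidean wedges glued along that single vertex, so the $X$-measure of any ball is bounded above by $M$ times, and below by $1$ times, the Euclidean measure of a ball of the same radius. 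The inflated-versus-original comparison then reduces to a direct Euclidean volume calculation, producing the factor $M(1+1072\kappa)^N$, which in turn is bounded by the desired $M(8(1+10^3\kappa))^N$.

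The only delicate piece is calibrating the numerology so that the final exponent comes out exactly as $\log_2(8(10^3\kappa+1))$; this works cleanly because the Whitney parameter $10^3\kappa$ in property (3) dominates the overlap parameter $36\kappa$ by a wide margin, leaving ample slack for both the containment $B_i \subset B(x, R^*)$ and the doubling comparison.
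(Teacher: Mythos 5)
Your argument for the general bound $K \le C_{vol}^{\log_2(8(10^3\kappa+1))}$ is correct and is essentially the paper's own: pin every admissible radius $r_i$ to $d(x,\bdry E)$ by the triangle inequality, use disjointness to sum the $\mu(B_i)$ inside $B(x,R^*)$, and then compare $\mu(B(x,R^*))$ to $\mu(B_i)$ by iterating doubling centered at $z_i$; your constants ($1+964\kappa$, $1+1036\kappa$, $1+1072\kappa$) play the same role as the paper's $r_1,r_2$ and $8(1+10^3\kappa)$, and the slack works out.

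The refined one-vertex bound, however, has a genuine gap: you assert that the $X$-measure of any ball is bounded \emph{below} by the Euclidean volume of a ball of the same radius. That is false in a Euclidean complex. Admissibility does not force the total solid angle along the skeleton to be at least that of $R^N$: for instance, glue two thin triangles with apex angle $\alpha$ along their two apex edges; a ball of radius $r$ centered at the apex has measure about $\alpha r^2 \ll \pi r^2$, and the same problem occurs for a Whitney ball $B_i$ whose center lies on (or near) the one vertex of $E$ or on a lower-dimensional face. The solid angle hypothesis only yields $\mu(B(z,r)) \gtrsim \alpha\,\omega_N r^N$, so your sandwich produces $M(1+1072\kappa)^N/\alpha$ rather than the claimed $M(8(1+10^3\kappa))^N$. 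The paper avoids any such lower bound by invoking the \emph{same-center} strong local doubling estimate stated in chapter one, $\mu(B(y,c\rho)) \le M c^N \mu(B(y,\rho))$ for $c\rho \le \varl$ (valid under the degree bound when the ball meets at most one vertex): since both balls share the center $y=z_i$, the wedge structure is common to numerator and denominator and no factor $1/\alpha$ appears. Your proof is repaired by replacing the Euclidean upper/lower comparison with exactly that estimate, applied to $B(x,R^*)\subset B(z_i,(1+1072\kappa)r_i)$ with $c = 1+1072\kappa \le 8(1+10^3\kappa)$.
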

\begin{proof}
Let a point $x\in E$ be given. Let $B(y,r_y) \in \F$ be a ball centered at $y$
with the property that $x \in 36\kappa B(y,r_y)$.  Then:
\begin{eqnarray*}
d(x,y) &\le& 36 \kappa r_y = 36 \kappa 10^{-3}\kappa^{-1} d(y, \bdry E) \\
&\le& .036 (d(x,y) +d(x,\bdry E)).
\end{eqnarray*}
When we solve for $d(x,y)$, we have:
\begin{eqnarray*}
d(x,y) \le 
     \frac{.036}{1-.036}d(x,\bdry E) .
\end{eqnarray*}
The triangle inequality tells us that 
\begin{eqnarray*}
d(x, \bdry E) -d(x,y) \le d(y,\bdry E ) \le d(y,x) + d(x, \bdry E).
\end{eqnarray*}
We use this to bound $d(y,\bdry E )$.
\begin{eqnarray*}
\frac{1-.072}{1-.036}d(x,\bdry E)
\le d(y,\bdry E) 
\le \frac{1}{1-.036}d(x,\bdry E).
\end{eqnarray*}
Because $r_y=(10^3\kappa +1)^{-1}d(y,\bdry E)$, this tells us that the
radius $r_y$ is bounded by
\begin{eqnarray*}
(1+10^{3}\kappa)^{-1}
\frac{1-.072}{1-.036}d(x,\bdry E)
\le r_y 
\le \frac{(1+10^{3}\kappa)^{-1}}{1-.036}d(x,\bdry E).
\end{eqnarray*}

These inequalities hold for any $B(y,r_y) \in \F$ with $x \in 36\kappa
B(y,r_y)$.  Each of the balls, $B_y$ will be contained in
$B_x:=B(x,r_1d(x,\bdry E))$ where $r_1=\frac{(1+10^{3}\kappa)^{-1} +
.036}{1-.036 }$.  The $B_y$ have radius at least $r_2d(x,\bdry E)$
where $r_2=(1+10^{3}\kappa)^{-1}\frac{1-.072}{1-.036}$.  We also know
that the $B_y$ are disjoint.  This tells us that:
\begin{eqnarray*}
|\{B\in \F | x \in 36 \kappa B \}|  \min_{B_y \in \F\cap B_x} \mu(B_y) \le 
\sum_{B_y \in \F \cap B_x} \mu(B_y) \le \mu(B_x).
\end{eqnarray*}
We can use volume doubling to compare the size of $\min_{B_y \in \F\cap B_x} \mu(B_y)$ and
$\mu(B_x)$.  Note that $r_1 < 2$ and
$\frac{1}{2(1+10^{3}\kappa)}<r_2$.  This tells us that \\ $B(x,r_1 d(x,
\bdry E)) \subset B(y, 8(1+10^3 \kappa)r_2 d(x,\bdry E))$.
\begin{eqnarray*}
\mu(B_x) 
\le C_{vol}^{\log_2(8(1+10^{3}\kappa))} \mu(B_y).
\end{eqnarray*}
Combining these inequalities and taking the supremum over $x \in E$ gives us:
\begin{eqnarray*}
 \sup _{x \in E} | \{B\in \F | x \in 36 \kappa B \} | \le
 C_{vol}^{\log_2(8(1+10^{3}\kappa))}.
\end{eqnarray*}
Note that this only depends on $\kappa$ and $C_{vol}$.
When $E$ intersects only one vertex we have: 
\begin{eqnarray*}
\mu(B_x) 
\le M 2^{2N} (2(1+10^{3}\kappa))^N
\mu(B_y).
\end{eqnarray*}
This gives us a more refined estimate.  Here $N$ is the dimension of $X$:
\begin{eqnarray*}
 \sup _{x \in E} | \{B\in \F | x \in 36 \kappa B \} |
\le M (8(1+10^{3}\kappa))^N .
\end{eqnarray*}
\end{proof}

We will first describe properties of this collection, and then we will
use them to show a Poincar\'{e} inequality.  This is a modified
version of the argument found in \cite{LSC}.  We can also use this
technique to take a Poincar\'{e} inequality on a small ball and extend
it to one on a larger ball whenever we have volume doubling.  This
increases the constant involved, so it cannot be done indefinitely,
but given a fixed radius we will be able to have inequalities that
hold up to balls of that size.

We begin with a bit of notation.  Let $B_z \in \F$ be a ball such that
$z\in 2B_z$.  Note that there may be more than one; we will pick one
arbitrarily.  As $z$ is the center of $E$, we will call $B_z$ the
central ball.  For a ball, $B$, call the center $x_B$, and fix
$\gamma_B$, a distance minimizing curve from $z$ to $x_B$.
%%%%%%%%%%%%%%%%%%%%%%%%%%%%%%%%%%%%%%%Lemma 536 Dist to bdry, r, r/4
\begin{lemma}\label{536}
For any $B \in \F$ we have
\begin{eqnarray*}
d(\gamma_B,\bdry E) \ge \frac{1}{2} d(B,\bdry E) = \frac{1}{2}\kappa 10^3 r_B.
\end{eqnarray*}
If $B'\in \F$ has the property $2B' \cap \gamma_B \ne \emptyset$, then
$r_{B'}\ge \frac{1}{4}r_B$.
\end{lemma}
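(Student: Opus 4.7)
The plan is to exploit the fact that $d(\cdot,\bdry E)$ is $1$-Lipschitz on any metric space, together with property~(3) of the Whitney cover, which gives $d(x_B,\bdry E) = (10^3\kappa + 1)r_B$ and forces $B \subset E$ (so in particular $d(z,x_B) < r$, since $x_B \in B \subset E = B(z,r)$).

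For the first claim I would fix an arbitrary $w \in \gamma_B$. Since $\gamma_B$ is a distance-minimizing curve from $z$ to $x_B$, it satisfies $d(z,w) + d(w,x_B) = d(z,x_B)$. The $1$-Lipschitz property of $d(\cdot,\bdry E)$ yields the two lower bounds
\begin{align*}
d(w,\bdry E) &\ge d(x_B,\bdry E) - d(w,x_B) = (10^3\kappa+1)r_B - d(w,x_B), \\
d(w,\bdry E) &\ge d(z,\bdry E) - d(z,w) = r - d(z,w).
\end{align*}
I would then split on the size of $d(w,x_B)$. If $d(w,x_B) \le \tfrac{1}{2}(10^3\kappa+2) r_B$, the first inequality directly delivers $d(w,\bdry E) \ge \tfrac{1}{2}\cdot 10^3\kappa\, r_B$. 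Otherwise, the geodesic identity gives $d(z,w) < d(z,x_B) - \tfrac{1}{2}(10^3\kappa+2)r_B$, and combining with $d(z,x_B) < r$ reduces the second inequality to the same lower bound. Taking the infimum over $w \in \gamma_B$ yields $d(\gamma_B,\bdry E) \ge \tfrac{1}{2}\cdot 10^3\kappa\, r_B = \tfrac{1}{2} d(B,\bdry E)$.

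For the second claim I would pick $w \in 2B' \cap \gamma_B$, so $d(w,x_{B'}) \le 2 r_{B'}$. The $1$-Lipschitz property combined with part~(1) gives
\[
d(x_{B'},\bdry E) \ge d(w,\bdry E) - 2r_{B'} \ge \tfrac{1}{2}\cdot 10^3\kappa\, r_B - 2 r_{B'}.
\]
Using $d(x_{B'},\bdry E) = (10^3\kappa+1) r_{B'}$ from property~(3) and rearranging yields $r_{B'} \ge \tfrac{10^3\kappa}{2(10^3\kappa+3)} r_B$, which exceeds $\tfrac{1}{4} r_B$ whenever $\kappa \ge 1$ (and $\kappa$ is always $\ge 6$ by its definition).

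The main obstacle is just bookkeeping the constants in the case split of part~(1); the geometric content is the natural statement that a geodesic between two points sitting deep inside $E$ remains deep inside $E$, and consequently any Whitney ball touching such a geodesic must be sized comparably to $r_B$, because Whitney radii are proportional to the distance to $\bdry E$.
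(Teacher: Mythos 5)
Your proposal is correct and follows essentially the same route as the paper: both parts rest on the triangle inequality for the distance-to-boundary function, the additivity $d(z,w)+d(w,x_B)=d(z,x_B)$ along the minimizing curve, and the Whitney relation $d(x_B,\bdry E)=(10^3\kappa+1)r_B$, and your second part reproduces the paper's bound $r_{B'}\ge \frac{10^3\kappa}{2(10^3\kappa+3)}r_B$ exactly. The only cosmetic difference is in part (1): where you case-split on the size of $d(w,x_B)$, the paper simply adds the two triangle inequalities and cancels $d(z,\alpha)+d(\alpha,x_B)\le r_E$, which avoids the split but yields the same estimate.
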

%%%%%%%%%%%%%%%%%%%%%%%%%%%%%%%%%%%%%%%%%%%%%%%%%%%%%%%%%%%%%%%%%%%%%%%
\begin{proof}
This first claim will follow from multiple applications of the
triangle inequality.  Let $\alpha$ be the point in $\gamma_B$ which is
closest to the boundary: \\ $d(\gamma_B,\bdry E) = d(\alpha, \bdry E)$.
Then we can bound $r_E$:
\begin{eqnarray*}
d(z,\alpha) + d(\alpha,\bdry E) \ge d(z, \bdry E) = r_E
\end{eqnarray*}
and we can bound the distance from $B$ to $\bdry E$:
\begin{eqnarray*}
d(x_B,\alpha) + d(\alpha,\bdry E) \ge d(x_B,\bdry E) \ge d(B,\bdry E).
\end{eqnarray*}
Summing them, we find that:
\begin{eqnarray*}
d(z,\alpha) + d(x_B,\alpha) + 2d(\alpha,\bdry E)\ge  r_E +d(B,\bdry E). 
\end{eqnarray*}
As $z$ is on $\gamma_B$ and $\alpha$ minimizes the distance to the
boundary, we have \\ $d(z,\alpha) + d(\alpha,x_B)=d(z,x_B) \le r_E$.
Putting this into the inequality, we find:
\begin{eqnarray*}
r_{E} + 2d(\alpha,\bdry E) &\ge&  r_{E}+d(B,\bdry E) \\
d(\alpha,\bdry E) &\ge&  \frac{1}{2}d(B,\bdry E).
\end{eqnarray*}
The second part follows from the fact:
\begin{eqnarray*}
\frac{1}{2}d(B,\bdry E) \le d(\gamma_B,\bdry E) 
\le d(\gamma_B \cap 2B',\bdry E).
\end{eqnarray*}
If $\alpha'$ is the point in $\gamma_B \cap 2\bar{B'}$ and $\beta'$ is
the point in $\bar{B'}$ that realizes the distance to the boundary
$\bdry E$, then we have
\begin{eqnarray*}
d(\gamma_B \cap 2B',\bdry E) = d(\alpha',\bdry E) 
 &\le& d(\alpha',x_{B'}) + d(x_{B'},\beta') + d(B',\bdry E) \\
 &\le& 2r_{B'} + r_{B'} + d(B',\bdry E) .
\end{eqnarray*}
Combining this with fact (3), we have:
\begin{eqnarray*}
\frac{1}{2} 10^3\kappa r_{B} &\le& 3r_{B'} +10^3\kappa r_{B'} \\
\frac{1}{4}r_{B} &\le& \frac{10^3\kappa}{2(3+10^3\kappa)} r_{B} \le r_{B'}.
\end{eqnarray*}
\end{proof}
%%%%%%%%%%%%%%%%%%%%%%%%%%%%%%%%%%%%%%%%%%%%%%%%%%%%%End Proof 536
For each ball $B$ in $\F$, we would like to define a string of balls,
$\F(B)$, that takes $B_z$ to $B$.  Set $B_0=B_z$.  Then, for the first
point on $\gamma_B$ that is not contained in $2B_i$, take a ball
$B_{i+1}$ in $F$ such that that point is contained in $2B_{i+1}$.  As
$B_{i+1}$ is open, this guarantees that $2B_i\cap 2B_{i+1} \ne
\emptyset$.  We continue in this manner until $2B_{\varl -1}\cap 2B \ne
\emptyset$.  Then we set $B_{\varl} =B$.  We label $\F(B)=
\cup_{i=0}^{\varl} B_i$.  Note that due to volume doubling, the chain
will be finite.  This chain will allow us to move from the central
ball to any other ball in the cover of $E$.  It is useful because
neighboring balls are of comparable radii and volume.
%%%%%%%%%%%%%%%%%%%%%%%%%%%%%%%%%%%%%%%%%%%%%%%%Lemma 537 compare r_i
\begin{lemma} \label{537}
For any $B\in \F$ and any $B_i,B_{i+1} \in \F(B)$ we can compare the
radii where $r_j=r_{B_j}$ in the following manner:
\begin{eqnarray*}
(1 + 10^{-2}\kappa^{-1})^{-1}r_i \le r_{i+1} \le (1 + 10^{-2}\kappa^{-1}) r_i.
\end{eqnarray*}
We also have $B_{i+1}\subset 6B_{i}$ and $B_{i}\subset 6B_{i+1}$, and
so 
\begin{eqnarray*}
\mu(6B_i \cap 6B_{i+1}) \ge \max\{\mu(B_i),\mu(B_{i+1})\}.
\end{eqnarray*}
\end{lemma}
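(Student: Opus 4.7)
The plan is to exploit two elementary facts: (i) the function $y \mapsto d(y, \bdry E)$ is $1$-Lipschitz on any metric space, and (ii) property (3) of the Whitney cover forces $d(x_{B_j}, \bdry E) = (10^3\kappa + 1) r_j$ for every $B_j \in \F$, as already noted in property (3). Combining these with the overlap $2B_i \cap 2B_{i+1} \ne \emptyset$ should squeeze the ratio $r_i / r_{i+1}$ to near $1$.

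Concretely, I would first observe that by the construction of $\F(B)$ the consecutive balls satisfy $2B_i \cap 2B_{i+1} \ne \emptyset$, so the triangle inequality gives $d(x_{B_i}, x_{B_{i+1}}) \le 2(r_i + r_{i+1})$. Applying the $1$-Lipschitz property of $d(\cdot, \bdry E)$ to the two centers, and substituting $d(x_{B_j}, \bdry E) = (10^3\kappa + 1)r_j$, I would obtain
\begin{equation*}
(10^3\kappa + 1)\,|r_i - r_{i+1}| \le 2(r_i + r_{i+1}).
\end{equation*}
Assuming without loss of generality $r_i \ge r_{i+1}$, rearranging yields $(10^3\kappa - 1)r_i \le (10^3\kappa + 3)r_{i+1}$, hence
\begin{equation*}
\frac{r_i}{r_{i+1}} \le 1 + \frac{4}{10^3\kappa - 1}.
\end{equation*}
Since $\kappa \ge 6$ by its definition in Theorem \ref{weakp1}, a short numerical check gives $4/(10^3\kappa - 1) \le 1/(100\kappa) = 10^{-2}\kappa^{-1}$, which produces the desired two-sided bound $(1 + 10^{-2}\kappa^{-1})^{-1} r_i \le r_{i+1} \le (1 + 10^{-2}\kappa^{-1}) r_i$.

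For the containment, any $y \in B_{i+1}$ satisfies $d(y, x_{B_i}) \le d(y, x_{B_{i+1}}) + d(x_{B_{i+1}}, x_{B_i}) < r_{i+1} + 2(r_i + r_{i+1}) = 2 r_i + 3 r_{i+1}$. The radius comparison implies $r_{i+1} \le \tfrac{4}{3} r_i$ (in fact much less, since $1 + 10^{-2}\kappa^{-1} \ll \tfrac{4}{3}$), whence $2r_i + 3r_{i+1} \le 6 r_i$, giving $B_{i+1} \subset 6B_i$; the symmetric argument yields $B_i \subset 6B_{i+1}$. Finally, the volume claim follows trivially: $B_i \subset 6B_i \cap 6B_{i+1}$ (combining $B_i \subset 6B_i$ with $B_i \subset 6B_{i+1}$), and similarly $B_{i+1} \subset 6B_i \cap 6B_{i+1}$, so $\mu(6B_i \cap 6B_{i+1}) \ge \max\{\mu(B_i), \mu(B_{i+1})\}$.

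The only real obstacle is bookkeeping the constants: the gap between $10^3\kappa$ and $10^{-2}\kappa^{-1}$ is not tight, but the lower bound $\kappa \ge 6$ (coming from the definition $\kappa = 6(\tfrac{2}{\sqrt{2(1-\cos\alpha)}}+1)^n$) gives plenty of room. Nothing here uses admissibility or the complex structure — only the metric-space definition of the Whitney cover and property (3) — so the argument transplants verbatim from the Euclidean case treated in \cite{LSC}.
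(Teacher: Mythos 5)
Your proof is correct and follows essentially the same route as the paper: the $1$-Lipschitz property of $d(\cdot,\bdry E)$ applied to the two centers is just the paper's triangle-inequality computation packaged symmetrically, and both arrive at the ratio bound $1+\frac{4}{10^3\kappa-1}\le 1+10^{-2}\kappa^{-1}$ before deducing the inclusions $B_{i+1}\subset 6B_i$, $B_i\subset 6B_{i+1}$ and the measure comparison in the same way. No gaps; the numerical slack you note (needing only $\kappa\ge 1/600$) is consistent with the paper's use of $\kappa\ge 6$.
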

%%%%%%%%%%%%%%%%%%%%%%%%%%%%%%%%%%%%%%%%%%%%%%%%%%%%%%%%%%%%%%%%%%%%%
\begin{proof}
Let $x_i$ and $x_{i+1}$ be the centers of $B_i$ and $B_{i+1}$
respectively.  By our
construction, $2B_i \cap 2B_{i+1} \ne \emptyset$.  This tells us that
$d(x_i,x_{i+1}) \le 2r_i + 2r_{i+1}$.
\begin{eqnarray*}
d(x_{i+1},\bdry E) &\le& d(x_{i+1},x_i) + d(x_i,\bdry E) \\ r_{i+1} +
d(B_{i+1},\bdry E) &\le& (2r_i + 2r_{i+1}) + (r_i + d(B_i,\bdry E)) \\
r_{i+1} +10^3 \kappa r_{i+1} &\le& 2r_{i+1} + 3r_i + 10^3 \kappa r_{i}
\\ r_{i+1} &\le& \frac{3+10^3 \kappa}{10^3 \kappa-1} r_i =
\left(1+\frac{4}{10^3 \kappa-1}\right) r_i .
\end{eqnarray*}
This tells us that $r_{i+1} \le(1+10^{-2}\kappa^{-1}) r_i$.  By a symmetric
argument, we also get the lower bound.

To show set inclusions, we use the fact that 
\begin{eqnarray*}
d(x_i,x_{i+1}) \le 2r_i + 2r_{i+1} \le 2r_i +2(1+10^{-2}\kappa^{-1}) r_i .
\end{eqnarray*}
Since any point in $B_{i+1}$ is within distance $r_{i+1}$ of
$x_{i+1}$, the triangle inequality tells us that it is within distance
$2r_i +2(1+10^{-2}\kappa^{-1}) r_i + r_{i+1} \le 6r_i$ of $x_i$.  This
gives us the inclusion $B_{i+1}\subset 6B_{i}$.  The reverse holds by
a symmetric argument, and so $\mu(6B_i \cap 6B_{i+1}) \ge
\max\{\mu(B_i),\mu(B_{i+1})\}$ follows.
\end{proof}
%%%%%%%%%%%%%%%%%%%%%%%%%%%%%%%%%%%%%%%%%%%%%%%%%%%%Lemma 538 Chain
\begin{lemma}\label{538}
Given $B\in \F$ and $A \in \F(B)$ we have $B \subset (10^3\kappa +9)A$.
\end{lemma}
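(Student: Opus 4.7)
The plan is: for $y\in B$, write $d(y,x_A)\le d(y,x_B)+d(x_B,x_A)\le r_B+d(x_B,x_A)$ and bound each term. Since $A\in\F(B)$, the chain construction guarantees $2A\cap\gamma_B\ne\emptyset$ (for $A=B_z$ take $\alpha=z$; for $A=B$ take $\alpha=x_B$; for intermediate $A=B_i$ take the ``handoff'' point of $\gamma_B$ from $2B_{i-1}$ into $2B_i$). Lemma~\ref{536} then gives $r_B\le 4r_A$, controlling the first term.

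For the second term, I would pick $\alpha\in 2A\cap\gamma_B$, so that $d(x_A,\alpha)\le 2r_A$, and observe that because $\gamma_B$ is a minimizing geodesic from $z$ to $x_B$ passing through $\alpha$, its sub-arc from $\alpha$ to $x_B$ is also minimizing, whence $d(\alpha,x_B)=d(z,x_B)-d(z,\alpha)$. The heart of the argument is that property~(3) of the Whitney cover forces $\alpha$ to lie far along $\gamma_B$: the triangle inequality through a boundary point yields $d(z,x_A)\ge r-d(x_A,\bdry E)=r-(10^3\kappa+1)r_A$, where $r$ is the radius of $E$. Combined with $d(x_A,\alpha)\le 2r_A$ and $d(z,x_B)\le r$, this gives $d(\alpha,x_B)\le(10^3\kappa+3)r_A$, and hence $d(x_A,x_B)\le 2r_A+(10^3\kappa+3)r_A=(10^3\kappa+5)r_A$.

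Assembling, $d(y,x_A)\le r_B+(10^3\kappa+5)r_A\le 4r_A+(10^3\kappa+5)r_A=(10^3\kappa+9)r_A$, so $y\in(10^3\kappa+9)A$. The main subtlety is the step $d(z,x_A)\ge r-(10^3\kappa+1)r_A$: one needs the full equality $d(x_A,\bdry E)=(10^3\kappa+1)r_A$ coming from property~(3), rather than only the containment $10^3\kappa A\subset E$, so that $x_A$ is actually pushed into the interior of $E$ and the geodesic parameter of $\alpha$ is forced close to $d(z,x_B)$.
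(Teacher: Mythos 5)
Your argument is correct and essentially the same as the paper's: both reduce the claim to $d(x_A,x_B)\le(10^3\kappa+5)r_A$ together with $r_B\le 4r_A$ from Lemma~\ref{536}, using a point $\alpha\in 2A\cap\gamma_B$, the fact that $\gamma_B$ is minimizing so $d(\alpha,x_B)=d(z,x_B)-d(z,\alpha)$, and the defining relation of property~(3) to bound $d(x_A,\bdry E)$ by $(10^3\kappa+1)r_A$. Your rearrangement of the triangle inequalities (bounding $d(z,\alpha)$ from below rather than $d(z,\bdry E)-d(z,\alpha)$ from above) is only a cosmetic variation, and your extra check that $2A\cap\gamma_B\ne\emptyset$ for every $A\in\F(B)$ is a point the paper leaves implicit in the chain construction.
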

%%%%%%%%%%%%%%%%%%%%%%%%%%%%%%%%%%%%%%%%%%%%%%%%%%%%%%%%%%%%%%%%%%%
\begin{proof}
Let $x_B$ and $x_A$ be the centers of $B$ and $A$ respectively, and
let $\alpha$ be a point in $2A\cap \gamma_B$.  By \ref{536}, we know
that $4 r_A \ge r_B$.  Note that $\alpha$ occurs on the distance
minimizing curve $\gamma_B$ between $x_B$ which is the center of $B$
and $z$, the center of the large ball $E$.  This tells us that
$d(z,x_B) = d(z,\alpha) +d(\alpha,x_B)$.  Then
\begin{eqnarray*}
d(x_A,x_B) &\le& d(x_A,\alpha) + d(\alpha,x_B) \\
&=& d(x_A,\alpha) + (d(z,x_B) - d(z,\alpha)) \\
&\le& 2r_A +d(z,\bdry E) - d(z,\alpha) \\
&\le& 2r_A +(d(z,\alpha) +d(\alpha,\bdry E)) - d(z,\alpha) \\
&\le& 2r_A +d(\alpha, x_A) + d(x_A,\bdry E) \\
&\le& 4r_A + (r_A + 10^3\kappa r_A) = (10^3\kappa +5)r_A .
\end{eqnarray*}
Because all points in $B$ are within $r_B \le 4 r_A$ of $x_B$, they
will be within \\ $(10^3\kappa +5)r_A + 4r_A = (10^3\kappa +9)r_A$ of $x_A$.
Thus, $B \subset (10^3\kappa +9)A$ holds.
\end{proof}
We now have a number of lemmas that describe the geometry of the
covering.  We can use these to get an extension of our Poincar\'{e}
inequality.  We will do this by using this chain of balls to get a
chain of inequalities.  Our first step is to compare the average of
neighboring balls in the chain.
%%%%%%%%%%%%%%%%%%%%%%%%%%%%%%%%%%%%%%%%%%%%%%%%%Lemma 539 diff of f_Bi 
\begin{lemma}\label{539}
For $B_i$ and $B_{i+1}$ neighboring balls in a chain $\F(B)$, we have
\begin{eqnarray*}
|f_{6B_i} - f_{6B_{i+1}}| 
 \le C_{Weak} 18 \kappa \frac{r_i}{\mu(B_i)} 
\int_{36\kappa B_i} |\grad f(x)| d\mu(x)
\end{eqnarray*}
whenever $f$ satisfies 
\begin{eqnarray*}
||f-f_{6B_i}||_{1,6B_i} \le C_{Weak} \kappa 6 r_i ||\grad f||_{1,\kappa 6B_i}
\end{eqnarray*}
 for all $B_i \in \F(B)$.
\end{lemma}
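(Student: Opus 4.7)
The plan is a standard chaining-style comparison: use the substantial overlap between $6B_i$ and $6B_{i+1}$ provided by Lemma \ref{537} to bridge the two averages through values of $f$ itself, then apply the hypothesized weak Poincaré inequality on each ball.

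First I would express the constant difference $f_{6B_i}-f_{6B_{i+1}}$ as an average over the intersection and insert $f(x)$ by the triangle inequality. Since $f_{6B_i}$ and $f_{6B_{i+1}}$ are constants,
\begin{align*}
|f_{6B_i} - f_{6B_{i+1}}|
&= \left|\aveint_{6B_i\cap 6B_{i+1}} (f_{6B_i}-f_{6B_{i+1}})\,dx\right| \\
&\le \frac{1}{\mu(6B_i\cap 6B_{i+1})}\left(\int_{6B_i}|f-f_{6B_i}|\,dx + \int_{6B_{i+1}}|f-f_{6B_{i+1}}|\,dx\right),
\end{align*}
after enlarging the domain of each of the two pieces from the intersection up to the respective ball.

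Next, Lemma \ref{537} gives $\mu(6B_i\cap 6B_{i+1}) \ge \mu(B_i)$, so the prefactor becomes $1/\mu(B_i)$. Applying the assumed weak Poincaré inequality on $6B_i$ and on $6B_{i+1}$ bounds the two integrals by $C_{Weak}\cdot 6\kappa r_i\int_{36\kappa B_i}|\grad f|\,dx$ and $C_{Weak}\cdot 6\kappa r_{i+1}\int_{36\kappa B_{i+1}}|\grad f|\,dx$ respectively.

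Finally, I would consolidate the two terms using the radius comparison $r_{i+1}\le (1+10^{-2}\kappa^{-1})r_i$ from Lemma \ref{537} together with the containment of $36\kappa B_{i+1}$ inside (a very slight enlargement of) $36\kappa B_i$ coming from $B_{i+1}\subset 6B_i$; this fuses the two gradient integrals into a single one over $36\kappa B_i$ and combines the radius prefactors into a constant bounded by $18\kappa r_i$. The argument has no real obstacles: every geometric input is already packaged in Lemma \ref{537}, and the only care needed is bookkeeping the constants so that they close up cleanly into the claimed $C_{Weak}\cdot 18\kappa\cdot r_i/\mu(B_i)$.
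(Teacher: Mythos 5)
Your argument is essentially the paper's own proof: average the constant difference over $6B_i \cap 6B_{i+1}$, insert $f$ by the triangle inequality, enlarge each piece to its ball, invoke $\mu(6B_i\cap 6B_{i+1})\ge\mu(B_i)$ from Lemma \ref{537}, apply the hypothesized weak Poincar\'{e} inequality on $6B_i$ and $6B_{i+1}$, and consolidate using $r_{i+1}\le(1+10^{-2}\kappa^{-1})r_i\le 2r_i$, exactly as the paper does. The one adjustment you should make is to keep the gradient integrals on the domains the hypothesis actually provides, $6\kappa B_i$ and $6\kappa B_{i+1}$, and then use $6\kappa B_{i+1}\subset 36\kappa B_i$ (which follows from the center-distance and radius comparisons of Lemma \ref{537}); pre-enlarging to $36\kappa B_{i+1}$ as you propose is problematic because $36\kappa B_{i+1}$ need not lie inside $36\kappa B_i$, so your fused integral would end up on a slightly larger ball than the one appearing in the lemma.
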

%%%%%%%%%%%%%%%%%%%%%%%%%%%%%%%%%%%%%%%%%%%%%%%%%%%%%%%%%%%%%%%%%%%%%%
\begin{proof}
We can write:
\begin{equation*}
\begin{split}
\mu(6B_i \cap 6B_{i+1}) & |f_{6B_i} - f_{6B_{i+1}}|  \\
& = \int_{6B_i \cap 6B_{i+1}} |f_{6B_i} - f_{6B_{i+1}}| d\mu(x) \\
& \le \int_{6B_i \cap 6B_{i+1}} |f(x)-f_{6B_i}|+|f(x)- f_{6B_{i+1}}| d\mu(x) \\
& \le \int_{6B_i} |f(x)-f_{6B_i}| d\mu(x) +\int_{6B_{i+1}}|f(x) - f_{6B_{i+1}}| 
 d\mu(x) \\
& \le C_{Weak} 6 \kappa \left(r_i \int_{6\kappa B_i}|\grad f(x)| d\mu(x) 
                + r_{i+1} \int_{6\kappa B_{i+1}}|\grad f(x)| d\mu(x) \right)\\
& \le C_{Weak} 6 \kappa \left(r_i \int_{6\kappa B_i}|\grad f(x)| d\mu(x) 
                + 2 r_{i} \int_{36\kappa B_{i}}|\grad f(x)| d\mu(x) \right)\\
& \le C_{Weak} 18 \kappa r_i \int_{36\kappa B_{i}}|\grad f(x)| d\mu(x) .
\end{split}
\end{equation*}
This string of inequalities holds by the triangle inequality, set
inclusion, the weak Poincar\'{e} inequality (Theorem \ref{weakp1}),
and the comparisons in Lemma \ref{537}.
 
By Lemma \ref{537} we know that $\mu(B_i) \le \mu(6B_i \cap
6B_{i+1})$, and so we can rewrite this to get:
\begin{eqnarray*}
\mu(B_i)|f_{6B_i} - f_{6B_{i+1}}|
  \le 18 \kappa r_i C_{Weak} \int_{36\kappa B_i} |\grad f(x)| d\mu(x).
\end{eqnarray*}
\end{proof}
Recall that we have shown $C_{Weak}=\frac{2^{3n+3} M^3
\kappa^{N+1}}{\alpha n}$, as in Theorem \ref{weakp1} for our small
balls containing only one vertex.

We are now in a position to prove our main theorem.
%%%%%%%%%%%%%%%%%%%%%%%%%%%%%%%%%%%%%%%%%%%%%%%%%%%Theorem r indep of z
\begin{theorem}\label{UnifPoincare1General}
Let $E$ be set whose subsets satisfy volume doubling with constant
$C_{vol}$.  Suppose $\F$ is a Whitney type cover of $E$ and that $f$
satisfies
\begin{eqnarray*}
||f-f_{6B_i}||_{1,6B_i} \le C_{Weak} \kappa 6 r_i ||\grad f||_{1,\kappa 6B_i}
\end{eqnarray*}
 for all $B_i \in \F$.  Then
\begin{eqnarray*}
\int_E |f(x)-f_E| d\mu(x) \le P_0 r \int_E |\grad f(x)| d\mu(x)
\end{eqnarray*}
holds where 
 $ P_0=\left(1+3 C_{vol}^{1+\log_2(10^3 \kappa +9)}\right)
     K C_{Weak} 12 \kappa 10^{-3} $.
\end{theorem}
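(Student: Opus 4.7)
The plan is to run a chaining argument along the Whitney cover, combining the hypothesized $L^1$-Poincar\'{e} estimate on each $6B$ with a telescoping comparison of the averages $f_{6B_i}$ along the chains $\F(B)$ from Lemma \ref{537}.

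First I would replace $f_E$ by $f_{6B_z}$ on the central ball, using the standard estimate $\int_E|f - f_E|\,d\mu \le 2\int_E|f - c|\,d\mu$ with $c = f_{6B_z}$ (well defined since $6B_z \subset 10^3\kappa B_z \subset E$ by property~(3)). Property~(2) then lets me pick a measurable subpartition $\{E_B\}_{B\in\F}$ of $E$ with $E_B \subset 2B$, so that
\[\int_E|f - f_{6B_z}|\,d\mu \;\le\; \sum_{B\in\F}\int_{2B}|f - f_{6B_z}|\,d\mu.\]
For $x \in 2B$ I telescope along $\F(B) = \{B_0 = B_z, B_1, \ldots, B_\varl = B\}$:
\[|f(x) - f_{6B_z}| \;\le\; |f(x) - f_{6B}| \;+\; \sum_{i=0}^{\varl-1}\bigl|f_{6B_{i+1}} - f_{6B_i}\bigr|,\]
integrate over $2B\subset 6B$, and apply the Poincar\'{e} hypothesis to the first term and Lemma \ref{539} to each telescoping term.

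Summing over $B\in\F$ produces two kinds of contributions. The first sums directly: property~(4) bounds the overlap of the $6\kappa B$'s by $K$ (since $6\kappa B \subset 36\kappa B$), and property~(3) gives $r_B \le 10^{-3}\kappa^{-1}r$, yielding a contribution of at most $6\cdot 10^{-3}\,K\,C_{Weak}\,r\int_E|\grad f|\,d\mu$. For the telescoping contribution, I would swap the order of summation to group by the ball $A\in\F$ that appears in a chain:
\[\sum_B \mu(2B)\sum_i C_{Weak}\,18\kappa\,\frac{r_i}{\mu(B_i)}\int_{36\kappa B_i}|\grad f|\,d\mu \;=\; \sum_A C_{Weak}\,18\kappa\,\frac{r_A}{\mu(A)}\int_{36\kappa A}|\grad f|\,d\mu \;\cdot\; \sum_{B:\,A\in\F(B)}\mu(2B).\]
Lemma \ref{538} gives $B \subset (10^3\kappa + 9)A$ for each $B$ with $A\in\F(B)$, so disjointness of $\F$ (property~(1)) plus volume doubling yield $\sum_{B:\,A\in\F(B)}\mu(2B) \le C_{vol}^{1+\log_2(10^3\kappa+9)}\mu(A)$. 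Using $r_A \le 10^{-3}\kappa^{-1}r$ and one more application of the property~(4) overlap bound on the $36\kappa A$'s, the telescoping contribution is at most $18\cdot 10^{-3}\,K\,C_{Weak}\,C_{vol}^{1+\log_2(10^3\kappa+9)}\,r\int_E|\grad f|\,d\mu$. Combining both contributions with the factor $2$ from the initial reduction and collecting yields the asserted bound with constant $P_0$.

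The main obstacle is the double-sum swap and its volume-doubling bookkeeping. Lemmas \ref{536}--\ref{538} are tailored exactly to this: they give comparability of consecutive radii along a chain, the containment $B \subset (10^3\kappa+9)A$ for every $A\in\F(B)$, and disjointness of $\F$ with a uniform overlap bound. Once those ingredients are in hand, the rest reduces to careful constant tracking.
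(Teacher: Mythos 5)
Your proposal is correct and follows essentially the same route as the paper: reduce $f_E$ to the central-ball average $f_{6B_z}$, cover $E$ by the $2B$'s, apply the hypothesized weak Poincar\'{e} inequality on each $6B$, and chain the averages along $\F(B)$ via Lemma \ref{539}, with your sum-swap grouped by $A\in\F(B)$ being exactly the paper's $\chi_B\,\chi_{(10^3\kappa+9)A}$ indicator argument (disjointness of $\F$ plus Lemma \ref{538} and volume doubling). The only difference is bookkeeping: your constant comes out without the extra factor of $\kappa$ appearing in the stated $P_0$, which is harmless since $\kappa\ge 1$, so the asserted bound follows a fortiori.
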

%%%%%%%%%%%%%%%%%%%%%%%%%%%%%%%%%%%%%%%%%%%%%%%%%%%%%%%%%%%%%%%%%%
\begin{proof}
We want to bound $|f-f_E|$.  In order to do this, we will split this
quantity into two essentially similar pieces, $|f-f_{6B_z}|$ and
$|f_E-f_{6B_z}|$.  At the end of the proof, we will show that
$|f_E-f_{6B_z}|$ can be bounded by $|f-f_{6B_z}|$.  Because of this,
we only need consider $|f-f_{6B_z}|$.  We will take $f$ minus its
average on the central ball and put it into a form where we can take
advantage of the covering.  This will involve splitting this further
into chains of sufficiently small balls, and then applying the weak
Poincar\'{e} inequality to them.  After a bit of work, this will give
us the desired inequality.
 
First, we will use the fact that $\cup_{B \in \F} 2B$ covers all of $E$
to split the integral up into pieces.
\begin{eqnarray*}
\int_E |f(x)-f_{6B_z}| d\mu(x) 
&\le& \sum_{B \in \F} \int_{2B} |f(x)-f_{6B_z}| d\mu(x) \\
&\le& \sum_{B \in \F} \int_{2B} |f(x)-f_{6B}| d\mu(x)
    + \int_{2B} |f_{6B}-f_{6B_z}| d\mu(x) .
\end{eqnarray*}

The first piece can be bounded nicely using the weak Poincar\'{e}
inequality (Theorem \ref{weakp1}).
\begin{eqnarray*}
\sum_{B \in F} \int_{2B} |f(x)-f_{6B}| d\mu(x) 
  &\le& \sum_{B \in F} \int_{6B} |f(x)-f_{6B}| d\mu(x) \\ &\le& \sum_{B
  \in F} C_{Weak} 6 \kappa r_B \int_{6 \kappa B} |\grad f(x)| d\mu(x)\\
  &\le& K C_{Weak} 6 \kappa 10^{-3} r_{E} \int_{E} |\grad f(x)| d\mu(x) .
\end{eqnarray*}

The last part of the inequality follows from the fact that $6 \kappa B
\subset E$ (by Lemma \ref{536}), and at most $K$ balls in $6 \kappa \F$ 
overlap any given point in $E$.

The second piece can be rewritten as:
\begin{eqnarray*}
\sum_{B \in \F}\int_{2B} |f_{6B}-f_{6B_z}| d\mu(x) 
&=&\sum_{B \in \F}\mu(2B) |f_{6B}-f_{6B_z}|\\
&\le& \sum_{B \in \F}C_{vol} \mu(B) |f_{6B}-f_{6B_z}|.
\end{eqnarray*}

Now let us consider what happens when we fix $B$.  We have a chain,
$\F(B)$, connecting $B$ to the central ball; we can use this and Lemma
\ref{539} to find:
\begin{eqnarray*}
|f_{6B}-f_{6B_z}| &\le& \sum_{i=0}^{\varl -1} |f_{6B_i}-f_{6B_{i+1}}| \\
&\le& \sum_{i=0}^{\varl -1} C_{Weak} 18 \kappa \frac{r_i}{\mu(B_i)} 
    \int_{36\kappa B_i} |\grad f(x)| d\mu(x) \\
&=& \sum_{A \in \F(B)} C_{Weak} 18 \kappa \frac{r_A}{\mu(A)} 
    \int_{36\kappa A} |\grad f(x)| d\mu(x).
\end{eqnarray*}

By lemma \ref{538} we know that $B \subset (10^3 \kappa +9) A$ for any
$A\in \F(B)$, and so we have $\chi_B = \chi_B \chi_{(10^3 \kappa +9) A}$.
Multiplying the previous inequality by this, summing over the $B$, and
then integrating over $E$ gives us:
\begin{equation*}
\begin{split}
\int_E &\sum_{B \in \F} |f_{6B}-f_{6B_z}|\chi_B(y) d\mu(y)
\\ &\le \int_E \sum_{B \in \F}\sum_{A \in \F(B)} 
      C_{Weak} 18 \kappa \frac{r_A}{\mu(A)} 
        \int_{36\kappa A} |\grad f(x)| d\mu(x) \chi_B(y) \chi_{(10^3 \kappa +9) A}(y) d\mu(y).
\end{split}
\end{equation*}
Since the $B$ are disjoint, we have $\sum_{B \in \F} \chi_B(y) \le 1$.
This allows us to simplify the right hand side.  We can then integrate.
\begin{eqnarray*}
... &\le& \int_E \sum_{A \in \F} 
      C_{Weak} 18 \kappa \frac{r_A}{\mu(A)} 
        \int_{36\kappa A} |\grad f(x)| d\mu(x) \chi_{(10^3 \kappa +9) A}(y) d\mu(y) \\
&=& \sum_{A \in \F} 
      C_{Weak} 18 \kappa \frac{r_A\mu((10^3 \kappa +9)A)}{\mu(A)} 
        \int_{36\kappa A} |\grad f(x)| d\mu(x).
\end{eqnarray*}
Volume doubling gives us:
\begin{eqnarray*}
&\le& \sum_{A \in \F} 
      C_{Weak} 18 \kappa r_{A}  C_{vol}^{\log_2(10^3 \kappa +9)}  
       \int_{36\kappa A} |\grad f(x)| d\mu(x).       
\end{eqnarray*}
We then use the bound from (4) to see:
\begin{eqnarray*}
&\le& C_{Weak} 18 \kappa 10^{-3} r_{E}
    C_{vol}^{\log_2(10^3 \kappa +9)} K \int_{E} |\grad f(x)| d\mu(x).
\end{eqnarray*}
Putting all of this together and factoring, our original inequality becomes:
\begin{equation*}
\begin{split}
\int_E |f(x)-f_{6B_z}| &d\mu(x) \\ &\le 
 \left(1+3C_{vol}^{1+\log_2(10^3 \kappa +9)} \right)
   K C_{Weak} 6 \kappa 10^{-3}
   r_{E} \int_{E} |\grad f(x)| d\mu(x) .
\end{split}
\end{equation*}

Let $\frac{1}{2}P_0=\left(1+3 C_{vol}^{1+\log_2(10^3 \kappa +9)}\right)
   K C_{Weak} 6 \kappa 10^{-3}$.  Then we can rewrite the inequality as:
\begin{eqnarray*}
\int_E |f(x)-f_{6B_z}| d\mu(x) 
  \le \frac{1}{2}P_0 r_{E} \int_{E} |\grad f(x)| d\mu(x) .
 \end{eqnarray*}

All that remains is to switch from $f_{6B_z}$ to the average on the
entire set, $f_E$.
\begin{eqnarray*}
\int_E |f_E-f_{6B_z}| d\mu(x) &=& \mu(E) |f_E-f_{6B_z}|\\
&=& \mu(E) \abs{\frac{1}{\mu(E)} \int_E f(x) -f_{6B_z} d\mu(x) }\\
&\le& \int_E |f(x) -f_{6B_z}| d\mu(x) \\
&\le& \frac{1}{2}P_0 r_{E} \int_{E} |\grad f(x)| d\mu(x).
\end{eqnarray*}

Thus, the Poincar\'{e} inequality holds on the ball $E=B(z,r)$.
\begin{eqnarray*}
\int_E |f(x)-f_E| d\mu(x) 
  &\le& \int_E |f(x)-f_{6B_z}| d\mu(x) +\int_E |f_{6B_z}-f_{E}| d\mu(x) \\
  &\le& P_0 r_{E} \int_{E} |\grad f(x)| d\mu(x) .
\end{eqnarray*}
\end{proof}
%%%%%%%%%%%%%%%%%%%%%%%%%%%%%%%%%%%%%%%%%%%%%%%%%%%%%%%%%%%%%%%%%
\begin{corollary}\label{UnifPoincare1}
Let $X$ be an admissible n-dimensional Euclidean complex with degree
bounded above by $M$, solid angle bounded by $\alpha$, and edge
lengths bounded below by $\varl$.  Let $E=B(z,r)$ where
$r<R_0:=\frac{\varl}{\kappa}$.  Then
\begin{eqnarray*}
\int_E |f(x)-f_E| d\mu(x) \le P_0 r \int_E |\grad f(x)| d\mu(x)
\end{eqnarray*}
holds for $f \in \Lip(X) \cap L^1(E)$ where
$\kappa=6(\frac{2}{\sqrt{2(1-\cos(\alpha))}}+1)^n$ and \\ $P_0=(1+3
M^3 2^n (10^3 \kappa +9)^n)M (8(1+10^{3}\kappa))^n C_{Weak} 6 \kappa$.
\end{corollary}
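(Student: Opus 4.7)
The plan is to specialize Theorem \ref{UnifPoincare1General} to the setting of an admissible Euclidean complex, with $E=B(z,r)$ and $r<R_0=\varl/\kappa$. This requires verifying the three hypotheses of that theorem: local volume doubling, existence of a Whitney cover with a uniform overlap bound, and the weak Poincar\'{e} inequality on each $6B_i$.

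First I would verify that the weak Poincar\'{e} inequality from Theorem \ref{weakp1} applies on each ball of the Whitney cover. Property (3) of the cover gives $r_i=10^{-3}\kappa^{-1}d(B_i,\bdry E)\le 10^{-3}\kappa^{-1}r$, so the expanded radius satisfies $6\kappa r_i\le 6\cdot 10^{-3} r<R_0$, placing $6\kappa B_i$ in the regime where Theorem \ref{weakp1} is available. This delivers exactly the hypothesis
\[ \norm{f-f_{6B_i}}_{1,6B_i}\le C_{Weak}\cdot 6\kappa r_i\cdot\norm{\grad f}_{1,6\kappa B_i} \]
required by Theorem \ref{UnifPoincare1General}.

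Next I would observe that $E$ itself intersects at most one vertex of $X$. Since $\kappa=6\left(\frac{2}{\sqrt{2(1-\cos\alpha)}}+1\right)^n\ge 6\cdot 2^n\ge 12$, the diameter $2r<2\varl/\kappa<\varl$ is below the minimum vertex separation, so no ball in $E$ can straddle two vertices. This activates the single-vertex case of the Whitney-cover lemma preceding the theorem, which gives the sharp overlap bound $K\le M(8(1+10^3\kappa))^n$. At the same time the edge-length bound $\varl$, together with degree bound $M$, supplies the local volume comparison $\mu(B(x,cr))\le Mc^n\mu(B(x,r))$ for all relevant balls inside $E$, so we may take $C_{vol}=M\cdot 2^n$ for the doubling hypothesis.

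With all hypotheses in place, Theorem \ref{UnifPoincare1General} delivers
\[ \int_E|f(x)-f_E|\,d\mu(x)\le P_0\, r\int_E |\grad f(x)|\,d\mu(x) \]
with $P_0=(1+3C_{vol}^{1+\log_2(10^3\kappa+9)})\,K\,C_{Weak}\cdot 12\kappa\cdot 10^{-3}$. Substituting $C_{vol}=M2^n$ and $K=M(8(1+10^3\kappa))^n$, and using the refined single-vertex volume comparison to replace the abstract iterated-doubling factor $C_{vol}^{\log_2(10^3\kappa+9)}$ by the sharper $M(10^3\kappa+9)^n$, yields the stated $P_0$. The only subtle point is this last simplification: since every ball under consideration sits inside $E$ and hence meets only one vertex, the Euclidean structure on the cone at that vertex lets us use the exponent $n$ rather than paying the coarser log-doubling price. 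Every other step is a direct quotation of prior results.
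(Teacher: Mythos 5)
Your proposal is correct and follows essentially the same route as the paper, which simply applies Theorem \ref{UnifPoincare1General} with $K=M(8(1+10^{3}\kappa))^n$ and $C_{vol}=M2^n$; your extra verifications (that $6\kappa r_i\le 6\cdot 10^{-3}r<R_0$ so Theorem \ref{weakp1} applies to the Whitney balls, and that $E$ meets at most one vertex so the refined overlap bound holds) are exactly the checks the paper leaves implicit. The constant bookkeeping also works out: replacing the iterated-doubling factor by the single-vertex estimate $\mu(cB)\le Mc^{n}\mu(B)$ gives a constant dominated by the stated $P_0$ (which uses $M^3$ and $6\kappa$ in place of the slightly smaller $M^2$ and $12\kappa 10^{-3}$), so the claimed inequality follows.
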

\begin{proof}
Apply Theorem \ref{UnifPoincare1General} with $K=M(8(1+10^{3}\kappa))^n$ and
$C_{vol} = M2^n$.
\end{proof}
%%%%%%%%%%%%%%%%%%%%%%%%%%%%%%%%%%%%%%%%%%%%%%%%%%%%%%%%%%%%%%%%%%%%
\begin{corollary}\label{UnifPoincareP}
Let $X$ be an admissible n-dimensional Euclidean complex with degree
bounded above by $M$, solid angle bounded by $\alpha$, and edge
lengths bounded below by $\varl$. For $f \in \Lip(X) \cap L^p(E)$ and
$r<R_0$ we have
\begin{eqnarray*}
\inf_c ||f-c||_{p,E} \le p P_0 r ||\grad f||_{p,E}
\end{eqnarray*}
where $E$ is a ball of radius $r$ and $1\le p < \infty$.
Note that this implies:
\begin{eqnarray*}
||f-f_E||_{p,E} \le 2 p P_0 r ||\grad f||_{p,E}.
\end{eqnarray*}
Here $P_0=(1+3 M^3 2^n (10^3 \kappa +9)^n)M (8(1+10^{3}\kappa))^n C_{Weak}
6 \kappa$, 
$C_{Weak}=\frac{2^{3n+3} M^3 \kappa^{n+1}}{\alpha n}$, and 
$\kappa=6(\frac{2}{\sqrt{2(1-\cos(\alpha))}}+1)^n$. 
\end{corollary}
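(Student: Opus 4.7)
The plan is to deduce this $L^p$ version directly from the $L^1$ version (Corollary \ref{UnifPoincare1}) via the general $L^1 \to L^p$ bootstrapping Lemma \ref{PoincareP} already established in the paper. In other words, the real work was done in proving the $L^1$ uniform Poincar\'{e} inequality on balls of radius $r < R_0$; what remains is a mechanical application of a lemma that is geometry-free.

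First I would verify the hypotheses of Lemma \ref{PoincareP}. The lemma takes as input an inequality of the form $\norm{f-f_B}_{1,B} \le C r \norm{\grad f}_{1,B}$ for Lipschitz $f$ on a ball $B = B(z,r)$. Corollary \ref{UnifPoincare1} provides exactly this with the constant $C = P_0$, and the inequality is valid uniformly in the center $z$, for every radius $r < R_0 = \varl/\kappa$, and for every $f \in \Lip(X) \cap L^1(E)$. Since any $f \in \Lip(X) \cap L^p(E)$ on a ball $E$ of finite measure is automatically in $L^1(E)$, the $L^1$ inequality applies to the functions we care about.

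Next I would invoke Lemma \ref{PoincareP} with this $C = P_0$ and the same ball $E = B(z,r)$. The lemma immediately yields
\begin{eqnarray*}
\inf_{c \in \R} \norm{f-c}_{p,E} \le p P_0 r \norm{\grad f}_{p,E}
\end{eqnarray*}
and, by the Jensen's inequality step in its proof,
\begin{eqnarray*}
\norm{f-f_E}_{p,E} \le 2 p P_0 r \norm{\grad f}_{p,E}.
\end{eqnarray*}
These are exactly the two claims of the corollary, and the stated formulas for $P_0$, $C_{Weak}$, and $\kappa$ are inherited unchanged from Corollary \ref{UnifPoincare1} and Theorem \ref{weakp1}.

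There is essentially no obstacle: the heavy lifting (the wedge decomposition, the descent to a skeleton, the Whitney chaining) is already encapsulated in the $L^1$ statement, and Lemma \ref{PoincareP} was designed precisely to make this final upgrade a one-line deduction. The only thing worth double-checking is that the Lipschitz assumption propagates correctly through the auxiliary function $g(x) = |f(x)-c_f|^p \sgn(f(x)-c_f)$ used inside Lemma \ref{PoincareP}, but that is handled inside the lemma itself and needs no re-examination here.
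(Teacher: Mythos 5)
Your proposal is correct and matches the paper's own proof, which is exactly the one-line application of Lemma \ref{PoincareP} to the $L^1$ inequality of Corollary \ref{UnifPoincare1} with constant $P_0$. Nothing further is needed.
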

\begin{proof}
Apply Lemma \ref{PoincareP} to Corollary \ref{UnifPoincare1}.
\end{proof}
%%%%%%%%%%%%%%%%%%%%%%%%%%%%%%%%%%%%%%%%%%%%%%%%%%%%%%%%%%%%%%%%%%%
\begin{corollary}\label{PoincareExtend1}
Assume $p=1$ Poincar\'{e} inequality $||f-f_B||_{1,B} \le P_0 r
||\grad f||_{1,B}$ holds for $f \in \Lip(X) \cap L^p(E)$ on balls $B=
B(x,r)$ with $r \le R$.  Assume volume doubling holds with constant
$C_{vol}$ for balls with radius less than $C_0 R$.  Then
\begin{eqnarray*}
||f-f_E||_{p,E} \le p 2 P_0 \left( 6 (1+3C_{vol}^{11}) C_{vol}^{13}
10^{-3} \right)^{\ceil{\log_{\frac{10^3}{6}}(C_0)}} r_E ||\grad f||_{p,E}
\end{eqnarray*}
also holds for balls $E$ with radius less than $C_0 R$ and $1 \le p < \infty$.
\end{corollary}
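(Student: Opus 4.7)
The plan is to iterate Theorem \ref{UnifPoincare1General} with $\kappa=1$, starting from the given $p=1$ Poincar\'{e} inequality at scale $R$ and doubling up the radius until we reach scale $C_0 R$, and finally to upgrade from $p=1$ to general $p$ via Lemma \ref{PoincareP}.

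First I would prove, by induction on $n \ge 0$, that a $p=1$ Poincar\'{e} inequality
\begin{eqnarray*}
\|f-f_B\|_{1,B} \le P_n \, r_B \, \|\grad f\|_{1,B}
\end{eqnarray*}
holds for every ball $B$ of radius at most $R_n := (10^3/6)^n R$, with a constant $P_n$ to be tracked. The base case $n=0$ is the hypothesis with $P_0$ as given. For the inductive step, fix a ball $E$ of radius $r_E \le R_{n+1}$ and construct a Whitney-type cover $\F$ of $E$ as in Section~2.2, with $r_B = 10^{-3}d(B,\bdry E)$. Because $r_B \le 10^{-3} r_E \le 10^{-3} R_{n+1} = R_n/6$, the ball $6B$ has radius $\le R_n$, so the inductive hypothesis applies to each $6B_i$: this is exactly the hypothesis of Theorem \ref{UnifPoincare1General} with $\kappa=1$ and $C_{Weak}=P_n$. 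Volume doubling holds on all balls encountered (which have radius at most $(10^3+9)r_A \le r_E \le C_0 R$), so the Whitney lemmas \ref{536}--\ref{539} go through.

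Applying Theorem \ref{UnifPoincare1General} and bounding $K$ by $C_{vol}^{\log_2(8(10^3+1))} \le C_{vol}^{13}$ from the lemma on property (4) (with $\kappa=1$), the recursion is
\begin{eqnarray*}
P_{n+1} \le P_n \cdot \bigl(1+3C_{vol}^{1+\log_2(10^3+9)}\bigr) \cdot C_{vol}^{13} \cdot 12 \cdot 10^{-3}
\le P_n \cdot 6 (1+3C_{vol}^{11}) C_{vol}^{13} \cdot 10^{-3},
\end{eqnarray*}
using $1+\log_2(1009) \le 11$ and absorbing the factor $2$ as the statement permits. Iterating $N:=\ceil{\log_{10^3/6}(C_0)}$ times is enough for $R_N \ge C_0 R$, so we obtain the $p=1$ Poincar\'{e} inequality on every ball $E$ of radius $\le C_0 R$ with constant
\begin{eqnarray*}
P_0 \cdot \bigl(6(1+3C_{vol}^{11})C_{vol}^{13} \cdot 10^{-3}\bigr)^{N}.
\end{eqnarray*}

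Finally, Lemma \ref{PoincareP} converts this $p=1$ inequality into the claimed $L^p$ inequality, picking up the factor $2p$ and yielding the stated bound. The main obstacle is really just bookkeeping: at each iteration one must verify that the Whitney balls $B \in \F$ and their dilations $6B$, $36 B$, $(10^3+9)B$ all stay within the regime where the previous-scale Poincar\'{e} inequality and the volume doubling hypothesis are available. The ratio $10^3/6$ between successive radii $R_n$ and $R_{n+1}$ is dictated precisely by the constraint $6 r_B \le 10^{-3} \cdot 6 r_E \le R_n$, which is what drives the base of the logarithm in the exponent.
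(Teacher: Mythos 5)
Your proposal is correct and follows essentially the same route as the paper: iterate Theorem \ref{UnifPoincare1General} with $\kappa=1$, increasing the admissible radius by the factor $10^3/6$ at each step so that the dilated Whitney balls $6B_i$ stay within the previous scale, and then upgrade to general $p$ via Lemma \ref{PoincareP}; your version simply makes the induction and the doubling-regime bookkeeping explicit. The only loose point, the factor $12$ versus $6$ per iteration when quoting the constant from Theorem \ref{UnifPoincare1General}, is a slack already present in the paper's own statement of the corollary and does not affect the argument.
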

\begin{proof}
Note that if $E=B(x,\frac{1}{6} 10^3r)$, then the $p=1$ Poincar\'{e}
inequality holds for all balls in the Whitney cover dilated by a
factor of 6 with $\kappa =1$.  We can apply Theorem
\ref{UnifPoincare1General} which gives us
\begin{eqnarray*}
||f-f_E||_{1,E} \le 6 (1+3C_{vol}^{11}) C_{vol}^{13} 10^{-3} P_0 r_E
  ||\grad f||_{1,E}.
\end{eqnarray*}
In particular, we can repeat this to show that the $p=1$ Poincar\'{e}
inequality holds for balls up to radius $C_0 R$ with constant $( 6
(1+3C_{vol}^{11}) C_{vol}^{13} 10^{-3})^{\ceil{\log_{\frac{10^3}{6}}(C_0)}}
P_0$.
To get the $p$ Poincar\'{e} inequality, apply lemma \ref{PoincareP}.  
\end{proof}

Note that bounds on degree $M$, angles $\alpha$ and edge lengths
$\varl$ give us uniform local volume doubling on our complex, $X$.
For any fixed $R$ we can then apply Lemma \ref{PoincareP} to Corollary
\ref{PoincareExtend1} to get the standard $L^2$ Poincar\'{e}
inequality for balls of radius up to $R$.  In general, this cannot be
extended to $R = \infty$; note that the constant in the new
Poincar\'{e} inequality goes to infinity as $C_0$ goes to infinity.

\chapter{Small Time Heat Kernel Estimates for X}
  The heat kernel, $h_t(x,y)$, is the fundamental solution to the heat equation
\begin{eqnarray*}
\del_t u = \Delta u.
\end{eqnarray*}
Note that our formulation does not have factors of $-1$ or
$\frac{1}{2}$, which appear in some of the literature.  This type of
differential equation is parabolic; one way of obtaining information
about it is through parabolic Harnack inequalities.  Sturm \cite{Sturm}
shows that local volume doubling and Poincar\'{e} inequalities on a
subset of a complete metric space imply a local parabolic Harnack
inequality on that subset.  He then uses this to find Gaussian
estimates on the heat kernel.  The equivalence of the parabolic
Harnack inequality with Poincar\'{e} and volume doubling had
previously been done in the Riemannian manifold case by Grigor'yan
\cite{Grigor} and Saloff-Coste \cite{LSCNoteOn}.  

\section{Small time Heat Kernel Asymptotics}
We have shown a uniform local Poincar\'{e} inequality, and our complex
is both complete and locally satisfies volume doubling.  This tells us
that we've satisfied the hypotheses of the following theorem of
Sturm \cite{Sturm} which gives a lower bound on the diagonal.
%%%%%%%%%%%%%%%%%%%%%%%%%%%%%%%%%%%%%%%%%%%%%%%%%%%%%%%%begin Sturm1
\begin{theorem}[Sturm]\label{Sturm1}
Assume $Y$ is an open subset of a complete space $X$ that admits a
Poincar\'{e} inequality with constant $C_P$ and volume doubling with
constant $2^N$.  Then there exists a constant $C=C(C_P,N)$ such that
\begin{eqnarray*}
h_t(x,x) \ge \frac{1}{C \mu(B(x,\sqrt{t}))}
\end{eqnarray*}
for all $x \in Y$ and all $t$ such that $0<t<\rho^2(x,X-Y)$.
\end{theorem}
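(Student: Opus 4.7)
The strategy I would follow is the standard two-step reduction: first, convert the combination of Poincaré inequality and volume doubling into a parabolic Harnack inequality valid on cylinders contained in $Y$; second, use that Harnack inequality to compare $h_t(x,x)$ with the off-diagonal values $h_t(x,y)$, and extract the lower bound from the conservation of mass $\int h_t(x,\cdot)\,d\mu \le 1$.

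For the first step, I would invoke the equivalence (essentially the content of the earlier sections of Sturm's paper, and in the Riemannian setting due to Grigor'yan and Saloff-Coste as noted in the text) between the conjunction of $(C_P)$ + volume doubling and the scale-invariant parabolic Harnack inequality. Concretely, for any non-negative local solution $u(s,y)$ of $\partial_s u = \Delta u$ on a cylinder $Q = (s_0 - r^2, s_0) \times B(x_0, r) \subset \mathbb{R} \times Y$, one has $\sup_{Q_-} u \le C_H \inf_{Q_+} u$, where $Q_{\pm}$ are the standard forward/backward sub-cylinders and $C_H$ depends only on $C_P$ and $N$. I would treat this derivation as a black box coming from the earlier theory; restating its proof (Moser iteration in the metric-measure setting, fed by Poincaré and doubling) would duplicate known work.

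For the second step, fix $x \in Y$ and $t$ with $0 < t < \rho^2(x, X-Y)$, and set $r = \tfrac{1}{2}\sqrt{t}$. The function $u(s,y) := h_s(x,y)$ is a non-negative solution of the heat equation on the cylinder $(0, \infty) \times Y$, and by the choice of $t$ the cylinder $(t/4,\, 2t) \times B(x, r)$ lies inside the admissible region. Applying the parabolic Harnack inequality on this cylinder, with the forward sub-cylinder centered at time $t$ and the backward one at time $t/2$, yields
\begin{eqnarray*}
h_t(x,y) \le C_H \, h_{2t}(x,x)
\end{eqnarray*}
for every $y \in B(x,r)$. Integrating over $B(x,\sqrt{t}/2)$ and using the Dirichlet-form subprobability estimate $\int_Y h_s(x,y)\, d\mu(y) \le 1$, together with the mass-retention bound $\int_{B(x,\sqrt{t}/2)} h_t(x,y)\, d\mu(y) \ge \tfrac{1}{2}$ that comes from iterating Harnack backwards in time until $s \to 0^+$ (or, equivalently, from a Davies--Gaffney-type argument), I would conclude
\begin{eqnarray*}
\tfrac{1}{2} \le C_H \, h_{2t}(x,x) \, \mu(B(x,\sqrt{t}/2)).
\end{eqnarray*}
Volume doubling then replaces $\mu(B(x,\sqrt{t}/2))$ by $\mu(B(x,\sqrt{t}))$ up to a factor of $2^N$, and a change of variable $t \mapsto t/2$ (again legal inside the admissible range) gives the stated inequality with $C = C(C_P, N)$.

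The main obstacle is the mass-retention claim $\int_{B(x,\sqrt{t}/2)} h_t(x,\cdot)\, d\mu \ge c_0$. In the global setting this is classical, but here everything must be kept inside $Y$, so I would have to show that the Harnack chain used to propagate mass back to the initial data never needs to leave a ball of radius comparable to $\sqrt{t}$ about $x$ — which is exactly what the condition $t < \rho^2(x, X-Y)$ guarantees. Carefully tracking the geometric constants in this chaining argument (number of Harnack steps, loss per step) to ensure $c_0$ depends only on $C_P$ and $N$ is the delicate part; once it is pinned down, the rest of the argument is bookkeeping.
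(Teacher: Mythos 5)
This statement is quoted in the thesis as a theorem of Sturm (cited from \cite{Sturm}); the paper supplies no proof of its own, only applications, so there is no internal argument to compare yours against. Your sketch follows the standard route that Sturm's proof itself takes: Poincar\'{e} plus volume doubling give a scale-invariant parabolic Harnack inequality on cylinders contained in $Y$ (Moser iteration in the metric-measure setting), then Harnack applied to $u(s,y)=h_s(x,y)$ turns a mass-retention estimate on $B(x,c\sqrt{t})$ into the on-diagonal lower bound, with doubling absorbing the change of radius and the time shift. That outline is correct and is essentially the known argument.

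The one point to repair is the mechanism you propose for the mass-retention claim $\int_{B(x,\sqrt{t}/2)} h_t(x,\cdot)\,d\mu \ge c_0$: ``iterating Harnack backwards in time until $s\to 0^{+}$'' does not work as stated, because the Harnack inequality compares values of a single nonnegative solution on sub-cylinders and degenerates as the time window shrinks toward the singular initial datum $\delta_x$; it cannot by itself control how much of the unit mass stays near $x$. The standard way to get $c_0$ is the route you mention only parenthetically: an exit-time or integrated Gaussian-tail estimate, e.g.\ a Davies--Gaffney bound for the Dirichlet form, or the localized Gaussian upper bound (the companion upper-bound theorem of Sturm that the thesis also quotes), integrated over $X- B(x,K\sqrt{t})$ together with doubling to show the escaped mass is at most $1/2$ for $K=K(C_P,N)$ --- all of which stays inside $Y$ precisely because $t<\rho^2(x,X-Y)$. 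With that substitution the constants depend only on $C_P$ and $N$ and your argument closes; as written, the delicate step you flag is genuinely unproved rather than merely fussy bookkeeping.
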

%%%%%%%%%%%%%%%%%%%%%%%%%%%%%%%%%%%%%%%%%%%%%%%%%%%%%%%%end Sturm1
Here, $\rho$ refers to the intrinsic distance.  In our complex, this
will always satisfy $\rho(x,y) \ge d(x,y)$.  Also note that since we
have a uniform local Poincar\'{e} inequality and a uniform local
volume doubling constant we have the following corollary:
%%%%%%%%%%%%%%%%%%%%%%%%%%%%%%%%%%%%%%%%%%%%%%%%%%%%%%%%begin Cor ge
\begin{corrolary}\label{Corge}
Let $X$ be an admissible $n$-dimensional Euclidean complex with degree
bounded above by $M$, solid angle bounded by $\alpha$, and edge
lengths bounded below.  For any $R_0 > 0$ there is a corresponding
constant $C=C(X,R_0)$ so that
\begin{eqnarray*}
h_t(x,x) \ge \frac{1}{C M \mu(S^{(n-1)}) t^{n/2}}
\end{eqnarray*}
for all $x \in X$ and all $t$ such that $0<t<R_0^2$.
\end{corrolary}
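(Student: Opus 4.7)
The plan is to deduce this directly from Sturm's Theorem~\ref{Sturm1} (stated just above), with the uniform Poincar\'{e} inequality (Corollary~\ref{UnifPoincareP} extended via Corollary~\ref{PoincareExtend1}) and uniform local volume doubling as the inputs. The three pieces to assemble are: a Poincar\'{e} inequality on balls up to radius $R_0$, volume doubling on balls up to radius $R_0$, and an upper bound on $\mu(B(x,\sqrt{t}))$ of the form $M\,\mu(S^{(n-1)})\,t^{n/2}$ that turns the denominator produced by Sturm into the explicit $t^{n/2}$ shape in the claim.

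First I would fix $R_0>0$ and invoke Corollary~\ref{PoincareExtend1} to extend the $p=2$ Poincar\'{e} inequality of Corollary~\ref{UnifPoincareP} from balls of radius $\varl/\kappa$ to balls of radius $R_0$. This yields a constant $C_P = C_P(X,R_0)$, absorbing the factor $(6(1+3C_{vol}^{11})C_{vol}^{13}10^{-3})^{\lceil\log_{10^3/6}(\kappa R_0/\varl)\rceil}$ into $C_P$. Next I would record that the assumptions $M,\alpha,\varl$ imply uniform volume doubling on $X$: on balls entirely inside a maximal polyhedron the doubling constant is the Euclidean one, and balls that hit the $(n{-}1)$-skeleton are broken into at most $M$ wedges, so iterating doubling up to radius $R_0$ produces a constant $2^N = 2^{N(X,R_0)}$.

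With these two uniform inputs, I would apply Theorem~\ref{Sturm1} with $Y=X$ (so $X-Y=\emptyset$ and $\rho(x,X-Y)=\infty$, hence the constraint $t<\rho^2(x,X-Y)$ is vacuous and one only needs the inequalities to hold on balls of radius up to $\sqrt{t}<R_0$). The theorem gives
\[
h_t(x,x)\;\ge\;\frac{1}{C\,\mu(B(x,\sqrt{t}))}
\]
for every $x\in X$ and every $0<t<R_0^2$, with $C=C(C_P,N)=C(X,R_0)$.

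Finally I would bound the volume $\mu(B(x,\sqrt{t}))$ from above by $M\,\mu(S^{(n-1)})\,t^{n/2}$. For $x$ and $\sqrt{t}$ small enough that $B(x,\sqrt{t})$ meets only the face containing $x$, this is immediate: the wedge decomposition produces at most $M$ wedges, each contained in a Euclidean ball of radius $\sqrt{t}$, and each such Euclidean ball has volume at most $\mu(S^{(n-1)})\,t^{n/2}$ in the paper's notation. For $\sqrt{t}$ up to $R_0$ the same $t^{n/2}$ shape follows from uniform volume doubling (any constant multiplicative loss is absorbed into $C=C(X,R_0)$). Substituting this into the bound from Sturm yields the claim. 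The only subtlety I anticipate is bookkeeping the dependencies in the constant as we extend both the Poincar\'{e} inequality and the volume doubling from scale $\varl/\kappa$ up to the prescribed scale $R_0$; everything else is a direct substitution into the previous theorem.
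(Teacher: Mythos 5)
Your overall strategy (uniform local Poincar\'{e} inequality plus uniform local volume doubling, then Sturm's theorem, then the bound $\mu(B(x,\sqrt{t}))\le C'M\mu(S^{(n-1)})t^{n/2}$) is the right one, but the way you invoke Theorem \ref{Sturm1} has a genuine gap. You take $Y=X$, so that $\rho(x,X-Y)=\infty$ and the time restriction disappears, and you then assert that one only needs the Poincar\'{e} and doubling inequalities on balls of radius up to $\sqrt{t}<R_0$. The theorem as stated does not say that: its hypothesis is that $Y$ admits a Poincar\'{e} inequality with a single constant $C_P$ and volume doubling with a single constant $2^N$, and with $Y=X$ this means global Poincar\'{e} and global doubling on the whole complex. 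Those fail under the standing assumptions --- the paper notes after Corollary \ref{PoincareExtend1} that the Poincar\'{e} constant blows up as the radius grows, and complexes such as the free-group example have exponential volume growth, so no global doubling constant exists. So with $Y=X$ the constant $C=C(C_P,N)$ you would extract is not finite or uniform, and the restriction to $t<R_0^2$ in the conclusion has no source.

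The paper's proof supplies exactly the localization you skipped: for each $x$ it applies Theorem \ref{Sturm1} with $Y=B(x,R_0)$. Then the hypotheses only involve Poincar\'{e} and doubling on balls contained in $B(x,R_0)$, i.e.\ at scales comparable to $R_0$, where Corollary \ref{PoincareExtend1} and the uniform local doubling give constants depending only on $X$ and $R_0$ and not on $x$; and the condition $0<t<\rho^2(x,X-Y)$ becomes precisely $t<R_0^2$, since $\rho(x,X-Y)\ge d(x,X-Y)=R_0$. With that correction the rest of your argument --- in particular the wedge/doubling estimate $\mu(B(x,\sqrt{t}))\le C'M\mu(S^{(n-1)})t^{n/2}$ with the multiplicative loss absorbed into $C(X,R_0)$ --- goes through and matches the paper's proof.
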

%%%%%%%%%%%%%%%%%%%%%%%%%%%%%%%%%%%%%%%%%%%%%%%%%%%%%%%%end Cor ge
\begin{proof}
For each $x\in X$ apply \ref{Sturm1} to $X$ with $Y=B(x,R_0)$.  The
distance compares easily: $\rho(x,X-Y) \ge d(x,X-Y) = R_0$.  Because
the constant $C$ in \ref{Sturm1} depends only on $C_P$ and $N$, we can
use the fact that our constants $C_P$ and $N$ do depend only on the
radius of our ball to obtain a universal constant, $C$.
\end{proof}
Sturm \cite{Sturm} also proves an upper bound for the heat kernel;
this bound is especially useful near the diagonal.
%%%%%%%%%%%%%%%%%%%%%%%%%%%%%%%%%%%%%%%%%%%%%%%%%%%%%%%%begin Sturm2
\begin{theorem}[Sturm]\label{Sturm2}
Assume  $Y$ is an open subset of a complete space $X$ that admits a
Poincar\'{e} inequality with constant $C_P$ and volume doubling with
constant $2^N$.  Then there exists a constant $C=C(C_P,N)$ such that
for every $x,y\in Y$
\begin{eqnarray*}
h_t(x,y) \le \frac{Ce^{-\frac{\rho^2(x,y)}{4t}}}{\sqrt{\mu(B(x,\sqrt{T})) \mu(B(y,\sqrt{T}))}} 
\left(1+\frac{\rho^2(x,y)}{t}\right)^{N/2} 
e^{-\lambda t}(1+\lambda t)^{1+N/2}
\end{eqnarray*}
where $\rho$ is the intrinsic distance,
$R=\inf(\rho(x,X-Y),\rho(y,X-Y))$, $T=\min(t,R^2)$ and $\lambda$ is
the bottom of the spectrum of the self-adjoint operator $-L$ on
$L^2(X,\mu)$.
\end{theorem}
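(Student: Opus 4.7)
The plan is to prove this upper bound in three stages: establish the on-diagonal bound from the hypotheses, apply Davies' exponential perturbation to get the Gaussian off-diagonal decay, and then incorporate the polynomial and spectral corrections. All arguments must be carried out at scale $\sqrt{T}$ inside $Y$ so that only the local Poincar\'{e} inequality and local doubling are invoked.

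First, I would derive an on-diagonal estimate by combining the Poincar\'{e} inequality at scale $r\le \sqrt{T}$ with volume doubling to produce a Nash-type (equivalently, Faber--Krahn) inequality on balls $B\subset Y$ of radius $\sqrt{T}$. The classical Nash argument then yields
\begin{equation*}
h_t(x,x)\le \frac{C(C_P,N)}{\mu(B(x,\sqrt{T}))}, \qquad 0<t\le R^2,\ x\in Y,
\end{equation*}
which is the quantitative upper-bound companion of the lower bound appearing in Corollary~\ref{Corge}.

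Second, to promote this diagonal estimate to exponential Gaussian decay, I would apply Davies' perturbation method. For any bounded Lipschitz $\psi$ with $\abs{\grad \psi}\le 1$ and any $\alpha\in\R$, set $H_t^{\alpha\psi}:=e^{-\alpha\psi}H_t e^{\alpha\psi}$. A short Dirichlet-form computation gives $\norm{H_t^{\alpha\psi}}_{2\to 2}\le e^{\alpha^2 t/2}$, and combining this with the on-diagonal bound via the Dunford--Pettis symmetrization trick produces
\begin{equation*}
h_t(x,y)\le \frac{C\exp\!\bigl(-\alpha\psi(x)+\alpha\psi(y)+\alpha^2 t\bigr)}{\sqrt{\mu(B(x,\sqrt{T}))\,\mu(B(y,\sqrt{T}))}}.
\end{equation*}
Choosing $\psi(z)=\rho(z,x)$ and optimizing in $\alpha=\rho(x,y)/(2t)$ extracts the $e^{-\rho^2(x,y)/(4t)}$ factor. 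The polynomial factor $(1+\rho^2(x,y)/t)^{N/2}$ then comes from a chaining refinement: slicing the $\rho$-geodesic from $x$ to $y$ into $\ceil{\rho(x,y)/\sqrt{T}}$ pieces and iterating the semigroup property together with doubling, each link contributes a factor controlled by a power of $2^N$. Finally, the spectral correction $e^{-\lambda t}(1+\lambda t)^{1+N/2}$ is obtained by splitting the time interval at $\min(t,1/\lambda)$: on the short portion the Davies bound applies, and on the long portion the operator estimate $\norm{H_s}_{2\to 2}\le e^{-\lambda s}$ supplies the $e^{-\lambda t}$ factor, with $(1+\lambda t)^{1+N/2}$ absorbing the interpolation loss.

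The main obstacle will be the precise localization to $Y$. The Davies argument naturally lives on all of $L^2(X,\mu)$, whereas the Poincar\'{e} and doubling hypotheses are only available inside $Y$ and only at scales up to $R$. Bridging this requires either careful localized Dirichlet-form manipulations or exit-time comparisons between the full semigroup $H_t$ and its Dirichlet restriction to balls of radius $\sqrt{T}$; arranging the cutoff $T=\min(t,R^2)$ to appear naturally, rather than as an exterior penalty, and keeping the constant $C$ depending only on $C_P$ and $N$, is what makes this type of estimate technically delicate.
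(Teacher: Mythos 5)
This theorem is not proved in the thesis at all: it is quoted as a result of Sturm \cite{Sturm} and used as a black box, so there is no internal proof to compare yours against. Measured against the literature it cites, your outline follows the standard route --- a local Nash/Faber--Krahn inequality extracted from Poincar\'{e} plus doubling to get the on-diagonal bound at scale $\sqrt{T}$, Davies' exponential perturbation for the Gaussian factor, and a spectral-gap splitting for the $e^{-\lambda t}(1+\lambda t)^{1+N/2}$ correction --- which is essentially the method of Davies, Carlen--Kusuoka--Stroock, Grigor'yan, Saloff-Coste and Sturm himself, so as a strategy it is sound.

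Two steps in your sketch carry the real weight and are currently gaps. First, passing from the twisted operator bound $\|H_t^{\alpha\psi}\|_{2\to 2}\le e^{C\alpha^2 t}$ to a pointwise bound on $h_t(x,y)$ is not a one-line ``symmetrization trick'': you need ultracontractivity of the \emph{perturbed} semigroup, i.e.\ the Nash or Moser iteration has to be rerun for $H_t^{\alpha\psi}$, and it is precisely this step, carried out so as to keep the sharp constant $4t$ in the exponent, that produces the factor $\left(1+\rho^2(x,y)/t\right)^{N/2}$; attributing that factor to a chaining of the semigroup property is not how the stated form arises, and a chaining argument at scale $\sqrt{T}$ would degrade the Gaussian constant rather than yield $e^{-\rho^2(x,y)/(4t)}$ exactly. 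Second, the localization issue you flag at the end is the genuinely delicate point, not a side remark: Poincar\'{e} and doubling are only assumed on $Y$ and only up to scale $R$, so the iteration must be performed for Dirichlet heat kernels of balls inside $Y$ (or via local parabolic mean-value estimates, as in Sturm's papers), and it is the comparison between the global kernel and these localized ones that forces $T=\min(t,R^2)$ into the volume factors while keeping $C=C(C_P,N)$. Your proposal correctly identifies this obstacle but does not yet supply the mechanism that resolves it.
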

%%%%%%%%%%%%%%%%%%%%%%%%%%%%%%%%%%%%%%%%%%%%%%%%%%%%%%%%end Sturm2
In general, we can replace $\lambda$ with $0$, which increases the
value of the right hand side.  In our setting, we can simplify this a
bit more.
%%%%%%%%%%%%%%%%%%%%%%%%%%%%%%%%%%%%%%%%%%%%%%%%%%%%%%begin Cor le
\begin{corrolary}\label{Corle}
Let $X$ be an admissible $n$-dimensional Euclidean complex with degree
bounded above by $M$, solid angle bounded by $\alpha$, and edge
lengths bounded below.  Then for any $R_0$ we there exists a constant
$C = C(X,R_0)$ so that for any $x \in X$ and $t>0$ we have:
\begin{eqnarray*}
h_t(x,x) \le \frac{C}{M \mu(S^{(n-1)}) (\min(t,R_0^2))^{n/2}} .
\end{eqnarray*}
\end{corrolary}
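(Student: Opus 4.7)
The plan is to apply Sturm's upper bound, Theorem \ref{Sturm2}, specialized to the diagonal $y=x$, with the same choice of local neighborhood $Y=B(x,R_0)$ used in Corollary \ref{Corge}. Since $X$ is complete, has uniform local volume doubling (from the degree bound $M$, solid angle bound $\alpha$, and edge length lower bound $\varl$), and satisfies the uniform local Poincar\'{e} inequality of Corollary \ref{UnifPoincareP} on balls of radius up to $R_0$, the hypotheses of Theorem \ref{Sturm2} hold on $Y$ with constants $C_P$ and $N$ that depend only on $X$ and $R_0$, not on the specific basepoint $x$. This uniformity is the whole point of the earlier chapter, and it transfers into a uniform constant in the heat kernel bound.

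With $y=x$ the intrinsic distance $\rho(x,y)$ vanishes, so the factor $e^{-\rho^2(x,y)/4t}(1+\rho^2(x,y)/t)^{N/2}$ collapses to $1$. Replacing $\lambda$ by $0$ (which only enlarges the right hand side, since $e^{-\lambda t}(1+\lambda t)^{1+N/2}$ is maximized at $\lambda=0$ on $[0,\infty)$ up to a constant), I obtain
\begin{eqnarray*}
h_t(x,x) \;\le\; \frac{C'}{\mu(B(x,\sqrt{T}))}, \qquad T=\min(t,R_0^2),
\end{eqnarray*}
with $C'=C'(X,R_0)$. Since $\rho(x,X-Y)\ge d(x,X-Y)=R_0$, the truncation $T=\min(t,R_0^2)$ is exactly what appears in the claimed statement.

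The remaining step is to bound $\mu(B(x,\sqrt{T}))$ from below by a multiple of $T^{n/2}$ of the correct form. For $\sqrt{T}\le R_0\le \varl/\kappa$, the ball $B(x,\sqrt{T})$ splits into at most $M$ wedges, each of which is a subset of an Euclidean ball and, by the solid angle hypothesis, has volume at least $\alpha\,T^{n/2}\mu(S^{(n-1)})/n$ (at least one wedge is present). Thus
\begin{eqnarray*}
\mu(B(x,\sqrt{T})) \;\ge\; \frac{\alpha}{n}\,\mu(S^{(n-1)})\,T^{n/2},
\end{eqnarray*}
uniformly in $x$. Absorbing $\alpha$, $n$, and the factor $M$ into a new constant $C=C(X,R_0)$, the claimed bound follows in the form written, with the $M\mu(S^{(n-1)})$ in the denominator chosen to parallel the volume expression already used for the matching lower bound of Corollary \ref{Corge}.

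The only non-mechanical input is verifying that Sturm's constant depends only on $(C_P,N)$ and hence, through our earlier work, only on $(X,R_0)$. This is where the uniform (not just pointwise) nature of the local Poincar\'{e} inequality of Corollary \ref{UnifPoincareP} and of the local volume doubling is essential; everything else is bookkeeping of constants.
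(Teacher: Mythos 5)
Your proposal is correct and follows essentially the same route as the paper: apply Sturm's upper bound (Theorem \ref{Sturm2}) with $Y=B(x,R_0)$, note $\rho(x,x)=0$ so the Gaussian and polynomial factors collapse, discard the $\lambda$ term, and invoke the uniformity of the local Poincar\'{e} and volume doubling constants to get a constant depending only on $(X,R_0)$. The only addition is your explicit lower bound $\mu(B(x,\sqrt{T}))\gtrsim T^{n/2}$ via the solid angle bound, a bookkeeping step the paper leaves implicit (and which, for centers merely near a lower-dimensional face, strictly requires the recentering argument of Theorem \ref{weakp1}, affecting only the constant).
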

%%%%%%%%%%%%%%%%%%%%%%%%%%%%%%%%%%%%%%%%%%%%%%%%%%%%%%end Cor le
\begin{proof}
For each $x\in X$ apply \ref{Sturm2} to $X$ with $Y=B(x,R_0)$.  The
distance compares easily: $\rho(x,X/Y) \ge d(x,X/Y) = R_0$.  The
$d(x,x)$ terms drop out, as do the $\lambda$ terms.  Because the
constant $C$ in \ref{Sturm2} depends only on $C_P$ and $N$, we can use
the fact that our constants $C_P$ and $N$ do not depend on our
specific choice of ball to obtain a universal constant, $C$.
\end{proof}
%%%%%%%%%%%%%%%%%%%%%%%%%%%%%%%%%%%%%%%%%%%%%%%%%%%%%%%%%%%%%%%%%%%%%%%%
\begin{corrolary}\label{offdiagonalHeat}
Let $X$ be an admissible $n$-dimensional Euclidean complex with degree
bounded above by $M$, solid angle bounded by $\alpha$, and edge
lengths bounded below.  For any $R_0$ there exists a $C=C(X,R_0)$ so that for
any $x,y \in X$ and $t>0$ we have:
\begin{eqnarray*}
h_t(x,y) \le \frac{C}{M \mu(S^{(n-1)}) (\min(t,R_0^2))^{n/2}}
e^{-\frac{d^2(x,y)}{4t}}\left(1+\frac{d^2(x,y)}{t}\right)^{N/2} .
\end{eqnarray*}
\end{corrolary}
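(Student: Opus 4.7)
The plan is to apply Sturm's off-diagonal upper bound (Theorem \ref{Sturm2}) in the same spirit as Corollary \ref{Corle}, but choosing the open set $Y \subset X$ so that it contains both $x$ and $y$ and keeps each of them at intrinsic distance at least $R_0$ from its complement.

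Given $x, y \in X$, I would fix a length-minimizing path $\gamma$ from $x$ to $y$ (which exists since $X$ is a complete length space with the bounded local geometry we impose) and take $Y := \{z \in X : d(z, \gamma) < R_0\}$. With this choice, $\rho(x, X-Y) \ge R_0$ and $\rho(y, X-Y) \ge R_0$, so in the notation of Theorem \ref{Sturm2} we have $R \ge R_0$ and hence $T = \min(t, R^2) \ge \min(t, R_0^2)$. The hypotheses of Theorem \ref{Sturm2} are satisfied on $Y$: Corollary \ref{UnifPoincareP} supplies a Poincar\'e inequality with constant $C_P = C_P(X, R_0)$ on every ball of radius $\le R_0$, and the degree, angle, and edge-length bounds give uniform volume doubling with constant $2^N$ on such balls. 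Only these two properties at scales $\le R_0$ are needed, since every ball appearing on the right-hand side of Sturm's estimate has radius $\sqrt{T} \le R_0$.

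Applying Theorem \ref{Sturm2} with $\lambda = 0$ (valid because the bottom of the spectrum of $-\Delta$ is nonnegative), I obtain
\[h_t(x,y) \le \frac{C\, e^{-\rho^2(x,y)/4t}}{\sqrt{\mu(B(x,\sqrt{T}))\,\mu(B(y,\sqrt{T}))}} \left(1+\frac{\rho^2(x,y)}{t}\right)^{N/2}\]
with $C = C(X, R_0)$. I would then simplify this using two observations. First, on our Euclidean complex the intrinsic distance of the Dirichlet form coincides with the length distance, so $\rho(x,y) = d(x,y)$ (both are infima of the same integrals along admissible paths). Second, the solid angle bound yields a uniform lower bound $\mu(B(z, r)) \ge c\, M\, \mu(S^{(n-1)})\, r^n$ for every $z \in X$ and every $r \le R_0$, obtained by decomposing $B(z,r)$ into its wedges (each one a Euclidean sector of solid angle at least $\alpha \mu(S^{(n-1)})$). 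Substituting $r = \sqrt{T}$ in the denominator produces precisely the factor $M \mu(S^{(n-1)}) (\min(t, R_0^2))^{n/2}$ in the claimed bound.

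The main obstacle I expect is the uniformity claim: the open set $Y$ depends on $x,y$ and its diameter grows with $d(x,y)$, so one has to be careful that the constants $C_P$ and $2^N$ appearing in Sturm's hypothesis, and hence the final constant $C$, depend only on $X$ and $R_0$ rather than on $d(x,y)$. This is the point of having done the previous chapter's work: Corollary \ref{UnifPoincareP} and the local volume doubling are stated uniformly in the center, so they transfer to any open subset of $X$ without penalty. Once this uniformity is in place and the identification $\rho = d$ is recorded, the rest of the argument is a direct plug-in to Sturm's off-diagonal estimate.
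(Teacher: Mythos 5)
Your proposal is correct and follows essentially the same route as the paper: apply Sturm's off-diagonal upper bound (Theorem \ref{Sturm2}) to an open set $Y$ containing $R_0$-balls around $x$ and $y$, and invoke the uniform local Poincar\'{e} inequality and volume doubling so that the constant depends only on $X$ and $R_0$. The only difference is your choice of $Y$ as an $R_0$-tube around a geodesic rather than the paper's $Y=B(x,R_0)\cup B(y,R_0)$, which is immaterial for the upper bound (and is in fact the kind of set the paper reserves for the lower bound in Corollary \ref{offdiagonalHeatlower}).
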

\begin{proof}
For each $x,y\in X$ apply \ref{Sturm2} to $X$ with $Y=B(x,R_0) \cup
B(y,R_0)$.  The distance compares easily: $\rho(x,X/Y) \ge d(x,X/Y) =
R_0$.  Because the constant $C$ in \ref{Sturm2} depends only on $C_P$
and $N$, we can use the fact that our constants $C_P$ and $N$ do not
depend on our specific choice of ball to obtain a universal constant,
$C$.
\end{proof}
Note that we can rewrite this as a bound of the following form for
some constants $C,c$:
\begin{eqnarray*}
h_t(x,y) \le \frac{C}{(\min(t,R_0^2))^{n/2}} e^{-c\frac{d^2(x,y)}{t}}.
\end{eqnarray*}
%%%%%%%%%%%%%%%%%%%%%%%%%%%%%%%%%%%%%%%%%%%%%%%%%%%%%%%%%%%%%%%%%%%%%%%
\begin{corrolary} \label{ComplexIsJustLikeRn}
For an admissible $n$-dimensional Euclidean complex $X$ with degree
bounded above by $M$, solid angle bounded by $\alpha$, and edge
lengths bounded below, on $X^{(k)}$ we have
\begin{eqnarray*}
\frac{1}{C_k t^{k/2}} 
\le h_t^k(x,x) \le \frac{C_k}{t^{k/2}} .
\end{eqnarray*}
This holds for all $t<R_0^2$ and $x\in X^{(k)}$, where $C_k$ depends
on $R_0$, $\alpha$, $M$, $k$, and $\inf_{v,w \in X^{(0)}} d(v,w)$.  In
particular, we can take $C= \max_{k=1..n} C_k$ to have a uniform
constant for each $X^{(k)}$.
\end{corrolary}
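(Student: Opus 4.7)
The plan is to reduce to Corollaries \ref{Corge} and \ref{Corle} applied to each skeleton $X^{(k)}$ individually. First I would observe that $X^{(k)}$ is itself an admissible $k$-dimensional Euclidean complex: its maximal polyhedra are the $k$-dimensional faces of $X$ (dimensional homogeneity follows from that of $X$), and local $(k-1)$-chainability is inherited from the corresponding chainability property of $X$. The uniform geometric bounds also descend: the degree bound $M$ on $X$ directly controls the number of $k$-faces meeting at any lower face in $X^{(k)}$, the solid-angle bound $\alpha$ restricts to the $k$-dimensional faces, and the edge-length bound $\varl$ is the same since $X$ and $X^{(k)}$ share the same $0$- and $1$-skeletons.

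With these hypotheses verified, the full chain of results from this chapter applies verbatim to $X^{(k)}$ with the integer $n$ replaced by $k$ throughout. In particular, the uniform local Poincar\'e inequality (Corollary \ref{UnifPoincareP}) and local volume doubling hold on $X^{(k)}$ on balls of radius up to $R_0$, with $N=k$. Feeding these into Theorems \ref{Sturm1} and \ref{Sturm2} of Sturm exactly as in Corollaries \ref{Corge} and \ref{Corle} yields
\begin{eqnarray*}
\frac{1}{c_k \, \mu(S^{(k-1)})\, t^{k/2}} \le h_t^k(x,x) \le \frac{C_k'}{\mu(S^{(k-1)})\, t^{k/2}}
\end{eqnarray*}
for every $x \in X^{(k)}$ and $t < R_0^2$, with constants depending only on $R_0$, $\alpha$, $M$, $k$, and $\varl$. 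Absorbing the dimensional constant $\mu(S^{(k-1)})$ into a single $C_k$ gives the stated two-sided bound.

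For the final assertion, I would simply set $C = \max_{k=1,\dots,n} C_k$; since the ambient complex has finite dimension $n$, this maximum is finite, yielding a single constant that works simultaneously for every skeleton.

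The main obstacle is the first step: carefully checking that the skeleton $X^{(k)}$ really satisfies the admissibility hypothesis, since admissibility was defined in terms of the ambient dimension, and verifying that each of the intermediate lemmas (Theorem \ref{PoincareP1}, Theorem \ref{weakp1}, and the Whitney cover construction of Theorem \ref{UnifPoincare1General}) can be re-read with $X^{(k)}$ in place of $X$ without any hidden dependence on the full $n$-dimensional structure. Once this bookkeeping is done, the result is immediate.
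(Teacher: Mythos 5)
Your proposal matches the paper's proof, which is exactly this reduction: the paper simply applies Corollaries \ref{Corge} and \ref{Corle} to $X^{(k)}$, noting that $X^{(k)}$ is itself an admissible complex satisfying the same bounds as $X$, with $C_k$ varying with $k$ through the dimensional dependence of volume doubling and Poincar\'e. Your additional bookkeeping about admissibility and the descent of the geometric bounds is just a more explicit version of the same one-line argument.
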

\begin{proof}
Apply Corollaries \ref{Corge} and \ref{Corle} to $X^{(k)}$.  This
holds because $X^{(k)}$ is also an admissible complex satisfying the
same bounds as $X$.  $C_k$ varies slightly in each dimension due to
the effect of dimension on volume doubling, and hence Poincar\'{e}.
\end{proof}
%%%%%%%%%%%%%%%%%%%%%%%%%%%%%%%%%%%
 Off diagonal, the lower bound is more complicated.
\begin{theorem}[Sturm]\label{Sturm3}
Assume $Y$ is an open subset of a complete space $X$ that admits a
Poincar\'{e} inequality with constant $C_P$ and volume doubling with
constant $2^N$.  Then there exists a constant $C=C(C_P,N)$ such that
for every $x,y\in Y$ which are joined by a curve $\gamma$ of length
$\rho(x,y)$
\begin{eqnarray*}
h_t(x,y) \ge \frac{1}{C \mu(B(x,\sqrt{T})) } 
e^{-C\frac{\rho^2(x,y)}{t}} e^{-\frac{Ct}{R^2}}
\end{eqnarray*}
where $\rho$ is the intrinsic distance, $R=\inf_{0\le s \le
1}(\rho(\gamma(s),X-Y))$, and $T=\min(t,R^2)$.
\end{theorem}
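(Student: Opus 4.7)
The plan is to derive this Gaussian lower bound by combining the on-diagonal lower bound (Theorem \ref{Sturm1}) with the parabolic Harnack inequality and then chaining along the curve $\gamma$. Under the hypotheses of local volume doubling and a local $L^1$ Poincar\'{e} inequality on $Y$, Sturm's earlier work gives a local parabolic Harnack inequality: for any ball $B(z,r)$ with $4B(z,r)\subset Y$ and any nonnegative solution $u$ of $\partial_t u=\Delta u$ on the parabolic cylinder $(s-4r^2,s)\times 4B(z,r)$, one has a uniform estimate of the form $u(s-3r^2,z_1)\le C\, u(s,z_2)$ for $z_1,z_2\in B(z,r)$. This is the tool that lets us transport the on-diagonal lower bound along $\gamma$.

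First I would reduce to the case $t\le R^2$, so that $T=t$. In this regime, choose $N\simeq 1+\rho^2(x,y)/t$ and subdivide $\gamma$ into $N$ arcs of length $\rho(x,y)/N$, choosing a chain of points $x=z_0,z_1,\dots,z_N=y$ along $\gamma$ together with times $t_i=it/N$. Pick a radius $r\simeq \sqrt{t/N}$ small enough that each ball $B(z_i,2r)$ lies in $Y$ (this is where $R$ controls the geometry: the definition of $R$ ensures $B(\gamma(s),R)\subset Y$ for every $s$). Starting from the on-diagonal bound $h_{t/N}(z_0,z_0)\ge c/\mu(B(z_0,\sqrt{t/N}))$ of Corollary \ref{Sturm1}, I would apply the parabolic Harnack inequality $N$ times, at each step moving from $(t_i,z_i)$ to $(t_{i+1},z_{i+1})$ and losing a multiplicative factor $C_H$. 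After $N$ applications one obtains
\begin{eqnarray*}
h_t(x,y)\ge \frac{C_H^{-N}}{\mu(B(x,\sqrt{t/N}))}.
\end{eqnarray*}
Using volume doubling to replace $\mu(B(x,\sqrt{t/N}))$ by $\mu(B(x,\sqrt{t}))$ at the cost of a factor polynomial in $N$, and then noting that $C_H^{-N}=e^{-N\log C_H}\simeq e^{-C\rho^2(x,y)/t}$ with a matching constant $C$, gives the desired bound with $T=t$.

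To handle $t>R^2$, so that $T=R^2$, I would split the time interval into a ``diffusive'' piece of length $T$ and a ``stationary'' piece of length $t-T$. On the stationary piece I would apply the Harnack inequality repeatedly over the single ball $B(\gamma(s),R)$ in blocks of width comparable to $R^2$; each block costs one factor of $C_H$, and roughly $t/R^2$ blocks are needed, producing the factor $e^{-Ct/R^2}$. Concretely, one writes $h_t(x,y)=\int h_{R^2/2}(x,z)h_{t-R^2}(z,w)h_{R^2/2}(w,y)\,d\mu(z)d\mu(w)$ (or chains the Chapman--Kolmogorov identity) and uses Harnack to propagate the bound in time.

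The main obstacle is the interplay between the geometric constraint (the chain of balls must stay inside $Y$, so each ball has radius at most a fixed fraction of $R$) and the parabolic constraint (one wants radius $\sqrt{t/N}$ to match the natural scale of the heat equation). When $\sqrt{t}>R$ these two constraints are incompatible, and reconciling them is exactly what forces the appearance of both $T=\min(t,R^2)$ and the penalty $e^{-Ct/R^2}$. A secondary technical point is bookkeeping: the constant $C$ in the exponent, the constant in the Harnack inequality, and the doubling constant must be tracked carefully to ensure a single $C=C(C_P,N)$ suffices for both the Gaussian factor and the boundary correction.
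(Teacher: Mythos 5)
This statement is not proved in the thesis at all: it is quoted as a theorem of Sturm from \cite{Sturm}, so there is no internal proof to compare your proposal against. That said, your outline is essentially the argument underlying the cited result: Poincar\'{e} plus volume doubling yield a local parabolic Harnack inequality, the on-diagonal lower bound of Theorem \ref{Sturm1} seeds the estimate, and iterating Harnack along $\gamma$ with $N \simeq 1+\rho^2(x,y)/t$ steps at scale $r \simeq \sqrt{t/N}$ produces the Gaussian factor, with doubling absorbing the change from $\mu(B(x,\sqrt{t/N}))$ to $\mu(B(x,\sqrt{t}))$. Your choice of $N$ does satisfy the needed compatibility $\rho(x,y)/N \lesssim \sqrt{t/N}$, and the definition $R=\inf_s \rho(\gamma(s),X-Y)$ is exactly what keeps all the Harnack cylinders (not just the balls $B(z_i,2r)$) inside $Y$ uniformly along the curve, so the $t\le R^2$ half of your sketch is sound.

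The weak spot is the regime $t>R^2$. The Chapman--Kolmogorov factorization you write down does not by itself produce the factor $e^{-Ct/R^2}$: it presupposes a lower bound on $h_{t-R^2}(z,w)$ for $z,w$ inside the ball, which is precisely what is being proved. The correct mechanism, which you gesture at but should make primary, is to chain in time as well as in space: iterate the parabolic Harnack inequality over a fixed ball of radius comparable to $R$ centered on $\gamma$, in roughly $t/R^2$ time blocks of duration $\simeq R^2$, each block costing one multiplicative Harnack constant, while constraining the spatial steps to length at most a fixed fraction of $R$; the total number of steps is then of order $\rho^2(x,y)/t + t/R^2$, which is what produces both exponential factors and the appearance of $T=\min(t,R^2)$ in the volume term. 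With that replacement your proposal matches the standard proof of the quoted theorem.
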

In our setting, we can find a near-diagonal lower bound for any complex.
%%%%%%%%%%%%%%%%%%%%%%%%%%%%%%%%%%%%%%%%%%%%%%%%%%%%%%%%%%%%%%%%%%%%%
\begin{corrolary}\label{offdiagonalHeatlower}
Let $X$ be an admissible $n$-dimensional Euclidean complex with degree
bounded above by $M$, solid angle bounded by $\alpha$, and edge
lengths bounded below.  For any $R_0>0$ there exists a $C=C(X,R_0)$ so that for
any $x,y \in X$ with $d(x,y)<R_0$ and $t>0$ we have:
\begin{eqnarray*}
h_t(x,y) \ge \frac{1}{C \mu(B(x,\sqrt{\min(t,R_0^2)})) } 
e^{-C\frac{d^2(x,y)}{t}} e^{-\frac{Ct}{R_0^2}}.
\end{eqnarray*}
If $X$ is volume doubling and has a global Poincar\'{e} inequality, we
can set $R_0 =\infty$ to get:
\begin{eqnarray*}
h_t(x,y) \ge \frac{1}{C \mu(B(x,\sqrt{t}))} e^{-C\frac{d^2(x,y)}{t}}.
\end{eqnarray*}
\end{corrolary}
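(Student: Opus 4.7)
The plan is to apply Sturm's lower-bound theorem (Theorem \ref{Sturm3}) to a carefully chosen open set $Y \subset X$ containing both $x$ and $y$, as well as a distance-realizing curve between them. Fix $x,y \in X$ with $d(x,y) < R_0$, and let $\gamma$ be a geodesic from $x$ to $y$ (which exists by completeness of our length space). A natural choice is to take $Y$ to be a neighborhood of $\gamma$ of radius $R_0$, for instance $Y = B(x, 2R_0)$. Because $\gamma$ lies inside $B(x, R_0)$, every point of $\gamma$ is at intrinsic distance at least $R_0$ from $X - Y$, so in Sturm's notation $R \geq R_0$ and hence $T = \min(t, R^2) \geq \min(t, R_0^2)$.

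Next I need to verify Sturm's hypotheses on $Y$: volume doubling and a Poincaré inequality, both with constants independent of the basepoint. Volume doubling on balls of radius up to a fixed multiple of $R_0$ follows from the degree bound $M$ together with the lower bound on edge lengths, exactly as discussed just before Theorem \ref{PoincareP1}. The Poincaré inequality on balls of radius up to $R_0$ is Corollary \ref{UnifPoincareP}; to cover the larger balls that arise in the Whitney decomposition of $Y$, I invoke Corollary \ref{PoincareExtend1}, which bootstraps the small-ball Poincaré to balls of any fixed radius at the cost of a (finite) constant depending only on $R_0$ and the geometric parameters $M, \alpha, \varl$. Since the intrinsic distance $\rho$ on our length complex coincides with $d$, one also has $\rho(x,y) = d(x,y)$ and $\rho(\cdot, X-Y) \geq d(\cdot, X-Y)$, which is all that is needed to plug into Sturm's statement.

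With the hypotheses verified, Theorem \ref{Sturm3} applied to this $Y$ gives
\begin{eqnarray*}
h_t(x,y) \ge \frac{1}{C\,\mu(B(x,\sqrt{T}))}\,
  e^{-C d^2(x,y)/t}\, e^{-Ct/R^2},
\end{eqnarray*}
and absorbing $R \geq R_0$ and $T \geq \min(t, R_0^2)$ into the constant $C$ yields the stated inequality. For the last sentence of the corollary, if one has global volume doubling and a global Poincaré inequality, one may simply take $Y = X$; then $R = \infty$, $T = t$, the factor $e^{-Ct/R^2}$ equals $1$, and the cleaner bound drops out.

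The main technical point to get right is not any calculation but the verification that the constants in Sturm's theorem are genuinely uniform in $x$ and $y$: this reduces to checking that the Poincaré and doubling constants one feeds Sturm depend only on $R_0$ and the fixed geometric data $(M, \alpha, \varl, n)$, which is precisely what the uniform local machinery built in Chapter 2 (Corollaries \ref{UnifPoincareP} and \ref{PoincareExtend1}) was designed to provide. A secondary subtlety is to make the choice of $Y$ large enough that $R \gtrsim R_0$ regardless of where $x$ and $y$ sit, and here $d(x,y) < R_0$ is used to guarantee that the geodesic $\gamma$ stays safely inside $B(x, 2R_0)$.
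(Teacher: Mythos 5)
Your proposal is correct and is essentially the paper's own argument: apply Sturm's lower bound (Theorem \ref{Sturm3}) with $Y=B(x,2R_0)$, note that the geodesic from $x$ to $y$ stays at intrinsic distance at least $R_0$ from $X-Y$, and use the uniform local Poincar\'{e} and doubling constants from Chapter 2 (via Corollaries \ref{UnifPoincareP} and \ref{PoincareExtend1}) to make $C$ depend only on $X$ and $R_0$; the $R_0=\infty$ case with $Y=X$ is handled identically. In fact you supply more justification than the paper does (the geodesic argument for $R\ge R_0$ and the verification of uniformity), and the only step left implicit in your ``absorb into $C$'' remark is one application of local volume doubling to pass from $\mu(B(x,\sqrt{T}))$ with $T=\min(t,R^2)\le\min(t,4R_0^2)$ to $\mu(B(x,\sqrt{\min(t,R_0^2)}))$, which the paper glosses over as well.
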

%%%%%%%%%%%%%%%%%%%%%%%%%%%%%%%%%%%%%%%%%%%%%%%%%%%%%%%%%%%%%%%%%%%%
\begin{proof}
For each $x,y\in X$ apply \ref{Sturm3} to $X$ with $Y=B(x,2R_0)$.
Since we have a length space, the distance compares easily:
$\rho(\gamma(s),X/Y) \ge d(\gamma(s),X/Y) = R_0$.  Because the
constant $C$ in \ref{Sturm2} depends only on $C_P$ and $N$, we can use
the fact that our constants $C_P$ and $N$ do not depend on our
specific choice of ball to obtain a universal constant, $C$.
\end{proof}
\section{Examples}
\begin{example}
In $R^1$ the heat kernel, $h_t(x,y)$ is the density for the transition
probability of Brownian motion.  
\begin{eqnarray*}
h_t(x,y) = \frac{1}{\sqrt{4\pi t}} e^{-\frac{|x-y|^2}{4t}}.
\end{eqnarray*}
This is a normal density for $y$ with expectation $x$ and variance
$2t$.  In the probability literature, it is common for the heat
equation to be written with the time derivative multiplied by an extra
factor of $1/2$ so that the variance is $t$. See for example Feller
Volume 2 \cite{Feller2}.  

We can think of $R^1$ as an Euclidean complex.  This matches our on
diagonal bound exactly, but it is slightly nicer (by a factor of
$\sqrt{1 + d^2(x,y)/t}$ than our off diagonal bound.
\end{example}

\begin{example}
In $R^n$, the heat equation can be solved using either a scaling
argument or Fourier series.  Alternately, it can be thought of as an
n-dimensional version of Brownian motion.  In the PDE literature, the
heat kernel is also called the Gauss kernel or the fundamental
solution to the heat equation.  See Evans \cite{Evans} for a
derivation. 
\begin{eqnarray*}
h_t(x,y) = \frac{1}{(4\pi t)^{n/2}} e^{-\frac{|x-y|^2}{4t}}.
\end{eqnarray*}
Note that $R^n$ is also an Euclidean complex, and that this kernel is
consistent with our asymptotics.
\end{example}

\begin{example}
We can think of a circle of length $1$ as a complex consisting of
three edges of length $1/3$ joined in a triangle shape.  One can
calculate the heat kernel in terms of a sum using Fourier series; see
Dym and McKean \cite{Dym}.  The heat kernel here is
\begin{eqnarray*}
h_t(x,y) 
= \frac{1}{\sqrt{4\pi t}} \sum_{n=-\infty}^{\infty} e^{-\frac{|x-y-n|^2}{4t}}.
\end{eqnarray*}
When $x=y$, this simplifies to:
\begin{eqnarray*}
h_t(x,x) = \frac{1}{\sqrt{4\pi t}} 
           + \frac{2}{\sqrt{4\pi t}} \sum_{n=1}^{\infty} e^{-\frac{n^2}{4t}}.
\end{eqnarray*}
For small values of $t$, the dominant term is $\frac{1}{\sqrt{4\pi
t}}$.  This is the same behavior as our small time prediction.  Note
that once $t =1/4$ , the ball of radius $\sqrt{t}$ will be of size 1.
By this point in time, the asymptotic will cease to be useful.
\end{example}
  \begin{example}
We will look at the heat kernel on a star shaped graph, $X$, which has a central vertex
with $n$ edges attached to it.
\begin{figure}[h]
\centering
       \includegraphics[angle=0,width=2in]{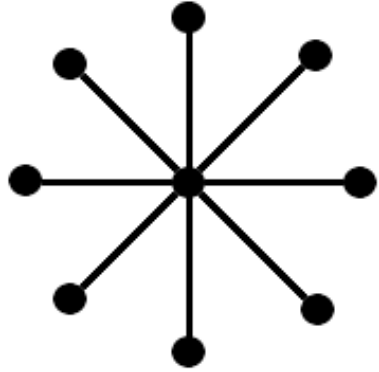}
\caption{Example of a star; here n=8.}
\end{figure}

 When we compute functions on $X$ for $x \in e(a,b)$, we will let $x$
represent $d(x,a)$. For example, $a$ would be 0, the midpoint would be
$\frac{1}{2}$, and $b$ would be 1.  All functions on $X$ are of the
form $f(x,j) = \sum_i f_i(x) I_{j=i} + f(0)I_{x=0}$ where $x \in
(0,1]$ and $j=1,2,...n$.  $f_i(x)$ represents the value of the
function along the ith leg, and $f(0)$ is the value at the center of
the star.  The measure on our star is $d\mu(x,j) = dx$.  When we
write the derivative $\frac{df}{dx}$ we will mean the usual derivative
with respect to Lebesgue measure.  $\frac{df}{d\mu}(0) = \sum_i
\frac{df_i}{dx}(0)$ for functions $f$ which are
differentiable for each $f_i$ on $(0,1]$.

We'll look at a domain where our function has zero derivative at the
boundary points and has zero derivative in the center.  \\ $\Dom(\Delta)
= \{f \in C(X): f_i \in C^1((0,1]) \text{ , and }\frac{df}{d\mu}(0)=0,
\frac{df_i}{d\mu}(1)=0 \}$; we will be using an $L^2$ norm on this
space.

The symmetric set of eigenfunctions are cosine on every leg:
\begin{eqnarray*}
\Phi_k(x) =\sqrt{\frac{2}{n}}\cos(k \pi x)
\mbox{ for }x \in e_i,\mbox{ } i=1..n,
\end{eqnarray*}
or, when $k=0$, they are a constant on every leg:
\begin{eqnarray*}
\Phi_0(x) =\frac{1}{\sqrt{n}} \mbox{ for }x \in e_i,\mbox{ } i=1..n.
\end{eqnarray*}

These functions have derivative zero at each vertex and are continuous
at the central vertex.  The coefficients are chosen so that they have
an $L^2$ norm of 1.

Note that if we look at the product of these for points $x$ and $y$ on the
star (regardless of which leg they occur on), we have
\begin{eqnarray*}
\Phi_k(x)\Phi_k(y) &=& \frac{2}{n}\cos(k \pi x)\cos(k \pi y) \\
         &=& \frac{1}{n}
              \left(\cos(k \pi (x-y)) + \cos(k \pi (x+y))\right).
\end{eqnarray*}
If we sum $e^{-\lambda^2 t}\Phi(x)\Phi(y)$ we find:
\begin{equation*}
\begin{split}
\frac{1}{n} +
\frac{1}{n}\sum_{k=1}^{\infty} e^{-(k\pi)^2t}
&   \left(\cos(k \pi (x-y)) + \cos(k \pi (x+y))\right) \\
&= \frac{1}{2n } \sum_{k=-\infty}^{\infty} e^{-(k\pi)^2t}
       \left(\cos(k \pi (x-y)) + \cos(k \pi (x+y))\right).
\end{split}
\end{equation*}

In order to simplify this, we will use Jacobi's identity (see Dym for
derivation):
\begin{eqnarray*}
\sum_{k=-\infty}^{\infty} e^{-\frac{(x-k)^2}{2s}} 
= \sqrt{2 \pi s}\sum_{k=-\infty}^{\infty}e^{-2 \pi^2 k^2 s}e^{2 \pi i k x}.
\end{eqnarray*}
Because this sums to a real number, we can rewrite it as:
\begin{eqnarray*}
\sum_{k=-\infty}^{\infty} e^{-\frac{(x-k)^2}{2s}} 
= \sqrt{2 \pi s}\sum_{k=-\infty}^{\infty}e^{-2 \pi^2 k^2 s} \cos(2\pi k x).
\end{eqnarray*}
This gives us:
\begin{equation*}
\begin{split}
\frac{1}{2n } \sum_{k=-\infty}^{\infty} e^{-(k\pi)^2t}
&       \left(\cos(k \pi (x-y)) + \cos(k \pi (x+y))\right) \\
&= \frac{1}{2n\sqrt{\pi t}} 
 \sum_{k=-\infty}^{\infty} \left( e^{-\frac{(x-y -2k)^2}{4t}} 
                               +  e^{-\frac{(x+y -2k)^2}{4t}}\right).
\end{split}
\end{equation*}

We will also have ones that form an $n-1$ dimensional basis on the
legs.  These are the ``odd'' eigenfunctions.  These will be either
sine or $0$ along the legs.  Since $\sin(0)=0$, they will be
continuous at the center.  We require the derivatives at the vertices
to be 0, and so the possible sine functions are $\sin
\left(\frac{(2k+1) \pi}{2} x\right)$.  Note that they must be
normalized according to an $L^2$ norm; this means that $\sum_{i=1}^n
\frac{1}{2} b_i^2 =1$, where the $b_i$ are the coefficients.  Since
they will need to have derivative zero at the central vertex, we will
need $\sum_{i=1}^n b_i =0$.  The combination of these two
restrictions, along with the fact that these eigenfunctions must be
orthogonal to one another, determine the coefficients.
 
For $i=1...\lfloor \frac{n}{2} \rfloor$ we have eigenfunctions of the form: 
\begin{eqnarray*}
\tilde{\Phi}_{k,i}(x) = 
      \left\{ \begin{array}{ll}             
            \sin \left(\frac{(2k+1) \pi}{2} x\right) & 
                  \text{ for } x \in e_{2i-1}  \\
            -\sin \left(\frac{(2k+1) \pi}{2} x\right) & 
                  \text{ for } x \in e_{2i}  \\
              0                            & \text{ otherwise.}
\end{array} \right. 
\end{eqnarray*}
These functions are trivially orthogonal to one another.  The factors
of $\pm 1$ give us derivative 0 at the center.

 For $i=1...\lfloor \frac{n}{2} \rfloor -1$ we have:
\begin{eqnarray*}
 \tilde{\Phi}_{k,\lfloor \frac{n}{2} \rfloor +i}(x) = 
      \left\{ \begin{array}{ll} 
     \frac{1}{\sqrt{i(i+1)}}
    \sin\left(\frac{(2k+1) \pi}{2} x\right) 
 & \text{ for } x \in e_j,  j=1..2i\\
      - \frac{i}{\sqrt{i(i+1)}}
        \sin\left(\frac{(2k+1) \pi}{2} x\right) 
 & \text{ for } x \in e_j, j=2i+1, 2i+2 \\
              0                            & \text{ otherwise.}
\end{array} \right. 
\end{eqnarray*}

In the case where there is an odd number of legs we have: 
\begin{eqnarray*}
 \tilde{\Phi}_{k,n-1}(x) = 
      \left\{ \begin{array}{ll} 
      \frac{\sqrt{2}}{\sqrt{n(n-1)}}
               \sin\left(\frac{(2k+1) \pi}{2} x\right) 
         & \text{ for } x \in e_j, j=1..n-1 \\
    - (n-1)\frac{\sqrt{2}}{\sqrt{n(n-1)}}
               \sin\left(\frac{(2k+1) \pi}{2} x\right) & \text{ for } x \in e_n
\end{array} \right. 
\end{eqnarray*}

Note that edges $2i$ and $2i-1$ have constants with the same sign,
which forces the functions to be orthogonal to the first set.  The
pattern of + and - allow them to be orthogonal to one another.  The
other factors guarantee that the derivative at the center is zero.

If we look at the product of the $\tilde{\Phi}_k$ for points $x$ and
$y$ on the star, we have
\begin{eqnarray*}
\tilde{\Phi}_k(x)\tilde{\Phi}_k(y) 
&=& c \sin\left(\frac{(2k+1) \pi}{2} x\right)
    \sin\left(\frac{(2k+1) \pi}{2} y\right) \\
&=& \frac{c}{2}\left(\cos\left((2k+1)\pi \frac{x-y}{2}\right) 
                   -\cos\left((2k+1) \pi\frac{x+y}{2}\right)\right).
\end{eqnarray*}

If we sum these ``odd'' eigenfunctions, multiplied by
$e^{-\frac{(2k+1)^2 \pi^2}{4}t}$, we have:
\begin{eqnarray*}
\sum_{k=0}^{\infty} e^{-\frac{(2k+1)^2 \pi^2}{4}t}\frac{c}{2}
      \left(\cos\left((2k+1)\pi \frac{x-y}{2}\right) 
           -\cos\left((2k+1) \pi\frac{x+y}{2}\right)\right).
\end{eqnarray*}
We can rewrite the $x-y$ terms as follows; a similar calculation will
work for the $x+y$ terms.  First we add in terms with $2k$.  We note
that cosine is an even function, and so we can extend this to negative
$k$.  We also have the zero term, since it will cancel between the two
sums.
\begin{equation*}
\begin{split}
\sum_{k=0}^{\infty} &e^{-((2k+1)\pi)^2\frac{t}{4}}\frac{c}{2}
      \cos\left((2k+1)\pi \frac{x-y}{2}\right) \\
+&\sum_{k=1}^{\infty} e^{-(2k\pi)^2\frac{t}{4}}\frac{c}{2}
       \cos\left(2k\pi \frac{x-y}{2}\right) 
  -\sum_{k=1}^{\infty} e^{-(2k\pi)^2\frac{t}{4}}\frac{c}{2}
        \cos\left(2k\pi \frac{x-y}{2}\right)  \\
&=\sum_{k=1}^{\infty} e^{-(k\pi)^2\frac{t}{4}}\frac{c}{2}
       \cos\left(k\pi \frac{x-y}{2}\right)  
  -\sum_{k=1}^{\infty} e^{-(k\pi)^2t}\frac{c}{2}
         \cos\left(k\pi(x-y)\right) \\
&=\sum_{k=-\infty}^{\infty} e^{-(k\pi)^2\frac{t}{4}}\frac{c}{4}
       \cos\left(k\pi \frac{x-y}{2}\right) 
  -\sum_{k=-\infty}^{\infty} e^{-(k\pi)^2t}\frac{c}{4}
         \cos\left(k\pi(x-y)\right). 
\end{split}
\end{equation*}

We will apply Jacobi's identity to each of the sums.  The first
sum yields:
\begin{eqnarray*}
\frac{c}{4}\sum_{k=-\infty}^{\infty} e^{-(k\pi)^2\frac{t}{4}}
\cos\left(2k\pi \frac{x-y}{4}\right) 
&=& \frac{c}{4\sqrt{2\pi\frac{t}{8}}}\sum_{k=-\infty}^{\infty}
e^{-\frac{(\frac{x-y}{4} -k)^2}{\frac{2t}{8}}} 
 \\
&=&\frac{c}{2\sqrt{\pi t}}\sum_{k=-\infty}^{\infty}
e^{-\frac{(x-y -4k)^2}{4t}} .
\end{eqnarray*}
The second sum gives us:
\begin{eqnarray*}
-\frac{c}{4}\sum_{k=-\infty}^{\infty} e^{-(k\pi)^2t}
 \cos\left(2k\pi \frac{x-y}{2}\right) 
=\frac{c}{4\sqrt{\pi t}}\sum_{k=-\infty}^{\infty}
-e^{-\frac{(x-y -2k)^2}{4t}}.
\end{eqnarray*}

Similarly, for the $x+y$ terms we have:
\begin{equation*}
\begin{split}
-\sum_{k=1}^{\infty} e^{-\frac{(2k+1)^2 \pi^2}{4}t} &\frac{c}{2}
     \cos\left((2k+1) \pi\frac{x+y}{2}\right) \\
&= 
-\frac{c}{2\sqrt{\pi t}}\sum_{k=-\infty}^{\infty}
 e^{-\frac{(x+y -4k)^2}{4t}} 
+ \frac{c}{4\sqrt{\pi t}}\sum_{k=-\infty}^{\infty}
e^{-\frac{(x+y -2k)^2}{4t}}.
\end{split}
\end{equation*}
To get the heat kernel, we sum the $e^{-\lambda^2 t}\Phi(x)\Phi(y)$:
\begin{eqnarray*}
h_t(x,y)&=&\frac{1}{2\sqrt{\pi t} n } 
 \sum_{k=-\infty}^{\infty}\left( e^{-\frac{(x-y -2k)^2}{4t}} 
 +  e^{-\frac{(x+y -2k)^2}{4t}}\right) \\
& &+ \frac{c}{2\sqrt{\pi t}}\sum_{k=-\infty}^{\infty}
\left(e^{-\frac{(x-y -4k)^2}{4t}} -e^{-\frac{(x+y -4k)^2}{4t}}\right) \\
& &+ \frac{c}{4\sqrt{\pi t}}\sum_{k=-\infty}^{\infty}
\left(e^{-\frac{(x+y -2k)^2}{4t}} -e^{-\frac{(x-y -2k)^2}{4t}}\right).
\end{eqnarray*}
Note that the value of $c$ depends on which edges $x$ and $y$ are on.
If they are on the same edge, $c=2\left(1-\frac{1}{n}\right)$.  If $x$
and $y$ are on different edges, then $c=-\frac{2}{n}$.

We are now in a position to see what happens on the star near $t=0$.
On the diagonal, the heat kernel will limit to infinity as $t$ goes to
zero with the following asymptotics:
\begin{eqnarray*}
h_t(0,0) & \approx& \frac{1}{n\sqrt{\pi t} }, \\
h_t(1,1) & \approx& \frac{1}{\sqrt{\pi t}}, \text{ and} \\
h_t(x,x) & \approx& \frac{1}{2\sqrt{\pi t}} \text{ for }x \ne 0,1.
\end{eqnarray*}

To determine what happens for $t$ near zero when we have two different
points, we need to consider the relative positions of $x$ and $y$.  We
know that the heat kernel will limit to zero as $t$ goes to zero.
Without loss of generality, we will look at when $d(x,0) < d(y,0)$.

When $x=0$ and $y \ne 1$, the $k=0$ terms will dominate.  They give
us:
\begin{eqnarray*}
h_t(0,y)  \approx
\frac{1}{n\sqrt{\pi t}} e^{-\frac{y^2}{4t}} .
\end{eqnarray*}

When $x=0$ and $y=1$, we will only have terms involving
$e^{-\frac{1}{4t}}$ contributing.  After cancellation, this gives us:
\begin{eqnarray*}
h_t(0,1)  \approx
\frac{2}{n\sqrt{\pi t}} e^{-\frac{1}{4t}}.
\end{eqnarray*}

When $x\ne 0$, the dominant terms will involve
$e^{-\frac{(x-y)^2}{4t}}$.  If the coefficient for those terms is
zero, we then have $e^{-\frac{(x+y)^2}{4t}}$ instead.  The relevant
terms are:
\begin{eqnarray*}
h_t(x,1) &\approx &
\left(\frac{2}{n}-c\right)\frac{1}{2\sqrt{\pi t}}
e^{-\frac{(x+1)^2}{4t}} 
+
\left(\frac{2}{n}+c\right)\frac{1}{2\sqrt{\pi t}}
e^{-\frac{(x-1)^2}{4t}}, \mbox{ and} \\
h_t(x,y) &\approx &
\left(\frac{2}{n}-c\right)\frac{1}{4\sqrt{\pi t}}
e^{-\frac{(x+y)^2}{4t}} 
+
\left(\frac{2}{n}+c\right)\frac{1}{4\sqrt{\pi t}}
e^{-\frac{(x-y)^2}{4t}} \mbox{ for }x \ne y.
\end{eqnarray*}
When $x$ and $y$ are on different edges, $c=-\frac{2}{n}$, and so all
of the terms with coefficient $\frac{2}{n}+c$ disappear.  
Note that the notation gives us $d(x,y)=x+y$:
\begin{eqnarray*}
h_t(x,1) &\approx&
  \frac{2}{n\sqrt{\pi t}} e^{-\frac{(x+1)^2}{4t}} 
  \text{ for }x \ne 1 \text{, and} \\
h_t(x,y) &\approx&
\frac{1}{n\sqrt{\pi t}} e^{-\frac{(x+y)^2}{4t}}  \text{ for }x \ne y, y \ne 1.
\end{eqnarray*}
When $x$ and $y$ are on the same edge, $\frac{2}{n} + c = \frac{2}{n}
+ 2\left(1-\frac{1}{n}\right) =2$.  This gives us the same asymtotic as in
$\R$.
\begin{eqnarray*}
h_t(x,1) &\approx &
\frac{1}{\sqrt{\pi t}} e^{-\frac{(x-1)^2}{4t}}, \mbox{ and} \\
h_t(x,y) &\approx &
\frac{1}{2\sqrt{\pi t}} e^{-\frac{(x-y)^2}{4t}} \mbox{ for }x \ne y.
\end{eqnarray*}
  \end{example}
  \begin{example}
 %%%%%%%%%%%%%%%%%%%%%%%%%%%%%%%%%%%%%%%%%%%%%%%%%%%%%%%%%%%%%%%%%%%%%%%
Consider the graph consisting of two central vertices which are joined
by one edge with edges coming off of them.  We will calculate the
behaviour of the heat kernel between the two central vertices for
small times.

\begin{figure}[h]
\centering
       \includegraphics[angle=0,width=2in]{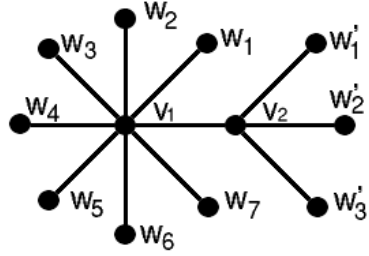}
\caption{Example of two joined stars with labels; here n=7 and m=3.}
\end{figure}

Consider two star-like graphs joined together by a central edge,
$e(v_1,v_2)$.  The star centered at $v_1$ connects to $n$ edges in
addition to the central edge.  These are labeled $e(v_1,w_i)$
$i=1..n$.  For the star centered at $v_2$, there are $m$ such edges
which are labeled $e(v_2,w_i')$ $i=1..m$.  This graph has three different
types of eigenfunctions.  When we compute these functions for $x \in
e(a,b)$, we will let $x$ represent $d(x,a)$. So $a$ would be 0, the
midpoint would be $\frac{1}{2}$, and $b$ would be 1.

The first kind correspond to eigenvalues of the form $\frac{(2k+1)^2
\pi^2}{4}$.  They form an $m-1+n-1$ dimensional space.  The
corresponding eigenfunctions are of the form $c(e(v,w))
\sin(\frac{(2k+1) \pi}{2} x)$ on the edges $e(v_1,w_i)$ and
$e(v_2,w_i')$ and are 0 on the central edge, $e(v_1,v_2)$.  Since each
eigenfunction is zero on the central edge, they will not contribute
when we compute $h_t(x,y)$ for $x \in e(v_1,v_2)$.

The second kind correspond to eigenvalues of the form $(k\pi)^2$.
When $k\ne 0$, these are $\pm \frac{\sqrt{2}}{\sqrt{m+n+1}}\cos(k\pi
x)$ along each edge in the graph, with sign chosen to preserve
continuity.  We write them:
\begin{eqnarray*}
\Phi_k(x) = \left\{ \begin{array}{l} 
       \frac{\sqrt{2}}{\sqrt{m+n+1}}\cos(k\pi x) 
        \mbox{ for }x \in e(v_1,v_2)\mbox{ or }e(v_1,w_i)\mbox{ }i=1..n \\
       \sgn(\cos(k \pi)) \frac{\sqrt{2}}{\sqrt{m+n+1}}\cos(k\pi x) 
        \mbox{ for }x \in e(v_2,w_i)\mbox{ }i=1..m. 
\end{array} \right. 
\end{eqnarray*}
The product of this function with itself at the vertices $v_1$ and $v_2$ is
\begin{eqnarray*}
\Phi_k(v_1)\Phi_k(v_2) = \frac{2}{m+n+1}\cos(k\pi).
\end{eqnarray*}
 When $k=0$, we have 
$\Phi_0(v_1)\Phi_0(v_2) = \frac{1}{m+n+1}$.

If we look at the sum of $\Phi_k(v_1)\Phi_k(v_2)e^{-k^2\pi^2 t}$,
we can write it as follows:
\begin{eqnarray*}
\sum_{k=0}^{\infty} \Phi_k(v_1)\Phi_k(v_2)e^{-k^2\pi^2 t} 
&=& \frac{1}{m+n+1} 
  +\sum_{k=1}^{\infty} \frac{2}{m+n+1}\cos(k\pi) e^{-k^2\pi^2 t}\\ 
&=& \frac{1}{m+n+1} \sum_{k=-\infty}^{\infty}\cos(k\pi) e^{-k^2\pi^2 t}.
\end{eqnarray*}

The third kind are more complicated.  They are of the form
\begin{eqnarray*}
f(x) = c_1(e)\left(\sin\left(\sqrt{\lambda}(1-x)\right) 
      + c_2(e)\sin\left(\sqrt{\lambda}x\right)\right)
\end{eqnarray*}
along each edge, where $c_1(e)$ and $c_2(e)$ will depend on the edge
$e$; see \cite{Kuc}.  For these to be in the domain, they must be
continuous and have zero derivative at each vertex.  We have
$2(m+n+1)$ constants $c_i(e)$ to determine, as well as the possible
values of $\lambda$.

To guarantee continuity, we need the function to have the same value
at $v_1$ regardless of which edge we're considering.  To get this, we
set 
\begin{align*}
c_1(e(v_1,w_i)) = c_1(e(v_1,v_2)).
\end{align*}
  Similarly, for continuity at
$v_2$, we need 
\begin{align*}
c_1(e(v_2,w_i')) = c_1(e(v_1,v_2))c_2(e(v_1,v_2)).
\end{align*}
This brings us down to $m+n+2$ different $c_i(e)$ that we need to
determine.

Now we need zero derivative at the vertices.  Note that for an edge $e$
\begin{eqnarray*}
f'(x) = 
\sqrt{\lambda} c_1(e)\left(-\cos\left(\sqrt{\lambda}(1-x)\right) 
+c_2(e)\cos\left(\sqrt{\lambda}x\right)\right).
\end{eqnarray*}

For edges of the form $e(v,w)$, we find that setting $c_2(e(v,w))
=\frac{1}{\cos\left(\sqrt{\lambda}\right)}$ will allow $f$ to satisfy
$f'(w)=0$.  For $f'(v_1)=0$, we need $c_2(v_1,v_2)
=\frac{(n+1)\cos^2\left(\sqrt{\lambda}\right)-n}
{\cos\left(\sqrt{\lambda}\right)}$.  We now have only two things that
can be used to determine our function; $c_1(e(v_1,v_2))$ and
$\lambda$.  Varying the value of $c_1(e(v_1,v_2))$ will multiply the
entire function by a constant.  We'll need to determine $\lambda$ in
order to have $f'(v_2)=0$.  For this, we need to solve
\begin{eqnarray*}
\sqrt{\lambda} c_1(e(v_1,v_2)) 
\left( 
 m\frac{(n+1)\cos^2\left(\sqrt{\lambda}\right)-n}
       {\cos\left(\sqrt{\lambda}\right)}
 \left(
  -\cos\left(\sqrt{\lambda}\right) +\frac{1}{\cos\left(\sqrt{\lambda}\right)} 
 \right) \right.& \\
+ \left. (-1)
 \left(
-1 +\frac{(n+1)\cos^2\left(\sqrt{\lambda}\right)-n}
         {\cos\left(\sqrt{\lambda}\right)}
        \cos\left(\sqrt{\lambda}\right) 
 \right) 
\right) &=0.
\end{eqnarray*}
The $(-1)$ comes from the fact that we want the sum of the inward
pointing derivatives along the edges to sum to zero.  We can simplify this to:
\begin{eqnarray*}
m
 \left(
 - (n+1)\cos^2\left(\sqrt{\lambda}\right) +2n+1 
 -\frac{n}{\cos^2\left(\sqrt{\lambda}\right)}
 \right) 
+ 
 \left(
n+1 - (n+1)\cos^2\left(\sqrt{\lambda}\right)
 \right)  \\
=0.
\end{eqnarray*}
This can be written as a quadratic equation in
$\cos^2\left(\sqrt{\lambda}\right)$; when solved, the only possibility
for $\cos^2\left(\sqrt{\lambda}\right)$ in $[0,1]$ is
$\cos^2\left(\sqrt{\lambda}\right) = \frac{mn}{(m+1)(n+1)}$.

Finally, we set $c_1(e(v_1,v_2))$ so that $||f||_2=1$.  This holds
when \\ $c_1(e(v_1,v_2))= \frac{\sqrt{m(m+1)}}{m+n+1}$.

Putting all of this information together, we find that the third type
of eigenfunction has the form:
\begin{eqnarray*}
\tilde{\Phi}_k(x) = 
\left\{ \begin{array}{l} 
\frac{\sqrt{m(m+1)}}{m+n+1}
\left(\sin\left(\sqrt{\lambda_k}(1-x)\right)+
  \frac{(n+1)\cos^2\left(\sqrt{\lambda_k}\right)-n}
       {\cos\left(\sqrt{\lambda_k}\right)} 
  \sin\left(\sqrt{\lambda_k}x\right)\right)
\\ \mbox{       for } x \in e(v_1,v_2) \\
\frac{\sqrt{m(m+1)}}{m+n+1}
\left(\sin\left(\sqrt{\lambda_k}(1-x)\right)+
\frac{1}{\cos\left(\sqrt{\lambda_k}\right)}
 \sin\left(\sqrt{\lambda_k}x\right)\right) 
\\ \mbox{       for } x \in e(v_1,w_i), i=1..n\\
  \frac{\sqrt{m(m+1)}((n+1)\cos^2\left(\sqrt{\lambda_k}\right)-n)}
       {\cos\left(\sqrt{\lambda_k}\right)(m+n+1)}
\left(\sin\left(\sqrt{\lambda_k}(1-x)\right)+
\frac{1}{\cos\left(\sqrt{\lambda_k}\right)}
 \sin\left(\sqrt{\lambda_k}x\right)\right) 
\\ \mbox{       for } x \in e(v_2,w_i'), i=1..m.
\end{array} \right. 
\end{eqnarray*}

These eigenfunctions correspond to eigenvalues $\lambda_k$, where \\
$\cos^2\left(\sqrt{\lambda_k}\right) = \frac{mn}{(m+1)(n+1)}$.  Let
$\sqrt{\lambda_0}$ be the square root of the eigenvalue in
$(0,\frac{\pi}{2})$.  All other $\sqrt{\lambda}$ are of the form
$\sqrt{\lambda_0} +k\pi$ and $-\sqrt{\lambda_0} +k\pi$, where $k$ is
a positive integer.  When $k$ is even, the cosine is positive; for $k$
odd, cosine is negative.

When we look at the product of $\tilde{\Phi}_k(v_1)$ and
$\tilde{\Phi}_k(v_2)$, this expression simplifies greatly.
\begin{eqnarray*}
\tilde{\Phi}_k(v_1)\tilde{\Phi}_k(v_2) 
=-\sgn\left(\cos\left(\sqrt{\lambda_k}\right)\right) \frac{\sqrt{mn}}{
\sqrt{(m+1)(n+1)}(m+n+1) } .
\end{eqnarray*}

We then know that the sum of
$\tilde{\Phi}_k(v_1)\tilde{\Phi}_k(v_2)e^{-\lambda_k^2 t}$ is
$\frac{\sqrt{mn}}{\sqrt{(m+1)(n+1)}(m+n+1)}$ multiplied by the
following:
\begin{equation*}
\begin{split}
\sum_{k=0}^{\infty} & -\sgn\left(\cos\left(\sqrt{\lambda_k}\right)\right)  
    e^{-(\lambda_k)^2 t} 
\\ &= 
\sum_{k=0}^{\infty} 
  e^{-((2k+1)\pi-\sqrt{\lambda_0})^2 t} 
-  e^{-((2k+2)\pi-\sqrt{\lambda_0})^2 t}  
-  e^{-(2k\pi +\sqrt{\lambda_0})^2 t}  
+  e^{-((2k+1)\pi+\sqrt{\lambda_0})^2 t}
\\ &= 
\sum_{k=0}^{\infty} 
  e^{-(\sqrt{\lambda_0}-(2k+1)\pi)^2 t} 
-  e^{-(\sqrt{\lambda_0}-(2k+2)\pi)^2 t}  
-  e^{-(\sqrt{\lambda_0}+2k\pi)^2 t}  
+  e^{-(\sqrt{\lambda_0}+(2k+1)\pi)^2 t}
\\ &=
\sum_{k=-\infty}^{\infty} 
e^{-(\sqrt{\lambda_0}-(2k+1)\pi)^2 t}
-e^{-(\sqrt{\lambda_0}-2k\pi)^2 t}.
\end{split}
\end{equation*}
%%%%%%%%%%%%%%%%%%%%%%%%%%%%%%%%%%%%%%%%%%%%%%%%%%%%%%%%%%%%%%%%%%%%%%%
We can sum $\Phi(v_1)\Phi(v_2)e^{-\lambda^2 t}$ over all of the
eigenfunctions to find an explicit expression for the heat kernel at
$(v_1,v_2)$.
\begin{equation*}
\begin{split}
 h_t(v_1,v_2) 
=& \frac{1}{m+n+1} \left( \sum_{k=-\infty}^{\infty} e^{- k^2 \pi^2 t} \cos(k \pi) \right. \\
 & + \left. \sqrt{\frac{mn}{(m+1)(n+1)}} 
\left(
\sum_{k=-\infty}^{\infty} e^{-(\sqrt{\lambda_0}-(2k+1)\pi)^2 t} 
-\sum_{k=-\infty}^{\infty} e^{-(\sqrt{\lambda_0} -2k\pi)^2 t}
\right) \right) .
\end{split}
\end{equation*}
We can rewrite this using Jacobi's identity (see Dym \cite{Dym}). The
identity is:
\begin{eqnarray*}
\sum_{k=-\infty}^{\infty} e^{-\frac{(x-k)^2}{2s}} 
= \sqrt{2 \pi s}\sum_{k=-\infty}^{\infty}e^{-2 \pi^2 k^2 s}e^{2 \pi i k x}. 
\end{eqnarray*}

Because the left hand side is a real number, we can rewrite the $e^{2
\pi i k x}$ on the right hand side to obtain:
\begin{eqnarray*}
\sum_{k=-\infty}^{\infty} e^{-\frac{(x-k)^2}{2s}} = \sqrt{2 \pi
s}\sum_{k=-\infty}^{\infty}e^{-2 \pi^2 k^2 s} \cos(2\pi k x). 
\end{eqnarray*}

We can apply this identity to the three series.  
For the first, we use the reverse of the equality with $x=\frac{1}{2}$
and $s=\frac{t}{2}$ in the right hand side.  This gives us:
\begin{eqnarray*}
\sum_{k=-\infty}^{\infty} e^{- k^2 \pi^2 t} \cos(k \pi)
=\frac{1}{\sqrt{\pi t}}
\sum_{k=-\infty}^{\infty} e^{-\frac{(1-2k)^2}{4t}}. 
\end{eqnarray*}

In the second, let $x=\frac{\sqrt{\lambda_0}+\pi}{2 \pi}$ and
$s=\frac{1}{8\pi^2 t}$.  This gives us \\ $ \frac{(x-k)^2}{2s} =
\frac{(\sqrt{\lambda_0}+\pi -2\pi k)^2}{(2\pi)^2 2 \frac{1}{8 \pi^2 t}}
= (\sqrt{\lambda_0} + (1-2k)\pi)^2 t$.  When we put that into the
identity, we have:
\begin{eqnarray*}
\sum_{k=-\infty}^{\infty} e^{-(\sqrt{\lambda_0} + (1-2k)\pi)^2 t} 
= \frac{1}{2\sqrt{\pi t}}\sum_{k=-\infty}^{\infty}e^{-\frac{k^2}{4t}} 
   \cos\left(k\left(\sqrt{\lambda_0}+\pi\right)\right). 
\end{eqnarray*}
 Note that $\sum_{k=-\infty}^{\infty}
e^{-(\sqrt{\lambda_0}-(2k+1)\pi)^2 t} =\sum_{k=-\infty}^{\infty}
e^{-(\sqrt{\lambda_0} + (1-2k)\pi)^2 t}$ by reindexing, and so this is
a way of rewriting our original sum.

For the third, we use $x= \frac{\sqrt{\lambda_0}}{2 \pi}$ and
$s=\frac{1}{8 \pi^2 t}$ which, by a similar computation, gives us:
\begin{eqnarray*}
\sum_{k=-\infty}^{\infty} e^{-(\sqrt{\lambda_0} -2k\pi)^2 t}
=\frac{1}{2\sqrt{\pi t}} \sum_{k=-\infty}^{\infty}e^{-\frac{k^2}{4t}} 
\cos\left(k \sqrt{\lambda_0}\right). 
\end{eqnarray*}

We can combine these three to rewrite $h_t(v_1,v_2)$ as follows:
\begin{equation*}
\begin{split}
h_t(v_1,v_2)=& \frac{1}{m+n+1} \frac{1}{\sqrt{\pi t}}\left(
\sum_{k=-\infty}^{\infty} e^{-\frac{(1-2k)^2}{4t}} \right. \\
 +& \left. \sqrt{\frac{mn }{(m+1)(n+1)}}
\frac{1}{2} 
\sum_{k=-\infty}^{\infty}e^{-\frac{k^2}{4t}} 
\left(
 \cos\left(k\sqrt{\lambda_0}+k \pi \right) 
 - \cos\left(k \sqrt{\lambda_0}\right)
\right) \right).
\end{split}
\end{equation*}
We can use the angle sum formula for cosines to see that
\begin{eqnarray*}
\cos\left(k\sqrt{\lambda_0}+k \pi \right) &=&
 \sin\left(k\sqrt{\lambda_0}\right)\sin\left(k\pi\right) 
+ \cos\left(k\sqrt{\lambda_0}\right)\cos\left(k\pi\right)\\
&=&\cos\left(k\sqrt{\lambda_0}\right)\cos\left(k\pi\right).
\end{eqnarray*}
As $\cos\left(k\pi\right) = 1$ for $k$ even and $-1$ for $k$ odd, the
term $\cos\left(k \sqrt{\lambda_0}+k \pi \right) -\cos\left(k
\sqrt{\lambda_0}\right)$ reduces to $0$ for even $k$ and $-2\cos\left(k
\sqrt{\lambda_0}\right)$ for odd $k$.  This allows us to write:
\begin{eqnarray*}
h_t(v_1,v_2) &=&  \frac{1}{m+n+1} \frac{1}{\sqrt{\pi t}} \left(
\sum_{k=-\infty}^{\infty} e^{-\frac{(2k+1)^2}{4t}} \right. \\
&&\left. + \sqrt{\frac{mn}{(m+1)(n+1)}} 
\frac{1}{2} 
\sum_{k=-\infty}^{\infty}e^{-\frac{(2k+1)^2}{4t}} 
(-2)\cos\left((2k+1) \sqrt{\lambda_0}\right) \right) 
 \\&=&  \frac{1}{m+n+1} \frac{1}{\sqrt{\pi t}} 
\sum_{k=-\infty}^{\infty} e^{-\frac{(2k+1)^2}{4t}}
\left(1-\frac{\sqrt{mn}\cos\left((2k+1)\sqrt{\lambda_0}\right)}
             {\sqrt{(m+1)(n+1)}}\right).
\end{eqnarray*}

When we consider the limit as $t$ approaches $0$, the $k=0$ and $k=-1$
terms will dominate.  By symmetry, these terms are equal.  This gives
us the approximation:
\begin{eqnarray*}
h_t(v_1,v_2) 
& \approx &
 \frac{1}{m+n+1} \frac{2}{\sqrt{\pi t}} 
e^{-\frac{1}{4t}}
\left(1-\frac{\sqrt{mn}\cos\left(\sqrt{\lambda_0}\right)}
             {\sqrt{(m+1)(n+1)}}\right) 
\\&=& \frac{1}{m+n+1} \frac{2}{\sqrt{\pi t}} 
 e^{-\frac{1}{4t}}
\left(1-\frac{\sqrt{mn}  \sqrt{ \frac{mn}{(m+1)(n+1)}}  }
             {\sqrt{(m+1)(n+1)}}\right)
\\&=&  \frac{1}{(m+1)(n+1)} \frac{2}{\sqrt{\pi t}} 
e^{-\frac{1}{4t}}.
\end{eqnarray*}

Note that when $m=n=0$ we have a line; this is the correct asymptotic
there.  Similarly, when $m=n=1$, we have a line of length 3.  This
works out there too.

We can also find the on-diagonal asymptotic at $v_1$, one of the
central vertices.  Here we have
\begin{eqnarray*}
\Phi_k^2(v_1)=\frac{2}{m+n+1}
\end{eqnarray*}
and
\begin{eqnarray*}
\tilde{\Phi}_k^2(v_1)
&=& \left(\frac{\sqrt{m(m+1)}}{\sqrt{m+n+1} } \right)
   \sin^2\left(\sqrt{\lambda_k}\right) \\
&=& \frac{m}{(m+n+1)(n+1)}.
\end{eqnarray*}
Using the same style of manipulations as before we find that $h_t(v_1,v_1)$ is
\begin{eqnarray*}
h_t(v_1,v_1) = \frac{1}{m+n+1} \sum_{k=-\infty}^{\infty} e^{-k^2 \pi^2 t}
+ \frac{m}{(m+n+1)(n+1)} \sum_{k=-\infty}^{\infty} e^{-(\lambda_0 -k \pi)^2 t}.
\end{eqnarray*}
We can use a Jacobi transform here with $x=0$, $t=2s$ in the first sum
and $x=\frac{\sqrt{\lambda_0}}{\pi}$ and $t=\frac{1}{2 \pi^2 s}$ in
the second to obtain:
\begin{eqnarray*}
h_t(v_1,v_1) = \frac{1}{(m+n+1)\sqrt{\pi t}} \sum_{k=-\infty}^{\infty}
\left(1 + \frac{m}{n+1} \cos(2k\sqrt{\lambda_0})\right)
e^{-\frac{k^2}{t}}.
\end{eqnarray*}
As t approaches zero, the $k\ne 0$ terms also approach zero.  The
dominant $k=0$ term is
\begin{eqnarray*}
\frac{1}{(m+n+1)\sqrt{\pi t}} \left(1 + \frac{m}{n+1} \right) 
= \frac{1}{(n+1)\sqrt{\pi t}}.
\end{eqnarray*}
This is the same asymptotic as we have on a single star whose center
connects to $n+1$ edges.  This is not surprising, since small values
of $t$ correspond to the local geometry of a space.
  \end{example}
  \begin{example}
 The heat kernel on an interval is messier than that on a line.  Dym
\cite{Dym} calculates it for 
$\frac{\partial}{\partial t} u -\frac{1}{2}\Delta u=0$ on the interval
$[0,1]$ to be:
\begin{eqnarray*}
p_t(x,y) 
= 1 + 2 \sum_{n=1}^{\infty} e^{-n^2 \pi^2 t/2} \cos(n \pi x) \cos(n \pi y) \\
= \frac{1}{\sqrt{2 \pi t}} 
 \sum_{n=-\infty}^{\infty} e^{-\frac{(x-y-2n)^2}{2t}} 
                         + e^{-\frac{(x+y-2n)^2}{2t}}.
\end{eqnarray*}
We would like to have the heat kernel for $\frac{\partial}{\partial t}
\tilde{u} -\Delta \tilde{u}=0$ on $[0,L]$.  We can find it by noting
that if $u(t,x)$ is a solution to $\frac{\partial}{\partial t} u
-\frac{1}{2}\Delta u=0$ on the interval $[0,1]$, then $\tilde{u}(t,x) = c
u(a t,b x)$ has derivatives
\begin{eqnarray*}
\left(\frac{\partial}{\partial t} \tilde{u}\right)(t,x) 
  = \left(c a \frac{\partial}{\partial t} u\right)(a t,b x) 
\end{eqnarray*}
and 
\begin{eqnarray*}
(\Delta \tilde{u})(t,x) = (c b^2 \Delta u)(a t, b x) = \left( 2 c b^2
\frac{\partial}{\partial t} u \right)(a t,b x).
\end{eqnarray*}
Then $(\frac{\partial}{\partial t} \tilde{u})(t,x) - (\Delta
\tilde{u})(t,x) = 0$ when $a=2b^2$.  We change the interval length by
setting $b=\frac{1}{L}$.  This gives us $\tilde{p}_t(x,y) = c
p_{\frac{2t}{L^2}}(\frac{x}{L},\frac{y}{L})$. The constant $c$ is used
to normalize so that $\int_0^L \tilde{p}_t(x,y) dx=1$ holds for each
$t>0$.  Since this integral is $\int_0^L c
p_{\frac{2t}{L^2}}(\frac{x}{L},\frac{y}{L}) dx =
\int_0^1 c p_{\frac{2t}{L^2}}(z,\frac{y}{L}) Ldz = cL$, 
we have $c=\frac{1}{L}$.

We can write this as
\begin{eqnarray*}
\tilde{p}_t(x,y) 
= \frac{1}{\sqrt{4 \pi t}} 
    \sum_{n=-\infty}^{\infty} e^{-\frac{(x-y-2nL)^2}{4t}} 
                            + e^{-\frac{(x+y-2nL)^2}{4t}}.
\end{eqnarray*}
This description allows one to look at terms with $n$ near 0 to find the
small time asymptotic.

Note that for $L=3$, $x=1$ and $y=2$ we have:
\begin{eqnarray*}
\tilde{p}_t(1,2) 
= \frac{1}{\sqrt{4 \pi t}} 
    \sum_{n=-\infty}^{\infty} e^{-\frac{(-1-6n)^2}{4t}} 
                            + e^{-\frac{(3-6n)^2}{4t}}.
\end{eqnarray*}
When $t$ is small, this behaves like the first $n=0$ term:
\begin{eqnarray*}
\tilde{p}_t(1,2) 
\approx \frac{1}{2\sqrt{\pi t}} e^{-\frac{1}{4t}} .
\end{eqnarray*}
This is consistent with the asymptotic for the two star case when
$m=n=1$.  Note also that when $L=1$, $x=0$, $y=1$ that this gives
$\frac{2}{\sqrt{\pi t}} e^{- \frac{1}{4t}}$ which is consistent with
the star with one edge.
   \end{example}
 \begin{example}
\begin{figure}[h]\label{picZ3}
\centering
\includegraphics[angle=0,width=1in]{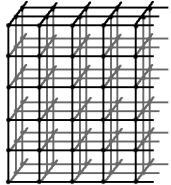}
\caption{A subset of the three dimensional grid.}
\end{figure}
Complexes can have an underlying group structure.  For example, $Z^3$,
the group consisting of triplets of integers, can be used to create a
3 dimensional complex by connecting a cube, $[0,1]^3$, to itself where
each pair of triples which differ by one by a line segment.  This grid
is the 1-skeleton, and the points in the group form a 0-skeleton.  We
can use the 1-skeleton to create a space that looks like a bunch of
empty boxes by filling in the faces formed by loops of four edges.
This is the 2-skeleton.  If we then fill in the boxes in the
2-skeleton, we'll have a 3-skeleton, which is $R^3$.  Locally, we've
shown that if $h_t$ is the heat kernel on the $k$-skeleton,
\begin{eqnarray*}
\frac{1}{{C} t^{k/2}}
\le h_t(x,x) \le \frac{C}{(\min(t,R_0^2))^{k/2}}
\end{eqnarray*}
for all $x \in X$ and all $t$ such that $0<t<R_0^2$.

We prove in chapter 5 that the heat kernel on each of these k-skeletons
globally behaves like the heat kernel on $R^3$.  
\end{example}

\begin{example}
There are also complexes with underlying group structure whose
geometry is not globally Euclidean.  Let $G$ be the free group on two
elements; this is a group of words formed by letters $a$ and $b$ and
their inverses $a^{-1}$ and $b^{-1}$ where the only cancellations are
$aa^{-1} =a^{-1}a =1$ and $bb^{-1}=b^{-1}b=1$, and $a$ and $b$ don't
commute.  Let $Y$ be the complex formed by three squares joined into
an L shape (see Fig. \ref{picN}).  We have a larger structure $X$
which has copies of $Y$ connected to each other via the group $G$.
That is, each copy of $Y$ will be connected to four other copies of
$Y$; the top of the L connects to the bottom edge of the lower left
square of the L, and the right of the L connects to the left edge of
the bottom square.

\begin{figure}[h]
\centering
\includegraphics[angle=0,width=1in]{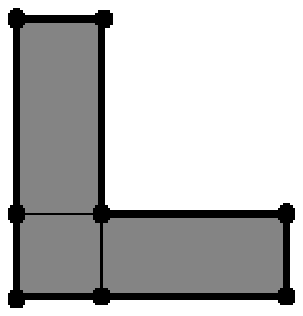}
\hspace{1in}
\includegraphics[angle=0,width=2in]{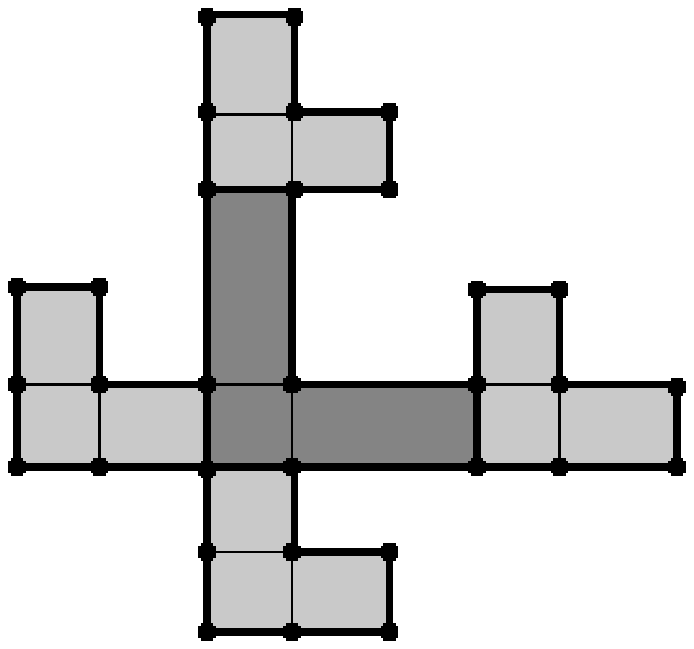}
\caption{Y (left); Y (darker shading) and its four surrounding copies
(lighter shading) (right) Each of the edges in these pictures should
be interpreted as having length 1.}\label{picN}
\end{figure}

  Note that we can not isometrically embed $X$ into $R^2$. The
stretching of the edges in Fig. \ref{picN} is to allow you to see
distinct edges and vertices.  This structure globally it acts like a
hyperbolic space, but locally it is Euclidean.

 For a small ball with $R<.5$, we have a two dimensional circle
(possibly missing a wedge) whose volume is $\approx \pi R^2$.  For $t
<.5$, corollary \ref{ComplexIsJustLikeRn} tells us:
\begin{eqnarray*}
h_t(x,x) \approx \frac{C}{(4\pi t)}.
\end{eqnarray*}

For a given copy of $Y$, there are four neighbors, each of which has
three additional neighbors.  For a ball of radius $R>1$, there will be
approximately $1 + 4 + 4(3) + \cdots + 4(3^R) \approx 2 (3^{R+1})$ copies of
$Y$.  This tells us that for large $R$, we have exponential volume
growth.  In particular, this group is nonamenable.  We define this in
chapter 5 and show that the large time behavior of the heat kernel is
\begin{eqnarray*}
\sup_{x \in X} h_t(x,x) \approx C_0 e^{-t/C_1}.
\end{eqnarray*}
\end{example}

\chapter{Setup for Groups}
A finite product of elements from a set $S$ is called a word.  If a
word is written $s_1s_2...s_k$, we say it has length $k$.
 
A finitely generated group is a group with a generating set, $S$,
where every element in the group can be written as a finite word using
elements of $S$.  Although for a given $g \in G$ it is computationally
difficult to determine which word is the smallest one representing
$g$, such a word (or words) exists.  If this word has length $k$, then
we write $|g|=k$.

We define the volume of a subset of $G$ to be the number of elements
of $G$ contained in that subset.  We write $|B_r|$ to denote the volume
of a ball of radius $r$, $B_r := \{g \in G : |g| \le r\}$.  For groups,
volume is translation invariant, and so we do not lose any generality
by having it centered at the identity.

For a function $f$ which maps elements of a group to the reals, we
define the Dirichlet form on $\varl^2(G)$ to be $E(f,f) = \frac{1}{|S|}\sum_{g
\in G}\sum_{s \in S} |f(g) -f(gs)|^2$.

Although we'd need to specify directions if we were to define a
gradient, we can define an object which behaves like the length of the
gradient of $f$ on $G$.  We write this as $|\grad f(x)|
=\sqrt{\frac{1}{|S|}\sum_{s \in S} |f(x) -f(xs)|^2}$.  Notationally,
this means that $E(f,f) = \sum_{g \in G} |\grad f(g)|^2$.

Discrete $L^p$ norms restricted to a subset $A \subset G$ are written
 as \\
$||f||_{p,A} = \left(\sum_{x\in A} |f(x)|^p \right)^{1/p}$.  When $A=G$, we
 will write $||f||_{p,G}$.
 
 One can show a Poincar\'{e} type inequality on a volume doubling
finitely generated group. The arguments used in this can be found in
\cite{CLsc1993}. 
 
%%%%%%%%%%%%%%%%%%%%%%%%%%%%%%%%%%%%%%%%%%%%%%%%%%begin weak group poincare
\begin{lemma}\label{groupVDpoincare}
Let $G$ be a finitely generated group with generating set $S$.  For
any $f : G \rightarrow R$, the following inequality holds on balls
$B_r$:
\begin{eqnarray*}
\norm{f-f_{B_r}}_{1,B_r} 
  \le \frac{|B_{2r}|}{|B_r|} 2r \sqrt{|S|} \norm{\grad f}_{1,B_{3r}}.
\end{eqnarray*}
If the group is volume doubling, this is a weak Poincar\'{e}
inequality on balls for $p=1$:
\begin{eqnarray*}
\norm{f-f_{B_r}}_{1,B_r} 
  \le 2r C_{Doubling}\sqrt{|S|} \norm{\grad f}_{1,B_{3r}}.
\end{eqnarray*}
\end{lemma}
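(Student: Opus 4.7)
The plan is to mimic the classical Euclidean proof of the $p=1$ Poincar\'{e} inequality, but using geodesic words in the Cayley graph as the replacement for straight-line paths. This is particularly clean because the group acts on itself by isometries, so translation invariance gives us all of the ``changes of variable'' we need for free.

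First I would observe that
\begin{align*}
\norm{f-f_{B_r}}_{1,B_r}
 = \sum_{x \in B_r} \left| \frac{1}{|B_r|}\sum_{y\in B_r}(f(x)-f(y))\right|
 \le \frac{1}{|B_r|}\sum_{x,y\in B_r}|f(x)-f(y)|,
\end{align*}
and then re-parametrize the double sum via $y=xg$. Since $x,y\in B_r$ we have $g=x^{-1}y\in B_{2r}$, so
\begin{align*}
\sum_{x,y\in B_r}|f(x)-f(y)|
\le \sum_{g\in B_{2r}}\sum_{x\in B_r}|f(x)-f(xg)|.
\end{align*}

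Next, for each fixed $g\in B_{2r}$ I would choose a geodesic word representation $g=s_1 s_2\cdots s_k$ with $k=|g|\le 2r$ and telescope:
\begin{align*}
|f(x)-f(xg)|\le \sum_{i=1}^{k}|f(xs_1\cdots s_{i-1})-f(xs_1\cdots s_{i-1}s_i)|.
\end{align*}
Summing over $x\in B_r$ and setting $w=xs_1\cdots s_{i-1}$ (so that $w$ ranges over a translate of $B_r$ contained in $B_{r+i-1}\subset B_{3r}$) gives
\begin{align*}
\sum_{x\in B_r}|f(x)-f(xg)|\le \sum_{i=1}^k\sum_{w\in B_{3r}}|f(w)-f(ws_i)|.
\end{align*}
Cauchy--Schwarz on the single generator $s_i$ yields $|f(w)-f(ws_i)|\le\sqrt{|S|}\,|\grad f(w)|$, so each inner sum is at most $\sqrt{|S|}\,\norm{\grad f}_{1,B_{3r}}$, and therefore
\begin{align*}
\sum_{x\in B_r}|f(x)-f(xg)|\le 2r\sqrt{|S|}\,\norm{\grad f}_{1,B_{3r}}.
\end{align*}

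Finally I would sum over $g\in B_{2r}$, obtaining a factor $|B_{2r}|$, and divide by $|B_r|$ to conclude
\begin{align*}
\norm{f-f_{B_r}}_{1,B_r}\le \frac{|B_{2r}|}{|B_r|}\,2r\sqrt{|S|}\,\norm{\grad f}_{1,B_{3r}}.
\end{align*}
The volume-doubling version is then immediate by replacing $|B_{2r}|/|B_r|$ with the doubling constant $C_{\text{Doubling}}$.

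The only real subtlety is bookkeeping the various translates and making sure the intermediate points $xs_1\cdots s_{i-1}$ stay inside $B_{3r}$; this is where the $3r$ (rather than $r$) on the right-hand side comes from. Nothing here is genuinely hard because the Cayley graph's translation invariance avoids the nonconvexity issues that complicated the Euclidean-complex version in Theorem \ref{PoincareP1}.
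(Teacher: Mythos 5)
Your proof is correct and follows essentially the same route as the paper: bound $\norm{f-f_{B_r}}_{1,B_r}$ by the double sum $\frac{1}{|B_r|}\sum_{x,y\in B_r}|f(x)-f(y)|$, write $y=xg$ with $g\in B_{2r}$, telescope along a geodesic word of length at most $2r$, and change variables so the increments live in $B_{3r}$. The only (harmless) difference is the final step: you bound each increment $|f(w)-f(ws_i)|$ pointwise by $\sqrt{|S|}\,|\grad f(w)|$, whereas the paper first replaces the specific generators $s_i$ by a sum over all of $S$ with multiplicity at most $2r$ and then applies a Jensen-type bound; both lead to the stated constant $\frac{|B_{2r}|}{|B_r|}\,2r\sqrt{|S|}$.
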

%%%%%%%%%%%%%%%%%%%%%%%%%%%%%%%%%%%%%%%%%%%%%% begin proof
\begin{proof}
Let $G$ be a finitely generated group with a symmetric set of
generators, $S$.  Let $B_r$ be a ball of radius $r$; for brevity, we
will not explicitly write the center.  We can write the norm of $f$
minus its average as follows.
\begin{eqnarray*}
\norm{f-f_{B_r}}_{1,B_r}
&= &\sum_{x \in B_r} |f(x) - \frac{1}{|B_r|} \sum_{y \in B_r} f(y)| \\
&\le & \frac{1}{|B_r|}\sum_{x \in B_r}  \sum_{y \in B_r}| f(x)-f(y)|. 
\end{eqnarray*}
For each $y \in B_r$, there exists a $g\in G$ with $|g| \le 2r$ such
that $y =x g$. We make this substitution and sum over all $g\in G$
with $|g| \le 2r$.
\begin{eqnarray*}
 \frac{1}{|B_r|}\sum_{x \in B_r}  \sum_{y \in B_r}| f(x)-f(y)|
&\le &\frac{1}{|B_r|}\sum_{x \in B_r} \sum_{g: |g|\le 2r} | f(x)-f(x g)| \\
&=& \frac{1}{|B_r|}\sum_{g: |g|\le 2r}\sum_{x \in B_r} | f(x)-f(x g)|.
\end{eqnarray*}
We will begin with the innermost quantity, and then simplify the sums.
We can write $g = s_1..s_k$ as a reduced word with $k \le 2r$.  We
rewrite the difference of $f$ at $x$ and $xg$ by splitting the path
between them into pieces.
\begin{eqnarray*}
|f(x)-f(x g)|
 \le \sum_{i=1}^{|g|} |f(xs_1...s_{i-1})- f(xs_1...s_i)|.
\end{eqnarray*}
We fix $g$ and sum over all $x \in B_r$.
\begin{eqnarray*}
\sum_{x \in B_r} |f(x)-f(xg)|
& \le &\sum_{x \in B_r}  
     \sum_{i=1}^{|g|} |f(xs_1...s_{i-1})- f(xs_1...s_i)| \\
 &=& \sum_{i=1}^{|g|} \sum_{x \in B_r} 
     |f(xs_1...s_{i-1})- f(xs_1...s_i)|. 
\end{eqnarray*}
We can change variables by letting $z= xs_1...s_{i-1}$.  Note that $|x
s_1..s_{i-1}| < 3r$ as $i\le 2r$.  Then $z \in B_{3r}$.
\begin{eqnarray*}
\sum_{i=1}^{|g|} \sum_{x \in B_r} 
     |f(xs_1...s_{i-1})- f(xs_1...s_i)| 
 \le \sum_{i=1}^{|g|} \sum_{z \in B_{3r}} 
      |f(z)- f(zs_i)| .
\end{eqnarray*}
Since $s_i \in S$, we can sum over all $s \in S$ instead of the $s_i$
in $g$.  To do this, we must account for the multiplicity of the
$s_i$.  We could have at most $|g|$ copies of any generator; $|g| \le
2r$, and so we will multiply by $2r$.
\begin{eqnarray*}
\sum_{i=1}^{|g|} \sum_{z \in B_{3r}} 
      |f(z)- f(zs_i)| 
 \le 2r \sum_{s \in S} \sum_{z \in B_{3r}} 
      |f(z)- f(zs)|.
\end{eqnarray*}
Jensen's inequality allows us to rewrite this in terms of the gradient.
\begin{eqnarray*}
2r \sum_{s \in S} \sum_{z \in B_{3r}} 
      |f(z)- f(zs)|
& \le& 2r \sum_{z \in B_{3r}} 
           \sqrt{|S| \frac{1}{|S|} \sum_{s \in S} |f(z)- f(zs)|^2} \\
 & =&  2r \sum_{z \in B_{3r}} \sqrt{|S|} |\grad f(z)|.
\end{eqnarray*}
We'll use this calculation to get the desired inequality.  We now have
\begin{eqnarray*}
\sum_{x \in B_r} |f(x)-f(xg)|
\le 2r \sum_{z \in B_{3r}} \sqrt{|S|} |\grad f(z)|.
\end{eqnarray*}
Dividing by $|B_r|$ and summing over the $g$ gives us
\begin{eqnarray*}
\frac{1}{|B_r|}\sum_{g: |g|\le 2r}\sum_{x \in B_r} | f(x)-f(x g)| 
&\le& \frac{1}{|B_r|}\sum_{g: |g|\le 2r} 
    \sum_{z \in B_{3r}} 2r \sqrt{|S|} |\grad f(z)| \\
&=& \frac{|B_{2r}|}{|B_r|} 2r \sqrt{|S|}
    \sum_{z \in B_{3r}} |\grad f(z)|.
\end{eqnarray*}
This reduces to
\begin{eqnarray*}
\norm{f-f_{B_r}}_{1,B_r} 
  \le \frac{|B_{2r}|}{|B_r|} 2r \sqrt{|S|} \norm{\grad f}_{1,B_{3r}}.
\end{eqnarray*}
Note that in general, $\frac{|B_{2r}|}{|B_r|}$ will depend on the
radius, $r$.  If the group is volume doubling, this gives us a weak
Poincar\'{e} inequality.
\begin{eqnarray*}
\norm{f-f_{B_r}}_{1,B_r} 
  \le C_{Doubling} 2r \sqrt{|S|} \norm{\grad f}_{1,B_{3r}}.
\end{eqnarray*}
\end{proof}
%%%%%%%%%%%%%%%%%%%%%%%%%%%%%%%%%%%%%%%%%%%%%%%%%%end weak group poincare
\section{Comparing distances in X and G}
Let $X$ be a complex, and $G$ be a finitely generated group of
isomorphisms on the complex such that $X/G =Y$ is an admissible
complex consisting of a finite number of polytopes.

One example of this type of complex is a Cayley graph; this is the graph where
each vertex corresponds to a group element, and two vertices are
connected by an edge if they differ by an element of the generating
set.  In this case, $Y$ is the unit interval.

We would like to be able to compare functions defined on the group, $G$,
with functions defined on the complex, $X$.  To do this, we will look at
ways to transfer a function defined on $G$ to a function defined on $X$
that roughly preserves the norm of both the function and its energy
form.  We seek to do the reverse as well.  We will use a technique that originated with Kanai \cite{Kanai} and additionally was used by Coulhon and Saloff-Coste \cite{CoulhonLsc}.

We also want a way of changing from real valued functions which take
values in $X$ to ones that take values in $G$.  We will do this by
taking a copy of $Y$ and splitting it into many smaller pieces.  Given
$\delta \le \diam(Y)$, we can find a finite covering of $Y$ by balls
of radius $\delta$ such that balls of radius $\delta /2$ are disjoint
in $Y$.  As $Y$ is a finite polytopal complex, volume doubling on $Y$
implies that at most a finite number of balls of radius $\delta$ will
overlap.  $X$ can be written by taking a copy of $Y$ for each element
of $G$, and so this cover can be expanded to a cover of $X$.  Note
that once we have a copy of the cover of $Y$ for each element of $G$,
balls of radius $\delta /2$ in this larger cover may overlap. For $X$,
the overlap is also finite; call the number of overlapping balls
$\cover$.

Call the centers of the balls covering $Y$ $\{ \gamma_i\}_{i=1}^N$ and
the balls covering $X$ $\{ g\gamma_i\}_{i=1..N; g \in G}$.  Note that
each $x\in X$ is within $\delta$ of at least one of the $g\gamma_i$.
As we are frequently switching between $X$ and $G$, we will use $B_X$
for balls in $X$ and $B_G$ for balls in $G$.

\begin{example}
\begin{figure}[h]
\centering
       \includegraphics[angle=0,width=1.5in]{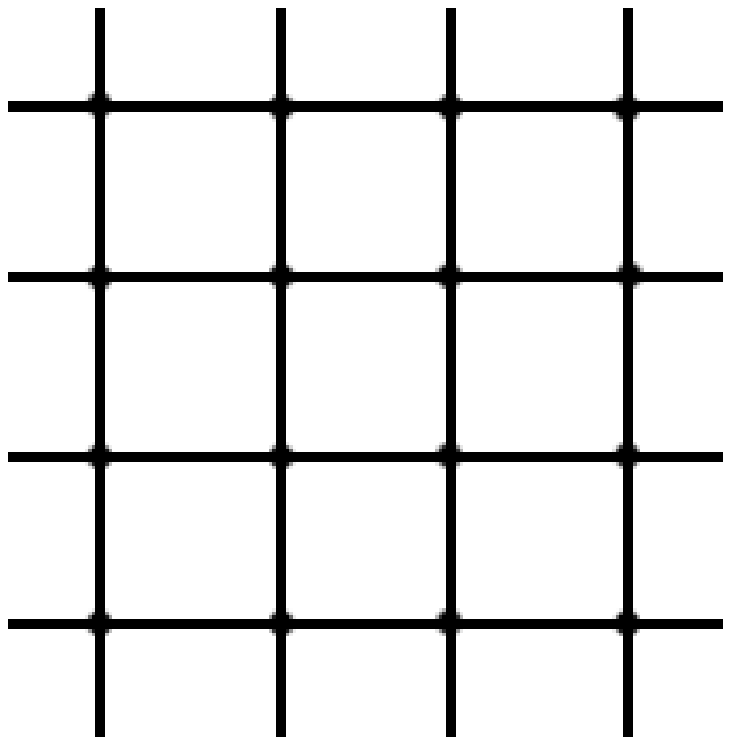}
\hspace{.5in}
       \includegraphics[angle=0,width=1.5in]{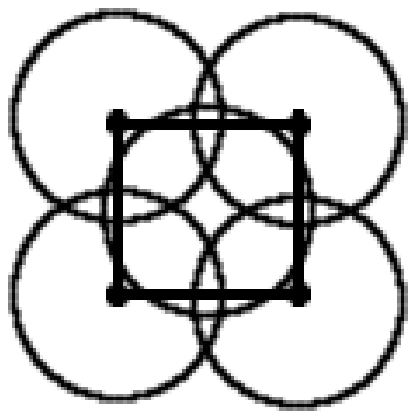}
\hspace{.5in}
       \includegraphics[angle=0,width=1.5in]{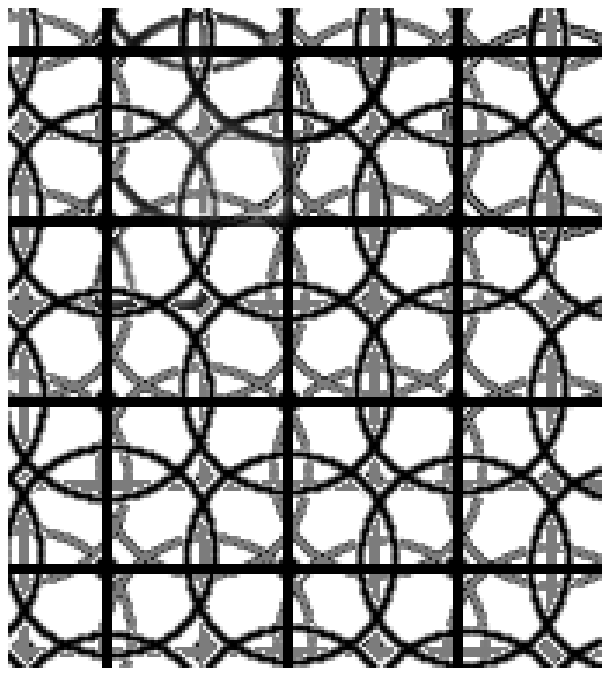}
\caption{X split into copies of Y(left);a copy of Y covered by 5 balls of radius .6 (center); a cover for Y shifted by five copies of G cover everything -- the black lines represent four copies that overlap exactly; the fifth copy is gray(right).}
\end{figure}
Let $X=R^2$, $G =Z^2$ and $Y =[0,1]^2$.  For $\delta =.6$, balls of
radius $\delta/2 =.3$ centered at the corners of $Y$ are disjoint, but
not all points in the plane are covered. We can introduce another copy
of $G$ that's shifted by $(.5,.5)$.  All points in $Y$ are covered by
some ball of radius delta, but the balls of radius $.3$ will not
overlap.  We can check this by comparing the distance along the
diagonal from $(0,0)$ to $(1,1)$ with the length covered by the radii
along the same diagonal.  We have $d((0,0),(1,1))= \sqrt{2} \approx
1.4$.  For the balls, there's the radius of the one centered at
$(0,0)$, the one centered at $(1,1)$ and the diameter of the one
centered at $(.5,.5)$.  This sums to $4\delta =1.2$.

The number of overlapping balls in $X$ is $\cover=10$. This happens at
$(0,.5)$ where there are four balls centered at $(0,0)$, one centered
at $(.5,.5)$, one centered at $(-.5,.5)$, and four centered at
$(0,1)$.  In this example, $\gamma_1=(0,0)$, $\gamma_2=(0,1)$,
$\gamma_3=(1,0)$, $\gamma_4=(1,1)$, and $\gamma_5=(.5,.5)$.
\end{example}

We define
\begin{eqnarray*}
\group{f(g,i)} = \frac{1}{\mu\left(B_X(g\gamma_i,\delta)\right)}
                 \int_{B_X(g\gamma_i,\delta)} f(x) dx 
               = \dashint_{B_X(g\gamma_i,\delta)} f(x) dx .
\end{eqnarray*}

We can view $\group{f}$ as a collection of $N$ functions defined on
$G$.  For any fixed $i$, we can treat $\group{f(\cdot,i)}$ as a
function on the group, and so the norm of $\group{f}$ can be found by
summing these over $i$.

It's important to note that the sets $\{g : g\in B_X(r) \cap G\}$ and
$B_G(r)$ are potentially different.  We can describe this by comparing
distances with some explicit constants.
%%%%%%%%%%%%%%%%%%%%%%%%%%%%%%%%%%%%%%%%%%%%%%%%%Lemma Compare Distances
\begin{lemma}\label{CompareDistance}
We can compare distances in $G$ and $X$ in the following manner.
There exist constants $\CompareXG$ and $\CXG$ so that for any $g,h\in
G$ we have:
\begin{eqnarray*}
\frac{1}{\CXG} d_X(g,h) \le d_G(g,h) \le \CompareXG d_X(g,h).
\end{eqnarray*}
This tells us that balls centered at points in $G$ compare as:
\begin{eqnarray*}
G \cap B_X(\frac{r}{\CompareXG}) \subset B_G(r) \subset G \cap B_X(\CXG r).
\end{eqnarray*}
Here, $\CXG = \max_{v_1,v_2 \in Y^{(0)}} d_{Y^{(1)}}(v_1,v_2)$ and \\
$\CompareXG = \frac{1}{\min_{v_1,v_2 \in Y^{(0)}} d_{X^{(1)}}(v_1,v_2)}
\prod_{i=1}^n
\max\left(\sqrt{\frac{2}{1-\cos(\alpha)}},\frac{\max_{y_1,y_2 \in Y^{(i-1)}}
d_{X^{(i-1)}}(y_1,y_2)}{\min_{v_1,v_2 \in Y^{(0)}}
d_{X^{(1)}}(v_1,v_2)} \right)$ where $\alpha$ is the smallest interior angle
in $X$.
\end{lemma}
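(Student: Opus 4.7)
The plan is to handle the two inequalities separately, after identifying $G$ with the orbit $G\cdot \tilde{y}_0 \subset X^{(0)}$ for a fixed basepoint vertex $\tilde{y}_0$ projecting to $y_0 \in Y^{(0)}$ under the quotient map $X \to Y$.

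For the upper bound $d_X(g,h) \le \CXG\, d_G(g,h)$, I would write $g^{-1}h$ as a reduced word $s_1 \cdots s_k$ with $k = d_G(g,h)$. Since $G$ acts on $X$ by isometries, the triangle inequality yields
\begin{equation*}
d_X(g,h) \le \sum_{i=1}^k d_X(s_1 \cdots s_{i-1}\tilde{y}_0, s_1\cdots s_i \tilde{y}_0) = \sum_{i=1}^k d_X(\tilde{y}_0, s_i \tilde{y}_0).
\end{equation*}
For each generator $s$, both $\tilde{y}_0$ and $s\tilde{y}_0$ are lifts of the same vertex $y_0$, so $d_X(\tilde{y}_0, s\tilde{y}_0)$ is bounded above by the length of the shortest path in $X^{(1)}$ joining these two lifts, which lifts from a path in $Y^{(1)}$ and hence has length at most $\CXG = \max_{v_1,v_2 \in Y^{(0)}} d_{Y^{(1)}}(v_1, v_2)$.

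For the reverse inequality $d_G(g,h) \le \CompareXG\, d_X(g,h)$, let $\gamma$ be a geodesic in $X$ from $g$ to $h$ of length $r$. The strategy is a two-step reduction. First, I would convert $\gamma$ into a path $\tilde\gamma \subset X^{(1)}$ by dimension-by-dimension descent: a segment of $\gamma$ meeting a single $i$-dimensional face can be replaced, using the same law-of-cosines argument as in Theorem~\ref{weakp1}, by a path in $X^{(i-1)}$ whose length exceeds the original by at most
\begin{equation*}
\max\!\left(\sqrt{\tfrac{2}{1-\cos(\alpha)}},\; \tfrac{\max_{y_1,y_2 \in Y^{(i-1)}} d_{X^{(i-1)}}(y_1,y_2)}{\min_{v_1,v_2 \in Y^{(0)}} d_{X^{(1)}}(v_1,v_2)}\right).
\end{equation*}
Iterating this descent over $i = 1,\ldots,n$ accumulates the product appearing in $\CompareXG$. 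Second, the resulting path $\tilde\gamma \subset X^{(1)}$ visits a sequence of vertices $g = v_0, v_1, \ldots, v_m = h$, each edge having length at least $\min_{v_1,v_2 \in Y^{(0)}} d_{X^{(1)}}(v_1,v_2)$, so $m$ is bounded by $\mathrm{length}(\tilde\gamma)$ divided by this minimum edge length --- this is precisely the prefactor of $\CompareXG$. Each such edge traversal descends to an edge in $Y^{(1)}$ and therefore corresponds to a uniformly bounded number of moves in $G$, so the word length of $g^{-1}h$ is controlled by $\CompareXG \cdot r$.

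The main obstacle is the inductive skeleton descent in the first step: at each dimension one must verify that a geodesic segment landing in a face can be pushed to the next-lower skeleton either within a single face (contributing the diameter-to-edge ratio) or by exploiting the lower bound $\alpha$ on interior angles (contributing the law-of-cosines factor $\sqrt{2/(1-\cos\alpha)}$). The case-by-case maximum in the formula for $\CompareXG$ exactly reflects these two possibilities at each dimension, and the overall bookkeeping closely parallels the descent used in the proof of Theorem~\ref{weakp1}.
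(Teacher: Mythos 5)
Your proposal is correct and follows essentially the same route as the paper: the easy direction bounds $d_X$ by a path in $X^{(1)}$ tracking the word in $G$ with each generator step costing at most $\CXG$, and the hard direction pushes a geodesic down the skeleta one dimension at a time, with the same case split at each stage (faces meeting, handled by the law of cosines and the angle bound $\alpha$, versus faces disjoint, handled by the diameter-to-minimum-edge-length ratio), accumulating the product in $\CompareXG$ and finishing by converting $X^{(1)}$-length to word length via the minimum edge length. No substantive difference from the paper's argument.
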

%%%%%%%%%%%%%%%%%%%%%%%%%%%%%%%%%%%%%%%%%%%%%%%%%%%%%Proof
\begin{proof}
We'll start with the easy direction.  Any path in $X^{(1)}$ is also a
path in $X$, and so $d_X(g,h) \le d_{X^{(1)}}(g,h)$.  To compare this
with distances in $G$, which count numbers of points in paths, we use
lengths of edges between them.
\begin{eqnarray*}
d_{X^{(1)}}(g,h) \le 
\max_{v_1,v_2 \in Y^{(0)}} d_{Y^{(1)}}(v_1,v_2) d_G(g,h).
\end{eqnarray*}

Set $\CXG = \max_{v_1,v_2 \in Y^{(0)}} d_{Y^{(1)}}(v_1,v_2)$.
Since whenever $ \CXG d_{G}(g,h)\le r$ we also have $d_X(g,h)\le r$,
we know that any point in $B_G(\frac{r}{\CXG})$ is also in $B_X(r)$.
This tells us that $B_G(r) \subset G \cap B_X(\CXG r)$.

We can use the fact that $X$ can be subdivided into copies of $Y$ to
relate distances in the other direction as well.  We will compare
distances in the different skeletons of $X$.  In order to simplify
notation, $d_k(x,y)$ will refer to the distance between $x$ and $y$ when
we restrict to paths in $X^{(k)}$.

Let $g,h \in G$ be given.  Then there is a shortest path in $X$
between them.  If there are multiple such paths, pick one. Label it
$\{x_0=g,x_1,x_2,..,x_k=h\}$ where $x_i \in$ $X^{(n-1)}$ for
$i=1..k-1$, and $x_i$, $x_{i+1}$ are both in the boundary of the same
maximal polytope, although they are on different faces.  In
particular, both are in $X^{(n-1)}$.  Then we know that the length of
this path is$\sum_{i=1}^k d_{n}(x_{i-1},x_i)$.

We will compare $d_{n}(x_{i-1},x_i)$ with $d_{n-1}(x_{i-1},x_i)$, and
use this to relate the distances between skeletons who differ by 1
dimension.  This will allow us to work our way from $n$ dimensions
down to $X^{(0)}$.  Then we can compare $X^{(0)}$ with $G$.

Look at $x_i$ and $x_{i+1}$ which are in $(n-1)$ dimensional faces $F_i$ and
$F_{i+1}$.  Either $F_i \cap F_{i+1}$ is nonempty and so they share a
lower dimensional point, or else it is empty and they do not.  

If they do not share a point, then:
\begin{eqnarray*}
\min_{v_1,v_2 \in Y^{(0)}} d_{1}(v_1,v_2)
\le d_{n}(x_i,x_{i+1}).
\end{eqnarray*}
We get this bound because the diameter of any subpolyhedra of $Y$ must
be bounded below by the length of the smallest edge of that polyhedra.

We can also bound the $(n-1)$ distance:
\begin{eqnarray*}
 d_{n-1}(x_i,x_{i+1}) 
   \le \max_{y_1,y_2 \in Y^{(n-1)}} d_{n-1}(y_1,y_2).
\end{eqnarray*}

Putting this together, we get:
\begin{eqnarray*}
d_{n-1}(x_i,x_{i+1}) \le \frac{\max_{y_1,y_2 \in Y^{(n-1)}}
d_{n-1}(y_1,y_2)}{\min_{v_1,v_2 \in Y^{(0)}}
d_{1}(v_1,v_2)} d_{n}(x_i,x_{i+1}).
\end{eqnarray*}

Otherwise, if $F_i$ and $F_{i+1}$ intersect in a lower dimensional
face, we will call $v$ the point on the intersection which minimizes
$d_{n-1}(v,x_i)+d_{n-1}(v,x_{i+1})$.  These three points form a
triangle with angle $x_i v x_{i+1} = \theta \ge \alpha$, where
$\alpha$ is the smallest interior angle in $Y$ as well as in $X$.
Note that this angle is bounded because $Y$ is made up of a finite
number of polytopes.  We would like to determine a relationship
between $d_{n}(x_i,x_{i+1})$ and $\inf_{v \in F_i \cap F_{i+1}}
d_{n-1}(v,x_i)+d_{n-1}(v,x_{i+1})$.

To find this, we will use a simple derivation. For positive numbers $a$
and $b$ we have
\begin{eqnarray*}
(a-b)^2 &\ge& - \cos(\alpha)(a-b)^2 \\
a^2 + b^2 -2ab\cos(\alpha) &\ge& 2ab -a^2\cos(\alpha) -b^2\cos(\alpha) \\
2a^2 + 2b^2 -4ab\cos(\alpha) &\ge& a^2 + b^2 + 2ab -2ab\cos(\alpha) 
-a^2\cos(\alpha) -b^2\cos(\alpha) \\
a^2 + b^2 -2ab\cos(\alpha) &\ge& (a+b)^2\left(\frac{1-\cos(\alpha)}{2}\right).
\end{eqnarray*}

This is helpful because when we apply the law of cosines to the
triangle we have:
\begin{eqnarray*}
d_{n}^2(x_i,x_{i+1}) = d_{n-1}^2(x_i,v) +d_{n-1}^2(v,x_{i+1})
-2 d_{n-1}(x_i,v)d_{n-1}(v,x_{i+1})\cos(\theta).
\end{eqnarray*}

We can form an inequality by replacing $\cos(\theta)$ with the larger
$\cos(\alpha)$:
\begin{eqnarray*}
d_{n}^2(x_i,x_{i+1}) \ge d_{n-1}^2(x_i,v)
+d_{n-1}^2(v,x_{i+1}) -2
d_{n-1}(x_i,v)d_{X^{(n-1)}}(v,x_{i+1})\cos(\alpha).
\end{eqnarray*}

Then we can apply our fact with $a=d_{n-1}(x_i,v)$ and
$b=d_{n-1}(v,x_{i+1})$.
\begin{eqnarray*}
d_{n}^2(x_i,x_{i+1}) \ge (d_{n-1}(x_i,v)+
d_{n-1}(v,x_{i+1}))^2\left(\frac{1-\cos(\alpha)}{2}\right).
\end{eqnarray*}

This leads us to the conclusion that 
\begin{eqnarray*}
d_{n}(x_i,x_{i+1}) &\ge& \left(d_{n-1}(x_i,v)+
d_{n-1}(v,x_{i+1})\right)\sqrt{\frac{1-\cos(\alpha)}{2}} \\
&\ge& d_{n-1}(x_i,x_{i+1})\sqrt{\frac{1-\cos(\alpha)}{2}}.
\end{eqnarray*}

When we combine the cases where faces intersect with the case where
they do not, we get the following inequality:
\begin{eqnarray*}
d_{n-1}(x_i,x_{i+1}) 
\le \max\left(\sqrt{\frac{2}{1-\cos(\alpha)}},\frac{\max_{y_1,y_2 \in Y^{(n-1)}}
d_{n-1}(y_1,y_2)}{\min_{v_1,v_2 \in Y^{(0)}}
d_{1}(v_1,v_2)} \right) d_{n}(x_i,x_{i+1}).
\end{eqnarray*}

We can sum and use the fact that we had a distance minimizing path in
$X^{(n)}$ to get
\begin{eqnarray*}
d_{n-1}(g,h) 
\le \max\left(\sqrt{\frac{2}{1-\cos(\alpha)}},\frac{\max_{y_1,y_2 \in Y^{(n-1)}}
d_{n-1}(y_1,y_2)}{\min_{v_1,v_2 \in Y^{(0)}}
d_{1}(v_1,v_2)} \right) d_{n}(g,h).
\end{eqnarray*}

We can repeat this argument for the lower dimensions (down to
dimension 1) to get:
\begin{eqnarray*}
d_{1}(g,h) 
\le \prod_{i=1}^n 
\max\left(\sqrt{\frac{2}{1-\cos(\alpha)}},\frac{\max_{y_1,y_2 \in Y^{(i-1)}}
d_{i-1}(y_1,y_2)}{\min_{v_1,v_2 \in Y^{(0)}}
d_{1}(v_1,v_2)} \right) d_{X^{(n)}}(g,h).
\end{eqnarray*}

To compare with the distance in $G$, we see that 
\begin{eqnarray*}
\min_{v_1,v_2 \in Y^{(0)}} d_{1}(v_1,v_2) d_G(g,h) \le
d_{1}(g,h). 
\end{eqnarray*}

We will define $\CompareXG$ to be 
\begin{eqnarray*}
\CompareXG = \frac{1}{\min_{v_1,v_2 \in Y^{(0)}} d_{1}(v_1,v_2)}
\prod_{i=1}^n
\max\left(\sqrt{\frac{2}{1-\cos(\alpha)}},\frac{\max_{y_1,y_2 \in Y^{(i-1)}}
      d_{i-1}(y_1,y_2)}{\min_{v_1,v_2 \in Y^{(0)}} d_{1}(v_1,v_2)} \right).
\end{eqnarray*}
  
This gives us the inequality:
\begin{eqnarray*}
d_G(g,h) \le \CompareXG d_{X}(g,h).
\end{eqnarray*}
The inequality implies the containment $G \cap
B_X(\frac{r}{\CompareXG}) \subset B_G(r)$ by the argument from the
start of this proof.
\end{proof}
%%%%%%%%%%%%%%%%%%%%%%%%%%%%%%%%%%%%%%%%%%%  Bound Norms %%%%%%%%%%
\section{Comparing functions on $X$ with corresponding ones on $G$}
We can compare the norm of $f$ with the norm of $\group{f}$, as well
as the norm of $\grad f$ with that of its analogue.  Note that given a
radius, $R$, Corollary \ref{PoincareExtend1} tells us that we have a
uniform Poincar\'{e} inequality for $f$ on balls of radius at most
$R$.  This will be helpful for our comparison.  In particular, we will
use this where $\cpon$ is the constant associated to the Poincar\'{e}
inequality for balls of radius up to $3 \diam(Y)$.  Note that if we
took $\delta =\diam(Y)$, we could cover $Y$ with exactly one ball.
%%%%%%%%%%%%%%%%%%%%%%%%%%%%%%%%%%%%%%% lemma fToGroupf %%%%%%%%%%%%
\begin{lemma}\label{fToGroupf}
Let $B_X(r) := B_X(g',r)$ be a ball in $X$ centered at $g' \in G$.  For any
$c \in R$, we can compare $f:X\rightarrow R$ with $\group{f}: G^N
\rightarrow R$ in the following manner:
\begin{eqnarray*}
||f-c||_{p,B_X(r)}^p \le C \left(\delta^p ||\grad f||_{p,B_X(r+2\delta)}^p
         + ||\group{f} -c||_{p,B_G(\CompareXG(r+\delta+\diam(Y)))}^p\right).
\end{eqnarray*}
When $r=\infty$, this says that:
\begin{eqnarray*}
||f-c||_{p,X}^p 
  \le C \left(\delta^p ||\grad f||_{p,X}^p +||\group{f} -c||_{p,G}^p \right).
\end{eqnarray*}
The constant $C$ depends on $X$, $p$, and $\delta$.
\end{lemma}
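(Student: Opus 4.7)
The plan is to use the cover $\{B_X(g\gamma_i,\delta)\}_{g \in G,\, i=1,\dots,N}$ of $X$ to localize: on each cover ball we split
\begin{equation*}
|f(x)-c|^p \le 2^{p-1}\bigl(|f(x)-\group{f(g,i)}|^p + |\group{f(g,i)}-c|^p\bigr),
\end{equation*}
recalling that $\group{f(g,i)}$ is exactly the average of $f$ on $B_X(g\gamma_i,\delta)$. The first term will be controlled pointwise-in-$(g,i)$ by a local Poincar\'e inequality, while the second term, being constant on the ball, will contribute to $\|\group{f}-c\|_p^p$ after integration and summation.

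To execute this, first fix a measurable partition $\{A_{g,i}\}$ of $X$ with $A_{g,i}\subset B_X(g\gamma_i,\delta)$, obtained by assigning each $x$ to one of the at most $\cover$ cover balls containing it. Then
\begin{equation*}
\|f-c\|_{p,B_X(r)}^p = \sum_{g,i}\int_{A_{g,i}\cap B_X(r)} |f(x)-c|^p\,dx,
\end{equation*}
and the above pointwise inequality produces two sums. For the gradient sum, apply Corollary \ref{PoincareExtend1} on each $B_X(g\gamma_i,\delta)$ -- valid provided $\delta$ is small enough that the extended Poincar\'e inequality holds uniformly on balls of radius $\delta$ with some constant $\cpon$ -- to get
\begin{equation*}
\int_{B_X(g\gamma_i,\delta)}|f - \group{f(g,i)}|^p\,dx \le (2p\cpon\delta)^p \int_{B_X(g\gamma_i,\delta)}|\grad f|^p\,dx.
\end{equation*}
For the averaged-value sum, $|\group{f(g,i)}-c|$ is constant on $A_{g,i}$, so integration yields $\mu(A_{g,i})|\group{f(g,i)}-c|^p \le V\,|\group{f(g,i)}-c|^p$, where $V:=\max_i \mu(B_X(\gamma_i,\delta))$ is finite since there are only finitely many $\gamma_i$ and $\mu$ is $G$-invariant.

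It remains to identify the ranges of summation with Euclidean and group balls. If $B_X(g\gamma_i,\delta)\cap B_X(g',r)\neq\emptyset$, then by the triangle inequality $B_X(g\gamma_i,\delta)\subset B_X(g',r+2\delta)$, so the $\cover$-fold overlap bounds the gradient sum by $\cover\cdot\|\grad f\|_{p,B_X(r+2\delta)}^p$. The same nonempty intersection forces $d_X(g,g')\le r+\delta+\diam(Y)$ via $d_X(g,g\gamma_i)\le\diam(Y)$, and Lemma \ref{CompareDistance} then places $g$ in $B_G(g',\CompareXG(r+\delta+\diam(Y)))$, so the averaged-value sum is at most $\|\group{f}-c\|_{p,B_G(\CompareXG(r+\delta+\diam(Y)))}^p$. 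Taking $C$ to be the maximum of $2^{p-1}(2p\cpon)^p\cover$ and $2^{p-1}V$ gives the claimed inequality with $C$ depending only on $X$, $p$, and $\delta$; the $r=\infty$ case is the same argument with all ball restrictions removed.

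The main obstacle is bookkeeping rather than substance. One must check that $\delta$ is chosen small enough for the Poincar\'e inequality of Corollary \ref{PoincareExtend1} to apply with a single constant $\cpon$ on all cover balls, that $V$ and $\cover$ are finite (both follow from $Y$ being a finite complex, together with $G$-invariance of the metric and measure), and that the constants do not depend on the specific ball $B_X(g',r)$ being considered -- this transparency is what makes the passage $r\to\infty$ costless.
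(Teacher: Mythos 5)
Your proposal is correct and follows essentially the same route as the paper's proof: cover $B_X(r)$ by the balls $B_X(g\gamma_i,\delta)$, split $|f-c|^p$ into a term handled by the uniform local Poincar\'e inequality (noting $\group{f(g,i)}$ is the ball average) and a constant term bounded by the ball volume, then use the overlap bound $\cover$ and Lemma \ref{CompareDistance} to land in $B_X(r+2\delta)$ and $B_G(\CompareXG(r+\delta+\diam(Y)))$ respectively. The only difference is cosmetic bookkeeping (a measurable partition subordinate to the cover instead of summing over cover balls with centers in $B_X(r+\delta)$), which changes nothing of substance.
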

%%%%%%%%%%%%%%%%%%%%%%%%%%%%%%%%%%%%%%%%%%%%%%%%%%%%% Proof Norms %%
\begin{proof}
We begin by rewriting the norm using the fact that balls of radius $r
+ \delta$ centered at $g \gamma_i$ form a cover.
\begin{eqnarray*}
||f-c||_{p,B_X(r)}^p &=& \int_{B_X(r)} |f(x) -c|^p dx \\
&\le& \sum_i \sum_{g\gamma_i \in B_X(r+ \delta)} 
                \int_{B_X(g\gamma_i,\delta)} |f(x) -c|^p dx .
\end{eqnarray*}
We'll use the fact that 
$|f(x) -c|^p \le 2^p |f(x) -\group{f(g,i)}|^p + 2^p |\group{f(g,i)} -c|^p$
 to split this into two pieces.  In the first piece, we can simplify using
the local Poincar\'{e} inequality in $X$.
\begin{equation*}
\begin{split}
\sum_i \sum_{g\gamma_i \in B_X(r + \delta)} 
&      2^p \int_{B_X(g\gamma_i,\delta)} |f(x) -\group{f(g,i)}|^p dx \\
&\le \sum_i \sum_{g\gamma_i \in B_X(r + \delta)} 
      2^p \delta^p \cpon \int_{B_X(g\gamma_i,\delta)} |\grad f(x)|^p dx \\
&\le 2^p \delta^p \cpon \cover \int_{B_X(r + 2\delta)} |\grad f(x)|^p dx .
\end{split}
\end{equation*}
In the second, we first note that there is no $x$ dependence in the
integrand. We integrate to get the volume of the ball.  This will be
dominated by the largest such volume.
\begin{equation*}
\begin{split}
\sum_i &\sum_{g\gamma_i \in B_X(r + \delta)} 
     2^p \int_{B_X(g\gamma_i,\delta)}  |\group{f(g,i)} -c|^p dx \\
&= \sum_i \sum_{g\gamma_i \in B_X(r + \delta)} 
     2^p \mu(B_X(g\gamma_i,\delta)) |\group{f(g,i)} -c|^p  \\
&\le  2^p 
\left( \max_{g\gamma_i \in B_X(r+ \delta)} \mu(B_X(g\gamma_i,\delta)) \right)
     \sum_i \sum_{g\gamma_i \in B_X(r + \delta)} |\group{f(g,i)} -c|^p 
\end{split}
\end{equation*}
This gives us the definition of the $p$ norm in $X$.  
We switch to the norm in $G$ using the distance comparisons from Lemma
\ref{CompareDistance}.
\begin{equation*}
\begin{split}
... &=   2^p \max_{g\gamma_i \in B_X(r+ \delta)} \mu(B_X(g\gamma_i,\delta)) 
             ||\group{f} -c||_{p,B_X(r + \delta)}^p \\
&\le   2^p \max_{g\gamma_i \in B_X(r+ \delta)} \mu(B_X(g\gamma_i,\delta)) 
             ||\group{f} -c||_{p,B_G(\CompareXG(r+\delta+\diam(Y)))}^p .
\end{split}
\end{equation*}
When we put these together we have for some constant $C$:
\begin{eqnarray*}
||f-c||_{p,B_X(r)}^p
\le  C \left(\delta^p ||\grad f||_{p,B_X(r+2\delta)}^p 
         + ||\group{f} -c||_{p,B_G(\CompareXG(r+\delta+\diam(Y)))}^p\right).
\end{eqnarray*}
Note that the uniformity of $X$ tells us that
$\mu(B_X(g\gamma_i,\delta))$ can be bounded by a constant.  In
particular, we use the fact that
$\mu(B_X(g\gamma_i,\delta))=\mu(B_X(h\gamma_i,\delta))$ for any
$g,h \in G$.
\end{proof}
%%%%%%%%%%%%%%%%%%%%%%%%%%%%%%%%%%%%%%%%%%%%%%% Bound Gradients %%%%%%%%%%%
We can also bound the gradients of $f$ and $\group{f}$ in their
respective norms.
\begin{lemma}\label{gradGroupftogradf}
Let $f\in \Lip(X)$ and $B_G(r)$ be given.  For $1 \le p <\infty$, we have
\begin{eqnarray*}
||\grad \group{f} ||_{p,B_G(r)}^p
\le  C(\delta) ||\grad f||_{p,B_X(\CXG r+2 \diam(Y))}^p.
\end{eqnarray*}
When $r = \infty$, this is:
\begin{eqnarray*}
||\grad \group{f} ||_{p,G}^p \le  C(\delta) ||\grad f||_{p,X}^p.
\end{eqnarray*}
$C(\delta) = \max_{\gamma_i}
     \frac{\mu(B_X(g\gamma_i,\Size))}{\mu(B_X(g\gamma_i,\delta))^2} N
     \max_{x\in X} \# \{B_X(g,\Size) | x \in B_X(g,\Size)\} \cpon
     \Size^p$. \\  Note that this constant depends on $X$, $p$, $N$, and
     $\delta$.
\end{lemma}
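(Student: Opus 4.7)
The idea is to bound each discrete difference $\group{f(g,i)} - \group{f(gs,i)}$ by the oscillation of $f$ on a single Euclidean ball containing both averaging balls, then invoke the uniform Poincar\'e inequality and assemble the estimate via the finite overlap of the cover. Fix $g \in B_G(r)$, $s \in S$, and $i$; I would choose a radius $\Size$ large enough that $B := B_X(g\gamma_i, \Size)$ contains both $B_X(g\gamma_i,\delta)$ and $B_X(gs\gamma_i,\delta)$. By Lemma \ref{CompareDistance}, $d_X(g\gamma_i, gs\gamma_i) \le 2\diam(Y) + \CXG$, so $\Size := 2\diam(Y) + \CXG + \delta$ suffices. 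Since subtracting the constant $f_B$ does not affect either average, the triangle inequality gives
\begin{align*}
|\group{f(g,i)} - \group{f(gs,i)}|
\le \aveint_{B_X(g\gamma_i,\delta)} |f - f_B|\,dx
   + \aveint_{B_X(gs\gamma_i,\delta)} |f - f_B|\,dx.
\end{align*}
Raising to the $p$-th power, using Jensen to bring the exponent inside the averages, and applying Corollary \ref{UnifPoincareP} on $B$ (via Corollary \ref{PoincareExtend1} if $\Size > R_0$) yields
\begin{align*}
|\group{f(g,i)} - \group{f(gs,i)}|^p
\le \frac{C\,\Size^p}{\mu(B_X(g\gamma_i,\delta))} \int_B |\grad f|^p\,dx,
\end{align*}
where $C$ absorbs the Poincar\'e constant $\cpon$ and a factor of $2^{p}$.

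Next I would convert $\bigl(\tfrac{1}{|S|}\sum_s |\group{f(g,i)} - \group{f(gs,i)}|^2\bigr)^{p/2}$ into a sum of $p$-th powers via the power-mean inequality, obtaining $|\grad \group{f(g,i)}|^p \le C(|S|,p) \sum_s |\group{f(g,i)} - \group{f(gs,i)}|^p$. Summing over $(g, i, s) \in B_G(r) \times \{1, \dots, N\} \times S$ and interchanging summation with integration, I would apply the bounded overlap of the dilated balls: by the uniform local geometry of $X$, each point of $X$ lies in at most $\max_{x} \#\{B_X(g,\Size) : x \in B_X(g,\Size)\}$ of the dilated balls, a constant independent of $(g,i)$. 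The union of these balls with $g \in B_G(r)$ sits in $B_X(\CXG r + \diam(Y) + \Size)$; since the excess is a fixed additive constant, it is absorbed into $C(\delta)$ and the domain of integration can be taken to be $B_X(\CXG r + 2\diam(Y))$.

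The main obstacle is not a single hard step but the simultaneous bookkeeping of (i) the volume ratio $\mu(B_X(g\gamma_i,\Size))/\mu(B_X(g\gamma_i,\delta))^2$, which arises when one rewrites the overlap sum $\sum_{g,i} \frac{1}{\mu(B_X(g\gamma_i,\delta))}\chi_{B_X(g\gamma_i,\Size)}$ in normalized form, (ii) the overlap count of the enlarged balls, and (iii) the factor $\cpon \Size^p$ from the Poincar\'e inequality, so that they combine into precisely the constant $C(\delta)$ in the statement. The case $r = \infty$ then follows by letting $r \to \infty$ and observing that each estimate passes to the limit.
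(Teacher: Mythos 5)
Your argument follows essentially the same route as the paper's: bound each difference $\group{f(g,i)}-\group{f(gs,i)}$ by the oscillation of $f$ over a single enlarged ball $B_X(g\gamma_i,\Size)$ containing both averaging balls, apply the uniform $L^p$ Poincar\'e inequality (with $\cpon$ chosen for radii up to a fixed multiple of $\diam(Y)$, via Corollary \ref{PoincareExtend1}) on that ball, then sum over $(g,i,s)$ using the bounded overlap of the enlarged balls and the distance comparison of Lemma \ref{CompareDistance}. The paper organizes the middle step as a double average of $|f(x)-f(y)|^p$ rather than inserting $f_B$ and using the triangle inequality, but these are the same estimate, and your constant $\Size^p/\mu(B_X(g\gamma_i,\delta))$ is dominated by the stated $C(\delta)$; your explicit $C(|S|,p)$ in the power-mean step is likewise harmless.

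The one slip is the closing remark: an additive excess in the radius cannot be ``absorbed into $C(\delta)$,'' because enlarging a constant never shrinks a domain of integration. With your choice $\Size=2\diam(Y)+\CXG+\delta$ you prove the inequality with the gradient norm taken over a ball of radius $\CXG r+3\diam(Y)+\CXG+\delta$, which is strictly weaker than the stated $B_X(\CXG r+2\diam(Y))$. The repair is trivial: either take $\Size=2\diam(Y)$, which the paper observes already suffices to contain both $\delta$-balls (since $\delta\le\diam(Y)$ and the copies $gY$, $gsY$ are adjacent), so that the union of the enlarged balls over $g\in B_G(r)$ lies in $B_X(\CXG r+\Size)=B_X(\CXG r+2\diam(Y))$; or else keep your $\Size$ and state the conclusion with the correspondingly larger, still $r$-independent, additive constant, which is all that is used in the later applications.
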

%%%%%%%%%%%%%%%%%%%%%%%%%%%%%%%%%%%%%%%%%%%%%%%%%%%%%%%% Proof Grads %%
\begin{proof}
Here, $\Size$ is a large enough radius so that for any $g$ and
$\gamma_i$ both $B_X(g\gamma_i,\delta)$ and $B_X(gs\gamma_i,\delta)$
are covered by $B_X(g\gamma_i,\Size)$.  Note that $\Size = \diam(Y)+\delta$
will work, but to remove the dependence on $\delta$, we can take
$\Size = 2\diam(Y)$.

We start by explicitly writing out the gradient and then moving the
$p/2$ into the integral via Jensen.
\begin{eqnarray*}
\lefteqn{||\grad \group{f} ||_{p,B_G(r)}^p}\\
& = &
  \sum_i \sum_{g\in B_G(r)} 
  \left( \frac{1}{|S|}\sum_{s \in S} |\group{f(g,i)} -\group{f(gs,i)}|^2 \right)^{p/2}\\
&\le& \sum_i \sum_{g\in B_G(r)} 
   \frac{1}{|S|} \sum_{s \in S} |\group{f(g,i)} -\group{f(gs,i)}|^{p}\\
&= &\sum_i \sum_{g \in B_G(r)} 
  \frac{1}{|S|} \sum_{s \in S}  
   | \dashint_{B_X(g\gamma_i,\delta)} f(x) dx 
    - \dashint_{B_X(gs\gamma_i,\delta)} f(y) dy|^p.
\end{eqnarray*}
We apply Jensen again; this time to the absolute value. 
\begin{eqnarray*}
...&\le& \sum_i \sum_{g \in B_G(r)} 
   \frac{1}{|S|}\sum_{s \in S} 
\dashint_{B_X(g\gamma_i,\delta)} \dashint_{B_X(gs\gamma_i,\delta)}
               |f(x) - f(y)|^p dx dy .
\end{eqnarray*}
The regularity of the space and the cover tell us 
$\mu(B_X(g\gamma_i,\delta)) =\mu(B_X(g s \gamma_i,\delta))$. 
\begin{eqnarray*}
...&=& \sum_i \sum_{g \in B_G(r)} 
   \frac{1}{|S|}\sum_{s \in S} 
               \int_{B_X(g\gamma_i,\delta)}  \int_{B_X(gs\gamma_i,\delta)}
               |f(x) - f(y)|^p \frac{dx dy}{\mu(B_X(g\gamma_i,\delta))^2}.
\end{eqnarray*}
We expand the sets we are integrating over to $B_X(g\gamma_i,\Size)$.
This larger set contains both $B_X(g\gamma_i,\delta)$ and
$B_X(gs\gamma_i,\delta)$ by construction.  We then rewrite the sum
over $S$, and change one integral to an average integral.
\begin{eqnarray*}
...&\le &\frac{1}{|S|} 
\sum_i \sum_{g \in B_G(r)} 
    \sum_{s \in S} \frac{1}{\mu(B_X(g\gamma_i,\delta))^2}
                 \int_{B_X(g\gamma_i,\Size)}  \int_{B_X(g\gamma_i,\Size)}
                 |f(x) - f(y)|^p dx dy \\
& = & \frac{1}{|S|} 
\sum_i \sum_{g \in B_G(r)} 
    |S| \frac{\mu(B_X(g\gamma_i,\delta))}{\mu(B_X(g\gamma_i,\delta))^2}
                 \int_{B_X(g\gamma_i,\Size)}  \dashint_{B_X(g\gamma_i,\Size)}
                 |f(x) - f(y)|^p dx dy .
\end{eqnarray*}
We now apply a local $p$ Poincar\'{e} inequality on $X$ to $f$.  The
constant for this is $\cpon$.
\begin{eqnarray*}
...&\le & \sum_i \sum_{g \in B_G(r)} 
    \cpon \Size^p 
    \frac{\mu(B_X(g\gamma_i,\Size))}{\mu(B_X(g\gamma_i,\delta))^2} 
         \int_{B_X(g\gamma_i,\Size)} |\grad f(x)|^p dx.
\end{eqnarray*}
We combine the sums and integral into a single integral.  All of the
$B_X(g\gamma_i,\Size)$ for $g\in B_G(r)$ are contained in $B_X(\CXG
r+\Size)$ by our distance comparison between $G$ and $X$.  We multiply
this integral by the number of overlapping balls in our sum. $C_M$ is
$N$ times the maximum number of balls $B_X(g\gamma_i,\Size)$ which
overlap at a point in $X$.
\begin{eqnarray*}
...&\le& \left(\max_{\gamma_i} 
     \frac{\mu(B_X(g\gamma_i,\Size))}{\mu(B_X(g\gamma_i,\delta))^2}\right)
     C_{M} \cpon \Size^p \int_{B_X(\CXG r+\Size)} 
     |\grad f(x)|^p dx \\
&=& C(\delta) ||\grad f||_{p,B_X(\CXG r+\Size)}^p.
\end{eqnarray*}
\end{proof}

\section{Poincar\'{e} inequalities on X with underlying group structure}
The bounds in the previous section can be used to transfer
inequalities between $X$ and $G$.  We can combine them with the weak
Poincar\'{e} inequality on $G$ to get an inequality on $X$.
%%%%%%%%%%%%%%%%%%%%%%%%%%%%%%%%%%%%%%%%%%%%%%%%%%%%%%%%%%%%%%%%%%%
\begin{theorem}\label{WeakGtoX}
Let $X$, a volume doubling Euclidean complex and $G$, a finitely
generated group with $X/G=Y$, a finite admissible polytopal complex be
given.  $X$ admits a Poincar\'{e} inequality with uniform
constant at all scales.  Let $f\in \Lip(X)$.  For $1 \le p < \infty$,
we have:
\begin{eqnarray*}
\inf_c||f-c||_{p,B_X(r)} \le C r ||\grad f||_{p,B_X(r)}.
\end{eqnarray*}
Note that this implies:
\begin{eqnarray*}
||f-f_{B_X(r)}||_{p,B_X(r)} \le 2C r ||\grad f||_{p,B_X(r)}.
\end{eqnarray*}
Here the balls can be centered at any point in $X$.
\end{theorem}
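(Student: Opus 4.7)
The plan is to bootstrap from the small-scale Poincar\'{e} inequality on $X$ (Corollary \ref{UnifPoincareP}) up to the claimed all-scales inequality by transferring the question to the group $G$, applying the weak Poincar\'{e} inequality there, and then promoting the resulting weak inequality on $X$ to the strong one via the Whitney chaining machinery of Theorem \ref{UnifPoincare1General}.

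First I would establish an $L^p$ weak Poincar\'{e} inequality on $G$. Lemma \ref{groupVDpoincare} gives the $L^1$ form $\norm{f-f_{B_r}}_{1,B_r} \le C r \norm{\grad f}_{1,B_{3r}}$, which holds since $X$ volume doubling plus $X/G$ compact forces $G$ to be volume doubling. Applying the substitution $g = |f-c_f|^{p-1}\sgn(f-c_f)$ from the proof of Lemma \ref{PoincareP}, together with H\"{o}lder, upgrades this to $\inf_c\norm{f-c}_{p,B_r} \le C p r \norm{\grad f}_{p,B_{3r}}$.

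Next I transfer this to $X$. Given $B_X(r)$, use the $G$-action (by isometries) to recenter so the center lies within $\diam(Y)$ of some $g'\in G$, and enlarge $r$ by a bounded amount so the ball may be assumed centered at $g'$ itself. Choose $c = \group{f}_{B_G(R')}$ for $R' = \CompareXG(r+\delta+\diam(Y))$. Lemma \ref{fToGroupf} then gives
\begin{eqnarray*}
\norm{f-c}_{p,B_X(r)}^p
   \le C\left(\delta^p\norm{\grad f}_{p,B_X(r+2\delta)}^p
             + \norm{\group{f}-c}_{p,B_G(R')}^p\right).
\end{eqnarray*}
The $L^p$ weak Poincar\'{e} on $G$ dominates the second summand by a constant multiple of $(R')^p \norm{\grad\group{f}}_{p,B_G(3R')}^p$, which in turn is controlled by $C(\delta) \norm{\grad f}_{p,B_X(3\CXG R' + 2\diam(Y))}^p$ via Lemma \ref{gradGroupftogradf}. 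Combining with Corollary \ref{UnifPoincareP} for small balls yields a weak Poincar\'{e} inequality at all scales of the form $\inf_c \norm{f-c}_{p,B_X(r)} \le C r \norm{\grad f}_{p,B_X(\kappa' r)}$, with $\kappa'$ an absolute enlargement factor depending only on $X$, $G$, $Y$, and $\delta$.

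Finally, since $X$ is volume doubling at all scales by hypothesis and we have this weak inequality uniformly on all balls, Theorem \ref{UnifPoincare1General} (applied with the Whitney cover of $B_X(r)$ by balls whose $\kappa'$-dilates lie inside $B_X(r)$) upgrades the weak form to the claimed strong form. The $L^p$ version follows by invoking Lemma \ref{PoincareP} once more. The main obstacle is arranging the bookkeeping so that the enlargement factor $\kappa'$ produced in the group transfer is exactly the dilation factor the Whitney chaining absorbs, and ensuring that every constant along the way depends only on the fixed data $(X, G, Y, \delta, p)$ and not on the center or radius of the ball --- this is where the homogeneity provided by the $G$-action and the uniform local estimates of Chapter 2 are essential.
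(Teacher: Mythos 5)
Your overall route is the same as the paper's: transfer the question to $G$ by choosing $c=\group{f}_{B_G(\cdot)}$ and applying Lemma \ref{fToGroupf}, control the group term with the weak Poincar\'{e} inequality of Lemma \ref{groupVDpoincare} (legitimate, since $X$ volume doubling plus $Y$ compact makes $G$ volume doubling), return to $X$ with Lemma \ref{gradGroupftogradf}, recenter to a nearby group point, and finally absorb the resulting enlargement factor with the Whitney-cover machinery (the paper packages this last step as Corollary \ref{PoincareExtend1}, which is exactly Theorem \ref{UnifPoincare1General} plus Lemma \ref{PoincareP}).

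The one step that does not work as you state it is the very first one: you cannot upgrade the \emph{weak} $L^1$ inequality on $G$ to a weak $L^p$ inequality by the substitution $g=|f-c_f|^{p-1}\sgn(f-c_f)$ and H\"{o}lder. Running that argument with the weak hypothesis gives
$\norm{f-c_f}_{p,B_r}^p \le C\,p\,r\,\norm{f-c_f}_{p,B_{3r}}^{p-1}\,\norm{\grad f}_{p,B_{3r}}$,
and because the factor $\norm{f-c_f}_{p,B_{3r}}^{p-1}$ lives on the \emph{larger} ball it cannot be cancelled against the left-hand side; the trick in Lemma \ref{PoincareP} requires the same ball on both sides, i.e.\ a strong inequality. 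This misstep is not fatal to your plan, but it is also unnecessary: do the entire transfer at $p=1$ (as the paper does), obtaining the weak $p=1$ inequality $\inf_c\norm{f-c}_{1,B_X(r)}\le C_0 r\,\norm{\grad f}_{1,B_X(\kappa' r)}$ at all scales, then use the Whitney chaining to make it strong, and only then invoke Lemma \ref{PoincareP} to pass to general $p$ --- the order weak $L^1$ $\rightarrow$ strong $L^1$ $\rightarrow$ strong $L^p$ is the one the machinery of Chapter 2 actually supports.
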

%%%%%%%%%%%%%%%%%%%%%%%%%%%%%%%%%%%%%%%%%%%%%%%%%%%%%%%Proof
\begin{proof} 
Note that we chose $\cpon$ so that the Poincar\'{e} inequality holds
for balls of radius up to $3\diam(Y)$.  We need to show that it also
holds for balls of radius greater than $3\diam(Y)$.  Let $r\ge
3\diam(Y)$ be given.  To start, we will assume that the center of
$B_X(r)$ is in $G$.  Pick $\delta=\diam(Y)$; this will force $N=1$.
This will allow us to split things up in such a way that we can use
the weak Poincar\'{e} inequality on $G$.  If we had multiple copies of
$G$, we wouldn't necessarily have the same average on each of them.
By choosing a value of $c$, we obtain something at least as large as
the infimum:
\begin{eqnarray*}
\inf_c ||f-c||_{1,B_X(r)} 
  \le ||f-\group{f}_{B_G(2\CompareXG r)}||_{1,B_X(r)}.
\end{eqnarray*}
Then we can use our first bound to get 
\begin{eqnarray*}
\lefteqn{||f-\group{f}_{B_G(2\CompareXG r)}||_{1,B_X(r)}} \\
&\le&  C \left(\delta ||\grad f||_{1,B_X(r+\delta)} 
        + ||\group{f} -\group{f}_{B_G(2\CompareXG r)}
            ||_{1,B_G(\CompareXG(r+\delta+\diam(Y)))}\right)\\
&\le &C \left(\delta ||\grad f||_{1,B_X(1.5r)} 
     + ||\group{f} -\group{f}_{B_G(2\CompareXG r)}||_{1,B_G(2\CompareXG r)}\right).
\end{eqnarray*}
Happily, we can apply the weak Poincar\'{e} inequality on groups
(Lemma \ref{groupVDpoincare}) to the second term:
\begin{eqnarray*}
||\group{f} -\group{f}_{B_G(2\CompareXG r)}||_{1,B_G(2\CompareXG r)}
\le 3 r \sqrt{|S|} ||\grad \group{f}||_{1,B_G(6\CompareXG r)}.
\end{eqnarray*}
Then, we can use the bound we have on the gradients to get an
inequality on $\grad f$.  Setting $\Size=2\diam(Y) < r$ tells us that
$B_X((6r+\Size)\CXG \CompareXG) \subset B_X(7\CXG \CompareXG r)$.
\begin{eqnarray*}
||\grad \group{f} ||_{1,B_G(6\CompareXG r)}
\le C(\delta) ||\grad f||_{1,B_X(7\CXG \CompareXG r)}.
\end{eqnarray*}
 Combining these, we have:
\begin{eqnarray*}
\inf_c ||f-c||_{1,B_X(r)} &\le& 
C \left(\delta ||\grad f||_{1,B_X(1.5r)} +3 r \sqrt{|S|} 
              C(\delta) ||\grad f||_{1,B_X(7\CXG \CompareXG r)}\right) \\
&\le& C_0 r ||\grad f||_{1,B_X(7\CXG \CompareXG r)}.
\end{eqnarray*}
As in lemma \ref{PoincareP}, we have:
\begin{eqnarray*}
||f-f_{B_X(r)}||_{1,B_X(r)} \le 2 \inf_c ||f-c||_{1,B_X(r)}. 
\end{eqnarray*}
If the center, $x$, were not in $G$, there is some $g' \in G$ such
that the center is within $\diam(Y)$ of $g'$.  That is, $d_X(x,g') \le
\diam(Y)$.  By inclusions of balls, we know that:
\begin{eqnarray*}
\inf_c ||f-c||_{1,B_X(x,r)} \le \inf_c ||f-c||_{1,B_X(g',r+\diam(Y))}.
\end{eqnarray*}
As $r+\diam(Y) \le 1.5r$ and $B_X(g',7 \CXG \CompareXG 1.5 r) \subset
B_X(x,12 \CXG \CompareXG r)$, we can switch centers by increasing the radius:
\begin{eqnarray*}
||\grad f||_{1,B_X(g',7 \CXG \CompareXG 1.5 r)}
\le ||\grad f||_{1,B_X(x, 12\CXG \CompareXG r)}.
\end{eqnarray*}
This tells us that any complex $X$ with the underlying group structure
admits a weak $p=1$ Poincar\'{e} inequality.  

$X$ is volume doubling, and so this weak inequality can be turned into
a strong p inequality via repeated application of a Whitney cover,
using Corollary \ref{PoincareExtend1}.
\end{proof}

In \cite{Varo} Varopolous showed that groups with polynomial growth of
degree $d$ have on diagonal behavior $p_{2n}(e,e) \approx n^{-d/2}$.
We show that a similar result holds for volume doubling
complexes with underlying group structure.

%%%%%%%%%%%%%%%%%%%%%%%%%%%%%%%%%%%%%%%%% XuniformPoincarefromG %%
\begin{theorem} \label{XuniformPoincarefromG}
Assume $X$ is a volume doubling Euclidean complex and $G$ is a
finitely generated group with $X/G=Y$, where $Y$ is a finite
admissible polytopal complex.

Then $X$ satisfies the on diagonal heat kernel estimates:
\begin{eqnarray*}
\frac{1}{C \mu(B(x,\sqrt{t}))}
\le h_t(x,x) \le \frac{C}{\mu(B(x,\sqrt{t}))} 
\end{eqnarray*}
$X$ also satisfies the off diagonal heat kernel lower bound:
\begin{eqnarray*}
\frac{1}{C \mu(B(x,\sqrt{t}))} \exp\left(-C \frac{d_X^2(x,y)}{t}\right)
\le h_t(x,y),
\end{eqnarray*}
as well as the upper bound:
\begin{eqnarray*}
h_t(x,y) \le \frac{C}{\sqrt{\mu(B(x,\sqrt{t}))\mu(B(y,\sqrt{t}))}}
    \exp\left(-\frac{d_X^2(x,y)}{4t}\right) \left(1 +
    \frac{d_X^2(x,y)}{t}\right)^{n/2}.
\end{eqnarray*}
\end{theorem}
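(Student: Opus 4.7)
The plan is to apply Sturm's theorems (\ref{Sturm1}, \ref{Sturm2}, \ref{Sturm3}) globally rather than locally, which is now possible because Theorem \ref{WeakGtoX} upgrades the small-scale Poincaré inequality of Corollary \ref{UnifPoincareP} to a Poincaré inequality with a single uniform constant valid at every scale. Combined with the hypothesis that $X$ is globally volume doubling and the fact that an admissible Euclidean complex with lower-bounded edge lengths is a complete length space, this places us exactly in the hypotheses of Sturm's theorems with $Y = X$, so that the cutoff radius $R$ disappears from the estimates.

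First I would verify the three global hypotheses: (i) volume doubling on $X$ with a single constant $C_{vol}$, which is assumed; (ii) a global scale-invariant $(1,1)$ (hence $(2,2)$) Poincaré inequality with a single constant $C_P$, obtained by Theorem \ref{WeakGtoX}; and (iii) completeness of $(X,d_X)$, which follows from the admissibility assumption on $X$ and its construction as a length space with a uniform lower bound on edge lengths. With these, the intrinsic distance $\rho$ coincides with $d_X$, and all of the $\rho(\cdot, X-Y)$ quantities in the statements of Sturm's theorems become infinite when we take $Y = X$.

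Next I would read off the three estimates. The on-diagonal lower bound follows immediately from Theorem \ref{Sturm1} applied with $Y=X$, since then $\rho^2(x, X-Y) = \infty$ so the estimate holds for all $t > 0$. The upper bound follows from Theorem \ref{Sturm2} applied with $Y = X$: setting $\lambda = 0$ and noting that $R = \infty$ forces $T = t$, the factor $e^{-\lambda t}(1+\lambda t)^{1+N/2}$ disappears, leaving
\begin{eqnarray*}
h_t(x,y) \le \frac{C e^{-d_X^2(x,y)/(4t)}}{\sqrt{\mu(B(x,\sqrt{t}))\mu(B(y,\sqrt{t}))}}\left(1 + \frac{d_X^2(x,y)}{t}\right)^{n/2}.
\end{eqnarray*}
Specializing to $x = y$ and using volume doubling to identify $\mu(B(x,\sqrt{t}))$ as the natural denominator yields the on-diagonal upper bound. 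The off-diagonal lower bound is the $R_0 = \infty$ case of Corollary \ref{offdiagonalHeatlower}, which itself is Theorem \ref{Sturm3} applied with $Y = X$ along any distance-realizing geodesic from $x$ to $y$; existence of such a geodesic follows from the length-space property.

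The only real subtlety, and what I would flag as the main technical point, is ensuring that the Poincaré inequality produced by Theorem \ref{WeakGtoX} is truly scale-invariant with a constant independent of the ball radius, since Corollary \ref{PoincareExtend1} only gives a constant depending on the ratio $C_0$ used in extending the radius. Here the group action rescues us: the extension from radius $3\diam(Y)$ to arbitrary radius in Theorem \ref{WeakGtoX} goes through the weak Poincaré inequality on the group (Lemma \ref{groupVDpoincare}) combined with the function-comparison Lemmas \ref{fToGroupf} and \ref{gradGroupftogradf}, all of whose constants are independent of $r$. Once uniformity in scale is confirmed, plugging into Sturm's three theorems is mechanical and produces the stated estimates with a single constant $C$ depending only on $X$.
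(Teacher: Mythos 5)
Your proposal is correct and follows essentially the same route as the paper, whose entire proof is to invoke Sturm's theorems once volume doubling (assumed) and the uniform all-scale Poincar\'{e} inequality from Theorem \ref{WeakGtoX} are in hand. Your additional checks (completeness, the $Y=X$ specialization making $\rho(\cdot,X-Y)$ infinite, and the scale-independence of the constant coming from the group-based extension rather than Corollary \ref{PoincareExtend1}) simply make explicit what the paper leaves implicit.
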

%%%%%%%%%%%%%%%%%%%%%%%%%%%%%%%%%%%%%%%%%%%%%%%%%%%%%%%%%%%%%%%%%%%%
\begin{proof}
To get the heat kernel bounds, apply Sturm \cite{Sturm} theorems
\ref{Sturm1} and \ref{Sturm2}, noting that we've satisfied both volume
doubling and a Poincar\'{e} inequality at all scales uniformly.
\end{proof}
\section{Mapping functions on $G$ to $X$}
Now we will look at how to take functions on $G$ to smooth versions on
$X$.  Let $f$ be a function mapping $G$ to the reals.  We'll look at a
partition of unity on the complex, $X$, which is created by
translating a smooth function $\chi$ by $g\in G$.  Then $\sum_{g \in
G} \chi_g(x) =1$.  We require the following:
\begin{itemize}
\item $\chi_g(x) = 1$ if $d_X(x,g) \le \frac{1}{4}$
\item $\chi_g(x) = 0$ if $d_X(x,g) \ge \csup$
\item $|\grad \chi_g(x)| \le \Const$
\end{itemize}

We know that $|\{g \in G : d_X(g,e) \le \csup \}|$ is finite; when $Y$
is nice and $\csup=1$ this will be $|S|$.  We also know that for any
$x\in X$, $|\{g \in G : \chi_g(x) \ne 0\}|$ is finite.  In particular,
there is a uniform bound, $\coverlap$.

This allows us to define a nice smooth function, $\comp{f(x)}$,
mapping $X$ to the reals:
\begin{eqnarray*}
\comp{f(x)} = \sum_{g \in G} f(g) \chi_g(x).
\end{eqnarray*}
Its $L^p$ norm is comparable to that of $f$.
%%%%%%%%%%%%%%%%%%%%%%%%%%%%%%%%%%%%%%%% Compcompare  %%%%%%%%%%%%
\begin{theorem}\label{Compcompare}
Let $f: G\rightarrow R$ be given. If we limit ourselves to a ball,
$B_G(r)$, with radius at least 1, we can compare $L^p$ norms in the
following way:
\begin{eqnarray*}
C_1 ||f-c||_{p,B_G(\frac{r-.25}{\CXG})}
  \le ||\comp{f}-c||_{p,B_X(r)} 
\le C_2 ||f-c||_{p,B_G(\CompareXG(r+\csup))}
\end{eqnarray*}
This holds for any $c\in R$.
Note that when $r=\infty$ and $c=0$ we have a nice bound on the norms:
\begin{eqnarray*}
C_1 ||f||_{p,G} \le ||\comp{f}||_{p,X} \le C_2 ||f||_{p,G}.
\end{eqnarray*}
For both of these inequalities,
$C_1=\mu(B_X(e,\frac{1}{4}))^{\frac{1}{p}}$ and
$C_2=\coverlap^{(p-1)/p} ||\chi_e||_{p,X}$.
\end{theorem}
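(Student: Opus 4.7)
The plan is to prove both inequalities by exploiting two structural facts about the partition of unity: on the small ball $B_X(g, 1/4)$ only the single bump $\chi_g$ is active, while on all of $X$ at most $\coverlap$ bumps overlap. I would fix a common center (say at a group element) for $B_X(r)$ and the group balls, and treat the two inequalities separately.

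For the upper bound, I would begin by using $\sum_g \chi_g \equiv 1$ to write $\comp{f}(x) - c = \sum_g (f(g) - c)\chi_g(x)$. At any point $x$ the sum involves at most $\coverlap$ terms, so by Hölder (or Jensen applied to the probability measure weighted by the nonzero $\chi_g(x)$) I get the pointwise bound $|\comp{f}(x) - c|^p \le \coverlap^{p-1} \sum_g |f(g)-c|^p \chi_g(x)^p$. Integrating over $B_X(r)$, pulling out the sum, and using that $\chi_g$ is supported in $B_X(g, \csup)$ restricts the sum to those $g$ with $d_X(g, \text{center}) \le r + \csup$. Lemma \ref{CompareDistance} then places these inside $B_G(\CompareXG(r + \csup))$. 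Finally, translation invariance gives $\int_X \chi_g^p\,dx = \|\chi_e\|_{p,X}^p$ for every $g$, yielding the stated constant $C_2 = \coverlap^{(p-1)/p}\|\chi_e\|_{p,X}$.

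For the lower bound, the key observation is that since $0 \le \chi_g \le 1$ and $\sum_g \chi_g \equiv 1$, whenever $\chi_g(x) = 1$ all the other $\chi_{g'}(x)$ vanish. By the first bullet in the definition of $\chi$, this happens throughout $B_X(g, 1/4)$, so $\comp{f} \equiv f(g)$ on $B_X(g, 1/4)$. Moreover the balls $\{B_X(g, 1/4)\}_{g \in G}$ are pairwise disjoint (distinct group elements are separated by at least $1$ in $d_X$). Hence
\begin{equation*}
\|\comp{f} - c\|_{p,B_X(r)}^p \;\ge\; \sum_{g: B_X(g,1/4) \subset B_X(r)} \mu(B_X(g,1/4))\,|f(g)-c|^p.
\end{equation*}
The condition $B_X(g, 1/4) \subset B_X(r)$ holds whenever $d_X(g, \text{center}) \le r - 1/4$, and by Lemma \ref{CompareDistance} every $g \in B_G(\tfrac{r - .25}{\CXG})$ satisfies this. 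Pulling out $\mu(B_X(e, 1/4))$ (translation invariance again) gives $C_1 = \mu(B_X(e, 1/4))^{1/p}$.

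The only delicate point is keeping the ball geometry consistent in both directions; the hard part is not any single estimate but rather matching the radius shrinkage/expansion dictated by $\CXG$ and $\CompareXG$ against the support size $\csup$ of $\chi$. The case $r = \infty$, $c = 0$ follows immediately by letting $r \to \infty$ in both inequalities and noting that both group balls exhaust $G$.
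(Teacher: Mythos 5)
Your proposal is correct and follows essentially the same route as the paper's proof: the upper bound via the discrete Jensen/H\"older step with the $\coverlap^{p-1}$ factor, support restriction to $B_X(r+\csup)$, and Lemma \ref{CompareDistance}; and the lower bound via $\chi_g\equiv 1$ on $B_X(g,\frac14)$, the fact that exactly one bump is active there, translation invariance of $\mu(B_X(g,\frac14))$, and the comparison $B_G(\rho)\subset G\cap B_X(\CXG\rho)$ (the paper just writes this direction starting from the group norm rather than from $\norm{\comp{f}-c}_{p,B_X(r)}$). The only cosmetic point is that the disjointness of the balls $B_X(g,\frac14)$ should be justified from the partition-of-unity properties (if two bumps were simultaneously equal to $1$ the sum would exceed $1$), rather than from an unstated claim that distinct group elements are at $d_X$-distance at least $1$.
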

%%%%%%%%%%%%%%%%%%%%%%%%%%%%%%%%%%%%%%%%%%%%%%%%%%%%%%%%%%%%%%%%%
\begin{proof}
If we limit ourselves to a ball, $B_G(r)$, with radius at least 1, we
have a comparison.
We first write out the definition of the norm, and then we use the fact that for every $x$, $\sum_{g \in G} \chi_g(x) =1$.
\begin{eqnarray*}
||\comp{f} -c||_{p,B_X(r)} 
&=& \left(\int_{B_X(r)} |\sum_{g \in G} f(g) \chi_g(x)-c|^p dx\right)^{\frac{1}{p}} \\
&=& \left(\int_{B_X(r)} |\sum_{g \in G} (f(g)-c) \chi_g(x)|^p dx\right)^{\frac{1}{p}}. 
\end{eqnarray*}
For each $x$, at most $\coverlap$ of the $\chi_g(x)$ are nonzero.
This allows us to apply a discrete version of Jensen to move the
exponent into the sum.
\begin{eqnarray*}
...&\le& \left(\int_{B_X(r)}\coverlap^{p-1} 
    \sum_{g \in G} |f(g)-c|^p \chi_g(x)^p dx\right)^{\frac{1}{p}}.
\end{eqnarray*}
The only $g$ with a nonzero $\chi_g(x)$ will be those within $X$
distance $\csup$ of a point in $B_X(r)$.  We can integrate over $g \in
G\cap B_X(r+\csup)$, and switch the finite integral and sum.
\begin{eqnarray*}
...&\le& \coverlap^{(p-1)/p}
  \left(\sum_{g \in G\cap B_X(r+\csup)}|f(g)-c|^p \int_{B_X(r)} \chi_g(x)^p
    dx\right)^{\frac{1}{p}}.
\end{eqnarray*}
The quantity $\int_{B_X(r)} \chi_g(x)^p$ will be bounded above by
$\int_{X} \chi_g(x)^p = \int_{X} \chi_e(x)^p$.
\begin{eqnarray*}
...&\le& \coverlap^{(p-1)/p} ||\chi_e||_{p,X}
  \left(\sum_{g \in G\cap B_X(r+\csup)}|f(g)-c|^p \right)^{\frac{1}{p}}.
\end{eqnarray*}
We then use the distance comparisons from lemma \ref{CompareDistance}
to get a norm with respect to distance in $G$.
\begin{eqnarray*}
...&\le& 
\coverlap^{(p-1)/p} ||\chi_e||_{p,X} ||f-c||_{p,B_G(\CompareXG(r+\csup))}.
\end{eqnarray*}
Now we will show the other inequality.  By definition, we can write
the norm in $G$ as:
\begin{eqnarray*}
||f-c||_{p,B_G(r)} 
&=& \left(\sum_{g \in B_G(r)} |f(g)-c|^p\right)^{\frac{1}{p}} .
\end{eqnarray*}
We introduce $\chi_g$ by noting $\chi_g(x) = 1$ for $x$ in
$B_X(g,\frac{1}{4})$, and integrating over this set.  Due to the
regularity of $X$, $\mu(B_X(g,\frac{1}{4}))$ does not depend on $g$,
and so we write it as $\mu(B_X(e,\frac{1}{4}))$.
\begin{eqnarray*}
...&=& \left(\sum_{g \in B_G(r)} \frac{1}{\mu(B_X(e,\frac{1}{4}))} 
    \left(\int_{B_X(g,\frac{1}{4})}
     |f(g)-c|^p \chi_g(x) dx \right)\right)^{\frac{1}{p}}.
\end{eqnarray*}
We now will switch the integral and the sum.  We are integrating only
over $x$ in balls centered at points in $B_G(r)$ of radius $1/4$.  This set
can be written $\cup_{h \in B_G(r)}B_X(h,\frac{1}{4})$. 
\begin{eqnarray*}
...&\le& \left(\frac{1}{\mu(B_X(e,\frac{1}{4}))} 
   \int_{\cup_{h \in B_G(r)}B_X(h,\frac{1}{4}) }  
\sum_{g \in G}
 |f(g)-c|^p \chi_g(x) dx \right)^{\frac{1}{p}}.
\end{eqnarray*}
For $x$ in $\cup_{h \in B_G(r)}B_X(h,\frac{1}{4})$, $\chi_g(x)=1$ for
exactly one $g \in G$, and it is zero otherwise.  This tells us
$\sum_{g \in G} |f(g)-c|^p \chi_g(x) = |\sum_{g \in G}(f(g)-c)
\chi_g(x)|^p$.  We then can write the above as
\begin{eqnarray*}
...&=&  \left(\frac{1}{\mu(B_X(e,\frac{1}{4}))} 
   \int_{\cup_{h \in B_G(r)}B_X(h,\frac{1}{4}) }  
   |\sum_{g \in G}  (f(g)-c) \chi_g(x)|^p dx \right)^{\frac{1}{p}}.
\end{eqnarray*}
We use the distance comparisons from Lemma \ref{CompareDistance}
to see that $\cup_{h \in B_G(r)}B_X(h,\frac{1}{4}) \subset B_X(\CXG
r+.25)$.
\begin{eqnarray*}
...&\le& \left(\frac{1}{\mu(B_X(e,\frac{1}{4}))} 
   \int_{B_X(\CXG r+.25) }  
   |\sum_{g \in G}  (f(g)-c) \chi_g(x)|^p dx \right)^{\frac{1}{p}}.
\end{eqnarray*}
Now we rewrite this using the fact that $\sum_{g \in G}\chi_g(x) =1$.
\begin{eqnarray*}
...&=& \left(\frac{1}{\mu(B_X(e,\frac{1}{4}))} 
   \int_{B_X(\CXG r+.25) }  
   |\sum_{g \in G}  f(g) \chi_g(x)-c|^p dx \right)^{\frac{1}{p}} \\
&=& 
 \left(\frac{1}{\mu(B_X(e,\frac{1}{4}))}\right)^{\frac{1}{p}}
||\comp{f}-c||_{p,B_X(\CXG r+.25)}.
\end{eqnarray*}
\end{proof}
%%%%%%%%%%%%%%%%%%%%%%%%%%%%%%%%%%%%%%%%%%%%%%%%%%%%%%%%%%%%%%%
We'd also like to compare the norms of the gradients.  To do this, we
want to write the gradient in such a way that we can compare it with
the one on $G$.   We first note that:
\begin{eqnarray*}
\grad\left(\sum_{g \in G} \chi_g(x)\right) = \grad 1 =0.
\end{eqnarray*}

This allows us to write the gradient of $\comp{f(x)}$ as 
\begin{eqnarray*} 
\grad \comp{f(x)} = \sum_{g \in G} f(g) \grad \chi_g(x) = \sum_{g \in G}
(f(g) -f(h))\grad \chi_g(x).
\end{eqnarray*}
%%%%%%%%%%%%%%%%%%%%%%%%%%%%%%%%%%%%%%%%%    CompGradcompare  %%%%%%%
\begin{lemma}\label{CompGradcompare}
Let $f: G\rightarrow R$ be given. Then for any $r$ we have:  
\begin{eqnarray*}
||\grad \comp{f(x)}||_{p,B_X(r)}^p 
\le C ||\grad f||^p_{p,B_G(\CompareXG(r+3\csup))}
\end{eqnarray*}
where $C = \Const^p \mu(B_X(e,\csup)) \Vol_G(G \cap B_X(e,2\csup))^p |S|^p$.

If we have $B(e,2\csup) = S$, the generating set, then this is:
\begin{eqnarray*}
||\grad \comp{f(x)}||_{p,B_X(r)}^p 
\le C ||\grad f||^p_{p,B_G(\CompareXG(r+\csup))}
\end{eqnarray*}
where $C = \Const^p \mu(B_X(e,\csup)) |S|^p $.

When $r=\infty$, this is:
\begin{eqnarray*}
||\grad \comp{f(x)}||_{p,X}^p \le C ||\grad f||^p_{p,G}.
\end{eqnarray*}
\end{lemma}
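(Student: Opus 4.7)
The plan is to exploit the partition-of-unity identity
\begin{eqnarray*}
\sum_{g \in G} \grad \chi_g(x) \;=\; \grad\!\left(\sum_{g \in G}\chi_g(x)\right) \;=\; \grad 1 \;=\; 0,
\end{eqnarray*}
which lets me subtract $f(h)$ for any reference $h \in G$ before bounding. For each $x \in X$ I pick $h(x) \in G$ with $\chi_{h(x)}(x) \ne 0$ (at least one exists since $\sum_g \chi_g(x) = 1$), so $d_X(h(x),x) \le \csup$. Then
\begin{eqnarray*}
\grad \comp{f(x)} \;=\; \sum_{g \in G}\bigl(f(g)-f(h(x))\bigr)\grad \chi_g(x),
\end{eqnarray*}
and only those $g$ with $d_X(g,x) \le \csup$, hence $d_X(g,h(x)) \le 2\csup$, contribute to the sum.

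Taking $p$-th powers, the uniform bounds $|\{g : \chi_g(x) \ne 0\}| \le \coverlap$ and $|\grad \chi_g(x)| \le \Const$ combined with a discrete Jensen inequality give, pointwise,
\begin{eqnarray*}
|\grad \comp{f}(x)|^p
  \;\le\; \coverlap^{p-1}\Const^p \!\!\sum_{g \in G \cap B_X(h(x),2\csup)}\!\! |f(g) - f(h(x))|^p.
\end{eqnarray*}
In the special case $B_X(e,2\csup) = S$, every such $g$ equals $h(x)s$ for some $s \in S$, so $|f(g)-f(h(x))|^p \le |S|^{p/2}|\grad f(h(x))|^p$ directly from the definition of $|\grad f|$ on $G$. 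In general, Lemma \ref{CompareDistance} gives $d_G(g,h(x)) \le 2\csup \CompareXG$, so $g$ and $h(x)$ are joined by a generator chain of uniformly bounded length, and telescoping along this chain expresses $|f(g)-f(h(x))|^p$ as a controlled sum of $|\grad f|^p$ at intermediate group elements.

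Next I integrate the pointwise bound over $x \in B_X(r)$ and interchange with the finite sum over $g$. The set of $x$ for which a given $g' \in G$ is counted has $\mu$-measure at most $\mu(B_X(e,\csup))$ (the support of $\chi_{g'}$), and the combinatorial multiplicity coming from the inner sum over $g$ near $h(x)$ is controlled by $\Vol_G(G \cap B_X(e,2\csup))^p$. The group elements that can appear lie within $d_G$ at most $\CompareXG\csup$ of $h(x)$ plus the chain length (itself $\le 2\csup\CompareXG$), which sits within $\CompareXG(r+\csup)$ of the identity since $h(x) \in G \cap B_X(r+\csup)$; thus the distance comparison places all contributing $g'$ inside $B_G(\CompareXG(r+3\csup))$ in general, and inside $B_G(\CompareXG(r+\csup))$ in the special case where the chain is trivial.

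The main technical obstacle is the chaining step and its bookkeeping: one must iterate $|f(xs)-f(x)|^2 \le |S||\grad f(x)|^2$ along a generator path from $h(x)$ to $g$, then verify that after the interchange each contributing $g'$ appears with bounded multiplicity. Everything else is uniform because $X/G$ is a finite polytopal complex, which is precisely what allows the constants $\Const$, $\mu(B_X(e,\csup))$, $\Vol_G(G \cap B_X(e,2\csup))$, $\coverlap$, and $|S|$ to be absorbed into the single $C$ of the statement.
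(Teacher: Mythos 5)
Your argument is correct and follows essentially the same route as the paper: the partition-of-unity identity to subtract a reference value $f(h)$, restriction to those $g$ with $d_X(g,h)\le 2\csup$, the bound $|\grad \chi_g|\le \Const$, Jensen, the direct gradient bound in the special case $G\cap B_X(e,2\csup)=S$, and telescoping along generator chains together with Lemma \ref{CompareDistance} to land in $B_G(\CompareXG(r+3\csup))$ in the general case. The only differences are bookkeeping ones (you choose $h(x)$ pointwise instead of covering $B_X(r)$ by the balls $B_X(h,\csup)$ with $h\in G\cap B_X(r+\csup)$, and your Jensen step carries an extra multiplicity factor such as $\coverlap^{p-1}$), which inflate the constant harmlessly but do not change the argument.
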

%%%%%%%%%%%%%%%%%%%%%%%%%%%%%%%%%%%%%%%%%%%%%%%%%%%%%%%%%%%%%%%%%%%%%%%%%
\begin{proof}
We can cover $X$ with balls of radius $\csup$.  This lets us rewrite the
norm as follows:
\begin{eqnarray*}
||\grad \comp{f(x)}||_{p,B_X(r)}^p 
& = &\int_{B_X(r)} |\grad \comp{f(x)}|^p dx \\
& \le& \sum_{h \in G\cap B_X(r+\csup)} \int_{B_X(h,\csup)} 
    |\grad \comp{f(x)}|^p dx \\
& =& \sum_{h \in G\cap B_X(r+\csup)} \int_{B_X(h,\csup)} 
    \abs{\sum_{g \in G} (f(g) -f(h))\grad \chi_g(x)}^p dx 
\end{eqnarray*}
 From its definition, we know that $\grad \chi_g(x)$ will be nonzero
 only when $d_X(x,g) < \csup$.  As we're integrating over $x$ with
 $d_X(x,h) \le \csup$, we can restrict our possible $g$ to those with
 $d_X(g,h) < 2\csup$.  Then we use the fact that $|\grad \chi_g(x)|
 \le \Const$.
\begin{equation*}
\begin{split}
\sum_{h \in G\cap B_X(r+\csup)}& \int_{B_X(h,\csup)} 
    \abs{\sum_{g \in B_X(h,2\csup)} (f(g) -f(h))\grad \chi_g(x)}^p dx \\
&\le \Const^p \sum_{h \in G\cap B_X(r+\csup)}
   \abs{ \sum_{g \in G \cap B_X(h,2\csup)} (f(g) -f(h)) }^p
   \mu(B_X(h,\csup)) .
\end{split}
\end{equation*}
Note that by invariance, $\mu(B_X(h,\csup)) =\mu(B_X(e,\csup))$.  At
this point, if we had $G \cap B_X(e,2\csup) = S$, the generating set,
we could proceed as follows.  Otherwise, we'll need to expand things a
little bit more.
\begin{equation*}
\begin{split}
 \Const^p &\mu(B_X(e,\csup)) 
   |S|^p \sum_{h \in G\cap B_X(r+\csup)}
     \abs{ \sum_{s\in S} \frac{1}{|S|}(f(hs) -f(h)) }^p \\
&\le \Const^p \mu(B_X(e,\csup)) |S|^p 
   \sum_{h \in G\cap B_X(r+\csup)}
      \left( \abs{\sum_{s\in S} \frac{1}{|S|}(f(hs) -f(h) )^2}\right)^{p/2} \\
&= \Const^p \mu(B_X(e,\csup)) |S|^p ||\grad f||^p_{p,G \cap B_X(r+\csup)} \\
&\le \Const^p \mu(B_X(e,\csup)) |S|^p 
   ||\grad f||^p_{p,B_G(\CompareXG(r+\csup))}.
\end{split}
\end{equation*}
If $G \cap B(e,2\csup) \ne S$, we could modify this by noting that:
\begin{eqnarray*}
&&\lefteqn{\sum_{h \in G\cap B_X(r+\csup)}
 \abs{ \sum_{g \in G\cap B_X(h,2\csup)} (f(g) -f(h)) }^p}
\\&& = \sum_{h \in G\cap B_X(r+\csup)}
   \abs{ \sum_{g \in G\cap B_X(h,2\csup)} 
     \sum_{i=0 : s_0..s_k=h^{-1}g}^{k-1} (f(hs_0..s_i) -f(hs_0..s_{i+1})) }^p 
\\
&&\le \sum_{h \in G\cap B_X(r+3\csup)}
   \Vol_G(G \cap B_X(e,2\csup))^p 
     \abs{\sum_{s \in S} (f(hs) -f(h)) }^p.
\end{eqnarray*}
This will yield the inequality:
\begin{eqnarray*}
||\grad \comp{f(x)}||_{p,B_X(r)}^p
\le C ||\grad f||^p_{p,B_G(\CompareXG(r+3\csup))}
\end{eqnarray*}
for $C =\Const^p \mu(B_X(e,\csup)) \Vol_G(G \cap B_X(e,2\csup))^p |S|^p $.

Note that in these, $\Const$ is the bound on the gradient of $\chi_g$.
\end{proof}
%%%%%%%%%%%%%%%%%%%%%%%%%%%%%%%%%%%%%%%%%%%%%%%%%%%%%%%%%%%%%%%%%%%%%%
\section{Poincar\'{e} inequality for volume doubling finitely generated groups}
We can use these estimates along with our knowledge of complexes in
order to show that volume doubling finitely generated groups admit a
strong Poincar\'{e} Inequality.  This is not a new fact, but it is a
cute proof.
%%%%%%%%%%%%%%%%%%%%%%%%%%%%%%%%%%%%%%%%%%%%%%%%%%%%%%%%%%%%%%%%%%%%%
\begin{theorem}
Let $G$ be a finitely generated volume doubling group.
Let $f: G\rightarrow R$ and $B_G(r) \subset G$ be given.  Then  
\begin{eqnarray*}
||f-f_{B_G(r)}||_{1,B_G(r)} \le C r ||\grad f||_{1,B_G(r)}.
\end{eqnarray*}
Here $C=4 C_P \Const |S|\csup $ where $C_P$ is the constant in the
global Poincar\'{e} inequality for $X$.
\end{theorem}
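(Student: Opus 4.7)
The plan is to transfer the global Poincaré inequality for $X$ (Theorem \ref{WeakGtoX}) back to $G$ via the $\comp{\cdot}$ operator, using Theorem \ref{Compcompare} to compare $L^1$ norms and Lemma \ref{CompGradcompare} to compare gradient norms. Since $G$ is finitely generated and volume doubling, I would first build $X$ as the Cayley-type complex associated to $G$ (so that $Y = X/G$ is a finite admissible complex, and so that $\CXG$ and $\CompareXG$ are as small as possible). Volume doubling of $G$ gives volume doubling of $X$, placing us in the setting of Theorem \ref{WeakGtoX} so that $X$ admits a global $L^1$ Poincaré inequality with constant $C_P$.

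Given $f:G\to\R$, I would form $\comp{f}$ on $X$ using the partition of unity $\{\chi_g\}$. Pick $R = \CXG r + \tfrac14$ and set $c = (\comp{f})_{B_X(R)}$; then the left half of Theorem \ref{Compcompare} gives
\[
C_1\,\|f-c\|_{1,B_G(r)} \;\le\; \|\comp{f} - c\|_{1,B_X(R)}.
\]
Next, apply the global Poincaré inequality for $X$ on $B_X(R)$ to bound this by $C_P\, R\, \|\grad \comp{f}\|_{1,B_X(R)}$, and then use Lemma \ref{CompGradcompare} to replace $\|\grad \comp{f}\|_{1,B_X(R)}$ by a constant multiple of $\|\grad f\|_{1,B_G(\CompareXG(R+\csup))}$. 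Finally, I would invoke the usual infimum trick
\[
\|f - f_{B_G(r)}\|_{1,B_G(r)} \;\le\; 2\inf_{c\in\R}\|f-c\|_{1,B_G(r)},
\]
which costs only a factor of $2$ and replaces the anchor constant $(\comp{f})_{B_X(R)}$ by $f_{B_G(r)}$.

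The main obstacle is making the constants collapse to the clean value $4 C_P \Const |S|\csup$ stated in the theorem. This forces a specific choice of $X$: one takes $Y$ to be a single edge (or the minimal Cayley-graph model) so that $\CXG = \CompareXG = 1$, $N=1$, and the factors $C_1, C_2$ of Theorem \ref{Compcompare} cancel. With this choice, $R = r + \tfrac14 \le 2r$ absorbs into one of the factors of $2$, and the gradient comparison in Lemma \ref{CompGradcompare} contributes exactly the $\Const |S|\csup$ piece, yielding the advertised constant after combining with the factor of $2$ from the infimum trick. The only nontrivial point is verifying that the expanded radius $\CompareXG(R+\csup) = R+\csup$ on the right-hand side is still controlled by a multiple of $r$ so the final inequality has the domain $B_G(r)$ on both sides (which holds once one chains the Whitney-covering extension of Corollary \ref{PoincareExtend1} inside $X$ before transferring back).
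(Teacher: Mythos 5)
Your proposal follows essentially the same route as the paper's proof: take $X$ to be the Cayley graph of $G$ (so $\CXG=\CompareXG=1$), push $f$ to $\comp{f}$, compare norms via Theorem \ref{Compcompare} with the anchor $c=(\comp{f})_{B_X(r+.25)}$, apply the global Poincar\'e inequality on $X$ coming from Theorem \ref{WeakGtoX}, and pull the gradient back with Lemma \ref{CompGradcompare}. The only substantive difference is the endgame: the paper does not invoke Corollary \ref{PoincareExtend1} to repair the enlarged ball $B_G(\CompareXG(r+.25+\csup))$ (that corollary enlarges the radius for the Poincar\'e inequality on $X$ and would not shrink the right-hand ball on $G$ anyway); instead it chooses $\csup=.74$ and uses discreteness --- $r$ may be taken to be an integer, so $B_G(r+.99)=B_G(r)$ as a subset of the group. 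Also, the stated factor $4$ arises from $\mu(B_X(e,\tfrac14))=\tfrac{|S|}{4}$ in Theorem \ref{Compcompare}, with the paper replacing the anchor $(\comp{f})_{B_X(r+.25)}$ by $f_{B_G(r)}$ directly rather than via the infimum trick, so your additional factor of $2$ (and the $r+\tfrac14$ in place of $r$) yields a slightly larger constant than the advertised $4\,C_P\,\Const\,|S|\,\csup$; this affects only bookkeeping, not the validity of the inequality.
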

%%%%%%%%%%%%%%%%%%%%%%%%%%%%%%%%%%%%%%%%%%%%%%%%%%%%%%%%%%%%%%%%%%
\begin{proof}
Take any such group, and let $X$ be its Cayley graph.  Theorem
\ref{XuniformPoincarefromG} showed that strong Poincar\'{e}
inequalities hold on $X$.  We happily note that both $\CXG$ and
$\CompareXG$ are $1$ on a Cayley graph, and so we can omit them from
our calculation.  We form a chain of inequalities as follows.  From
Theorem \ref{Compcompare}, we can set $c=(\comp{f})_{B_X(r+.25)}$ to get:
\begin{equation*}
\begin{split}
||f-(\comp{f})_{B_X(r+.25)}&||_{1,B_G(r)} \\
& \le \frac{1}{\mu(B_X(g,\frac{1}{4}))}
||\comp{f}-(\comp{f})_{B_X(r+.25)}||_{1,B_X(r+.25)}.
\end{split}
\end{equation*}
Note that for every $g \in G$, $\mu(B_X(g,\frac{1}{4}))= 1/4 |S|$.
 From Theorem \ref{XuniformPoincarefromG}, we know that:
\begin{eqnarray*}
 ||\comp{f}-(\comp{f})_{B_X(r+.25)}||_{1,B_X(r+.25)} 
\le C_P r ||\grad \comp{f}||_{1,B_X(r+.25)}.
\end{eqnarray*}
Then we transfer back, using the fact that $G \cap B_X(e,2\csup) =S$.
\begin{eqnarray*}
||\grad \comp{f(x)}||_{1,B_X(r+.25)} 
= \Const \mu(B_X(\csup)) |S| ||\grad f||_{1,B_G(r+.25+\csup)}.
\end{eqnarray*}
We can evaluate this as $X$ is a Cayley graph: $\mu(B_X(e,\csup)) =
|S|\csup$.

Since $X$ is a graph whose edges have unit length, $\csup <1$.  In
particular, we can pick $\csup=.74$.  Since our original ball, $B_G(r)$
is on the group, without loss of generality we know that $r$ is an
integer.  Then $B_G(r + \csup +.25) = B_G(r+.99) = B_G(r)$ on the
group.  Combining this, we have:
\begin{eqnarray*}
||f-\comp{f}_{B_X(r+.25)}||_{1,B_G(r)} \le 
  \frac{1}{.25|S|}
   C_P r \Const |S|\csup |S| ||\grad f||_{1,B_G(r)}.
\end{eqnarray*}
We can get the desired left hand side from
\begin{eqnarray*}
||f-f_{B_G(r)}||_{1,B_G(r)} \le
||f-\comp{f}_{B_X(r+.25)}||_{1,B_G(r)}.
\end{eqnarray*}
  We can use the graph structure to reduce this to:
\begin{eqnarray*}
||f-f_{B_G(r)}||_{1,B_G(r)} \le 
  4 C_P \Const |S|\csup r ||\grad f||_{1,B_G(r)}.
\end{eqnarray*}
\end{proof}

\chapter{Comparing Heat Kernels on X and G}
The main goal of this chapter is to show that for large times, the
heat kernel on the group is comparable to the heat kernel on the
complex.  The comparison was shown for groups and manifolds by
Saloff-Coste and Pittet \cite{LSCP}.
\begin{notation}
To simplify notation, we use $p_t$ for the heat kernel on the
group, and $h_t$ when it is on the complex.
\end{notation}
On a finitely generated group, the heat kernel can be used to describe
a symmetric random walk.  This is a walk where from a point $g\in G$,
the probability of moving to $gs$ in one step is $\frac{1}{|S|}$ for
each generator $s \in S$.  The value of the heat kernel on the
diagonal, $p_{2n}(e,e)$ gives us the probability of returning to the
same point after $2n$ steps.  We are interested in this for even
numbers of steps because this avoids parity issues.  The set-up for
these walks can be found in \cite{Kesten}.
\begin{definition}
We say $f(t) \approx g(t)$ if there exist positive finite constants
$C_1,C_2,C_3$, and $C_4$ so that
\begin{eqnarray*}
C_1 f(C_2 t) \le g(t) \le C_3 f(C_4 t).
\end{eqnarray*}
\end{definition}
We will show that the following holds when $t \ge 1$:
\begin{eqnarray*}
p_{2\floor{t}}(e,e) \approx \sup_{x \in X} h_t(x,x). 
\end{eqnarray*}
Note that it doesn't make sense to compare them for small times, since
$p_t$ is only defined for integer values of $t$.  

An important notion in this proof is that of amenability.
\begin{definition}
A {\bf F\o lner sequence} is a sequence of finite subsets, $F(i)$,
with the following properties: 
\\ (1) For any $g \in G$ there exists $i$ such that $g \in F(i)$,
\\ (2) $F(i) \subset F(i+1)$, and 
\\ (3) For any finite subset $Q \subset G$, $\lim_{i \rightarrow \infty}
\frac{\#(QF(i))}{\# F(i)} =1$. 
\\ Here, $QF(i)$ refers to the set $\{g : g = qf \text{ with }q \in Q, 
f \in F(i)\} $.
\end{definition}
\begin{definition}
$G$ is {\bf amenable} if and only if $G$ admits a F\o lner sequence.
\end{definition}
\begin{example}
The group of integers, $Z$, is amenable.  Here, the sets $[-i,i]$ form
a F\o lner sequence.
\end{example}

In order to show this, we will split it into two cases.  In the first,
we look at when $G$ is nonamenable.  Here, $p_{2\floor{t}}(e,e)
\approx e^{-t}$.  Then we will look at when $G$ is amenable.  
We will first show $p_t$ is approximately less than or equal to $h_t$,
and then we will show the reverse.

\section{Heat kernels in the nonamenable case}
We now look at the behavior of the heat kernel on $X$ and $G$ when $G$
is nonamenable.

We call $H_t$ is the semigroup form for the heat kernel on
$X$.  It is related to $h_t(x,y)$ by $H_t f(x) = \int_X f(y) h_t(x,y)
dy$.  It is also written as $H_t = e^{-t\Delta}$.  Alternatively,
$h_t$ is called the transition function for $H_t$.  Estimates on norms
of functions and their derivatives can give us estimates on
$||H_t||_{2\rightarrow 2}$.
%%%%%%%%%%%%%%%%%%%%%%%%%%%%%%%%%%%%%%%%%%%%%%%  fnormvsHt  %%%%%%%%
\begin{lemma}\label{fnormvsHt}
\begin{eqnarray*}
||f||_2 \le C ||\grad f||_2
\end{eqnarray*}
will be true for all $f\in \Dom(\Delta)$ if and only if for all $t>0$,
\begin{eqnarray*}
||H_t||_{2\rightarrow 2} \le e^{-t/C}.
\end{eqnarray*}
\end{lemma}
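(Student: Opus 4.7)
The plan is to identify $\|\grad f\|_2^2$ with the Dirichlet form $E(f,f) = (\Delta f, f)$ for $f \in \Dom(\Delta)$, which was established in the earlier construction of the form, and then to translate the statement into one about the bottom of the spectrum of the self-adjoint operator $\Delta$ via spectral theory. The bridge between the Poincaré-type inequality on $f$ and operator norm bounds on the semigroup is exactly a spectral gap statement, so both directions will reduce to standard functional calculus for the positive self-adjoint operator $\Delta$.

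For the forward direction, I would first observe that the hypothesis, squared, gives $(f,f) \le C^2 (\Delta f, f)$ for all $f \in \Dom(\Delta)$, which by density of $\Dom(\Delta)$ in $F = \Dom(\Delta^{1/2})$ extends to all of $F$. This is equivalent to $\Delta \ge C^{-2} I$ as self-adjoint operators, i.e., $\lambda := \inf \sigma(\Delta) \ge C^{-2}$. Then by the spectral theorem and the functional calculus representation $H_t = e^{-t\Delta}$, I get $\|H_t\|_{2 \to 2} = \sup_{\mu \in \sigma(\Delta)} e^{-t\mu} \le e^{-t\lambda}$, matching the claimed decay rate (up to the constant convention in the statement).

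For the reverse direction, I would exploit the smoothing of the semigroup: for $f \in \Dom(\Delta)$ the function $t \mapsto \|H_t f\|_2^2$ is differentiable with
\begin{equation*}
\frac{d}{dt}\|H_t f\|_2^2 = -2(\Delta H_t f, H_t f) = -2\|\grad H_t f\|_2^2,
\end{equation*}
and in particular $\tfrac{d}{dt}\big|_{t=0^+} \|H_t f\|_2^2 = -2\|\grad f\|_2^2$. On the other hand the hypothesis gives $\|H_t f\|_2^2 \le e^{-2t/C}\|f\|_2^2$, and subtracting $\|f\|_2^2$ and dividing by $t$ then sending $t \to 0^+$ produces the differential inequality $-2\|\grad f\|_2^2 \le -(2/C)\|f\|_2^2$, i.e., $\|f\|_2^2 \le C\|\grad f\|_2^2$. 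Density of $\Dom(\Delta)$ in $F$ and the closedness of the Dirichlet form then extend the bound to the full domain of the gradient.

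The main subtlety is not any single step but rather being careful about domains: the Dirichlet form $E$ is defined on $F = \Dom(\Delta^{1/2})$, while the semigroup derivative identity holds pointwise in $t$ only for $f$ in $\Dom(\Delta)$. I would handle this by stating both inequalities first on $\Dom(\Delta)$, where spectral calculus and the semigroup ODE are unambiguous, and then passing to the full natural domain by density plus the lower semicontinuity of $E$ under the graph norm $\|\cdot\|_2 + E(\cdot,\cdot)^{1/2}$. Everything else is formal spectral theory.
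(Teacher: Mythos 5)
Your proposal is correct and follows essentially the same route as the paper: the forward direction is the same spectral-gap argument via $\|\grad f\|_2^2=(\Delta f,f)=\|\sqrt{\Delta}f\|_2^2$ and the spectral theorem for $H_t=e^{-t\Delta}$, and the reverse direction extracts the form inequality from the small-time behavior of the semigroup bound (your derivative of $\|H_tf\|_2^2$ at $t=0^+$ is the same computation as the paper's limit of $\langle (H_t-I)f,f\rangle/t$). Your handling of domains and the remark about the $C$ versus $C^2$ convention are if anything slightly more careful than the paper's sketch.
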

%%%%%%%%%%%%%%%%%%%%%%%%%%%%%%%%%%%%%%%%%%%%%%%%%%%%%%%%%%%%%%%%%%%
\begin{proof}
We will sketch the proof.  We can show the forward implication by
using integration by parts:
\begin{eqnarray*}
||\grad f||^2_2 = \int |\grad f|^2 = \int f \Delta f = \int
  \sqrt{\Delta}f \sqrt{\Delta} f = ||\sqrt{\Delta} f||^2_2.
\end{eqnarray*}
This tells us that for any non-zero $f\in \Dom(\Delta)$, we have:
\begin{eqnarray*}
\frac{||\sqrt{\Delta} f||^2_2}{||f||^2_2} \ge \frac{1}{C}.
\end{eqnarray*}
We can take a square root and then an infimum to get:
\begin{eqnarray*}
\inf_{f \ne 0} \frac{||\sqrt{\Delta} f||_2}{||f||_2} \ge \frac{1}{\sqrt{C}}.
\end{eqnarray*}
This tells us that $\frac{1}{\sqrt{C}}$ is a lower bound on
eigenvalues of $\sqrt{\Delta}$.  Spectral theory tells us that
$\frac{1}{C}$ is a lower bound on eigenvalues of $\Delta$, and
$e^{-\frac{t}{C}}$ is an upper bound on eigenvalues of $H_t =
e^{-t\Delta}$.  This yields
\begin{eqnarray*}
||H_t||_{2\rightarrow 2} \le e^{-t/C}.
\end{eqnarray*}
For the reverse, consider the fact that
\begin{eqnarray*}
E(f,f) = \lim_{t \rightarrow 0} \frac{\langle(H_t - I)f,f \rangle }{t} 
= -\langle \grad f ,\grad f \rangle.
\end{eqnarray*}
We can use our bound to get:
\begin{eqnarray*}
\lim_{t \rightarrow 0} \frac{\langle(H_t - I)f,f\rangle}{t} \le 
\lim_{t \rightarrow 0} \frac{\langle(e^{-t/C} - 1)f,f\rangle}{t}
= \langle(-1/C)f,f\rangle.
\end{eqnarray*}
Then since 
\begin{eqnarray*}
E(f,f) = -\langle \grad f ,\grad f\rangle,
\end{eqnarray*}
we have 
\begin{eqnarray*}
 -\langle\grad f ,\grad f\rangle \le \langle(-1/C)f,f\rangle
\end{eqnarray*}
which gives us
\begin{eqnarray*}
||f||_2 \le C||\grad f||_2.
\end{eqnarray*}
\end{proof}
%%%%%%%%%%%%%%%%%%%%%%%%%%%%%%%%%%%%%%%%%%%%%%%%%%%%%%%%%%%%%%%%%%
  We can transfer between estimates on $||H_t||_{2\rightarrow 2}$ and
$h_t(x,y)$.  Since the bound on the norm of $f$ will hold for
nonamenable groups, we will combine lemmas \ref{fnormvsHt} and
\ref{Httopt} to get our heat kernel estimates.
%%%%%%%%%%%%%%%%%%%%%%%%%%%%%%%%%%%%%%%%%%%%%%%%%%%%%% {Httopt}
\begin{lemma}
If $||H_t||_{2 \rightarrow 2}^2 \le e^{-2t/C}$, then for all $z\in X$
and $t \ge t'$:
\begin{eqnarray*}
h_t(z,z) \le h_{t'}(z,z)e^{-t/C}.
\end{eqnarray*}
\label{Httopt}
\end{lemma}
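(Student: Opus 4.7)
The plan is to use the semigroup structure together with the self-adjointness and symmetry of the heat kernel to express the diagonal value $h_t(z,z)$ as the squared $L^2$-norm of a single-variable heat kernel slice, and then apply the hypothesized operator norm bound directly.

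First, I would recall two standard facts about the symmetric Dirichlet heat semigroup $H_t$: the semigroup identity $H_{r+s}=H_rH_s$ gives, after unfolding with the transition density, the Chapman--Kolmogorov identity
\begin{equation*}
h_{r+s}(z,y)=\int_X h_r(z,w)\,h_s(w,y)\,d\mu(w),
\end{equation*}
and symmetry $h_r(z,w)=h_r(w,z)$ lets one write $H_r$ acting on $h_s(z,\cdot)$ as $(H_r h_s(z,\cdot))(y)=h_{r+s}(z,y)$. Specializing $r=s$ gives the identity $h_{2s}(z,z)=\|h_s(z,\cdot)\|_2^2$, which will convert a pointwise diagonal bound into an $L^2$ bound.

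Next, for $t\ge t'$ I would split the time as $t/2=(t-t')/2+t'/2$ and use the semigroup identity to write
\begin{equation*}
h_{t/2}(z,\cdot)=H_{(t-t')/2}\,h_{t'/2}(z,\cdot).
\end{equation*}
Applying the $L^2\to L^2$ bound on $H_{(t-t')/2}$ coming from the hypothesis,
\begin{equation*}
h_t(z,z)=\|h_{t/2}(z,\cdot)\|_2^2
=\|H_{(t-t')/2}h_{t'/2}(z,\cdot)\|_2^2
\le \|H_{(t-t')/2}\|_{2\to 2}^{2}\,\|h_{t'/2}(z,\cdot)\|_2^2
\le e^{-(t-t')/C}\,h_{t'}(z,z),
\end{equation*}
which gives the stated bound (up to the conventional extra factor $e^{t'/C}\le\text{const}$ if one wants exactly $e^{-t/C}$ on the right-hand side).

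There is no substantial obstacle: the argument is a textbook semigroup computation. The only things to verify carefully are that $h_s(z,\cdot)\in L^2(X,\mu)$ for each $s>0$ (which follows from the near-diagonal upper bound in Corollary \ref{Corle} together with the Gaussian off-diagonal decay in Corollary \ref{offdiagonalHeat}, ensuring $\|h_s(z,\cdot)\|_2^2=h_{2s}(z,z)<\infty$), and that the self-adjointness and semigroup law both apply to $H_t$ on $L^2(X)$—both of which were established in the construction of $\Delta$ from the Dirichlet form $(E,F)$ via the Friedrichs extension described at the end of Section 1.3.
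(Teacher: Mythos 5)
Your proof is correct and takes essentially the same route as the paper: both arguments rest on the identity $\|h_s(z,\cdot)\|_{2}^{2}=h_{2s}(z,z)$ and apply the hypothesized $2\to 2$ bound to a heat-kernel slice, the paper merely recasting the resulting inequality $u(t+s)\le u(s)e^{-t/C}$ as the differential inequality $u'(s)\le -u(s)/C$ before integrating, a step your direct splitting $H_{t/2}=H_{(t-t')/2}H_{t'/2}$ makes unnecessary. The $e^{-(t-t')/C}$ versus $e^{-t/C}$ discrepancy you flag is present in the paper's own derivation as well (its Gronwall step likewise yields only $e^{-(t-t')/C}$), so your version proves exactly what the paper's proof does.
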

%%%%%%%%%%%%%%%%%%%%%%%%%%%%%%%%%%%%%%%%%%%%%%%%%%%%%%%%%%%%%%%%%
\begin{proof}
Apply $H_t$ to $f(y) = \frac{h_s(y,z)}{||h_s(\cdot,z)||_2}$.  
This gives you 
\begin{eqnarray*}
H_t\frac{h_s(x,z)}{||h_s(\cdot,z)||_2} = \int_X
\frac{h_s(y,z)}{||h_s(\cdot,z)||_2} h_t(x,y) dy =
\frac{h_{t+s}(x,z)}{||h_s(\cdot,z)||_2}.  
\end{eqnarray*}
Our estimate then tells us:
\begin{eqnarray*} 
||H_t||_{2 \rightarrow 2}^2 
\ge \int_X \frac{h_{t+s}(x,z)^2}{||h_s(\cdot,z)||_2^2} dx 
= \int_X \frac{h_{t+s}(x,z)h_{t+s}(z,x)}{||h_s(\cdot,z)||_2^2} dx 
=\frac{h_{2t+2s}(z,z)}{||h_s(\cdot,z)||_2^2}.
\end{eqnarray*}
Note that 
\begin{eqnarray*}
||h_s(\cdot,z)||_2^2 = \int_X h_s(y,z)^2 dy = h_{2s}(z,z).
\end{eqnarray*}
When we combine this with the inequality for $H_t$, we have:
\begin{eqnarray*}
\frac{h_{2t+2s}(z,z)}{h_{2s}(z,z)} \le e^{-2t/C}.
\end{eqnarray*}
Fix $z$ and let $u(t)= h_{t}(z,z)$.  This can be written as:
\begin{eqnarray*}
u(t+s) \le u(s)e^{-t/C}.
\end{eqnarray*}
This is equivalent to:
\begin{eqnarray*}
\frac{u(t+s)-u(s)}{t} \le u(s)\frac{e^{-t/C}-1}{t}.
\end{eqnarray*}
Taking the limit as $t \rightarrow 0^{+}$ gives us:
\begin{eqnarray*}
u'(s) \le (-1/C)u(s).
\end{eqnarray*}
This gives us the estimate that $u(t) \le u(t_0) e^{-t/C}$ for any $t\ge t_0$.
Rewriting this, we have the long time decay for all $z\in X$ and $t \ge t'$:
\begin{eqnarray*}
h_t(z,z) \le h_{t'}(z,z)e^{-t/C}.
\end{eqnarray*}
\end{proof}
Note that the converse is essentially true as well.  If $h_t(z,z) \le
h_{t'}(z,z)e^{-t/C}$ for $t \ge t'$, then we can construct an upper
bound for $||H_t||_{2 \rightarrow 2}^2$ whenever $t \ge t'$.
\begin{eqnarray*}
||H_t||_{2 \rightarrow 2}^2  
&=& \sup_{||f||_2=1} \int_X \left(\int_X f(y) h_t(x,y) dy \right)^2 dx.
\end{eqnarray*}
Note that $\int_X f(y)p_t(x,y) \le ||f||_2 ||p_t(x,\cdot)||_2$ holds
by H\"older.
\begin{eqnarray*}
...&\le& \sup_{||f||_2=1} \int_X ||f||_2^2 ||p_t(x,\cdot)||_2^2 dx \\
&=& \int_X ||p_t(x,\cdot)||_2^2 dx \\
&=& \int_X \int_X h_t(x,y)h_t(y,x) dy dx \\
&=& \int_X h_{2t}(x,x) dx \\
&\le& \left(\int_X h_{t'}(x,x) dx \right) e^{-2t/C}.
\end{eqnarray*}
This gives us $||H_t||_{2 \rightarrow 2} \le C' e^{-t/C}$ for $t \ge
t'$ where $C' = \int_X h_{t'}(x,x) dx$ depends only on $X$ and $t'$.

%%%%%%%%%%%%%%%%%%%%%%%%%%%%%%%%%%%%%%%%%%%%%%%%%%%%%%%%%%%%%%%%
In the case where $G$ is not amenable, it is well known that the heat
kernel decays exponentially.  This result was shown by Kesten \cite{Kesten}.
In particular, for any $f \in \Dom(E)$, we know that:
\begin{eqnarray*}
||f||_{2,G} \le C_G || \grad f||_{2,G}.
\end{eqnarray*}
We can use averaging to show that this will hold on $X$ as well.
%%%%%%%%%%%%%%%%%%%%%%%%%%%%%%%%%%%%%%%%%%%%%%%%%%%%%%%%%%%%%%%%%%%%%%
\begin{lemma} \label{NonAmenableht}
If $G$ is not amenable and $X/G=Y$, then for any $t'>0$ there exist constants
$C_0 = \sup_{y \in Y}h_{t'}(y,y)$ and $C_1 =\sqrt{C (\delta^2 + C_G^2
C(\delta))}$ so that for all $x,y \in X$
\begin{eqnarray*}
h_t(x,y) \le C_0 e^{-t/C_1}
\end{eqnarray*}
holds for all $t \ge t'$.  Note that $C, C(\delta)$ are as in Lemmas
\ref{fToGroupf} and \ref{gradGroupftogradf}.
\end{lemma}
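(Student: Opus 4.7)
The plan is to transfer the spectral gap inequality from the nonamenable group $G$ to the complex $X$, and then convert this into the desired heat-kernel decay using the two preparatory lemmas \ref{fnormvsHt} and \ref{Httopt}. The argument breaks into four steps.

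\textbf{Step 1 (spectral gap on $X$).} The key transfer is to show that for every $f \in C_0^{\Lip}(X)$,
\begin{equation*}
\|f\|_{2,X}^2 \;\le\; C\bigl(\delta^2 + C_G^2\, C(\delta)\bigr)\, \|\grad f\|_{2,X}^2,
\end{equation*}
with $C$ and $C(\delta)$ the constants from Lemmas \ref{fToGroupf} and \ref{gradGroupftogradf}. To see this, apply Lemma \ref{fToGroupf} with $p=2$, $c=0$, $r=\infty$ to get $\|f\|_{2,X}^2 \le C(\delta^2\|\grad f\|_{2,X}^2 + \|\group{f}\|_{2,G}^2)$. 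Since $G$ is nonamenable, Kesten's theorem supplies the spectral gap $\|\group{f}\|_{2,G} \le C_G\,\|\grad \group{f}\|_{2,G}$. Finally Lemma \ref{gradGroupftogradf} converts the right-hand side back to $C(\delta)\|\grad f\|_{2,X}^2$. Combining yields the claim with constant $C_1^2 = C(\delta^2 + C_G^2 C(\delta))$, i.e.\ $\|f\|_{2,X} \le C_1\|\grad f\|_{2,X}$. Because $C_0^{\Lip}(X)$ is a core for the Dirichlet form (as shown in Chapter 1), this extends to all of $\Dom(E) \supset \Dom(\Delta)$ by density.

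\textbf{Step 2 (semigroup norm decay).} Feed the inequality from Step 1 into Lemma \ref{fnormvsHt} with constant $C_1$ to conclude
\begin{equation*}
\|H_t\|_{2\to 2} \;\le\; e^{-t/C_1}\quad\text{for all }t>0.
\end{equation*}

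\textbf{Step 3 (on-diagonal decay).} Apply Lemma \ref{Httopt} to get
\begin{equation*}
h_t(x,x) \;\le\; h_{t'}(x,x)\, e^{-t/C_1}\qquad (t\ge t',\ x\in X).
\end{equation*}
Because $G$ acts by isometries and preserves both the measure and the Dirichlet form, the heat kernel is $G$-invariant: $h_{t'}(g\cdot x, g\cdot x) = h_{t'}(x,x)$. Since $X/G=Y$, this means $h_{t'}(x,x)$ depends only on the image of $x$ in $Y$, and $h_{t'}(x,x) \le \sup_{y \in Y}h_{t'}(y,y) = C_0$ for every $x\in X$.

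\textbf{Step 4 (off-diagonal bound).} The semigroup/Cauchy--Schwarz estimate $h_t(x,y) \le \sqrt{h_t(x,x)\,h_t(y,y)}$ combined with Step 3 immediately gives the desired $h_t(x,y) \le C_0\, e^{-t/C_1}$ for $t\ge t'$.

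The main obstacle is Step 1 — correctly stitching together three inequalities (the function comparison $X\to G$, Kesten's nonamenable spectral gap on $G$, and the gradient comparison $G\to X$) while keeping track of constants and making sure the $\delta$-dependence is valid. The rest of the argument is essentially plugging into results already established. A minor technical point is justifying that the resulting spectral gap, originally obtained for $C_0^{\Lip}(X)$, really does pass to the full domain needed to invoke Lemma \ref{fnormvsHt}; this follows from the core property of $C_0^{\Lip}(X)$ proved in Chapter 1.
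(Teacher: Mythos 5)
Your proposal follows essentially the same route as the paper: the same chain of Lemma \ref{fToGroupf}, Kesten's nonamenable spectral gap on $G$, and Lemma \ref{gradGroupftogradf} to get $\|f\|_{2,X}\le C_1\|\grad f\|_{2,X}$, then Lemmas \ref{fnormvsHt} and \ref{Httopt} plus $G$-invariance and compactness of $Y$, with your Cauchy--Schwarz step matching the paper's observation that $\sup_{x,y}h_t(x,y)=\sup_y h_t(y,y)$. The argument is correct; the only addition is your explicit remark on extending the spectral gap from the core $C_0^{\Lip}(X)$ to the full domain, which the paper leaves implicit.
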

%%%%%%%%%%%%%%%%%%%%%%%%%%%%%%%%%%%%%%%%%%%%%%%%%%%%%%%%%%%%%%%%%%%
\begin{proof}
Applying Lemma \ref{fToGroupf} with $p=2$, $r=\infty$ gives us:
\begin{eqnarray*}
||f||_{2,X}^2 
\le C \left(\delta^2 ||\grad f||_{2,X}^2 + ||\group{f}||_{2,G}^2\right).
\end{eqnarray*}
The inequality for groups then tells us this is less than
\begin{eqnarray*}
||f||_{2,X}^2 
\le C \left(\delta^2 ||\grad f||_{2,X}^2 
              + C_G^2 ||\grad(\group{f})||_{2,G}^2\right).
\end{eqnarray*}
We can then bound the gradient in $G$ by the gradient in $X$ using
Lemma \ref{gradGroupftogradf} with $p=2$, $r=\infty$:
\begin{eqnarray*}
||f||_{2,X}^2 
\le C \left(\delta^2 ||\grad f||_{2,X}^2 
                + C_G^2 C(\delta) ||\grad f||_{2,X}^2\right).
\end{eqnarray*}
Putting this together, we have:
\begin{eqnarray*}
||f||_{2,X} \le \sqrt{C (\delta^2 + C_G^2 C(\delta))} ||\grad f||_{2,X}.
\end{eqnarray*}
We can apply this with $C_1 =\sqrt{C (\delta^2 + C_G^2
C(\delta))}$ to the first argument to get $||H_t||_{2\rightarrow 2}
\le e^{-t/C_1}$ on our complex, $X$.  Then, apply Lemma \ref{Httopt}
to get the on-diagonal heat kernel bound for any fixed $z$.  
\begin{eqnarray*}
h_t(z,z) \le h_{t'}(z,z)e^{-t/C_1}.
\end{eqnarray*}
Because $X/G=Y$, we can shift $z$ by elements of $G$, and it won't
affect our heat kernel.  Specifically, this means
$h_t(z,z)=h_t(z+g,z+g)$ for any $g \in G$.  This allows us to consider
only values of $h_t(y,y)$ for points $y \in Y$. This tells us that the
supremum in $Y$ dominates: $\sup_{y \in Y} h_{t}(y,y) \ge h_t(z,z).$
Set $C_0 = \sup_{y \in Y} h_{t'}(y,y)$.  Because $Y$ is compact and
$h_{t'}(y,y)$ is continuous in $y$, for fixed $t'>0$ we will have
$C_0<\infty$.  As $\sup_{x,y} h_{t}(x,y) = \sup_y h_{t}(y,y)$, this
will give us our overall bound.
\end{proof}
\begin{corollary}\label{nonam}
If $G$ is not amenable and $X/G=Y$, then for $t\ge 1$
\begin{eqnarray*}
\sup_{x \in X} h_t(x,x) \approx p_{2\ceil{t}}(e,e).
\end{eqnarray*}
\end{corollary}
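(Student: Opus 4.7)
The plan is to establish two-sided exponential bounds on both $\sup_{x \in X} h_t(x,x)$ and $p_{2\ceil{t}}(e,e)$, and then verify directly that the definition of $\approx$ is satisfied by choosing the four rescaling constants $C_1,C_2,C_3,C_4$ to match the decay rates.

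On the complex side, the upper bound $\sup_x h_t(x,x) \le C_0 e^{-t/C_1}$ is exactly Lemma \ref{NonAmenableht}. On the group side, the nonamenability hypothesis is precisely Kesten's criterion, so the Markov operator $K$ has spectral radius $\rho<1$ on $\ell^2(G)$; writing $p_{2n}(e,e) = \norm{K^n\delta_e}_2^2$ and using $\norm{K^n}_{2\to 2} \le \rho^n$ then yields $p_{2n}(e,e) \le \rho^{2n}$.

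For the matching lower bounds, the group side is easy: a single path $ss^{-1}ss^{-1}\cdots$ of length $2n$ gives $p_{2n}(e,e) \ge |S|^{-2n}$. The complex side is more delicate; I plan to fix a Lipschitz bump $v$ supported near some $x_0$, observe that $E(v,v) \le C_v \norm{v}_2^2$, apply Chebyshev to the spectral measure $\nu_v$ of $v$ to produce mass bounded away from the high-frequency part of $\sigma(\Delta)$, and conclude $\langle H_t v, v\rangle \ge c e^{-Ct} \norm{v}_2^2$; the desired lower bound on $\sup_x h_t(x,x)$ then follows from the pairing inequality $\langle H_t v,v\rangle \le (\sup_x h_t(x,x)) \norm{v}_1^2$.

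With all four exponential bounds in hand, the relation $\approx$ reduces to a short computation: both quantities are sandwiched between exponentials of the form $e^{-at}$ and $e^{-bt}$, so taking $C_4$ small enough and $C_2$ large enough (with $C_1,C_3$ then selected to absorb multiplicative constants and the discrepancy between $t$ and $\ceil{t}$) produces the required sandwich. The main obstacle is the exponential lower bound on $\sup_x h_t(x,x)$, since the other three pieces are either immediate from Lemma \ref{NonAmenableht} or reduce to counting a single path in $G$.
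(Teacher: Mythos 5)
Your proposal is correct, and its skeleton is the same as the paper's: establish two-sided exponential bounds on both quantities and then verify the definition of $\approx$ by choosing $C_1,\dots,C_4$ to match decay rates. The difference is in how much is actually proved. The paper's argument is three lines: it cites Kesten for the two-sided group bound $p_{2\ceil{t}}(e,e)\approx e^{-t/C}$, invokes Lemma \ref{NonAmenableht} for the upper bound on $h_t(x,x)$, and then simply \emph{asserts} the lower bound $\sup_{x\in X}h_t(x,x)\ge c'e^{-t/C'}$ without justification. You supply exactly the piece the paper leaves out: fixing a nonnegative compactly supported Lipschitz bump $v$, the spectral measure $\nu_v$ of $\Delta$ satisfies $\int\lambda\,d\nu_v=E(v,v)\le C_v\norm{v}_2^2$, so Chebyshev gives $\nu_v([0,2C_v))\ge\tfrac12\norm{v}_2^2$ and hence $\langle H_tv,v\rangle\ge\tfrac12 e^{-2C_vt}\norm{v}_2^2$; combined with $\langle H_tv,v\rangle\le\bigl(\sup_{x,y}h_t(x,y)\bigr)\norm{v}_1^2\le\bigl(\sup_z h_t(z,z)\bigr)\norm{v}_1^2$ (the last step by the semigroup identity and Cauchy--Schwarz), this yields the needed exponential lower bound. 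Your group-side bounds are likewise derived rather than cited: the spectral-radius bound $p_{2n}(e,e)\le\rho^{2n}$ is Kesten's characterization of nonamenability, and your path-retracing bound $p_{2n}(e,e)\ge|S|^{-2n}$ (the paper uses the slightly sharper $|S|^{-n}$ in the proof of Theorem \ref{hlessp}) is enough, since only some exponential lower bound is required. The final bookkeeping with $C_2$ large, $C_4$ small, and $C_1,C_3$ absorbing constants and the $t$ versus $\ceil{t}$ discrepancy on $t\ge1$ is exactly right. In short: same route as the paper, but more self-contained, and it closes the one genuine gap in the paper's stated proof.
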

\begin{proof}
Kesten \cite{Kesten} showed that nonamenable groups have heat kernel
behavior $p_{2\ceil{t}}(e,e) \approx e^{-t/C}$.  By lemma
\ref{NonAmenableht}, we have $h_{t}(x,x) \le  c e^{-t/C}$.  Since 
\\ $\sup_{x \in X} h_{t}(x,x) \ge  c' e^{-t/C'}$, we have the equivalence.
\end{proof}
  \section{Heat Kernels in the Amenable Case}
This is a modified version of the argument in LSC-Pittet paper
\cite{LSCP} which shows that the on diagonal heat kernel on a group is
bounded above (in some sense) by the one on a manifold.  The basic
argument involves comparing eigenvalues and traces of the heat
equation restricted to a finite set.  We iterate through these sets
using F\o lner sequences, and then we compare the heat kernels
themselves.  
\subsection {Bounding those on G above by those on X}
%%%%%%%%%%%%%%%%%%%%%%%%%%%%%%%%%%%%%%%%%%%%%%%%%%%%%%%%%%%%%%%%%%%
\begin{theorem}\label{plessh}
Let $G$ be an amenable group and $X$ the associated complex.  For times
$t > 1$, we have constants $C, C_0$ so that
\begin{eqnarray*}
p_{\ceil{Ct}}(e,e) \le C_0 \sup_{x \in X} h_t(x,x).
\end{eqnarray*}
Here $C=2C_1C_2$ where $C_1$ and $C_2$ are the constants in
\ref{Compcompare} and\ref{CompGradcompare} and \\
$C_0= |S|^{\CompareXG \Rz / \min_{g\ne h} d_G(g,h)}$.
\end{theorem}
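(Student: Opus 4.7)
The plan is to adapt the Pittet--Saloff-Coste comparison technique. First, the heat semigroups restricted to a finite $A \subset G$ and a thickening $U(A) \subset X$ are compared via their eigenvalues, and then a F\o lner sequence (which exists because $G$ is amenable) is used to pass from this finite-dimensional comparison to a pointwise comparison on the full group.

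Concretely, fix a finite $A \subset G$ and let $U(A) \subset X$ be the $\csup$-neighborhood of $A$, so that $\supp(\comp{f}) \subset U(A)$ for every $f \in \ell^2(G)$ supported on $A$. Let $K^1_A$ be the transition operator restricted to $\ell^2(A)$ with eigenvalues $\beta_A(0) \ge \beta_A(1) \ge \cdots$, and let $\Delta^{U(A)}$ be the Dirichlet Laplacian on $L^2(U(A))$ with eigenvalues $\lambda'_0 \le \lambda'_1 \le \cdots$. Using the lift $f \mapsto \comp{f}$, the norm bound $\|\comp{f}\|_{2,X} \ge C_1 \|f\|_{2,G}$ from Theorem \ref{Compcompare}, and the energy bound $\E_X(\comp{f},\comp{f}) \le C_2\, E_G(f,f)$ from Lemma \ref{CompGradcompare}, the min--max characterization of eigenvalues applied to the $(k+1)$-dimensional test space $\comp{V_k}$ (where $V_k \subset \ell^2(A)$ is the span of the top $k+1$ eigenvectors of $K^1_A$) yields
\begin{equation*}
\lambda'_k \le C\,\bigl(1-\beta_A(k)\bigr), \qquad C = 2\,C_1 C_2,
\end{equation*}
the factor $2$ arising from the identity $E_G(f,f) = 2\langle(I-K)f,f\rangle$ relating the discrete Dirichlet form and the discrete Laplacian.

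Setting $n = \lceil Ct \rceil$ and using the inequality $\beta^n \le e^{-n(1-\beta)}$ (with the standard workaround for possibly negative $\beta_A(k)$ provided by passing to $K^2_A$ or restricting to even $n$), the eigenvalue comparison converts into a trace comparison:
\begin{equation*}
\trace\bigl(K^n_A\bigr) = \sum_k \beta_A(k)^n \le \sum_k e^{-n(1-\beta_A(k))} \le \sum_k e^{-t\lambda'_k} = \trace\bigl(H_t^{U(A)}\bigr).
\end{equation*}
The left side equals $\sum_{g \in A} p^A_n(g,g)$, and for $g$ in the $n$-interior $A_n := \{g \in A : B_G(g,n) \subset A\}$ translation invariance on $G$ gives $p^A_n(g,g) = p_n(e,e)$; the right side is dominated by $\mu(U(A))\sup_{x \in X}h_t(x,x)$. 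Dividing by $|A_n|$ and taking $A = F(i)$ along a F\o lner sequence forces $|A_n|/|A| \to 1$ while keeping $\mu(U(A))/|A|$ uniformly bounded, so in the limit
\begin{equation*}
p_n(e,e) \le C_0 \sup_{x \in X} h_t(x,x),
\end{equation*}
with $C_0$ the limiting ratio, expressible in the claimed form $|S|^{\CompareXG \Rz/\min_{g\ne h}d_G(g,h)}$ by bounding $\mu(U(A))/|A|$ through the metric comparison of Lemma \ref{CompareDistance} together with the bounded geometry of $Y = X/G$.

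The main obstacle will be the careful handling of possibly negative eigenvalues of $K^1_A$ when passing from the eigenvalue comparison to the trace comparison: the naive bound $\beta^n \le e^{-n(1-\beta)}$ fails near $\beta = -1$, and the standard fix of working with $K^2_A$ (whose spectrum lies in $[0,1]$) is what allows the factor $2$ to be absorbed cleanly into $C = 2C_1 C_2$. A secondary point is verifying that the F\o lner limit absorbs the boundary terms: one must choose $F(i)$ to grow fast enough relative to $n = \lceil Ct\rceil$ that both $|F(i) \setminus A_n|/|F(i)| \to 0$ and $\mu(U(F(i)))/|F(i)|$ stays bounded, which follows from the defining property of a F\o lner sequence applied to the ball $B_G(n)$ combined with the uniform local geometry of $X$.
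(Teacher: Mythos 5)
Your overall route is the same as the paper's: lift $f$ on $G$ to $\comp{f}$ on $X$, use Theorem \ref{Compcompare} and Lemma \ref{CompGradcompare} to dominate the Rayleigh quotient of the Dirichlet Laplacian on the thickened set by $C$ times $1-\beta_A(i)$ via min--max, convert this to a trace comparison $\trace(K_A^{2n+2}) \lesssim \trace(H^{A_0}_{t})$, bound the complex trace by $\mu(A_0)\sup_x h_t(x,x)$, and let a F\o lner sequence absorb the boundary. Your one genuine departure is harmless and arguably cleaner: instead of taking $A=S^nF(i)$ and citing the Pittet--Saloff-Coste inequality $p_{2n+2}(e,e)\le \trace(K_A^{2n+2})/|F(i)|$, you prove it directly by noting that for $g$ in the $n$-interior of $A$ all length-$n$ paths stay in $A$, so $p^A_n(g,g)=p_n(e,e)$, and then use the F\o lner property for $B_G(n)$ to get $|A_n|/|A|\to 1$; that is essentially the proof of the cited inequality, made self-contained. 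Your attribution of the factor $2$ in $C=2C_1C_2$ to $E_G(f,f)=2\langle (I-K)f,f\rangle$ is also consistent with the theorem's constant (the paper's own proof silently drops this $2$).

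The genuine gap is exactly the point you flag: the passage from eigenvalue comparison to trace comparison when $K_A$ has eigenvalues near $-1$. Neither of your proposed fixes works as stated. Restricting to even $n$ does not help: for $\beta_A(k)$ close to $-1$ you have $\beta_A(k)^{2m}\approx 1$, while $1-\beta_A(k)\approx 2$, so the target bound $e^{-t\lambda'_k}$ (with $\lambda'_k$ possibly of order $C$) can be far smaller than $\beta_A(k)^{2m}$. Passing to $K_A^2$ has the same problem in disguise: its eigenvalues are $\beta_A(k)^2$, and to run the argument you would need $\lambda'_k\lesssim 1-\beta_A(k)^2=(1-\beta_A(k))(1+\beta_A(k))$, but your min--max estimate only controls $\lambda'_k$ by $1-\beta_A(k)$, and the factor $1+\beta_A(k)$ degenerates as $\beta_A(k)\to -1$. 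The paper's resolution is different: since $\trace(K_A^{2n+1})\ge 0$, the negative-eigenvalue contribution satisfies $\sum_{\beta_A(i)<0}\abs{\beta_A(i)}^{2n+2}\le\sum_{\beta_A(i)>0}\beta_A(i)^{2n+1}\le\sum_{\beta_A(i)>0}\beta_A(i)^{2n}$, hence $\trace(K_A^{2n+2})\le 2\sum_{\beta_A(i)>0}\beta_A(i)^{2n}$, and only the nonnegative eigenvalues ever need the bound $\beta\le e^{-(1-\beta)}$. (Replacing $K$ by the lazy walk $\tfrac12(I+K)$ would be an alternative legitimate fix, at the cost of another factor of $2$ in the time change.) With that substitution your argument closes; without it, the trace inequality you wrote down is not justified.
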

%%%%%%%%%%%%%%%%%%%%%%%%%%%%%%%%%%%%%%%%%%%%%%%%%%%%%%%%%%%%%%%%%%%
\begin{proof}
Let $A$ be a finite subset of $G$, and let $A_0$ be the set of points
in $X$ which surround it.  That is, $A_0 := \{x \in X | d(x,A) <\Rz
\}$.

Because $\Rz \ge \csup$, we will have functions $f: G\rightarrow R$
which are supported in $A$ map to functions $\comp f: X \rightarrow R$
which are supported in $A_0$.  Using lemmas \ref{Compcompare} and
\ref{CompGradcompare}, we know that $||f||_2^2 \le C_1
||\comp{f}||_2^2$ and $|| \grad \comp{f}||_2^2
\le C_2 E(f,f)$.  Combining these, we get:
\begin{eqnarray*}
\frac{|| \grad \comp{f}||_2^2}{||\comp{f}||_2^2} 
  &\le& C_1C_2 \frac{ E(f,f)}{||f||_2^2} \\ 
   &=& C_1C_2 \frac{ \langle I-K_A f,f \rangle}{||f||_2^2} \\
   &=&  C_1C_2 \left(1 - \frac{ ||K^{1/2}_A f||_2^2}{||f||_2^2}\right).
\end{eqnarray*}
Here, we used the fact that $K_A$ is self-adjoint.  We can apply the
min-max principle in order to compare eigenvalues. Let
$\lambda_{A_0}(i)$ be the ith eigenvalue for $H_t$ on $A_0
\subset X$ (denoted $H_t^{A_0}$) and $\beta_{A}(i)$ ith 
eigenvalue for $K$ on $A \subset G$ (denoted $K_A$).  For eigenvalues
$1..|A|$, we have:
\begin{eqnarray*}
\lambda_{A_0}(i) \le C_1C_2 (1 -\beta_{A}(i))
\end{eqnarray*}
We can rewrite this as:
\begin{eqnarray*}
\beta_{A}(i) \le 1 - \frac{1}{C_1C_2} \lambda_{A_0}(i).
\end{eqnarray*}
As $\lambda_{A_0}(i)$ will be bounded below by 0, we can use $1-x
\le e^{-x}$ to get:
\begin{eqnarray*}
\beta_{A}(i) \le e^{- \frac{1}{C_1C_2} \lambda_{A_0}(i)}.
\end{eqnarray*}
We can use this to compare the traces.  Recall
\begin{eqnarray*} 
\trace(H_t^{A_0}) &=& \sum_i e^{-t\lambda_{A_0}(i)} \\
\trace(K_A^n) &=& \sum_i \beta_{A}^n(i).
\end{eqnarray*}
When $\beta_{A}(i)\ge 0$, we have:
\begin{eqnarray*}
\beta_{A}^{2n}(i) \le e^{- \frac{1}{C_1C_2} \lambda_{A_0}(i)2n}.
\end{eqnarray*}
We will compare the negative $\beta_{A}(i)$ terms with the positive
ones.  We know that $0 \le \trace(K_A^{2n+1})$.  This means we can
split the sum into two pieces and subtract the part with negative
eigenvalues from both sides:
\begin{eqnarray*}
\sum_{\beta_{A}(i) <0} \abs{\beta_{A}^{2n+1}(i)} 
\le
\sum_{\beta_{A}(i) >0} \beta_{A}^{2n+1}(i). 
\end{eqnarray*}
Since all of the eigenvalues are between -1 and 1, we have:
\begin{eqnarray*}
\sum_{\beta_{A}(i) <0} \abs{\beta_{A}^{2n+2}(i)}
\le \sum_{\beta_{A}(i) <0} \abs{\beta_{A}^{2n+1}(i)}
\le \sum_{\beta_{A}(i) >0} \beta_{A}^{2n+1}(i) 
\le \sum_{\beta_{A}(i) >0} \beta_{A}^{2n}(i) .
\end{eqnarray*}
This tells us that 
\begin{eqnarray*}
\trace(K_A^{2n+2}) \le \sum_{\beta_{A}(i)} \abs{\beta_{A}^{2n+2}(i)}
 \le 2 \sum_{\beta_{A}(i) >0} \beta_{A}^{2n}(i)
 \le 2 \sum_i \beta_{A}^{2n}(i).
\end{eqnarray*}

We can compare the first $|A|$ terms in the two sums, and the extra
terms in $\trace(H_t^{A_0})$ will only help us: 
\begin{eqnarray*}
\trace(K_A^{2n+2}) \le 2\trace(H_{\frac{2n}{C_1C_2}}^{A_0}).  
\end{eqnarray*}

We are now in a good spot.  We will compare the heat kernels with the
respective traces.  Fix $n$.  Let $F(i)$ be a F\o lner sequence in
$G$, and recall $S^n$ is the set of words in $G$ of length at most
$n$.  For each $i$ we will have a set \\ $A = S^nF(i)= \{ g : g = f u, f
\in F(i), u \in S^n \}$.  In Lsc-Pittet \cite{LSCP}, they showed that
for an amenable group $G$ we have the comparison:
\begin{eqnarray*}
p_{2n+2}(e,e) \le \frac{1}{|F(i)|} \trace(K_A^{2n+2}).
\end{eqnarray*}
By the definition of the trace, we know that on the complex we have:
\begin{eqnarray*} 
\trace(H_t^{A_0}) &=& \sum_i e^{-t\lambda_{A_0}(i)} \\
                 &=& \int_{A_0} h^{A_0}_t(x,x) dx \\
                 &\le& \mu(A_0) \sup_{x\in A_0} h^{A_0}_t(x,x) \\
                 &\le& \mu(A_0) \sup_{x\in X} h_t(x,x).
\end{eqnarray*}
When we combine these, we find that:
\begin{eqnarray*}
p_{C_1C_2(2n+2)}(e,e) \le 
  \frac{\mu(A_0)}{|F(i)|}\sup_{x\in X} h_{2n}(x,x).
\end{eqnarray*}
We can compare $\mu(A_0)$ with $\Vol_G(A)$.  Since $A_0 := \{x \in X
| d_X(x,A) <\Rz \}$, each element in $A$ can expand to at most
$|S|^{\CompareXG \Rz / \min_{g\ne h} d_G(g,h)}$ new elements in $A_0$.
This tells us:
\begin{eqnarray*}
p_{C_1C_2(2n+2)}(e,e) \le 
 |S|^{\CompareXG \Rz / \min_{g\ne h} d_G(g,h)}
   \frac{|A|}{|F(i)|}\sup_{x\in X} h_{2n}(x,x).
\end{eqnarray*}
We can now let $i$ go to infinity; since we have a F\o lner sequence,
$\frac{|A|}{|F(i)|} =\frac{|S^n F(i)|}{|F(i)|} $ will become $1$.  This
leaves us with:
\begin{eqnarray*}
p_{C_1C_2(2n+2)}(e,e) \le 
 |S|^{\CompareXG \Rz / \min_{g\ne h} d_G(g,h)}
   \sup_{x\in X} h_{2n}(x,x).
\end{eqnarray*}
\end{proof}
  \subsection{Bounding those on X above by those on G}
We'd like to show the reverse inequality.  We will do this using a
chain of comparisons.  First, we will compare $h_t(x,x)$ with
$h_t^W(x,x)$, where $h_t^W$ is the diffusions in an open subset $W
\subset X$.  Then we will compare eigenvalues of $h_t^W(x,x)$ and
$p_t^{W'}(e,e)$, where $p_t^{W'}(e,e)$ represents probability of a
random walk restricted to a set $W' \subset G$ returning to the
identity, using our bounds on norms and minimax inequalities.  Lastly,
we use a comparison for $p_t^{W'}(e,e)$ and $p_t(e,e)$.  At this
point, we will remove some of the dependence on $W$, and limit away
other factors to get the final result.

We would like to look at what happens to diffusions in an open subset
$W \subset X$.  Let $\tau$ be the exit time for this set: $\tau = \inf
\{t : t\ge 0, X_t \in W \}$.  Then by the strong Markov property we
have a restricted heat kernel:
\begin{eqnarray*}
h_t^W(x,y) =h_t(x,y) -E^x(h_{t-\tau}(X_{\tau},y)1_{\tau \le t}).
\end{eqnarray*}
  Here, $X_{t}$ is a random variable which at time $t=\tau$ will be
the point on $\bdry W$ where $X_t$ exits $W$.  The term
$h_{t-\tau}(X_{\tau},y)$ represents going from the point on the
boundary to $y$ in the time $t-\tau$ which is left after exiting $W$.
We take the expected value of this where $X_0 = x$.  We can bound the
expected value above by the maximum value. Since
$E^x(h_{t-\tau}(X_{\tau},y)1_{\tau \le t}) \le
\sup_{0<s<t}\sup_{z \in \bdry W} h_s(z,y)$, we have
\begin{eqnarray*}
h_t^W(x,y) \ge h_t(x,y) - \sup_{0<s<t}\sup_{z \in \bdry W} h_s(z,y).
\end{eqnarray*}

We can use this to bound $h_t^W(x,x)$ below for $x$ sufficiently far
from $\bdry W$.
%%%%%%%%%%%%%%%%%%%%%%%%%%%%%%%%%%%%%%%%%%%%%%%%%%%% htWbysupht %%%%%%%%%
\begin{lemma}\label{htWbysupht}
There exists a constant $C_H$ so that for all $\varepsilon_1 > 0$ there
exists $a>0$ so that for all open subsets $W \subset X$ and for all $t \ge
6r_1^2$ we know that
\begin{eqnarray*}
h_t^W(x,x) \ge C_H^{-1} \sup_{y \in X} h_{t-3r_1^2}(y,y) -\varepsilon_1.
\end{eqnarray*}
for all $x \in \{x \in W : d(x,\bdry W)> at^{1/2} \}$.  Here, $r_1=\diam(Y)$.
\end{lemma}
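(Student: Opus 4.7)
The plan is to decompose $h_t^W(x,x)$ via the strong Markov identity noted just before the lemma, control the defect by the Gaussian off-diagonal upper bound of Corollary \ref{offdiagonalHeat}, and manufacture the factor $C_H^{-1}\sup_y h_{t-3r_1^2}(y,y)$ from a short parabolic Harnack chain that exploits the $G$-invariance of the heat kernel together with $\diam(Y)=r_1$.

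The starting point is
\[
h_t^W(x,x) \;\ge\; h_t(x,x) \;-\; \sup_{0<s\le t}\sup_{z\in\bdry W} h_s(z,x),
\]
so the main quantitative work is to make the double supremum small. Since $d(x,\bdry W)>at^{1/2}$, every $z\in\bdry W$ satisfies $r:=d(z,x)\ge at^{1/2}$, and Corollary \ref{offdiagonalHeat} gives
\[
h_s(z,x) \;\le\; \frac{C}{(\min(s,R_0^2))^{n/2}}\,e^{-r^2/(4s)}\bigl(1+r^2/s\bigr)^{N/2}.
\]
A direct calculus check shows that $s\mapsto s^{-n/2}e^{-r^2/(4s)}$ is maximized at $s^*=r^2/(2n)$ with value of order $(r^2)^{-n/2}\le (a^2 t)^{-n/2}$, and that at this $s^*$ the polynomial factor reduces to the constant $(1+2n)^{N/2}$. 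In the regime $s\le R_0^2$ (or if $s^*>R_0^2$, at the endpoint $s=R_0^2$) this produces a bound of order $C(a^2 t)^{-n/2}$; in the regime $s\in[R_0^2,t]$ the prefactor is at most $CR_0^{-n}$ and $e^{-r^2/(4s)}\le e^{-a^2/4}$. Combining the two regimes yields an estimate of the form $C_1(a^2 t)^{-n/2}+C_2(1+a^2)^{N/2}e^{-a^2/4}$, and for $t\ge 6r_1^2$ both terms tend to $0$ as $a\to\infty$ uniformly. Given $\varepsilon_1>0$, I would choose $a=a(\varepsilon_1,X)$ so that this bound is at most $\varepsilon_1$; this is exactly the step that fixes $a$.

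The Harnack step compares $h_t(x,x)$ with $\sup_y h_{t-3r_1^2}(y,y)$. Since $X/G=Y$ is compact of diameter $r_1$ and $G$ acts by isometries, $h_s(y,y)$ depends only on the $G$-orbit of $y$, so $\sup_{y\in X}h_s(y,y)=\sup_{y\in Y}h_s(y,y)$, and I may replace $x$ by its orbit representative in $Y$ to arrange $d(x,y_0)\le r_1$ for every $y_0\in Y$. The uniform parabolic Harnack inequality on $X$---available through the uniform local volume doubling and the uniform Poincar\'e inequality of Corollary \ref{UnifPoincareP} via Sturm's theorems at the start of Chapter~3---applied in succession to the nonnegative space-time solutions $(s,z)\mapsto h_s(z,y_0)$ and $(s,z)\mapsto h_s(x,z)$ gives
\[
h_{t-3r_1^2}(y_0,y_0)\;\le\; C\,h_{t-2r_1^2}(x,y_0)\;\le\; C^2\,h_t(x,x),
\]
the space-time offsets being $(r_1^2,r_1)$ and $(2r_1^2,r_1)$, both well within the parabolic Harnack regime. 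Setting $C_H:=C^2$ and taking the supremum over $y_0\in Y$ yields $h_t(x,x)\ge C_H^{-1}\sup_y h_{t-3r_1^2}(y,y)$. The hypothesis $t\ge 6r_1^2$ ensures $t-3r_1^2>0$ and leaves room for the Harnack cylinders. Combining this with the defect bound produces
\[
h_t^W(x,x)\;\ge\; h_t(x,x)-\varepsilon_1 \;\ge\; C_H^{-1}\sup_{y\in X}h_{t-3r_1^2}(y,y)-\varepsilon_1,
\]
which is the claim.

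The main obstacle is the defect estimate: one must verify that the Gaussian decay genuinely dominates the polynomial $(1+r^2/s)^{N/2}$ at the critical scale and that $a$ can be chosen in terms of $\varepsilon_1$ and the intrinsic data of $X$ alone, uniformly in $W$ and in $t\ge 6r_1^2$. A secondary issue in the Harnack step is that if $r_1$ exceeds the intrinsic small-scale radius on which our Harnack inequality is uniform, one must iterate Harnack across a bounded number of scales, but the resulting larger constant still depends only on $X$ and is absorbed into $C_H$.
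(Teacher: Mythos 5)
Your proposal is correct and follows essentially the same route as the paper: the strong Markov decomposition with the Gaussian bound of Corollary \ref{offdiagonalHeat} optimized over $s$ to pick $a=a(\varepsilon_1,X)$ uniformly in $W$ and $t\ge 6r_1^2$, followed by a uniform parabolic Harnack comparison plus $G$-invariance to replace the local supremum by $\sup_{y\in X}h_{t-3r_1^2}(y,y)$. The only (harmless) difference is that you apply Harnack twice, passing through the off-diagonal value $h_{t-2r_1^2}(x,y_0)$, whereas the paper invokes Sturm's Harnack inequality once on the ball $B_{r_1}(x)$, which contains a copy of $Y$.
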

%%%%%%%%%%%%%%%%%%%%%%%%%%%%%%%%%%%%%%%%%%%%%%%%%%%%%%%%%%%%%%%%%%%%%%
\begin{proof}
By Corollary \ref{offdiagonalHeat} we know that there are constants $C_1$ and
$C_2$ so that
\begin{eqnarray*}
h_t(x,y) \le \frac{C_1}{\min(t,1)^{d/2}} e^{-C_2\frac{d^2(x,y)}{t}}.
\end{eqnarray*}

This estimate allows us to bound $\sup_{0<s<t}\sup_{z \in \bdry W}
h_s(z,y)$ whenever $y$ is at a distance at least $at^{1/2}$ away from
the boundary of $W$.  If $s \le 1$, then \\ $h_s(z,y) \le
\frac{C_1}{s^{d/2}} e^{-C_2\frac{a^2t}{s}}$. This has a maximum at
$s=\frac{2a^2 C_2 t}{d}$ which tells us:
\begin{eqnarray*} 
h_s(z,y) \le C_1 \left(\frac{d}{2a^2 C_2 t}\right)^{d/2} e^{-d/2}.  
\end{eqnarray*}
For $t \ge 6r_1^2$, this is maximized at $t=6r_1^2$.  We have $C_1
(\frac{d}{12r_1^2 C_2 e})^{d/2} a^{-d} < \varepsilon_1$ when $a >
C_1^{1/d} (\frac{d}{12r_1^2 C_2 e})^{1/2} \varepsilon_1^{-1/d}$.

If $t>s>1$, then the maximum occurs when $s=t$:
\begin{eqnarray*}
h_s(x,y) \le C_1 e^{-C_2\frac{a^2 t}{s}} \le C_1
e^{-C_2 a^2}.  
\end{eqnarray*}
We know that $C_1 e^{-C_2 a^2} < \varepsilon_1$ whenever $a >
\sqrt{\frac{1}{C_2} \ln(\frac{C_1}{\varepsilon_1})}$.

Thus, whenever $a > \max\left(C_1^{1/d} (\frac{d}{12r_1^2 C_2e})^{1/2}
\varepsilon_1^{-1/d},
\sqrt{\frac{1}{C_2} \ln(\frac{C_1}{\varepsilon_1})}\right)$
we have 
\begin{eqnarray*}
\sup_{0<s<t}\sup_{z \in \bdry W} h_s(z,y) < \varepsilon_1.
\end{eqnarray*}

We can bound $h_t(x,x)$ below using a parabolic Harnack inequality.
Theorem 3.5 in Sturm \cite{Sturm} uses techniques in Moser
\cite{Moser} to show that Poincar\'{e} and volume doubling locally
imply a parabolic Harnack inequality.  In our situation, we have
uniformly bounded constants for both local Poincar\'{e} and volume
doubling, and so the constant $C_H$ in the Harnack inequality will
also be uniform.

In the language of Sturm: \\
For all $K\ge 1$ and all $\alpha, \beta,
\gamma, \delta$ with $0 < \alpha < \beta < \gamma < \delta$ and $0 <
\varepsilon < 2$ there exists a constant $C_H = C_H(Y_1)$ such that for
balls $B_{2r}(x) \subset Y_1$ and all $T$,
\begin{eqnarray*}
\sup_{(s,y) \in Q^-} u(s,y) \le C_H \inf_{(s,y) \in Q^+} u(s,y)
\end{eqnarray*}
whenever $L_T$ is a uniformly parabolic operator whose associated
Dirichlet form is comparable by a factor of $K$ with the original
Dirichlet form, and $u$ is a nonnegative local solution of the parabolic
equation $(L_T -
\frac{\del}{\del T}) u =0$ on $Q = (T-\delta r^2,T) \times B_{2r}(x)$.
Here $Q^- = (T-\gamma r^2,T-\beta r^2) \times B_{\varepsilon r}(x)$ and
$Q^+ = (T-\alpha r^2,T)
\times B_{\varepsilon r}(x)$.

We can translate this language to our situation.  For us, $L_T =
\Delta$, and so there is no $T$ dependence in the operator.  This 
means the Dirichlet form condition will be trivially satisfied when
$K=1$.  We also will take $\varepsilon =1$.  We will set $Y_1 =
B_{3r_1}$.  This is a ball which is large enough so that every
equivalence class of $x \in Y$ has a representative in $Y_1$, as well as
an associated copy of $Y$ in $Y_1$.
When $t > 6r_1^2$, we can set $T = t + r^2$, $\alpha =1 $, $\beta =2$,
$\gamma = 4$, and $\delta =5$.  Then $Q^+ = (T-r^2, T+r^2)$, $Q^- =
(T-4r^2,T -2r^2)$, and $Q = (T-5r^2, T+r^2)$.  Applying Sturm here gives us
\begin{eqnarray*}
\sup_{y \in B_r(x)} h_{t-3r^2}(y,y) 
  \le C_H \inf_{y \in B_r(x)} h_t(y,y) 
  \le C_H h_t(x,x).
\end{eqnarray*}
Due to the symmetry of the space $X$, $h_{s}(y,y)$ is the same as
$h_{s}$ when $y$ is translated by an element of $G$.  For $r =
\diam(Y)$, we have a copy of \\ 
$Y \subset B_r(x) \subset B_{2r}(x) \subset Y_1$ 
for every $x \in Y$.  This tells us that 
\begin{eqnarray*}
\sup_{y \in B_r(x)} h_{s}(y,y) =\sup_{y \in X} h_{s}(y,y).
\end{eqnarray*}
\end{proof}
%%%%%%%%%%%%%%%%%%%%%%%%%%%%%%%%%%%%%%%%%%%%%%%%%%%%%%%%%%%%%%%%
We can bound the integral of $h_t^W(x,x)$ above by an analogue of
Lemma 5.3 in LSC-Pittet \cite{LSCP}.
%%%%%%%%%%%%%%%%%%%%%%%%%%%%%%%%%%%%%%%%%%%%%%%%% htWbylambdale1 %%%
\begin{lemma}\label{htWbylambdale1}
For subsets $W \subset X$, $B >0$, and $t\ge 1$,
\begin{eqnarray*}
\int_W h_t^W(x,x) dx \le 
 \sum_{\lambda_W(i) \le 1/B} e^{-t \lambda_W(i)} + C_1 2^{d/2}
 \mu(W)e^{-t/(2B)}.
\end{eqnarray*}  
Here, $C_1$ and $d$ are defined as in Corollary \ref{offdiagonalHeat},
and $\lambda_W$ are the eigenvalues of $h_t^W$.
\end{lemma}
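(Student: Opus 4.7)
The plan is to start from the spectral decomposition of the Dirichlet heat kernel on $W$. Since $h_t^W$ is symmetric, Hilbert--Schmidt, and $\{H_t^W\}$ is a semigroup of self-adjoint operators on $L^2(W)$ with eigenvalues $\{\lambda_W(i)\}$ and orthonormal eigenfunctions $\{\phi_i\}$, we get the usual trace identity
\begin{eqnarray*}
\int_W h_t^W(x,x)\,dx \;=\; \sum_i e^{-t\lambda_W(i)}.
\end{eqnarray*}
I would then split this sum according to whether $\lambda_W(i) \le 1/B$ or $\lambda_W(i) > 1/B$. The first part is exactly the first term in the desired inequality, so the whole task reduces to controlling the tail over large eigenvalues.

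For the tail, the key trick is to write $e^{-t\lambda} = e^{-t\lambda/2}\cdot e^{-t\lambda/2}$ and pull out a uniform factor: for any $i$ with $\lambda_W(i) > 1/B$ we have $e^{-t\lambda_W(i)/2} \le e^{-t/(2B)}$, so
\begin{eqnarray*}
\sum_{\lambda_W(i) > 1/B} e^{-t\lambda_W(i)}
  \;\le\; e^{-t/(2B)} \sum_{i} e^{-(t/2)\lambda_W(i)}
  \;=\; e^{-t/(2B)} \int_W h_{t/2}^W(x,x)\,dx.
\end{eqnarray*}
Now I would bound the remaining integral by dominating the Dirichlet kernel by the free kernel, $h_{t/2}^W(x,x) \le h_{t/2}(x,x)$ (by monotonicity in the domain, a standard consequence of the maximum principle or the Feynman--Kac representation used earlier to derive the restricted kernel).

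Finally I would invoke the uniform on-diagonal upper bound from Corollary \ref{Corle} (really Corollary \ref{offdiagonalHeat} on the diagonal): $h_{t/2}(x,x) \le C_1/\min(t/2,1)^{d/2}$. For $t \ge 1$, $\min(t/2,1) \ge 1/2$, so $h_{t/2}(x,x) \le C_1\, 2^{d/2}$, and integrating over $W$ yields the claimed $C_1 2^{d/2}\mu(W) e^{-t/(2B)}$. Assembling both halves of the split gives the lemma. The only point where one has to be a bit careful is the monotonicity $h_t^W \le h_t$ and the regime of $t$ where the uniform on-diagonal bound takes the clean form $C_1 2^{d/2}$; everything else is a one-line factorization and a substitution of previously proved estimates.
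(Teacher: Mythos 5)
Your proof is correct and follows essentially the same route as the paper: the trace identity, splitting the spectrum at $1/B$, extracting the factor $e^{-t/(2B)}$ from the large-eigenvalue tail, and bounding the leftover trace using $h^W \le h$ together with the uniform on-diagonal estimate, which yields exactly the constant $C_1 2^{d/2}\mu(W)$. The only cosmetic difference is that the paper pegs the residual trace at the fixed time $1/2$ via the inequality $ab \ge a/2 + b/2$ for $a,b\ge 1$, whereas you keep it at time $t/2$ and invoke $t\ge 1$ only when applying the on-diagonal bound.
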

%%%%%%%%%%%%%%%%%%%%%%%%%%%%%%%%%%%%%%%%%%%%%%%%%%%%%%%%%%%%%%%%%%
\begin{proof}
When $a,b \ge 1$ we have the inequality $ab \ge a/2 +b/2$.  Let $a=t$
and $b=B/\lambda_W(i)$.  Then for $t \ge 1$ and $\lambda_W(i) \ge
1/B$ we have 
\begin{eqnarray*}
tB/\lambda_W(i) \ge t/2 + B/(2\lambda_W(i)).
\end{eqnarray*}
  If we multiply through by $-1/B$ and exponentiate we find
\begin{eqnarray*}
e^{-t \lambda_W(i)} \le e^{-t/(2B) - \lambda_W(i)/2}.
\end{eqnarray*}
  This allows us to bound the sum over the larger eigenvalues:
\begin{eqnarray*}
\sum_{\lambda_W(i) \ge 1/B}  e^{-t \lambda_W(i)}
&\le& \sum_{\lambda_W(i) \ge 1/B}  e^{-t/(2B) - \lambda_W(i)/2} \\
&\le& e^{-t/(2B)} \sum_{\lambda_W(i)}  e^{- \lambda_W(i)/2} \\
&=& e^{-t/(2B)}\int_W h_{1/2}^W(x,x) dx \\
&\le&  e^{-t/(2B)}  C_1 2^{d/2} \mu(W).
\end{eqnarray*}

In the last step, we used the bound in \ref{offdiagonalHeat} which
tells us $h_{1/2}^W(x,x) \le C_1 2^{d/2}$.

Using the eigenvalue expansion, we can compare the integral of the
heat kernel at times greater than one with the sum over small
eigenvalues plus our bound on the sum over larger eigenvalues:
\begin{eqnarray*}
\int_W h_t^W(x,x) dx &=& \sum_{\lambda_W(i)}  e^{-t \lambda_W(i)} \\
&\le& \sum_{\lambda_W(i) \le 1/B} e^{-t \lambda_W(i)} 
    + C_1 2^{d/2} \mu(W)e^{-t/(2B)}.
\end{eqnarray*}
\end{proof}
%%%%%%%%%%%%%%%%%%%%%%%%%%%%%%%%%%%%%%%%%%%%%%%%%%%%%%%%%%%%%%%%%%%%%%
Let's consider what it means to have a Laplacian, $\Delta^{\Omega}$,
defined for functions restricted to a set, $\Omega$ with a polygonal
boundary.  Let the domain of $\Delta^{\Omega}$ be the closure of the
intersection of $\Dom(\Delta)$ and the continuous functions which are
compactly supported on $\Omega$; that is, $\Dom(\Delta^{\Omega}) =
\overline{\Dom(\Delta) \cap C^C_0(\Omega)}$.  Note that since
$\Dom(\Delta) \cap C_0(\Omega)
\subset \Dom(\Delta)$ and $\Dom(\Delta)$ is closed, we know that
\\ $\Dom(\Delta^{\Omega}) \subset \Dom(\Delta)$.

For functions $f \in \Dom(\Delta^{\Omega})$, we set $\Delta^{\Omega} f
= \Delta f$.  $\Delta^{\Omega}$ inherits many properties from
$\Delta$.  It is self-adjoint with a discrete spectrum, and as we will
see in the following lemma, for the $\Omega$ that we are interested in
there will be only finitely many eigenvalues which are close to $0$.

 We can show this by comparing operators restricted to subsets of $X$
to operators restricted to subsets of $G$.  Let $A \subset G$ be
given.  Let $\Omega = U(A)$ be a subset of $X$ with polygonal boundary so
that any function $f$ whose support is in $U(A)$ has an associated
function $\group{f}$ whose support is in $(A, \{1..N\})$.  In
particular, we would like $U(A)$ to be close in size to $A$.  Since
$\group f(g,i) = \dashint_{B_X(g\gamma_i,\delta)} f(x) dx $ averages
over neighborhoods of points in $X$, we can guarantee a set with
volume estimate:
\begin{eqnarray*}
\min_{y \in Y} \mu(B(y, \delta)) N \# A \le \mu(U(A)) \le \mu(Y) N \#A.
\end{eqnarray*}
The following lemma will give us a comparison for small eigenvalues on
$h_t^{U(A)}$.
%%%%%%%%%%%%%%%%%%%%%%%%%%%%%%%%%%%%%%%%%%%%%%%%%%%%%%htvsKAN    %%%%
\begin{lemma}\label{htvsKAN}
Let $A \subset G$ and $U(A) \subset X$ be given as above.  Eigenvalues
of $h_t^{U(A)}(x,x)$ and $p_n(e,e)$ are comparable in the
following manner:
\begin{eqnarray*}
\sum_{i :0 \le \lambda_{U(A)}(i) \le 1/B} e^{-2 n \lambda_{U(A)}(i)} 
\le \#A N p_{2\floor{n/(B(2-\sqrt{2}))}}(e,e).
\end{eqnarray*}
Here $B=4 C(\sqrt{\frac{1}{2C_{grad}}})/(2-\sqrt{2})$ where $C_1$ is
the constant in Lemma \ref{fToGroupf} and $C(\cdot)$ is the constant
from Lemma \ref{gradGroupftogradf}. $N$ depends on $\delta =
\sqrt{\frac{1}{2 C_{grad}}}$.
\end{lemma}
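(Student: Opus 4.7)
The plan is to compare eigenvalues of $\Delta^{U(A)}$ on $X$ with those of $I - K_A$ on $G$ via the transfer map $f \mapsto \group{f}$, and then convert the resulting trace inequality into a heat-kernel inequality on $G$. The choice $\delta = \sqrt{1/(2C_{grad})}$ is engineered so that the error term $C\delta^2 \|\grad f\|^2$ appearing in Lemma \ref{fToGroupf} can be absorbed into the main term whenever $f$ has small Rayleigh quotient.

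First, let $k$ be the number of eigenvalues of $\Delta^{U(A)}$ at most $1/B$, and let $\phi_1, \ldots, \phi_k$ be the corresponding $L^2$-orthonormal eigenfunctions. Each $\phi_j$ is supported in $U(A)$, so $\group{\phi_j}$ is supported in $A \times \{1,\ldots,N\}$. For any $f = \sum c_i \phi_i$ in this span, orthonormality gives $\|\grad f\|_2^2 = \sum c_i^2 \lambda_{U(A)}(i) \le (1/B)\|f\|_2^2$. Applying Lemma \ref{fToGroupf} with $p=2$, $c=0$, $r=\infty$:
\[
\|f\|_2^2 \;\le\; C\bigl(\delta^2 \|\grad f\|_2^2 + \|\group{f}\|_2^2\bigr) \;\le\; \tfrac{C\delta^2}{B}\|f\|_2^2 + C\|\group{f}\|_2^2.
\]
Once $B$ is large enough that $C\delta^2/B < 1$, this yields a coercive bound $\|\group{f}\|_2^2 \ge c_1 \|f\|_2^2$ with an explicit $c_1 > 0$; in particular, $\group{\cdot}$ is injective on the span of $\{\phi_1, \ldots, \phi_k\}$.

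Second, I would bound the Rayleigh quotient of $\group{f}$ using Lemma \ref{gradGroupftogradf} together with $\|\grad f\|_2^2 \le \lambda_{U(A)}(k)\|f\|_2^2$ on the span:
\[
\frac{\|\grad \group{f}\|_{2,G}^2}{\|\group{f}\|_2^2} \;\le\; \frac{C(\delta)\|\grad f\|_{2,X}^2}{c_1\|f\|_2^2} \;\le\; \frac{C(\delta)}{c_1}\lambda_{U(A)}(k).
\]
The Dirichlet form on $G^N$ decomposes as $N$ independent copies of $E(\cdot,\cdot)$ acting on each coordinate $\group{f}(\cdot,i)$, so the operator on $\ell^2(A \times \{1,\ldots,N\})$ whose quadratic form is $\|\grad \group{f}\|_{2,G}^2$ is the $N$-fold direct sum $I - K_A^{(N)}$. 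The min-max principle applied to the $k$-dimensional image then gives $1 - \tilde\beta(k) \le (C(\delta)/c_1)\lambda_{U(A)}(k)$, where $\tilde\beta(1) \ge \tilde\beta(2) \ge \cdots$ are the eigenvalues of $K_A^{(N)}$; equivalently, $\tilde\beta(k) \ge 1 - (C(\delta)/c_1)\lambda_{U(A)}(k)$.

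Third, with the explicit $B = 4C(\delta)/(2-\sqrt 2)$, the quantity $(C(\delta)/c_1)\lambda_{U(A)}(k)$ stays inside $[0, 1-1/\sqrt 2]$, the range on which the elementary inequality $1-x \ge \exp(-\sqrt 2\,x)$ holds. Therefore $\tilde\beta(k)^{2m} \ge \exp\bigl(-2m\sqrt 2\, (C(\delta)/c_1)\lambda_{U(A)}(k)\bigr)$, and the choice $m = \floor{n/(B(2-\sqrt 2))}$ makes this at least $\exp(-2n\lambda_{U(A)}(k))$. Summing over $k$ with $\lambda_{U(A)}(k) \le 1/B$,
\[
\sum_{i:\lambda_{U(A)}(i) \le 1/B} e^{-2n\lambda_{U(A)}(i)} \;\le\; \sum_i \tilde\beta(i)^{2m} \;=\; N\sum_j \beta_A(j)^{2m} \;=\; N\trace(K_A^{2m}).
\]
The proof finishes by noting $\trace(K_A^{2m}) = \sum_{g\in A} p^A_{2m}(g,g) \le \#A\cdot p_{2m}(e,e)$, since restricting the walk to $A$ only decreases return probabilities and the ambient walk is translation invariant.

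The main obstacle is threading the constants in Steps 2--3 so that the elementary bound $1-x \ge e^{-\sqrt 2\, x}$ on $[0,1-1/\sqrt 2]$ dovetails with the explicit formulae for $B$ and $c_1$ and produces exactly the factor $m = \floor{n/(B(2-\sqrt 2))}$. The choice $\delta^2 = 1/(2C_{grad})$ cuts the Lemma \ref{fToGroupf} error term in half, and the factor $(2-\sqrt 2)$ in the definition of $B$ is precisely what is left over after this halving is combined with the passage from a linear lower bound on $\tilde\beta(k)$ to a geometric lower bound on $\tilde\beta(k)^{2m}$.
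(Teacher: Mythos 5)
Your argument is correct and follows the paper's proof in all essentials: transfer the low eigenfunctions to $G$ via $\group{\cdot}$ using Lemma \ref{fToGroupf} with $\delta=\sqrt{1/(2C_{grad})}$ and Lemma \ref{gradGroupftogradf}, compare Rayleigh quotients, apply min--max against $I-K_{A,N}$, convert the linear eigenvalue bound into a geometric one by an elementary inequality, and finish with $\trace(K_{A,N}^{2m})\le \#A\,N\,p_{2m}(e,e)$. The only real deviation is that you handle the counting issue (that at most $\#A N$ eigenvalues satisfy $\lambda_{U(A)}(i)\le 1/B$) by observing that the coercive bound makes $\group{\cdot}$ injective on the span of the corresponding eigenfunctions, whereas the paper argues separately via $\trace(K_{A,N}^{2})\le \#A N/|S|$ to produce an eigenvalue of modulus at most $1/\sqrt{2}$; your route is a clean substitute for that step and otherwise matches the paper's constants and structure.
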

%%%%%%%%%%%%%%%%%%%%%%%%%%%%%%%%%%%%%%%%%%%%%%%%%%%%%%%%%%%%%%%%%%%%%%
\begin{proof}
Suppose $u$ is a solution to $\Delta^{\Omega} u = \lambda u$ on a set $\Omega
\subset X$ with polygonal boundary, and $u=0$ on $\bdry \Omega$.  Set $u=0$ 
outside of $\Omega$.  For $u\in \Dom(\Delta^{\Omega})$ and $\lambda
\ne 0$, a formal argument using integration by parts tells us:
\begin{eqnarray*}
\langle u,u \rangle &=& \frac{1}{\lambda} \langle \lambda u, u \rangle \\
 &=& \frac{1}{\lambda} \langle \Delta^{\Omega} u, u \rangle \\
 &=& \frac{1}{\lambda} \langle \grad u, \grad u \rangle  
    + \langle \grad u, u \rangle |_{\bdry \Omega} \\
 &=& \frac{1}{\lambda} \langle \grad u, \grad u \rangle .
\end{eqnarray*}

This gives us $||u||_2 = |\frac{1}{\lambda}| ||\grad u||_2$.  We know
that such eigenfunctions exist because $\Delta^{\Omega}$ is self-adjoint.

We will combine this with the inequality in Lemma
\ref{fToGroupf} for eigenfunctions $f$ on the set $U(A)$:
\begin{eqnarray*}
||f||_{2,X}^2 &\le& C_{grad}(\delta^2 ||\grad f||_{2,X}^2 +||\group{f}||_{2,G}^2)
 \\
& =&  C_{grad} (\delta^2 |\lambda| ||f||_{2,X}^2 +||\group{f}||_{2,G}^2).
\end{eqnarray*}
This tells us:
\begin{eqnarray*}
(1- C_{grad} \delta^2 |\lambda|)||f||_{2,X}^2 \le ||\group{f}||_{2,G}^2.
\end{eqnarray*}
If $\delta$ is less than $\sqrt{\frac{1}{\lambda C_{grad}}}$, we have
a nice bound for that $\lambda$.  In particular, $\delta =
\sqrt{\frac{1}{2C_{grad}}}$ gives us a simple bound for all 
$\lambda \le 1$ because $1- (1/2) |\lambda| > 1/2$.
\begin{eqnarray*}
||f||_{2,X}^2 \le 2 ||\group{f}||_{2,G}^2.
\end{eqnarray*}
Lemma \ref{gradGroupftogradf} tells us
\begin{eqnarray*}
||\grad \group{f} ||_{2,G}^2 \le  C(\delta) ||\grad f||_{2,X}^2.
\end{eqnarray*}
We have that for $C' = 2 C(\sqrt{\frac{1}{2C_{grad}}})$:
\begin{eqnarray*}
\frac{||\grad \group{f} ||_{2,G}^2}{||\group{f}||_{2,G}^2} 
  \le C' \frac{||\grad f||_{2,X}^2}{||f||_{2,X}^2}.
\end{eqnarray*}
We can rewrite $\grad \group{f}$ in terms of $K_{A,N}^{1/2}$.
\begin{eqnarray*}
1 - \frac{||K_{A,N}^{1/2} \group{f}||_{2,G}^2}{||\group{f}||_{2,G}^2} 
  \le C' \frac{||\grad f||_{2,X}^2}{||f||_{2,X}^2}.
\end{eqnarray*}
This will allow us to compare the first $k$ eigenvalues of
$h^{U(A)}_t$ with the absolute values of those for $K_{A,N}$, where $k
= \min(\#NA, \#\{\lambda_{U(A)}(i) \in [0,1]\})$.  The min-max definition
will give us these eigenvalue comparisons.  For simplicity, we will
use $\lambda_{U(A)}(i)$ to refer to the ith smallest eigenvalue of
$h^{U(A)}_t$, and $|\beta_A(i)|$ to refer to the ith largest absolute
value of the eigenvalue of $K_{A,N}$.  We have
\begin{eqnarray*}
1- |\beta_{A}(i)| &\le& C' \lambda_{U(A)}(i) \text{ which can be written as}\\
1- C' \lambda_{U(A)}(i) &\le& |\beta_{A}(i)|.
\end{eqnarray*}
When $1/2 \le x \le 1$, we know $x \ge e^{-2(1-x)}$.  Applying that to $x=
1-C'\lambda_{U(A)}(i)$, we have
\begin{eqnarray*}
e^{-2C'\lambda_{U(A)}(i)} \le |\beta_{A}(i)|
\end{eqnarray*}
 for $i \le k$ with  $0 \le \lambda_{U(A)}(i) \le 1/(2C')$.  
We can exponentiate to get:
\begin{eqnarray*}
e^{-2 n \lambda_{U(A)}(i)} \le |\beta_{A}(i)|^{n/C'}.
\end{eqnarray*}
We will have this bound for all of the $\lambda_{U(A)} \in
[0,(2-\sqrt{2})/(2C')]$ provided we can show that we have an $i$ with
$C'\lambda_{U(A)}(i) > (2-\sqrt{2})/2$.  If we knew that
$(2-\sqrt{2})/2 \le 1- |\beta_{A}(i)|$ for some $i$, then this would
be shown.  This means we want to have $|\beta_{A}(i)|^2 \le 1/2$ for
some $i$.  We know that $K_{A,N}$ is an $\#AN$ by $\#AN$ matrix whose
entries are either $1/|S|$ or $0$ and that there are $|S|$ nonzero
entries per row.  When we look at its square, we have another $\#AN$
by $\#AN$ matrix whose entries are at most $|S|/|S|^2 = 1/|S|$ and at
least $0$.  $K_{A,N}^2$ has eigenvalues $|\beta_A(i)|^2$.  This means
that the largest $\trace(K_{A,N}^2)$ could possibly be is $\#AN/|S|$,
and so $\sum_{i=1}^{\#AN} |\beta_A(i)|^2 \le \#AN/|S|$.  The average
value of an eigenvalue $|\beta_{A}|^2$ is $1/|S|$.  Since
$|\beta_A(i)|^2 \in [0,1]$, we must have at least one $|\beta_A|^2$
which is smaller than $1/|S|$ in order to have that as the average.
This tells us that there is some $i$ with $|\beta_A(i)| \le
1/\sqrt{|S|} \le 1/\sqrt{2}$.

In this way, we have guaranteed the bound for all $\lambda_{U(A)} \in
[0,(2-\sqrt{2})/(2C')]$.  Note that this also shows that there are at
most $\#AN$ such eigenvalues.

Summing over $\lambda_{U(A)}(i) \in [0,(2-\sqrt{2})/(2C')]$ gives us:
\begin{eqnarray*}
\sum_{i :0 \le \lambda_{U(A)}(i) \le (2-\sqrt{2})/(2C')} 
      e^{-2 n \lambda_{U(A)}(i)} 
\le \sum_{i :0 \le \lambda_{U(A)}(i) \le (2-\sqrt{2})/(2C')} 
      (\beta_{A}(i))^{n/C'}.
\end{eqnarray*}
Note that the $\beta_{A}(i)$ in this sum are positive.  We can
compare these to positive eigenvalues in the trace by using
$K^{2\floor{n/(2C')}}_{A,N}$.  
\begin{eqnarray*}
\sum_{i : 0 \le \lambda_{U(A)}(i) \le (2-\sqrt{2})/(2C')} (\beta_{A}(i))^{n/C'}
\le \trace (K_{A,N}^{2\floor{n/(2C')}}).
\end{eqnarray*}
Combining these yields:
\begin{eqnarray*}
\sum_{i :0 \le \lambda_{U(A)}(i) \le (2-\sqrt{2})/(2C')} 
      e^{-2 n \lambda_{U(A)}(i)} 
\le \trace (K_{A,N}^{2\floor{n/(2C')}}). 
\end{eqnarray*}
We know that by its definition
\begin{eqnarray*}
\trace (K_{A,N}^{2\floor{n/(2C')}}) 
&=& \sum_{g \in A, j = 1..n} p_{2\floor{n/(2C')}}(g,g) \\
&\le& \#A N p_{2\floor{n/(2C')}}(e,e).
\end{eqnarray*}
This gives us the result:
\begin{eqnarray*}
\sum_{i :0 \le \lambda_{U(A)}(i) \le (2-\sqrt{2})/(2C')} 
     e^{-2 n \lambda_{U(A)}(i)} 
\le \#A N p_{2\floor{n/(2C')}}(e,e).
\end{eqnarray*}
If we want to simplify the notation on the left, we may set $B=
2C'/(2-\sqrt{2})$.  This means $n/(2C') = n/(B(2-\sqrt{2}))$.  Hence:
\begin{eqnarray*}
\sum_{i :0 \le \lambda_{U(A)}(i) \le 1/B} 
     e^{-2 n \lambda_{U(A)}(i)} 
\le \#A N p_{2\floor{n/(B(2-\sqrt{2}))}}(e,e).
\end{eqnarray*}
\end{proof}
%%%%%%%%%%%%%%%%%%%%%%%%%%%%%%%%%%%%%%%%%%%%%%%%%%%%%%%%%%%%%%%%%%%%%
\begin{theorem}\label{hlessp}
For $t> 6r_1^2$  we get:
\begin{eqnarray*}
\sup_{y \in X} h_{t-3r_1^2}(y,y) 
\le  C p_{2\floor{\frac{t}{B \log |S|}}}(e,e)
\end{eqnarray*}
where $C = C_H \left(\frac{1}{\min_{y \in Y} \mu(B(y, \delta))} +
\frac{\mu(Y)}{\mu(B(y, \delta))} C_1 2^{d/2}\right)$.
\end{theorem}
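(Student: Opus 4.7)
The plan is to squeeze $\sup_{y \in X} h_{t-3r_1^2}(y,y)$ between a lower bound from Lemma~\ref{htWbysupht} and an upper bound built from Lemmas~\ref{htWbylambdale1} and \ref{htvsKAN}, then extract the group-side heat kernel by sending a F\o lner sequence to infinity. Fix $\varepsilon_1 > 0$ and a finite $A \subset G$; set $W = U(A)$ and $W_a := \{x \in W : d(x,\bdry W) > a t^{1/2}\}$ with $a$ as supplied by Lemma~\ref{htWbysupht}. For $t \ge 6r_1^2$ this gives
\begin{eqnarray*}
\mu(W_a)\left(C_H^{-1}\sup_{y \in X} h_{t-3r_1^2}(y,y) - \varepsilon_1\right)
\le \int_{W_a} h_t^W(x,x)\,dx \le \int_W h_t^W(x,x)\,dx,
\end{eqnarray*}
and Lemma~\ref{htWbylambdale1} bounds the right-hand side by a sum over small eigenvalues plus $C_1 2^{d/2}\mu(W)e^{-t/(2B')}$.

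Applying Lemma~\ref{htvsKAN} with $n = t/2$ dominates the eigenvalue sum by $\#A \cdot N \cdot p_{2\floor{t/(2B'(2-\sqrt{2}))}}(e,e)$. The crucial parameter choice is to set $B := 4B'$; since $|S| \ge 2$ implies $\log|S| \ge \log 2 > (2-\sqrt{2})/2$, this simultaneously guarantees $B\log|S| \ge 2B'(2-\sqrt{2})$, so monotonicity of $n \mapsto p_{2n}(e,e)$ replaces the floor by $\floor{t/(B\log|S|)}$, and, via the trivial return bound $p_{2n}(e,e) \ge |S|^{-2n}$,
\begin{eqnarray*}
e^{-t/(2B')} \le e^{-2t/B} \le |S|^{-2\floor{t/(B\log|S|)}} \le p_{2\floor{t/(B\log|S|)}}(e,e),
\end{eqnarray*}
so the exponential error term is absorbed into the heat-kernel term.

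Using the two-sided volume estimate $\min_{y \in Y}\mu(B(y,\delta))\cdot N\#A \le \mu(W) \le \mu(Y)\cdot N\#A$ inherent to the construction of $U(A)$, the inequality collapses to
\begin{eqnarray*}
C_H^{-1}\sup_{y \in X} h_{t-3r_1^2}(y,y) - \varepsilon_1
\le \frac{N\#A}{\mu(W_a)}\left(1 + C_1 2^{d/2}\mu(Y)\right) p_{2\floor{t/(B\log|S|)}}(e,e).
\end{eqnarray*}
Pushing $A$ through a F\o lner sequence gives $\mu(W\setminus W_a) = o(\mu(W))$, hence $\mu(W_a)/\mu(W) \to 1$ and $N\#A/\mu(W_a) \to N\#A/\mu(W) \le 1/\min_{y \in Y}\mu(B(y,\delta))$. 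Letting $\varepsilon_1 \to 0$ and multiplying by $C_H$ then produces the stated inequality with the asserted constant.

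The main obstacle is the delicate coordination of constants: $B'$ is already pinned down once we set $\delta = \sqrt{1/(2C_{grad})}$ in Lemma~\ref{htvsKAN}, so $B$ must be chosen so that both (i) monotonicity of $p$ carries the eigenvalue bound onto the target floor $\floor{t/(B\log|S|)}$ and (ii) the trivial Kesten-type lower bound absorbs the exponential error $e^{-t/(2B')}$ into the heat kernel term. The $\log|S|$ in the exponent is forced by the return-probability bound and uses $|S| \ge 2$ in an essential way; the F\o lner limit must also be handled carefully, since the parameter $a = a(\varepsilon_1)$ from Lemma~\ref{htWbysupht} grows as $\varepsilon_1 \downarrow 0$, requiring the nested limits $A \to \infty$ first and $\varepsilon_1 \to 0$ second.
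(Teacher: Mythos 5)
Your argument is correct and runs on the same skeleton as the paper's: Lemma \ref{htWbysupht} for the pointwise lower bound on the Dirichlet kernel, Lemma \ref{htWbylambdale1} for the eigenvalue split, Lemma \ref{htvsKAN} for the small-eigenvalue comparison, absorption of the exponential error via the trivial return-probability bound, a F\o lner limit, and only then $\varepsilon_1 \to 0$. The one real divergence is how the deep-interior hypothesis of Lemma \ref{htWbysupht} is arranged: the paper enlarges the Dirichlet domain to $W=U(AT)$, with $T$ a ball of radius about $a\sqrt{t}$, so that $U(A)$ automatically sits at distance $>a\sqrt{t}$ from $\bdry W$ and only $\#(AT)/\#A \to 1$ is needed; you keep $W=U(A)$ and integrate over the shrunken set $W_a$, which requires $\mu(W\setminus W_a)=o(\mu(W))$ along the F\o lner sequence. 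That claim is true but is asserted rather than proved, and it deserves a line: any point of $U(A)$ within $a\sqrt{t}$ of $\bdry U(A)$ lies in copies $U(\{g\})$ with $g$ in the inner layer $\{g\in A: gT'\not\subset A\}$ for a fixed finite ball $T'$ of comparable radius, and this layer has size at most $\#T'\cdot\#(AT'\setminus A)=o(\#A)$, which together with the two-sided volume estimate for $U(\cdot)$ gives the claim; the paper's enlargement trick sidesteps this extra step. Your calibration $B=4B'$ is a genuine improvement: it makes both the eigenvalue term and the error term dominate by $p_{2\floor{t/(B\log|S|)}}$ for every $|S|\ge 2$, whereas the paper's final monotonicity sentence, read literally, compares the floors the wrong way and would otherwise need $\log|S|\ge 2(2-\sqrt{2})$; the only cost is that your subscript carries $4B'$ in place of the statement's $B$, which is immaterial for the $\approx$-equivalence this theorem feeds into.
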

%%%%%%%%%%%%%%%%%%%%%%%%%%%%%%%%%%%%%%%%%%%%%%%%%%%%%%%%%%%%%%%%%%%%%
\begin{proof}
We'll use these lemmas and F\o lner sequences to build this
inequality.  

Recall Lemma \ref{htWbysupht} told us:
\begin{eqnarray*}
h_t^W(x,x) 
\ge C_H^{-1} \sup_{y \in X} h_{t-3r_1^2}(y,y) -\varepsilon_1.
\end{eqnarray*}
for all $x \in \{x \in W : d(x,\bdry W)> at^{1/2} \}$ when $t >
6r_1^2$.  
\\ Set $T = \{g \in G : d_X(e,g) \le \sqrt{t} a + 10 R_0 \}$. 
\\ Then $AT = \{g \in G : g=th$ for $t \in T, h \in A \}$.  We'll apply
this to $W=U(AT)$.  
\\ Note that $U(A) \subset \{x \in U(AT) : d(x,\bdry U(AT))> at^{1/2} \}$.

When we take the average over $U(A)$ we have:
\begin{eqnarray*}
\sup_{y \in X} h_{t-3r_1^2}(y,y) 
 \le C_H \left(\dashint_{U(A)} h_t^{U(AT)}(x,x)dx + \varepsilon_1 \right)
\end{eqnarray*}

From Lemma \ref{htWbylambdale1} we know how to bound the integral in
terms of $\lambda_W \le 1/B$:
\begin{eqnarray*}
\int_{U(A)} h_t^{U(AT)}(x,x) dx 
&\le& \int_{U(AT)} h_t^{U(AT)}(x,x) dx \\ 
&\le& \sum_{\lambda_{U(AT)}(i) \le 1/B} e^{-t \lambda_{U(AT)}(i)} 
    + C_1 2^{d/2} \mu({U(AT)})e^{-t/(2B)}.
\end{eqnarray*}

Putting them together gives us:
\begin{equation*}
\begin{split}
\sup_{y \in X} & \text{ }h_{t-3r_1^2}(y,y) \\
&\le C_H \left(\frac{1}{\mu({U(A)})}
         \sum_{\lambda_{U(AT)}(i) \le 1/B} e^{-t \lambda_{U(AT)}(i)}
       + \frac{\mu(U(AT))}{\mu(U(A))} C_1 2^{d/2} e^{-t/(2B)} + \varepsilon_1
        \right).
\end{split}
\end{equation*}
By Lemma \ref{htvsKAN} we have:
\begin{eqnarray*}
\sum_{i :0 \le \lambda_{U(AT)}(i) \le 1/B} 
     e^{-2 n \lambda_{U(AT)}(i)} 
\le \#(AT) N p_{2\floor{\frac{n}{B(2-\sqrt{2})}}}(e,e).
\end{eqnarray*}
When we set $n=t/2$, this gives us:
\begin{equation*}
\begin{split}
\sup_{y \in X} &  \text{ } h_{t-3r_1^2}(y,y) \\
& \le C_H \left(\frac{\# (AT) N}{\mu(U(A))}
         p_{2\floor{\frac{t}{2B(2-\sqrt{2})}}}(e,e)
      + \frac{\mu(U(AT))}{\mu(U(A))} C_1 2^{d/2} e^{-t/(2B)} + \varepsilon_1
        \right).
\end{split}
\end{equation*}
On $G$, we can bound below the probability of returning to the start
by noting that because $S=S^{-1}$, after moving $n$ steps, we have a
$\frac{1}{|S|^n}$ chance of exactly retracing our path.  
\begin{eqnarray*}
p_{2n}(e,e) \ge \frac{1}{|S|^n} = e^{-n\log |S|}.
\end{eqnarray*}
  A more convenient time gives us 
\begin{eqnarray*}
p_{2\floor{\frac{n}{2B \log |S|}}}(e,e) \ge e^{-n /(2B)}.
\end{eqnarray*}
When we place this into the inequality, we have:
\begin{equation*}
\begin{split}
&\sup_{y \in X} h_{t-3r_1^2}(y,y) \\
&\le C_H \left(\frac{\#(AT) N}{\mu({U(A)})} 
      p_{2\floor{\frac{t}{2B(2-\sqrt{2})}}}(e,e) 
   + \frac{\mu(U(AT))}{\mu(U(A))}C_1 2^{d/2} p_{2\floor{\frac{t}{2B \log|S|}}}(e,e) 
   + \varepsilon_1 \right) .
\end{split}
\end{equation*}
We can use the fact that $p_t(e,e) \le p_s(e,e)$ whenever $t>s$ noting
that both $2\floor{\frac{t}{2B(2-\sqrt{2})}}$ and $2\floor{\frac{t}{2 B \log|S|}}$
are larger than $2\floor{\frac{t}{B \log |S|}}$.
\begin{eqnarray*}
...\le  C_H \left( \left(\frac{\#(AT) N}{\mu({U(A)})}
            + \frac{\mu(U(AT))}{\mu(U(A))} C_1 2^{d/2}\right) 
    p_{2\floor{\frac{t}{B \log |S|}}}(e,e)) + \varepsilon_1\right) .
\end{eqnarray*}

We take a F\o lner sequence for $G$, and set $A=F(i)$.  We can
use our volume estimates to find:
\begin{eqnarray*}
\frac{\#(AT) N}{\mu({U(A)})} \le 
\frac{\#(AT) N}{\min_{y \in Y} \mu(B(y, \delta)) N \# A}
= \frac{\#(AT)}{\# A \min_{y \in Y} \mu(B(y, \delta))}
\end{eqnarray*}
and 
\begin{eqnarray*}
\frac{\mu(U(AT))}{\mu(U(A))} \le 
\frac{\mu(Y) N \#(AT)}{\min_{y \in Y} \mu(B(y, \delta)) N \# A}
= \frac{\#(AT) \mu(Y)}{\# A \min_{y \in Y} \mu(B(y, \delta))}.
\end{eqnarray*}
When we take the limit of $\frac{\#(AT)}{\# A} =
\frac{\#(F(i)T)}{\#F(i)}$ as $i \rightarrow \infty$, we find it is $1$.

This gives us:
\begin{eqnarray*}
\sup_{y \in X} h_{t-3r_1^2}(y,y)
\le  C_H 
\frac{1 + \mu(Y)}{\min_{y \in Y} \mu(B(y, \delta))} 
C_1 2^{d/2}
 p_{2\floor{\frac{t}{B \log |S|}}}(e,e) 
+ C_H \varepsilon_1.
\end{eqnarray*}
Now let $\varepsilon_1$ go to zero.  This yields the comparison.
\end{proof}

We can combine these three results into a single theorem.
\begin{theorem}
Let $G$ be a finitely generated group and $X$ the associated complex.
For times $t > 1$, we have the comparison 
\begin{eqnarray*}
p_{2\ceil{t}}(e,e) \approx \sup_{x \in X} h_t(x,x).
\end{eqnarray*}
Note that by transitivity, this holds for the heat kernels on the
skeletons as well.
\end{theorem}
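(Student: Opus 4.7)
The plan is to split into two cases based on amenability of $G$ and then simply invoke the three results already assembled in the chapter. First, I would observe that the nonamenable case is essentially immediate from Corollary \ref{nonam}, which directly states $\sup_{x\in X} h_t(x,x) \approx p_{2\ceil{t}}(e,e)$ when $G$ is nonamenable. This is the easy case because Kesten's exponential decay for $p_{2n}(e,e)$ and Lemma \ref{NonAmenableht}'s exponential decay for $h_t(x,x)$ line up on both sides of $\approx$ with matching rates, up to constants.

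For the amenable case, I would combine Theorem \ref{plessh} and Theorem \ref{hlessp}. Theorem \ref{plessh} gives the inequality
\[
p_{\ceil{Ct}}(e,e) \le C_0 \sup_{x \in X} h_t(x,x),
\]
which, after rewriting $t$ in terms of $\ceil{Ct}$, matches the definition of $\approx$ in one direction. Theorem \ref{hlessp} gives the reverse
\[
\sup_{y \in X} h_{t-3r_1^2}(y,y) \le C\, p_{2\floor{t/(B\log|S|)}}(e,e),
\]
valid for $t>6r_1^2$. Shifting the time variable by the additive constant $3r_1^2$ (absorbed into the multiplicative $C_4$ in the definition of $\approx$) and comparing $p_{2\floor{t/(B\log|S|)}}(e,e)$ to $p_{2\ceil{t}}(e,e)$ via a rescaling of $t$ yields the other direction.

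The only mildly delicate point, which I want to flag as the main obstacle (though it is not truly hard), is bookkeeping of the constants so that the final statement fits the four-constant definition
\[
C_1 f(C_2 t) \le g(t) \le C_3 f(C_4 t).
\]
Specifically, one must verify that the linear rescalings $t \mapsto Ct$, $t \mapsto t/(B\log|S|)$, and the additive shifts $t \mapsto t - 3r_1^2$ and $t\mapsto \ceil{t}$ vs $2\ceil{t}$ can all be absorbed into dilations of the time variable for $t \ge 1$. This is straightforward because for $t\ge 1$ the additive shifts are dominated by multiplicative ones, and $\ceil{\cdot}$ and $\floor{\cdot}$ differ from the identity by at most $1$.

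Finally, the last sentence of the statement, about transitivity across skeletons, follows because each $X^{(k)}$ is itself an admissible Euclidean complex satisfying the hypotheses (with degree, angles, and edge lengths inherited from $X$) and has the same underlying group $G$ acting on it; thus the same two-case argument applies to each $h^k_t$, and transitivity of $\approx$ yields $h^j_t(x,x) \approx h^k_t(x,x)$ for all $j,k$.
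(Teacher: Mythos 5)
Your proposal is correct and matches the paper's own argument, which is exactly the two-case split: Corollary \ref{nonam} for nonamenable $G$, and Theorems \ref{plessh} and \ref{hlessp} combined for amenable $G$. The additional bookkeeping you flag (absorbing the time shifts and floor/ceiling adjustments into the four constants of $\approx$ for $t\ge 1$) is the same implicit step the paper leaves to the reader, so nothing is missing.
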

\begin{proof}
If $G$ is amenable, apply theorems \ref{plessh} and \ref{hlessp}.  If $G$ is
nonamenable, apply corollary \ref{nonam}.
\end{proof}
This theorem gives a comparison of heat kernel behavior at large
times.  It does not; however, tell you what that behavior is for a
given group.  Even though the proof tells you the asymptotic for
nonamenable groups, it is not easy to determine amenability.  For
example, it is unknown whether Thompson's group $F$ is amenable or
not. (See Belk \cite{Belk}.)

\bibliography{biblio_thesis}
\bibliographystyle{plain}

\end{document}